\title{Quasilinear tropical compactifications}
\author{Nolan Schock}
\email{nschoc2@uic.edu}
\address{Department of Mathematics, University of Illinois at Chicago, Chicago
IL, 60607, USA}
\keywords{tropical compactification, tropical modification, del Pezzo surface,
hyperplane arrangement, Chow ring, moduli space}
\subjclass{Primary 14T90, 14J10; Secondary 14C15}
\begin{document}
\maketitle

\begin{abstract}
  The prototypical examples of tropical compactifications are compactifications of complements of hyperplane
  arrangements, which posses a number of remarkable properties not satisfied by more general tropical compactifications
  of closed subvarieties of tori. We introduce a broader class of tropical compactifications, which we call
  \emph{quasilinear (tropical) compactifications}, and which continue to satisfy the desirable properties of
  compactifications of complements of hyperplane arrangements. In particular, we show any quasilinear compactification
  is sch\"on, and its intersection theory is described entirely by the intersection theory of the corresponding tropical
  fan. As applications, we prove the quasilinearity of the moduli spaces of 6 lines in $\bP^2$ and marked cubic
  surfaces, obtaining results on the geometry of the stable pair compactifications of these spaces.
\end{abstract}

\section{Introduction}

Let $Y$ be a $d$-dimensional closed subvariety of an $n$-dimensional torus $T \cong (\bC^*)^n$. It is natural to attempt
to compactify $Y \subset T$ by taking its closure $\oY$ in a toric variety $X(\Sigma)$ with torus $T$. If $\oY$ is
proper and the induced multiplication map $T \times \oY \to X(\Sigma)$ is flat and surjective, then $\oY$ is called a
\emph{tropical compactification} of $Y$ \cite{tevelevCompactificationsSubvarietiesTori2007}. The definition implies that
if $\oY \subset X(\Sigma)$ is a tropical compactification, then it intersects each torus orbit in $X(\Sigma)$
nontrivially in the expected dimension; therefore, one can pullback the stratification of $X(\Sigma)$ by torus orbits to
a stratification of $\oY$, giving one a method to compare the geometry of $\oY$ to the geometry of the ambient toric
variety $X(\Sigma)$.

The goal of the present article is to study how close the geometry of a tropical compactification is to the geometry of
the ambient toric variety. Tropical compactifications can be quite general (see \cite[Chapter
6]{maclaganIntroductionTropicalGeometry2015} for many interesting examples), so at first glance one should not expect to
be able to say much. On the other hand, the prototypical example of a tropical compactification is when $Y$ is a
complement of a hyperplane arrangement, in which case tropical compactifications of $Y$ satisfy a number of remarkable
properties:
\begin{itemize}
  \item $Y$ is sch\"on, i.e., there is a tropical compactification $\oY \subset X(\Sigma)$ of $Y$ such that the
      multiplication map $T \times \oY \to X(\Sigma)$ is \emph{smooth} and surjective. This implies that \emph{any}
      tropical compactification of $Y$ has smooth multiplication map \cite[Theorem
      1.4]{tevelevCompactificationsSubvarietiesTori2007}, and if $\oY \subset X(\Sigma)$ is any such tropical
      compactification, then all strata of $\oY$ are smooth \cite[Lemma 2.7]{hackingHomologyTropicalVarieties2008}. In
      fact, when $\oY \subset X(\Sigma)$ is a complement of a hyperplane arrangement, all strata of $\oY$ are also
      complements of hyperplane arrangements \cite{katzRealizationSpacesTropical2011}.
  \item If $\oY \subset X(\Sigma)$ is a tropical compactification of $Y$, then there is an isomorphism of Chow rings
    \[
        A^*(\oY) \cong A^*(X(\Sigma)).
    \]
    If in addition the toric variety $X(\Sigma)$ is nonsingular, then $\oY$ is also nonsingular, and the Chow ring of
    $\oY$ agrees with its cohomology ring \cite{grossIntersectionTheoryLinear2015, ardilaLagrangianGeometryMatroids2022,
    adiprasitoHodgeTheoryCombinatorial2018}.
\end{itemize}
We call complements of hyperplane arrangements \emph{linear varieties}, and their tropical compactifications
\emph{linear (tropical) compactifications}.

We define a broader class of closed subvarieties of tori, called \emph{quasilinear} varieties, and their tropical
compactifications, called \emph{quasilinear (tropical) compactifications}, which includes the class of linear varieties
(and linear tropical compactifications), for which the analogues of the above properties hold. Our main results are the
following.

\begin{theorem}[{\cref{thm:qlin_vars,thm:qlin_strata,cor:qlin_schon}}] \label{thm:qlin_schon_main}
    \begin{enumerate}
        \item Quasilinear varieties are smooth, irreducible, rational, Chow-free, and linearly stratified (see
            \cref{def:chow_free,def:weakly_linear}).
        \item Every stratum of a quasilinear tropical compactification is a quasilinear variety.
    \end{enumerate}
    In particular, quasilinear varieties are sch\"on.
\end{theorem}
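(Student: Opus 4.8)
The plan is to establish the three assertions simultaneously, by induction on the complexity of the quasilinear structure on $Y$ — roughly, the number of tropical modifications needed to present $Y$, together with $\dim Y$. The base case is a torus (or an open subset of one), where every claim is immediate: a torus is smooth, irreducible, rational, has trivial Chow ring, its only tropical compactification up to refinement is $T \subseteq X(\Sigma)$ with trivial boundary strata, and the multiplication map $T \times T \to T$ is the group law, hence smooth. So the work is to propagate each property through one tropical modification (for part (1)) and through passage to a boundary stratum (for part (2)), and then to deduce sch\"onness formally from (1) and (2).

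For the inductive step of part (1): by definition a quasilinear $Y$ of complexity $k$ is (an open subset of) a tropical modification $\widetilde{Y'}$ of a quasilinear $Y'$ of smaller complexity along a quasilinear divisor $D \subset Y'$. Smoothness, irreducibility and rationality pass to $\widetilde{Y'}$ because, realized concretely in the enlarged torus, the modification is an isomorphism away from the exceptional locus $E$ (lying over $D$), and a neighborhood of $E$ is built from $D$ together with an affine or multiplicative factor; so $\widetilde{Y'}$ is covered by two smooth rational opens with smooth rational overlap. The delicate point is Chow-freeness (that the Chow ring is generated by the fundamental class): here one uses the localization sequence $A^*(E) \to A^*(\widetilde{Y'}) \to A^*(\widetilde{Y'} \setminus E) \to 0$, observes that $\widetilde{Y'} \setminus E$ is an open subset of $Y'$, hence Chow-free by induction (an open subset of an irreducible Chow-free variety is Chow-free by a second localization sequence), and that $E$ is itself quasilinear of smaller complexity, hence Chow-free by induction; one then checks, via the projection and self-intersection formulas for $E \hookrightarrow \widetilde{Y'}$, that the image of $A^{>0}(E)$ dies in $A^*(\widetilde{Y'})$, so that only the fundamental class survives. ``Linearly stratified'' for $Y$ is then proved together with part (2).

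For part (2): let $\oY \subset X(\Sigma)$ be a quasilinear tropical compactification and $\oY_\sigma = \oY \cap O_\sigma$ the stratum of a cone $\sigma \in \Sigma$. One identifies the closure of $\oY_\sigma$ in the toric variety of the star $\operatorname{Star}_\sigma(\Sigma)$ with a tropical compactification of $\oY_\sigma$, with tropicalization $\operatorname{Star}_\sigma(\operatorname{trop} Y)$, and shows that the recursive structure defining quasilinearity localizes: the star at $\sigma$ of a tropical modification of $Y'$ along $D$ is again a tropical modification of the star of $Y'$ along the star of $D$, up to splitting off a torus or linear factor coming from the lineality directions inside $\sigma$. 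Since tropical modifications strictly decrease complexity, $\oY_\sigma$ is quasilinear of smaller complexity, which closes the induction (and yields that $Y$ is linearly stratified).

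Granting (1) and (2), sch\"onness is formal. Fix a quasilinear tropical compactification $\oY \subset X(\Sigma)$ of $Y$; by (2) each stratum $\oY_\sigma$ is quasilinear, hence smooth by (1). For $\sigma \in \Sigma$ and $w \in \operatorname{relint}(\sigma)$ the initial degeneration $\operatorname{in}_w(Y)$ is isomorphic (non-canonically) to $\oY_\sigma \times (\bC^*)^{\dim \sigma}$, hence smooth; by the standard criterion that a very affine variety is sch\"on exactly when all of its initial degenerations are smooth — equivalently, when the multiplication map $m \colon T \times \oY \to X(\Sigma)$ is smooth, which one also sees directly since $m^{-1}(O_\sigma) = T \times \oY_\sigma$ fibers over $O_\sigma$ with smooth fiber $\cong T_\sigma \times \oY_\sigma$ — the variety $Y$ is sch\"on. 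I expect the main obstacle to be part (2): proving that the tropical-modification structure descends to every boundary stratum, which needs the combinatorial fact that tropical modification commutes with the star operation at faces, the correct interaction with the divisor $D$ and with the lineality, and — more subtly — matching the \emph{scheme} structure of $\oY_\sigma$ with the predicted tropical modification rather than merely its support. The Chow-freeness step in part (1) is a secondary difficulty of the same flavor.
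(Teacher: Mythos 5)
There is a genuine gap, and it comes from a misreading of what a tropical modification is on the algebraic side. If $Y\subset T$ is a variety, $f$ a regular function on $Y$ and $D=V(f)$, the variety whose tropicalization is the tropical modification of $\trop(Y)$ along $\trop(D)$ is the very affine graph $\{z=f\}\subset T\times\bC^*$, and the coordinate projection identifies it with the \emph{open complement} $Y\setminus D$. There is no exceptional locus $E\subset\wY$ lying over $D$: the fibers of the projection over points of $D$ are empty, and the new cones $\tau_{\geq}$ of the tropical modification correspond to boundary strata of compactifications of $\wY$, not to a subvariety of $\wY$. So your covering of $\wY$ by ``a neighborhood of $E$'' and the rest, and your Chow-freeness argument via the localization sequence for $E\hookrightarrow\wY$ together with self-intersection formulas, are arguments about an object that does not exist. (Ironically, the correct picture makes part (1) easier, not harder: once $\wY\cong Y\setminus D$ is known, smoothness, irreducibility, rationality and Chow-freeness pass to open subvarieties immediately, and ``linearly stratified'' follows because a complement of a linearly stratified variety in a linearly stratified variety is linearly stratified.)

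The deeper omission is that quasilinearity of $\wY$ is, by definition, a condition only on $\trop(\wY)$; it does not hand you the presentation ``$\wY$ is (an open subset of) a modification of a lower-complexity quasilinear $Y'$ along a quasilinear $D$,'' which your induction takes as its starting point. Producing that presentation is the actual content of the proof: one must show that the linear projection exhibiting $\trop(\wY)$ as a modification of $\cF$ induces $\pi:\wY\to Y:=\overline{\pi(\wY)}$ which is birational with finite fibers (\cref{lem:mod_bir}, using the Sturmfels--Tevelev push-forward formula together with irreducibility of $\trop(\wY)$ and the induced weight being the fundamental weight), then upgrade $\pi$ to an open immersion by Zariski's main theorem --- which requires normality of $Y$, supplied by the induction on the rank of the ambient lattice after passing to the minimal torus --- and finally show that $D=Y\setminus\pi(\wY)$ is reduced and irreducible with $\trop(D)=\cD$ (\cref{lem:trop_integral,cor:int_var}), so that the induction applies to $D$ as well. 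The same issue recurs in your part (2): the combinatorial localization (stars of modifications are modifications of stars, compatibly with products) is correct and is exactly \cref{prop:star_fan_mod,thm:star_qlin}, but the step you flag and leave open --- that the stratum $Y_\sigma$, a priori only a pure-dimensional scheme whose tropicalization is a quasilinear fan, is genuinely reduced and irreducible --- is precisely where the paper invokes \cref{lem:trop_integral,cor:int_var,cor:qlin_scheme}, using reducedness and local irreducibility of quasilinear fans plus normality from part (1). Your deduction of sch\"onness from (1) and (2) via smoothness of all strata is fine and agrees with \cref{cor:qlin_schon}.
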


\begin{theorem}[{\cref{thm:qlin_chow}}] \label{thm:qlin_chow_main}
    Let $i : \oY \hookrightarrow X(\Sigma)$ be a quasilinear tropical compactification. Then for all $k$, the pullback
    $i^* : A^k(X(\Sigma)) \to A^k(\oY)$ is an isomorphism, inducing an isomorphism of Chow rings
    \[
        A^*(\oY) \cong A^*(X(\Sigma)).
    \]
    If in addition the toric variety $X(\Sigma)$ is nonsingular, then $\oY$ is also nonsingular, and the cycle class map
    induces an isomorphism $A^*(\oY) \cong H^*(\oY)$.
\end{theorem}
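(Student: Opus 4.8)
The plan is to compare $\oY$ with $X(\Sigma)$ one torus orbit at a time, via a five-lemma induction over the orbit stratification, using sch\"onness to make all the relevant intersections transverse and Chow-freeness of the strata to identify the associated graded pieces. \emph{Reductions.} Since $\oY$ is sch\"on by \cref{thm:qlin_schon_main}, the multiplication map $m\colon T\times\oY\to X(\Sigma)$ is smooth and surjective; when $X(\Sigma)$ is nonsingular this forces $T\times\oY$, hence $\oY$, to be nonsingular, so $i\colon\oY\hookrightarrow X(\Sigma)$ is a regular embedding of codimension $c=n-d$ and the refined Gysin homomorphism $i^{!}$ is available. I carry out the argument in this case; in general one replaces Chow groups and Gysin pullback throughout by operational Chow cohomology and the operational Gysin homomorphism (Fulton, \S17), which enjoy the same formal properties. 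Sch\"onness further means $\oY$ meets every orbit $O_{\sigma}$ and every orbit closure $V(\sigma)$ transversally in the expected dimension; since quasilinear varieties are irreducible (\cref{thm:qlin_schon_main}), the stratum $\oY_{\sigma}:=\oY\cap O_{\sigma}$ is an irreducible smooth variety of dimension $d-\dim\sigma$, $\oY\cap V(\sigma)=\overline{\oY_{\sigma}}$ is reduced, and hence $i^{!}[V(\sigma)]=[\oY\cap V(\sigma)]$, in particular $i^{!}[O_{\sigma}]=[\oY_{\sigma}]$. By \cref{thm:qlin_schon_main} each $\oY_{\sigma}$ is a quasilinear variety, so it is Chow-free (\cref{def:chow_free}).

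\emph{The induction.} For $k\ge 0$ let $X^{(k)}=\bigcup_{\dim\sigma\ge k}V(\sigma)$, a closed toric subvariety of $X(\Sigma)$ with $X^{(k)}\setminus X^{(k+1)}=\coprod_{\dim\sigma=k}O_{\sigma}$, and let $\oY^{(k)}=\oY\cap X^{(k)}$, so $\oY^{(k)}\setminus\oY^{(k+1)}=\coprod_{\dim\sigma=k}\oY_{\sigma}$. Applying the refined Gysin map of the regular embedding $i$ to each term of the Chow localization sequence
\[
  A_{*}(X^{(k+1)}) \to A_{*}(X^{(k)}) \to \bigoplus_{\dim\sigma=k} A_{*}(O_{\sigma}) \to 0
\]
and to the analogous sequence for $\oY^{(k)}$ produces a commutative ladder (commutativity being compatibility of $i^{!}$ with proper pushforward and with flat restriction to open sets), the vertical maps shifting degree by $-c$. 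Arguing downward on $k$ — the base case $k=\max_{\sigma}\dim\sigma=d$ being the assertion, immediate from sch\"onness and irreducibility, that $\oY$ meets each maximal-cone orbit transversally in a single reduced point — we may assume the outer vertical maps over $X^{(k+1)}$ and its boundary strata are isomorphisms. The vertical map on $\bigoplus A_{*}(O_{\sigma})$ is also an isomorphism: $A_{*}(O_{\sigma})$ is freely generated by $[O_{\sigma}]$ in top degree because $O_{\sigma}$ is a torus, $A_{*}(\oY_{\sigma})$ is freely generated by $[\oY_{\sigma}]$ in top degree because $\oY_{\sigma}$ is Chow-free, and $i^{!}[O_{\sigma}]=[\oY_{\sigma}]$. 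A diagram chase gives surjectivity of the middle map. For injectivity one needs both localization sequences to be \emph{short} exact; for the toric row this is the classical linear independence of orbit-closure classes, and for the $\oY$ row it follows from Chow-freeness of all the strata by comparison with Borel--Moore homology: since each $\oY_{\sigma}$, being Chow-free, has Borel--Moore homology concentrated in even degree and freely generated by fundamental classes, the Borel--Moore localization \emph{long} exact sequences for the $\oY^{(k)}$ split into short exact sequences, and comparing these with the right-exact Chow sequences through the surjective cycle class maps forces the Chow sequences to be short exact too. The five lemma then applies at each stage, and at $k=0$ we obtain that $i^{!}=i^{*}\colon A^{*}(X(\Sigma))\to A^{*}(\oY)$ is an isomorphism in every degree; as $i^{*}$ is a ring homomorphism, it is an isomorphism of Chow rings.

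\emph{The cohomology statement.} When $X(\Sigma)$ is nonsingular, $\oY$ is nonsingular as above, and the cycle class map $A^{*}(X(\Sigma))\to H^{*}(X(\Sigma))$ is an isomorphism by the standard computation for smooth toric varieties. Running the identical five-lemma induction in Borel--Moore homology — legitimate precisely because the strata $\oY_{\sigma}$ are Chow-free, hence have Tate-type Borel--Moore homology matching their Chow groups — shows $i^{*}\colon H^{*}(X(\Sigma))\to H^{*}(\oY)$ is an isomorphism; three of the four maps in the commuting square of cycle class maps are then isomorphisms, so $A^{*}(\oY)\to H^{*}(\oY)$ is an isomorphism as well.

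\emph{Main obstacle.} Everything except the left-exactness input is formal. The essential step, and the one place where \cref{def:chow_free} must be used in an indispensable way, is the injectivity of the boundary pushforwards $A_{*}(\oY^{(k+1)})\to A_{*}(\oY^{(k)})$ — equivalently, the linear independence of the boundary-stratum classes of $\oY$. The comparison with Borel--Moore homology sketched above is the natural route, but making it rigorous — in particular verifying that Chow-free quasilinear varieties have Borel--Moore homology of Tate type with $A_{*}\cong H^{BM}_{2*}$ — is the technical heart of the proof.
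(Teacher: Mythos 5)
There is a genuine gap, and it sits exactly where you yourself located the ``technical heart.'' Your five-lemma induction needs both localization sequences to be \emph{short} exact, i.e.\ the pushforwards $A_*(X^{(k+1)}) \to A_*(X^{(k)})$ and $A_*(\oY^{(k+1)}) \to A_*(\oY^{(k)})$ to be injective, and both justifications you offer are false. Orbit-closure classes are \emph{not} linearly independent in the Chow groups of a toric variety: the classical statement is only that they generate (this is \cref{cor:chow_groups}), with relations coming from divisors of characters. Already for $\bP^2$ the three boundary lines are rationally equivalent, so $A_1(X^{(1)}) \cong \bZ^3 \to A_1(\bP^2) \cong \bZ$ is not injective, and the same failure occurs inside quasilinear compactifications (e.g.\ for $Y$ a line in $(\bC^*)^2$ with the standard tropical line fan, $X^{(1)}$ is three copies of $\bC^*$ mapping onto $A_1(X(\Sigma)) \cong \bZ$; on the $\oY$ side, three boundary points map onto $A_0(\bP^1) \cong \bZ$). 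Likewise, Chow-freeness does not give Borel--Moore homology of Tate type concentrated in even degrees: the torus $\bC^*$ is Chow-free but $H^{\mathrm{BM}}_1(\bC^*) \cong \bZ$, and every open stratum $\oY_{\sigma}$ is very affine, so the same failure occurs for all of them. Hence neither row is short exact, and the injectivity half of your five lemma --- which is the entire content of injectivity of $i^*$ --- has no support. A telling symptom is that your argument never invokes Poincar\'e duality of the quasilinear fan (\cref{thm:qlin_poincare}); but that is precisely the nontrivial tropical input that forces injectivity, while sch\"onness together with Chow-free, linearly stratified strata only accounts for surjectivity.

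For comparison, the paper sidesteps linear independence of boundary classes entirely by dualizing: it identifies $M_{d-k}(\Sigma) \cong \Hom(A_{n-d+k}(X(\Sigma)),\bZ)$ via Minkowski weights and uses the commutative square \eqref{eq:key_diag} relating $-\cap\omega$, $i^*$, the Kronecker duality map $\cD_{\oY}$, and $i_*$. Chow-freeness of the strata gives surjectivity of $A_{n-d+k}(X(\Sigma)) \to A_k(\oY)$ (\cref{cor:chow_sur}), hence injectivity of $i_*$ on the dual groups; Poincar\'e duality of the quasilinear fan makes $-\cap\omega$ an isomorphism, giving injectivity of $i^*$ and surjectivity of $i_*$; and Totaro's Kronecker duality for proper linearly stratified varieties (\cref{lem:weakly_linear}) makes $\cD_{\oY}$ an isomorphism, whence $i^*$ is an isomorphism (\cref{cor:chow_iso}), with the homology statement following from the same machinery. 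Note also that this mechanism works verbatim when $\Sigma$ is not simplicial, which your reduction to a regular embedding plus ``replace everything by operational Gysin maps'' does not actually address. If you wish to salvage a stratification argument, you must at minimum feed in the fan-level Poincar\'e duality (or some equivalent) as an input, and you should not expect the homology-level localization sequences to split.
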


\begin{remark}
    We emphasize in particular that the above theorem yields an isomorphism of Chow rings even in the case where
    $\Sigma$ is not simplicial. This is a generalization of previous results even in the linear case.
\end{remark}

The definition of quasilinearity is inspired by recent work of Amini and Piquerez on Hodge theory for tropical fans
\cite{aminiHodgeTheoryTropical2020, aminiHomologyTropicalFans2021}. Combinatorial Hodge theory, which has recently been
developed to resolve a number of long-standing conjectures in combinatorics (cf.
\cite{adiprasitoHodgeTheoryCombinatorial2018, ardilaLagrangianGeometryMatroids2022,
huhCombinatorialApplicationsHodgeRiemann2019}), begins from the observation that the isomorphism of cohomology rings
$H^*(\oY) \cong A^*(X(\Sigma))$ for linear tropical compactifications endows an \textit{a priori} purely combinatorial
object (the Chow ring of a realizable matroid) with the rich structure of the cohomology ring of a smooth projective
algebraic variety. In particular, this ring satisfies Poincar\'e duality, Hard Lefschetz, and the Hodge-Riemann
relations (the so-called ``K\"ahler package''), which implies a number of remarkable combinatorial consequences. It was
soon observed that even Chow rings of non-realizable matroids satisfy the K\"ahler package
\cite{adiprasitoHodgeTheoryCombinatorial2018}, and inspired by this Amini and Piquerez have defined a broader class of
\emph{shellable} tropical fans, continuing to satisfy the K\"ahler package \cite{aminiHomologyTropicalFans2021}.

We define in a slightly different fashion the notion of a \emph{quasilinear} tropical fan, which it turns out is the
same as a shellable tropical fan in the sense of Amini and Piquerez (\cref{thm:qlin_shellable}). Namely, we say a
reduced tropical fan is \emph{quasilinear} if it is isomorphic to a complete tropical fan, or a fan supported on a
tropical modification of a quasilinear tropical fan along a quasilinear tropical divisor (\cref{def:qlin_fans}). (A
reduced tropical fan is one whose weights are all 1. Tropical modifications, introduced by Mikhalkin
\cite{mikhalkinTropicalGeometryIts2007}, are the tropical versions of graphs of piecewise linear polynomials---the
classical graph is unbalanced, so one must modify it by adding in some additional cones. For precise definitions, see
\cref{sec:trop_prelims,sec:qlin_fans}). We say a closed subvariety $Y$ of an algebraic torus $T$ is \emph{quasilinear}
if its tropicalization is (the support of) a quasilinear tropical fan, and we define a \emph{quasilinear tropical
compactification} to be any tropical compactification of a quasilinear algebraic variety.  This inductive definition of
quasilinearity allows one to prove the main results \cref{thm:qlin_schon_main,thm:qlin_chow_main} by a (careful)
inductive procedure. Indeed, a key step in proving \cref{thm:qlin_schon_main,thm:qlin_chow_main} is to prove the
following main properties of quasilinear fans.

\begin{theorem}[{\cref{thm:prod_qlin,thm:star_qlin,thm:qlin_poincare}}]
    \begin{enumerate}
        \item Let $\Sigma$ and $\Sigma'$ be two reduced tropical fans. Then $\Sigma \times \Sigma'$ is quasilinear if
            and only if both $\Sigma$ and $\Sigma'$ are quasilinear.
        \item Star fans of quasilinear tropical fans are quasilinear.
        \item Quasilinear tropical fans are Poincar\'e. (In fact, quasilinear tropical fans satisfy the K\"ahler
            package.)
    \end{enumerate}
\end{theorem}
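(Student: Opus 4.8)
The plan is to prove (1)--(3) by a single simultaneous induction on the \emph{modification depth} of a quasilinear fan, that is, on the least number of tropical modifications occurring in a presentation witnessing its quasilinearity, complete fans having depth $0$. The base case requires all three statements for complete reduced tropical fans. This is immediate for (1) and (2): the support of a product is the product of the supports, so a product is complete if and only if both factors are, and the star fan of a complete fan at any cone is again complete. For (3), completeness implies the K\"ahler package by the combinatorial counterpart of the corresponding statement for complete toric varieties; I would cite this from Amini--Piquerez, who treat the not-necessarily-simplicial case, and which also subsumes Poincar\'e duality.

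For the inductive step of (1), write a quasilinear $\Sigma$ of positive depth as a tropical modification $\mathrm{TM}_f(\Sigma_0)$ of a quasilinear fan $\Sigma_0$ of smaller depth along a quasilinear divisor $D \subseteq \Sigma_0$. The crucial point is that tropical modification commutes with products: pulling $f$ back along the projection $\Sigma_0 \times \Sigma' \to \Sigma_0$ produces a function $\tilde f$ with $\Sigma \times \Sigma' \cong \mathrm{TM}_{\tilde f}(\Sigma_0 \times \Sigma')$ and center $D \times \Sigma'$; applying the inductive hypothesis for (1) to the pairs $(\Sigma_0,\Sigma')$ and $(D,\Sigma')$ shows that both $\Sigma_0 \times \Sigma'$ and $D \times \Sigma'$ are quasilinear, hence so is $\Sigma \times \Sigma'$. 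Unwinding both factors this way gives the ``if'' direction. The ``only if'' direction is the delicate one: starting from a modification presentation of $\Sigma \times \Sigma'$, one must show that its distinguished ray direction, which spans a rank-one summand of $N_\Sigma \oplus N_{\Sigma'}$, can be taken to lie in one of the two factors, and that the modification together with its (a priori non-product) center then descends compatibly with the product decomposition; alternatively one can try to realize each factor as a star fan of $\Sigma \times \Sigma'$ and invoke (2). Making this descent precise is the subtlest point of parts (1)--(2).

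For (2), let $\sigma$ be a cone of a quasilinear fan $\Sigma = \mathrm{TM}_f(\Sigma_0)$ with center $D$. A case analysis according to the image of $\sigma$ under the projection to $\Sigma_0$, and according to whether $\sigma$ contains the ray created by the modification, shows that the star fan of $\Sigma$ at $\sigma$ is in each case either a star fan of $\Sigma_0$, or a tropical modification of a star fan of $\Sigma_0$ along the corresponding star fan of $D$, or a product of a star fan of $D$ with a line. In every case the inductive hypotheses for (1) and (2) --- star fans of $\Sigma_0$ and of $D$ are quasilinear of smaller depth, and quasilinearity is stable under the operations just listed --- yield that the star fan is quasilinear. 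This step is essentially bookkeeping with the cone structure of a tropical modification, but it must be carried out carefully near the new ray.

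For (3) the heart of the matter is that tropical modification along a quasilinear divisor preserves the K\"ahler package. I would first record the Chow-theoretic input: under the standard presentation of the Chow ring of a tropical fan, a tropical modification $\mathrm{TM}_f(\Sigma_0)$ along $D$ fits $A^\ast(\mathrm{TM}_f\Sigma_0)$, $A^\ast(\Sigma_0)$ and $A^\ast(D)$ into a blow-up/projective-bundle-type exact sequence, from which Poincar\'e duality for $\mathrm{TM}_f\Sigma_0$ follows from that of $\Sigma_0$ and $D$, both quasilinear of smaller depth. Upgrading to Hard Lefschetz and the Hodge--Riemann relations I would do by the semicontinuity/deformation method of Amini--Piquerez: degenerate an ample class on $\mathrm{TM}_f\Sigma_0$ toward the boundary contributed by the modification, so that the Hodge--Riemann relations for $\Sigma_0$ and $D$ pass to a limiting class, and propagate Hard Lefschetz along the deformation. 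Once the equivalence of quasilinear with shellable fans (\cref{thm:qlin_shellable}) is available, (3) also follows directly from the K\"ahler package for shellable fans, and I would present both routes. Propagating Hodge--Riemann positivity through a modification whose center is itself only understood inductively is, I expect, the main obstacle of the whole theorem.
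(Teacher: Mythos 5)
The main gap is in the forward direction of (1), which you explicitly leave open (``making this descent precise is the subtlest point''). This is precisely the step where the paper's argument has real content, and the idea you are missing is the following: after reducing (by induction on the rank of the ambient lattice, not on modification depth) to a presentation of $\cF \times \cG$ as a tropical modification of $\cF \times \cG'$ along a quasilinear divisor $\cD$, one uses the K\"unneth decomposition of Minkowski weights from \cref{prop:chow_prods},
\[
    M_{d+d'-1}(\cF \times \cG') \cong \bigl(M_d(\cF) \otimes M_{d'-1}(\cG')\bigr) \oplus \bigl(M_{d-1}(\cF) \otimes M_{d'}(\cG')\bigr),
\]
to write $\cD = \cF \times \cD' + \cD'' \times \cG'$, and then the irreducibility of the quasilinear divisor $\cD$ (\cref{prop:irred_criterion}) to force one summand to vanish; the geometry of the modification then rules out $\cD = \cD'' \times \cG'$, so $\cD = \cF \times \cD'$ and $\cG \cong \cT\cM_{\cD'}\cG'$ descends. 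Your suggested alternative --- realize $\Sigma$ as a star fan of $\Sigma \times \Sigma'$ and invoke (2) --- is circular as stated in the paper's logical order, since \cref{thm:star_qlin} is itself deduced from \cref{thm:prod_qlin}: one needs stable invariance (i.e., part (1)) to know that quasilinearity of a star fan is independent of the chosen fan structure on the support. Your induction on modification depth is also shakier than the rank induction, since it is not clear that the base $\cF \times \cG'$ of a modification presentation of $\cF \times \cG$ has smaller depth, whereas its ambient rank visibly drops by one.

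For (3), your picture of the Chow-theoretic input is not the right one, and your proposed argument is far heavier than what is needed or what the paper does. A tropical modification is not a blow-up: there is no exact sequence mixing $A^*(\cT\cM_f\Sigma_0)$ with $A^*(D)$. Instead $p^* : A^*(\Sigma_0) \to A^*(\cT\cM_f\Sigma_0)$ is \emph{surjective} (the class of the new ray is expressible in terms of the old generators, \cref{prop:trop_mod_chow}) and $p_* : M_*(\cT\cM_f\Sigma_0) \to M_*(\Sigma_0)$ is \emph{injective} (\cref{prop:trop_mod_mink}); Poincar\'e duality then transfers by the projection formula square (\cref{thm:poincare_mod}), with the divisor $D$ entering only through the star fans at the new rays (\cref{prop:star_fan_mod}), not through the Chow ring of the total fan. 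No Hodge--Riemann degeneration argument is needed for the Poincar\'e statement, which is the actual content of (3); the parenthetical K\"ahler package claim is obtained in the paper purely by the identification of quasilinear with shellable fans (\cref{thm:qlin_shellable}) and citation of Amini--Piquerez --- and note it is asserted only for simplicial quasilinear fans, since Hodge--Riemann can fail for non-projective complete fans, so your base case as stated is too strong. Two smaller inaccuracies: in (2), the star fan at a cone $\tau_{\geq}$ is $\Delta^{\tau}$ itself, not a product of a star fan of $D$ with a line; and the case of a cone not meeting $D$ yields a \emph{degenerate} modification of a star fan of $\Sigma_0$, which is isomorphic to that star fan only because quasilinear fans are Poincar\'e --- so (2) quietly depends on (3), which your ordering of the induction should make explicit.
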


\subsection{Examples and applications}

The latter half of this article is devoted to giving criteria for checking whether a given algebraic variety is
quasilinear, and applying this criteria to describe some interesting nontrivial examples. Our main criteria are the
following.

\begin{theorem}[{\cref{thm:qlin_criterion_main}}]
    Let $Y \subset T$ be a quasilinear variety, and $f$ a regular function on $Y$ such that either $f$ is nonvanishing
    or $D = V(f) \subset Y$ is also quasilinear in $T$. Then $\wY = \{z=f\} \subset T \times \bC^*_z$ is quasilinear.
\end{theorem}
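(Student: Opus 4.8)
The plan is to recognize $\operatorname{Trop}(\wY)$ as a tropical modification of $\operatorname{Trop}(Y)$ and then apply the recursive definition of quasilinear tropical fans (\cref{def:qlin_fans}) directly. Write $\varphi = \operatorname{trop}(f)$ for the tropicalization of $f$, regarded as a function on $\operatorname{Trop}(Y)$: since $f$ is a regular function on $Y \subset T$ and we work over $\bC$, any Laurent-polynomial representative of $f$ tropicalizes to a conewise integral-linear function that is positively homogeneous of degree one, and this descends to a well-defined function $\varphi$ on $\operatorname{Trop}(Y)$ independent of the representative. After refining the fan structure on $\operatorname{Trop}(Y)$ so that $\varphi$ is linear on each cone -- harmless, as quasilinearity is a property of the support of a tropical fan -- the graph $\Gamma_\varphi$ together with the extra downward cones prescribed by $\operatorname{div}(\varphi)$ form a fan. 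By the standard correspondence between graphs and tropical modifications (\cref{sec:trop_prelims}), this fan is exactly $\operatorname{Trop}(\wY)$: indeed $z = f$ is a unit on $\wY$, so $\wY$ is the graph of $f$ over $Y$ inside $T \times \bC^*_z$, whose tropicalization is the tropical modification of $\operatorname{Trop}(Y)$ along $\operatorname{div}(\varphi)$. In particular $\operatorname{Trop}(\wY)$ is again a fan.

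It then remains to check that we are modifying a quasilinear fan along a quasilinear tropical divisor. The base $\operatorname{Trop}(Y)$ is quasilinear by hypothesis. If $f$ is nonvanishing on $Y$, then $V(f) \cap Y = \emptyset$, so $\operatorname{div}(\varphi) = 0$, the modification has degree zero, and $\operatorname{Trop}(\wY)$ is isomorphic to $\operatorname{Trop}(Y)$, hence quasilinear. If instead $D = V(f) \subset Y$ is quasilinear in $T$, then $\operatorname{Trop}(D)$ is the support of a quasilinear tropical fan; since tropicalization is compatible with taking divisors of regular functions, $\operatorname{div}(\varphi)$ is a tropical divisor of $\operatorname{Trop}(Y)$ whose support is $\operatorname{Trop}(D)$ and whose weights are those of the reduced fan $\operatorname{Trop}(D)$, i.e. $\operatorname{div}(\varphi)$ is a quasilinear tropical divisor. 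By \cref{def:qlin_fans}, $\operatorname{Trop}(\wY)$ is therefore quasilinear, so $\wY$ is a quasilinear variety.

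The step I expect to be the main obstacle is the verification in the second case that $\operatorname{div}(\varphi)$ is a \emph{reduced} tropical cycle supported on $\operatorname{Trop}(D)$. The equality of supports $\lvert\operatorname{div}(\varphi)\rvert = \operatorname{Trop}(V(f)\cap Y)$ is standard, but the tropical multiplicities of $\operatorname{Trop}(V(f)\cap Y)$ can exceed one even when $V(f)$ is reduced -- for example for a smooth conic in $(\bC^*)^2$, whose tropicalization has weight-two rays -- so this genuinely uses the hypothesis that $D$ is quasilinear, equivalently that $\operatorname{Trop}(D)$ underlies a reduced tropical fan, to force $\operatorname{div}(\varphi)$, and hence the modification $\operatorname{Trop}(\wY)$, to be reduced. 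A secondary, purely formal point is that in the degree-zero case one should confirm that the notion of isomorphism of tropical fans used in \cref{def:qlin_fans} is insensitive to the shear $\operatorname{Trop}(Y) \xrightarrow{\sim} \Gamma_\varphi$, which is immediate.
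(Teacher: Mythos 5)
Your overall strategy coincides with the paper's: identify $\trop(\wY)$ as a tropical modification of $\trop(Y)$ along a divisor supported on $\trop(D)$, check that this divisor is reduced (hence quasilinear) using the hypothesis on $D$, and invoke \cref{def:qlin_fans}. You have also correctly isolated reducedness of the divisor as a point that genuinely needs the hypothesis. However, the mechanism you propose for the central identification does not work, and that identification is where the real content of the theorem lies. Your claim that a Laurent-polynomial representative of $f$ tropicalizes to a function on $\trop(Y)$ ``independent of the representative'' is false. Take $Y = V(x+y-1) \subset (\bC^*)^2$, so that $\trop(Y)$ is the standard tropical line, and $f = x = 1-y$ on $Y$. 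The representative $x$ tropicalizes to the globally linear function $w_1$, whose divisor on $\trop(Y)$ is trivial; the representative $1-y$ tropicalizes to $\min\{0,w_2\}$, whose restriction to $\trop(Y)$ has divisor the origin with weight $1$. These yield non-isomorphic modifications, and only the first one is $\trop(\wY)$ (here $f$ is a unit on $Y$, so $\wY \cong Y$ via a monomial shear and $\trop(\wY)$ has no vertical ray). So the naive $\trop(f)$ can produce spurious divisor components where $f$ does not vanish on $Y$, and there is no ``standard correspondence'' in \cref{sec:trop_prelims} (nor, at this level of generality over a trivially valued field, elsewhere) that hands you $\trop(\wY) = \cT\cM_{\trop(f)}(\trop(Y))$ with $\div(\trop(f)) = \trop(D)$ as weighted cycles. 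That equality is precisely what has to be proved.

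The paper's proof supplies exactly this missing step, and it uses the quasilinearity of $Y$ and $D$ in an essential way \emph{before} the modification is constructed: since $Y$ and $D$ are sch\"on with all strata quasilinear (hence reduced and irreducible), one may choose compatible unimodular fan structures $\Sigma$ on $\trop(Y)$ and $\Delta$ on $\trop(D)$, obtain simple normal crossings compactifications $\oY$ and $\oD$, and then recover $\trop(\wY)$ for $\wY \cong Y \setminus D$ by geometric tropicalization inside the same $\oY$. The correct piecewise integral linear function is defined by divisorial valuations, $\varphi(v_{\rho}) = \val_{D_{\rho}}(f)$ on the boundary divisors of $\oY$, not by tropicalizing a defining Laurent polynomial, and the incidence combinatorics of the boundary identifies the resulting fan as $\cT\cM_{\varphi}(\Sigma)$ with $\div(\varphi) = \Delta$, reduced because the strata are reduced and irreducible. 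To salvage your argument you would need to replace $\trop(f)$ by this valuation-theoretic $\varphi$ and justify the graph description via geometric tropicalization or an equivalent device, at which point you have reproduced the paper's proof.
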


Recall that the moduli space $M_{0,n}$ of $n$ points in $\bP^1$ is a linear variety, the complement in $\bP^{n-3}$ of
the hyperplanes $x_i=0$ and $x_i=x_j$. In particular, the (previously known) analogues of
\cref{thm:qlin_schon_main,thm:qlin_chow_main} imply that the moduli space $\oM_{0,n}$ of stable $n$-pointed rational
curves is the log canonical compactification of $M_{0,n}$ (cf. \cite{keelGeometryChowQuotients2006}, \cite[Section
2]{hackingCompactificationModuliSpace2006}), as well as quickly recover Keel's results on the Chow ring of $\oM_{0,n}$
\cite{keelIntersectionTheoryModuli1992}.

There are two natural higher-dimensional analogues of $\oM_{0,n}$: the moduli space $\oM(r,n)$ of stable hyperplane
arrangements, compactifying the moduli space $M(r,n)$ of arrangements of $n$ hyperplanes in general position in
$\bP^{r-1}$ \cite{hackingCompactificationModuliSpace2006, alexeevModuliWeightedHyperplane2015}, and the moduli space
$\oY(E_n)$ of (weighted) stable marked del Pezzo surfaces of degree $9-n$ \cite{hackingStablePairTropical2009},
compactifying the moduli space $Y(E_n)$ of smooth marked del Pezzo surfaces of degree $9-n$. Using the notion of
quasilinearity, we obtain the analogues of the aforementioned results on $\oM_{0,n}$ for the first cases of these
higher-dimensional moduli spaces.

\begin{theorem}[{\cref{thm:M36_quasilinear,cor:M36_schon,cor:M36_chow}}]
    \begin{enumerate}
        \item The moduli space $M(3,6)$ of 6 lines in $\bP^2$ is quasilinear, and in particular sch\"on.
        \item The stable pair compactification $\oM(3,6)$ of $M(3,6)$ is the log canonical compactification.
        \item Let $\oM^{\Sigma}(3,6) \subset X(\Sigma)$ be any tropical compactification of $M(3,6)$. Then there is an
            isomorphism of Chow rings $A^*(\oM^{\Sigma}(3,6)) \cong A^*(X(\Sigma))$. If $X(\Sigma)$ is nonsingular, then
            $\oM^{\Sigma}(3,6)$ is a resolution of singularities of $\oM(3,6)$, and $A^*(\oM^{\Sigma}(3,6)) \cong
            H^*(\oM^{\Sigma}(3,6))$.
    \end{enumerate}
\end{theorem}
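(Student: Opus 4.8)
The plan is to prove part (1), the quasilinearity of $M(3,6)$, by a short but delicate induction on \cref{thm:qlin_criterion_main}, and then to deduce parts (2) and (3) by feeding sch\"onness into \cref{thm:qlin_chow_main} together with the known description of the stable pair compactification. To begin (1), I would set up explicit coordinates. By projective duality it is equivalent to treat configurations of $6$ points in $\bP^2$ with no three collinear; fixing the first four to be a projective frame identifies $M(3,6)$ with the locus of pairs $p_5=[a:b:1]$, $p_6=[c:d:1]$ generic with respect to the frame. Writing out the collinearity conditions exhibits $M(3,6)$ as the complement in $(\bC^*)^4_{a,b,c,d}$ of eight hyperplanes --- the conditions that $(a,b)$ and $(c,d)$ each lie in $M_{0,5}\subset\bC^2$, together with $a\ne c$ and $b\ne d$ --- and the two quadrics $bc=ad$ and $(b-1)(c-1)=(a-1)(d-1)$ coming from the lines $\overline{p_3p_5}$ and $\overline{p_4p_5}$. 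Separating off the conditions not involving $d$ and solving the two quadrics for $d$, this realizes $M(3,6)$ as obtained from $Y_1\times\bC^*_d$ --- where $Y_1=(\bC^*)^3_{a,b,c}\setminus\{a=1,\,b=1,\,c=1,\,a=b,\,a=c\}$ is a linear variety --- by deleting the five hypersurfaces $V(d-g)$ with $g$ ranging over $1,\ c,\ b,\ bc/a,\ 1+(b-1)(c-1)/(a-1)$; the last two are regular on $Y_1$ because $a\ne0,1$ there.

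Since $Y_1$ is linear and $\bC^*$ has complete tropicalization, $Y_1\times\bC^*_d$ is quasilinear, and I would delete the five hypersurfaces one at a time, in a suitable order (the linear values of $g$ first). The engine is that for quasilinear $Z$ and regular $\varphi$ on $Z$ there is an isomorphism $Z\setminus V(\varphi)\cong\{w=\varphi\}\subset T_Z\times\bC^*_w$, so \cref{thm:qlin_criterion_main} makes $Z\setminus V(\varphi)$ quasilinear as soon as $V(\varphi)\subset Z$ is quasilinear; thus at each stage it suffices to see that the deleted locus is quasilinear. Pulling back along $d=g_i$, this locus is $Y_1$ minus the vanishing loci of $g_i$ and of the differences $g_i-g_j$ for $j<i$. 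For the three linear values of $g$ these are hyperplane conditions, so the deleted locus is again a complement of hyperplanes in a torus. For $g=bc/a$ the conditions $bc/a\ne b,c$ collapse to $a\ne b,c$, which already hold on $Y_1$, leaving $Y_1\setminus V(bc-a)$; and for $g=1+(b-1)(c-1)/(a-1)$ the factorizations of the various $g-g_j$ (for instance $g-bc/a$ vanishes exactly on $\{a=b\}\cup\{a=c\}$) show that every extra condition is automatic on $Y_1$, leaving $Y_1\setminus V\bigl((a-1)+(b-1)(c-1)\bigr)$. Each of these two cases is then finished by one further application of \cref{thm:qlin_criterion_main}, reducing it to the quasilinearity of a single quadric hypersurface in $Y_1$; eliminating the variable $a$ (using $a\ne0,1$) and then applying a torus automorphism --- $(b,c)\mapsto(b,bc)$ in the first case, $(b,c)\mapsto(1/b,1/c)$ in the second, which turn $\{bc=1\}$ and $\{bc=b+c\}$ into hyperplanes --- presents each quadric hypersurface as a linear variety (in fact, isomorphic to $M_{0,5}$). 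This closes the induction: $M(3,6)$ is quasilinear, hence sch\"on by \cref{thm:qlin_schon_main}.

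For (2) and (3): part (1) makes \emph{every} tropical compactification $\oM^{\Sigma}(3,6)\subset X(\Sigma)$ a quasilinear tropical compactification, so \cref{thm:qlin_chow_main} at once gives $A^*(\oM^{\Sigma}(3,6))\cong A^*(X(\Sigma))$, together with the nonsingularity of $\oM^{\Sigma}(3,6)$ and the isomorphism $A^*(\oM^{\Sigma}(3,6))\cong H^*(\oM^{\Sigma}(3,6))$ when $X(\Sigma)$ is nonsingular. For the remaining assertions I would run the same argument as for $\oM_{0,n}$ (cf. \cite{keelGeometryChowQuotients2006}, \cite[Section~2]{hackingCompactificationModuliSpace2006}): sch\"onness of $M(3,6)$ puts it in the setting where the minimal tropical compactification is defined, and the Hacking--Keel--Tevelev argument --- needing only sch\"onness of the interior, a resolution with reduced normal crossings boundary, and ampleness of the log canonical class on the universal stable pair --- identifies that minimal tropical compactification with the log canonical compactification of $M(3,6)$; matching this with the description of the stable pair compactification $\oM(3,6)$ via stable toric pairs \cite{alexeevModuliWeightedHyperplane2015, hackingCompactificationModuliSpace2006} identifies $\oM(3,6)$ with it, which is (2). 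Finally, any tropical compactification $\oM^{\Sigma}(3,6)$ dominates the minimal one via a toric morphism refining fans, so it maps properly and birationally onto $\oM(3,6)$; when $X(\Sigma)$ is nonsingular the source is smooth, and hence this map is a resolution of singularities, completing (3).

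The main obstacle is squarely part (1): although the tower of tropical modifications realizing $M(3,6)$ has only two nontrivial levels, the inductive bookkeeping is delicate, since at every step one must check that the genericity conditions, pulled back along the graph maps $d=g$, either degenerate to hyperplane conditions or become vacuous --- and this relies on non-obvious polynomial identities among the defining equations of $M(3,6)$, which is exactly the point at which the special geometry of six points in $\bP^2$ is used. By contrast, once sch\"onness is in hand, the passage to parts (2) and (3) is essentially formal.
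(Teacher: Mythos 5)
Your part (1) is correct, and it takes a genuinely different route to the same coordinates. The paper first deletes all eight hyperplanes from $(\bC^*)^4$ to obtain a linear variety $M_1$, then removes the two quadric hypersurfaces $Q_1=\{x_1y_2=x_2y_1\}$ and $Q_2=\{(x_1-1)(y_2-1)=(x_2-1)(y_1-1)\}$ simultaneously via the corollary to \cref{thm:qlin_criterion_main}; the computational content there is that $Q_1\cap Q_2=\emptyset$ on $M_1$, that $Q_1\cong Q_2$ by symmetry, and that $Q_1$ meets every subcollection of the hyperplanes quasilinearly. You instead fiber over the coordinate $d$ and strip off the five graph hypersurfaces $\{d=g_i\}$ one at a time, trading the disjointness and symmetry checks for the factorization identities making the cross-conditions vacuous on $Y_1$ (e.g.\ $g_5-g_4=(a-b)(a-c)/\bigl(a(a-1)\bigr)$, which is precisely the statement $Q_1\cap Q_2\cap M_1=\emptyset$ in other clothing). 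I verified your factorizations and your reductions of the two quadric loci to hyperplane-arrangement complements of $M_{0,5}$ type via eliminating $a$ and a monomial torus automorphism; like the paper, you leave implicit that the successive graph re-embeddings differ from the models you analyze only by monomial coordinate changes, which is acceptable at the paper's own level of detail. Part (3) is argued exactly as in the paper: \cref{thm:qlin_chow} gives the Chow and cohomology statements, and the resolution statement uses that $\oM(3,6)$ is the tropical compactification attached to the coarsest fan structure on $\trop(M(3,6))$, so any tropical compactification with $X(\Sigma)$ nonsingular dominates it properly and birationally.

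The gap is in part (2). Sch\"onness of $M(3,6)$ does not by itself identify the compactification on the coarsest (``minimal'') fan structure with the log canonical compactification, and your substitute ingredient --- ampleness of the log canonical class on the universal stable pair --- concerns KSBA stability of the parameterized objects, not positivity of $K_{\oM}+B$ on the base; if sch\"onness together with the stable-pair description sufficed, the Keel--Tevelev conjecture would be formal in every sch\"on case, which it is not. What the paper actually uses (citing the proof of Theorem 9.14 of Hacking--Keel--Tevelev and Luxton) is twofold: (i) $\oM(3,6)$ is the closure of $M(3,6)$ in the toric variety of the Dressian $Dr(3,6)$, and (ii) $Dr(3,6)$ is a \emph{convexly disjoint} fan supported on $\trop(M(3,6))$; it is (ii), combined with sch\"onness, that forces the tropical compactification on this coarsest structure to be the log canonical model. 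Your sketch contains (i) in the form ``matching with the stable toric pair description'' but omits (ii), which is exactly the special combinatorial feature of $(3,6)$ that makes the statement true. Note also that the resolution claim in your part (3) quietly relies on the same identification of $\oM(3,6)$ with the coarsest-structure tropical compactification, so it inherits the need for this input as well.
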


\begin{remark}
    Keel and Tevelev have conjectured that (the normalization of the main irreducible component of) $\oM(r,n)$ is the
    log canonical compactification of $M(r,n)$ precisely in the cases $(r,n)=(2,n)$ (in which case
    $\oM(2,n)=\oM_{0,n}$), and $(r,n)=(3,6),(3,7),(3,8)$ (as well as those cases obtained by duality $\oM(r,n) \cong
    \oM(n-r,n)$). This conjecture has recently been proven by work of Luxton
    \cite{luxtonLogCanonicalCompactification2008}, Corey \cite{coreyInitialDegenerationsGrassmannians2021}, and
    Corey-Luber \cite{coreyGrassmannianPlanesMathbb2023}. The above theorem gives an alternative proof of the main
    step in Luxton's argument for $\oM(3,6)$. (For another alternative argument in this case, see \cite[Section
    7]{schockIntersectionTheoryStable2022}.)

    The work of Corey and Luber \cite{coreyGrassmannianPlanesMathbb2023} implies that the moduli space $M(3,8)$ is not
    quasilinear, in contrast to an erroneous statement in the first preprint version of this article. On the other hand,
    the moduli space $M(3,7)$ can still be shown to be quasilinear, although the argument is quite involved---see
    \cite[Chapter 7]{schockGeometryTropicalCompactifications2022} for details.
\end{remark}

\begin{remark}
    Part (3) of the above theorem also quickly recovers the main results of \cite{schockIntersectionTheoryStable2022}.
\end{remark}

\begin{theorem}[{\cref{thm:Y_quasilinear,cor:Y_schon,cor:Y_chow}}]
    \begin{enumerate}
        \item The moduli space $Y(E_6)$ of smooth marked cubic surfaces is quasilinear, and in particular sch\"on.
        \item Let $\oY^{\Sigma}(E_6) \subset X(\Sigma)$ be any tropical compactification of $Y(E_6)$. Then
            $A^*(\oY^{\Sigma}(E_6)) \cong A^*(X(\Sigma))$. If $X(\Sigma)$ is nonsingular, then so is
            $\oY^{\Sigma}(E_6)$, and $A^*(\oY^{\Sigma}(E_6)) \cong H^*(\oY^{\Sigma}(E_6))$.
    \end{enumerate}
\end{theorem}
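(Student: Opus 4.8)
The theorem has two parts, and part (2) is formal given part (1). Indeed, once $Y(E_6)$ is known to be quasilinear in the torus governing its del Pezzo (stable pair) compactification $\oY(E_6)$, the isomorphism $A^*(\oY^{\Sigma}(E_6)) \cong A^*(X(\Sigma))$ for every tropical compactification is immediate from \cref{thm:qlin_chow_main}; and when $X(\Sigma)$ is nonsingular, sch\"onness (\cref{thm:qlin_schon_main}) together with \cref{thm:qlin_chow_main} gives that $\oY^{\Sigma}(E_6)$ is nonsingular, hence a resolution of singularities of $\oY(E_6)$, with $A^*(\oY^{\Sigma}(E_6)) \cong H^*(\oY^{\Sigma}(E_6))$. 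So the entire content is part (1): the moduli space $Y(E_6)$ of smooth marked cubic surfaces, in its natural torus embedding --- that of \cite{hackingStablePairTropical2009}, whose tropical fan has rays along the classes of the $27$ lines on the universal cubic surface --- is quasilinear.

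The plan for part (1) is to bootstrap from the preceding theorem, that $M(3,6)$ is quasilinear. A marked cubic surface is the same data as an ordered configuration of six points in general position in $\bP^2$ (blow down the six exceptional curves), so $Y(E_6)$ is abstractly isomorphic to $M(3,6)$; but quasilinearity depends on the torus embedding, and the del Pezzo embedding of $Y(E_6)$ need not coincide with the one used for $M(3,6)$. I would bridge the two by a sequence of monomial coordinate changes and graph constructions. Concretely, the extra coordinates in the del Pezzo embedding record the non-exceptional $(-1)$-curve classes --- the line classes $\ell_{ij}$ and the conic classes $C_i$ --- and the ``$(-1)$-curve relations'' in the Cox ring of the universal surface (coming from linear equivalences such as $e_i + \ell_{ij} + C_j \sim -K$) express each such coordinate as a regular Laurent-polynomial function $f$ of the point coordinates and the previously adjoined classes. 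The key point is that on $Y(E_6)$, the locus of \emph{smooth} cubic surfaces, every one of the $27$ classes is an honest $(-1)$-curve, so each such $f$ is nonvanishing on $Y(E_6)$; hence adjoining the extra coordinates one at a time exhibits $Y(E_6)$ as obtained from $M(3,6)$ by repeated application of the (nonvanishing case of the) criterion \cref{thm:qlin_criterion_main}, and $Y(E_6)$ is quasilinear. If some $f$ does vanish --- say because one prefers to work inside a larger compactification --- its zero locus is a boundary stratum, quasilinear by \cref{thm:qlin_strata}, so the ``$D = V(f)$ quasilinear'' case of the criterion applies instead. (A self-contained alternative is an induction on del Pezzo degree, building $Y(E_6)$ from $Y(E_5) \cong M(3,5) \cong M_{0,5}$, which is a complement of a hyperplane arrangement and hence linear, so quasilinear; there the step truly requiring the quasilinear criterion rather than mere linearity is the ``not all six points on a conic'' condition, whose relevant vanishing locus is a $\bP^1$ minus finitely many points.)

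I expect the main obstacle to be the explicit bookkeeping behind this iteration: pinning down the del Pezzo torus embedding of $Y(E_6)$ and its defining relations, ordering the $27$ line classes compatibly with the incidence combinatorics of the $E_6$ root system, and verifying at each stage that the next coordinate is a regular --- not merely rational --- function of the earlier ones (and, where needed, that its zero locus is one of the quasilinear strata furnished by \cref{thm:qlin_strata}). Once this combinatorial input is in place, part (1) follows, and then part (2) follows as explained above.
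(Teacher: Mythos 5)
Your reduction of part (2) to part (1) via \cref{thm:qlin_schon_main,thm:qlin_chow_main} matches the paper, and your instinct to bootstrap from the quasilinearity of $M(3,6)$ (\cref{thm:M36_quasilinear}) is the right starting point. But the core of part (1) has a genuine gap. First, $Y(E_6)$ is \emph{not} abstractly isomorphic to $M(3,6)$: ``general position'' for six points defining a smooth cubic surface includes the condition that the six points do not lie on a conic, so $Y(E_6)$ is the complement in $M(3,6)$ of the nonempty hypersurface $Q$ parameterizing six points on a conic. Consequently the one function that actually matters---the conic discriminant---\emph{does} vanish on $M(3,6)$, and your nonvanishing mechanism (adjoining the $27$ line coordinates via invertible Cox-ring relations) never engages with it; note also that the conic condition corresponds to effectivity of the $(-2)$-class $2h - e_1 - \cdots - e_6$, not to any of the $27$ $(-1)$-classes, so it is invisible to your bookkeeping of honest $(-1)$-curves. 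Your fallback for the vanishing case is also incorrect: $Q$ is an internal hypersurface of the \emph{open} variety $M(3,6)$, not a stratum $\oY \cap O(\sigma)$ of a tropical compactification, so \cref{thm:qlin_strata} does not apply to it.

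What is actually needed, and what the paper supplies, is a proof that $Q$ is quasilinear \emph{in the ambient embedding} $Q \subset M(3,6) \subset (\bC^*)^{14}$, after which \cref{thm:qlin_criterion_main} (applied to the very affine graph of the conic discriminant on $M(3,6)$) finishes the argument. The paper does this by observing that $Q \cong M_{0,6}$ is linear in its intrinsic torus, and then invoking Luxton's results to identify $\trop(Q \subset (\bC^*)^{14})$ with $\trop(M_{0,6})$ (equivalently, one can use the identification of this embedding with the open Segre cubic). Nothing in your proposal substitutes for this step---your parenthetical induction on del Pezzo degree gestures at the conic locus but neither identifies it correctly in the relevant torus nor proves it quasilinear---so as written the argument does not go through.
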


\begin{remark}
    The log canonical compactification $\oY(E_6)$ of $Y(E_6)$ is described in \cite{hackingStablePairTropical2009},
    following work of Naruki \cite{narukiCrossRatioVariety1982}.  It is not the the moduli space of stable marked cubic
    surfaces, but it is a tropical compactification, and admits an interpretation as a moduli space of \emph{weighted}
    stable marked cubic surfaces \cite{gallardoGeometricInterpretationToroidal2021, schockModuliWeightedStable2024}. The
    moduli space of stable marked cubic surfaces is also an explicitly described tropical compactification of $Y(E_6)$
    \cite{hackingStablePairTropical2009}. In particular, the above theorem allows one to write down explicit
    presentations of the Chow rings of moduli of weighted stable marked cubic surfaces. The intersection theory of
    $\oY(E_6)$ has also previously been studied in \cite{colomboChowGroupModuli2004}.
\end{remark}

\subsection{Outline}

This paper is organized as follows.

In \cref{sec:trop_prelims} we study the geometry of tropical fans. This section is largely a self-contained review of
known results and definitions on tropical fans, following mainly \cite{gathmannTropicalFansModuli2009,
allermannFirstStepsTropical2010, aminiHomologyTropicalFans2021}. We extend previous results by studying tropical
intersection theory (and in particular Poincar\'e duality) for possibly non-simplicial fans. Our hope is that this
section, while lengthy, could serve as a coherent introduction to tropical fans and their intersection theory for
readers with little to no prior background in this area.

In \cref{sec:qlin_fans,sec:qlin_vars,sec:qlin_comps}, we define respectively quasilinear fans, varieties, and
compactifications and prove the main results concerning them. \cref{sec:qlin_comps} also contains a quite general
discussion on the Chow rings of tropical compactifications, including some criteria for isomorphisms of Chow rings which
potentially hold outside of the quasilinear case, and may be of independent interest.

In \cref{sec:qlin_criteria} we study criteria for determining when a given closed subvariety of a torus is quasilinear,
and apply this criteria to give some basic examples and non-examples of quasilinear varieties.

Finally, in \cref{sec:applications} we apply the notion of quasilinearity to study the moduli spaces $M(3,6)$ of 6 lines
in $\bP^2$ and $Y(E_6)$ of marked cubic surfaces, proving the last of the main results.

\begin{acknowledgements}
    This work has benefited greatly from conversations with Valery Alexeev, Omid Amini, Daniel Corey, Emanuele Delucchi,
    Alex Fink, and Diane Maclagan. I am also grateful to Mathias Schulze for helpful questions and feedback, and to the
    anonymous referees for very helpful feedback, and for suggesting the terminology ``linearly stratified'' for
    varieties which are linear in the sense of \cite{totaroChowGroupsChow2014}---improving the previous name of ``weakly
    linear.''
\end{acknowledgements}

\section{Tropical preliminaries} \label{sec:trop_prelims}

\subsubsection{Notation}

Throughout we fix a lattice $N$ with dual $M = \Hom(N,\bZ)$, and write $N_{\bR} = N \otimes \bR \cong \bR^n$ (likewise
for $M_{\bR}$). We will often denote $N_{\bR}$ just by $\bR^n$. By a fan in $N_{\bR}$ we mean a strongly convex,
rational, polyhedral fan, so that all fans correspond to toric varieties \cite[Section
1.4]{fultonIntroductionToricVarieties1993}. We write $\Sigma_k$ for the set of $k$-dimensional cones of $\Sigma$.  We
write $\tau \prec \sigma$ if a cone $\tau$ is a face of a cone $\sigma$. We write $N_{\sigma}$ for the sublattice of $N$
generated by the cone $\sigma$. If $\tau \prec \sigma$ and $\dim \sigma = \dim \tau + 1$, then we write
$n_{\sigma,\tau}$ for any lattice point in the relative interior of $\sigma$ whose image generates the 1-dimensional
quotient lattice $N_{\sigma}/N_{\tau}$. The star fan of $\Sigma$ at a cone $\sigma$ is the fan in
$N_{\bR}/N_{\sigma,\bR}$ whose cones are the images of the cones of $\Sigma$ containing $\sigma$.

We denote by $X(\Sigma)$ the toric variety corresponding to a fan $\Sigma$, with torus $T = N \otimes \bC^*$. We write
$O(\sigma)$ for the torus orbit of $X(\Sigma)$ corresponding to the cone $\sigma$, and $V(\sigma)$ for its closure;
recall that $V(\sigma)$ is isomorphic to the toric variety $X(\Sigma^{\sigma})$.

\subsection{Basic definitions}

\begin{definition}[{\cite{gathmannTropicalFansModuli2009,
  allermannFirstStepsTropical2010}}]
  A \emph{tropical fan} in $N_{\bR}$ is a pair $(\Sigma,\omega)$ consisting of a pure $d$-dimensional fan $\Sigma$ in
  $N_{\bR}$, together with a function $\omega : \Sigma_d \to \bZ_{> 0}$ satisfying the \emph{balancing condition}: for
  every $(d-1)$-dimensional cone $\tau$, one has
  \[
    \sum_{\substack{\sigma \in \Sigma_d \\ \sigma \succ \tau}} \omega(\sigma)n_{\sigma,\tau} = 0 \mod{N_{\tau}}.
  \]
  The function $\omega$ is called the \emph{weight} of the tropical fan.
\end{definition}

\begin{example}
    Any complete fan is tropical, with weight one on all top-dimensional cones.
\end{example}

\begin{example}
    If $(\Sigma,\omega)$ and $(\Sigma',\omega')$ are two tropical fans, then $(\Sigma \times \Sigma',\omega \times
    \omega')$ is also tropical, where $(\omega \times \omega')(\sigma \times \sigma') = \omega(\sigma)\omega(\sigma')$
    \cite[Example 2.9]{gathmannTropicalFansModuli2009}.
\end{example}

Any refinement $\wt\Sigma$ of a tropical fan $(\Sigma,\omega)$ is also tropical, with the natural weight function
$\wt\omega(\wt\sigma) = \omega(\sigma)$ for all $\wt\sigma \in \wt\Sigma_d$, where $\sigma \in \Sigma_d$ is the unique
inclusion-minimal cone of $\Sigma$ containing $\wt\sigma$. The fan $(\wt\Sigma,\wt\omega)$ is then called a
\emph{tropical refinement} of $(\Sigma,\omega)$. Two tropical fans are said to be \emph{tropically equivalent} if they
have a common tropical refinement; this is indeed an equivalence relation \cite[Example
2.11]{gathmannTropicalFansModuli2009}.

\begin{definition}[{\cite{allermannFirstStepsTropical2010}}]
  A \emph{tropical fan cycle} $(\cF,\omega)$ in $N_{\bR}$ is an equivalence class of tropical fans in $N_{\bR}$ up to
  common tropical refinement. A \emph{(tropical) fan supported on} $(\cF,\omega)$, or a \emph{(tropical) fan structure}
  on $(\cF,\omega)$ is any representative of $(\cF,\omega)$.
\end{definition}

We typically think of a tropical fan cycle $(\cF,\omega)$ as the support $\cF$ of a pure-dimensional fan, such that any
fan supported on $\cF$ is a tropical fan, with the weight $\omega$. Thus, for instance, when we refer to the support of
a tropical fan, we view it as a tropical fan cycle.

\begin{example} \label{ex:local_defs}
  If $(\Sigma,\omega)$ is a tropical fan of dimension $d$, and $\sigma \in \Sigma_k$, then the star fan
  $\Sigma^{\sigma}$ is a tropical fan of dimension $d-k$, with the natural weight $\omega^{\sigma}$ inherited from
  $\omega$.

  If $(\cF,\omega)$ is a tropical fan cycle, and $v \in \cF$ (viewing $\cF$ as the support of a fan in $N_{\bR}$, as
  above), then set \cite[A.6]{gublerGuideTropicalizations2013}
  \[
    \cF^v = \{u \in N_{\bR} \mid u + \epsilon v \in \cF \text{ for all
    sufficiently small } \epsilon\}.
  \]
  Then $\cF^v$ is also a tropical fan cycle with weight inherited from $\omega$. If $\Sigma$ is a fan structure on $\cF$
  such that $v$ is in the relative interior of a cone $\sigma \in \Sigma_k$, then
  \[
      \cF^v = \lvert \Sigma^{\sigma} \rvert \times \bR^k.
  \]
\end{example}

\begin{definition}
  A tropical fan $(\Sigma,\omega)$ is \emph{reduced} if $\omega(\sigma)=1$ for all top-dimensional cones $\sigma$ of
  $\Sigma$.
\end{definition}

\begin{definition}
  A tropical fan $(\Sigma,\omega)$ is \emph{irreducible} if there is no tropical fan $(\Sigma',\omega')$ of the same
  dimension, such that $\lvert \Sigma' \rvert \subsetneq \lvert \Sigma \rvert$. A tropical fan $(\Sigma,\omega)$ is
  \emph{locally irreducible} if all of its star fans are irreducible.
\end{definition}

\begin{definition} \label{def:nice_props}
    \begin{itemize}
        \item A property $\cP$ of tropical fans is \emph{intrinsic to the support} if whenever $(\Sigma,\omega)$ and
            $(\Sigma',\omega')$ are two representatives of the same tropical fan cycle, $(\Sigma,\omega)$ satisfies
            $\cP$ if and only if $(\Sigma',\omega')$ satisfies $\cP$.
        \item A property $\cP$ of tropical fans is \emph{local} if whenever $(\Sigma,\omega)$ satisfies $\cP$, all of
            its star fans (viewed as tropical fans by \cref{ex:local_defs}) also satisfy $\cP$.
        \item A property $\cP$ of tropical fans which is intrinsic to the support is \emph{stably invariant} if a
            tropical fan $(\Sigma,\omega)$ satisfies $\cP$ if and only if $\Sigma \times \Delta$ satisfies $\cP$ for all
            complete fans $\Delta$ of dimension $\geq 1$ \cite[3.2.3]{aminiHomologyTropicalFans2021}.
    \end{itemize}
\end{definition}

The point of stable invariance is to ensure properties of tropical fans which are intrinsic to the support and local
also give well-behaved local properties of tropical fan cycles. Indeed, continuing with the notation of
\cref{ex:local_defs}, if $v \in \cF$ and $\Sigma,\Sigma'$ are two fan structures on $\cF$ such that $v$ is in the
relative interior of $\sigma \in \Sigma'$ and $\sigma' \in \Sigma'$, then
\[
    \cF^v = \lvert \Sigma^{\sigma} \rvert \times \bR^{\dim \sigma} = \lvert (\Sigma')^{\sigma'} \rvert \times \bR^{\dim
    \sigma'},
\]
so in order to, for instance, obtain a property $\cP$ for $(\Sigma')^{\sigma'}$ from the corresponding property for
$\Sigma^{\sigma}$, we ask that the property is also satisfied by $\lvert \Sigma^{\sigma} \rvert \times \bR^{\dim \sigma
- \dim \sigma'}$.

Following the above discussion, suppose $\cP$ is a property of tropical fans which is intrinsic to the support, local,
and stably invariant. Then a tropical fan $\Sigma$ supported on a tropical fan cycle $\cF$ satisfies $\cP$ if and only
if \emph{any} tropical fan supported on $\cF$ satisfies $\cP$. Thus in this case we can also say that the tropical fan
cycle $\cF$ satisfies the property $\cP$.

\begin{proposition} \label{prop:red_loc_irred_nice}
    \begin{enumerate}
        \item The property of being reduced is intrinsic to the support, local, and stably invariant.
        \item The property of being irreducible is intrinsic to the support and stably invariant. In fact, if $\Sigma$
            and $\Sigma'$ are two tropical fans, then $\Sigma \times \Sigma'$ is irreducible if and only if $\Sigma$ and
            $\Sigma'$ are both irreducible.
        \item The property of being locally irreducible is intrinsic to the support, local, and stably invariant.
    \end{enumerate}
\end{proposition}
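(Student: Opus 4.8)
The plan is to reduce all three items to a single nontrivial fact---that a product of irreducible tropical fans is irreducible---with everything else being formal manipulation of weights, star fans, and products. For part (1) I would note that a tropical refinement $(\wt\Sigma,\wt\omega)$ of $(\Sigma,\omega)$ has $\wt\omega(\wt\sigma)=\omega(\sigma)$ for the inclusion-minimal cone $\sigma\supseteq\wt\sigma$, so $\wt\omega\equiv 1$ exactly when $\omega\equiv 1$; since any two fan structures on a tropical fan cycle have a common refinement, reducedness is intrinsic to the support. It is local because the weight of $\Sigma^{\sigma}$ on a maximal cone is, by \cref{ex:local_defs}, the weight of $\Sigma$ on the corresponding cone, and stably invariant because the weight of $\Sigma\times\Delta$ on $\sigma\times\delta$ is $\omega(\sigma)\omega_{\Delta}(\delta)=\omega(\sigma)$ when $\Delta$ is complete.

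For part (2), being intrinsic to the support is automatic, the defining condition mentioning only $\lvert\Sigma\rvert$ and $\dim\Sigma$. Granting the product characterization, stable invariance follows: a complete fan is irreducible, since a tropical fan of dimension $k$ in $\bR^{k}$ has support $\bR^{k}$, so $\Sigma\times\Delta$ is irreducible if and only if $\Sigma$ is. One half of the product characterization is also easy: if $\lvert\Sigma_{0}\rvert\subsetneq\lvert\Sigma\rvert$ with $\Sigma_{0}$ a tropical fan of dimension $\dim\Sigma$, then $\Sigma_{0}\times\Sigma'$ is a tropical fan of the same dimension as $\Sigma\times\Sigma'$ but with strictly smaller support, so reducibility of either factor forces reducibility of the product. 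The real content is the converse.

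To prove the converse, suppose $\Sigma$ and $\Sigma'$ are irreducible, of dimensions $d$ and $d'$, and let $Z$ be a tropical fan with $\lvert Z\rvert\subseteq\lvert\Sigma\rvert\times\lvert\Sigma'\rvert$ and $\dim Z=d+d'$; I must show $\lvert Z\rvert=\lvert\Sigma\rvert\times\lvert\Sigma'\rvert$. Since $\dim Z=d+d'$, the support $\lvert Z\rvert$ is not contained in the codimension-one skeleton of $\Sigma\times\Sigma'$, so it meets the relative interior of some product $\sigma\times\sigma'$ of maximal cones. Over the relative interior of a maximal cone $\sigma'$ of $\Sigma'$ the ambient support is $\lvert\Sigma\rvert$ times an open subset of the linear space $N_{\sigma',\bR}$, and there the local description of tropical fan cycles together with balancing of $Z$ at the codimension-one cones lying over $\mathrm{relint}(\sigma')$ presents $\lvert Z\rvert$ as the product of $\mathrm{relint}(\sigma')$ with a tropical fan cycle in $\lvert\Sigma\rvert$ of dimension $d$, which by irreducibility of $\Sigma$ has support all of $\lvert\Sigma\rvert$ wherever it is nonempty. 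Running the symmetric argument in the other factor, and using that an irreducible tropical fan is connected through codimension one---otherwise each component of its wall-adjacency graph would span a proper tropical subfan---one propagates nonemptiness from the pair found above to every pair of maximal cones, so $\lvert Z\rvert$ contains each such $\sigma\times\sigma'$ and hence equals $\lvert\Sigma\rvert\times\lvert\Sigma'\rvert$. I expect this converse to be the main obstacle: the delicate point is handling the common refinement of $Z$ and $\Sigma\times\Sigma'$ needed to phrase the balancing argument---a refinement of a product of fans is generally not a product---while carrying out the two fiberwise arguments and the codimension-one propagation.

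For part (3), write $\cF=\lvert\Sigma\rvert$. By \cref{ex:local_defs}, for $v\in\cF$ the cycle $\cF^{v}=\lvert\Sigma^{\sigma}\rvert\times\bR^{\dim\sigma}$ (for the cone $\sigma$ of $\Sigma$ with $v$ in its relative interior) depends only on $\cF$, and by the stable invariance of irreducibility from part (2) it is irreducible if and only if $\Sigma^{\sigma}$ is; as $v$ ranges over $\cF$ the cone $\sigma$ ranges over all cones of $\Sigma$, so local irreducibility of $\Sigma$ is equivalent to irreducibility of $\cF^{v}$ for all $v\in\cF$, a condition on $\cF$ alone. Locality follows from the identity $(\Sigma^{\sigma})^{\bar\tau}=\Sigma^{\tau}$ for $\sigma\prec\tau$, which makes every star fan of $\Sigma^{\sigma}$ a star fan of $\Sigma$. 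And stable invariance follows from $(\Sigma\times\Delta)^{\sigma\times\delta}=\Sigma^{\sigma}\times\Delta^{\delta}$ with $\Delta^{\delta}$ again complete, so by part (2) this is irreducible if and only if $\Sigma^{\sigma}$ is---and taking $\delta$ maximal recovers $\Sigma^{\sigma}$ itself, so $\Sigma\times\Delta$ is locally irreducible if and only if $\Sigma$ is.
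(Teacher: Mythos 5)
Your parts (1) and (3) are correct and are exactly the formal checks the paper has in mind when it declares the proposition ``Immediate'': reducedness is a statement about inherited weights, and local irreducibility reduces to irreducibility of the local cycles $\cF^v=\lvert\Sigma^{\sigma}\rvert\times\bR^{\dim\sigma}$ via stable invariance of irreducibility. The only genuine content in the proposition is the converse of the product characterization in (2), for which the paper supplies no argument at all; the route most consistent with its later machinery is to combine \cref{prop:irred_criterion} with the K\"unneth decomposition $M_{d+d'}(\Sigma\times\Sigma')\cong M_d(\Sigma)\otimes M_{d'}(\Sigma')$ from \cref{prop:chow_prods}: since the Minkowski weight groups are nonzero free abelian groups, the tensor product is $\bZ$ if and only if both factors are. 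Your direct geometric argument is a legitimate alternative, but two points need repair. First, the propagation step ``connected through codimension one'' is the wrong mechanism: crossing a single wall $\tau'$ of $\Sigma'$ only forces the balancing relation $\sum m_{\sigma''}n_{\sigma'',\tau'}\equiv 0$ among the neighbors of $\tau'$, and since star fans at walls of an irreducible fan need not themselves be irreducible, this only guarantees that \emph{some} other neighbor carries nonzero weight, not all of them. Second, it is also unnecessary: once you know (as you assert) that the slice of $Z$ over $\operatorname{relint}(\sigma')$ is either empty or all of $\lvert\Sigma\rvert\times\operatorname{relint}(\sigma')$, and symmetrically in the other factor, a single application of each claim finishes the proof --- nonemptiness over one $\sigma_0'$ gives $\lvert\Sigma\rvert\times\sigma_0'\subseteq\lvert Z\rvert$, hence $Z$ meets $\operatorname{relint}(\sigma)\times\lvert\Sigma'\rvert$ for every $\sigma$, hence $\sigma\times\lvert\Sigma'\rvert\subseteq\lvert Z\rvert$ for every $\sigma$.

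The cleanest way to make your fiberwise claim rigorous is to observe that balancing of $Z$ at walls of a common refinement lying in $\operatorname{relint}(\sigma\times\sigma')$ forces the weight of $Z$ to be constant there (exactly two adjacent full-dimensional pieces at each such wall), so $Z$ induces a genuine function $m:\Sigma_d\times\Sigma'_{d'}\to\bZ_{\geq 0}$; balancing at the walls $\tau\times\sigma'$ then says precisely that $m(-,\sigma')$ is a Minkowski weight on $\Sigma$, which by \cref{prop:irred_criterion} and the everywhere-positivity of the fundamental weight is either identically zero or nowhere zero. That is the statement you want, and it sidesteps the refinement-of-a-product issue you flag.
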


\begin{proof}
    Immediate.
\end{proof}

In particular, by the preceding discussion, it makes sense to speak of tropical fan \emph{cycles} being reduced or
(locally) irreducible.

\begin{remark}
    We warn the reader that while our definition of irreducibility given above agrees with the one given in
    \cite{gathmannTropicalFansModuli2009, gathmannIrreducibleCyclesPoints2012}, our definition of \emph{local}
    irreducibility differs from that of \cite[Definition 2.21]{gathmannIrreducibleCyclesPoints2012}, which we might
    instead call \emph{irreducible in codimension one}.
\end{remark}

\subsection{Intersection theory}

\begin{definition}
    Let $\Sigma$ be any fan. The \emph{Chow ring (or ring of tropical cocycles)} $A^*(\Sigma)$ of $\Sigma$ is the
    (operational \cite[Chapter 17]{fultonIntersectionTheory1998}) Chow ring $A^*(X(\Sigma))$ of the corresponding toric
    variety $X(\Sigma)$.
\end{definition}

We recall the following well-known presentation of the Chow ring of a unimodular fan.

\begin{theorem}[{\cite{brionPiecewisePolynomialFunctions1996, katzPiecewisePolynomialsMinkowski2008}}]
    \label{thm:chow_pres}
    Let $\Sigma$ be a unimodular fan in $N_{\bR}$. Then
    \[
        A^*(\Sigma) \cong \frac{\bZ[x_{\rho} \mid \rho \in \Sigma_1]}{L + I},
    \]
    where
    \begin{itemize}
        \item $L$ is the ideal of \emph{linear relations}, generated by $\sum_{\rho \in \Sigma_1} \langle u,v_{\rho}
            \rangle x_{\rho}$ for $u \in M$.
        \item $I$ is the \emph{Stanley-Reisner} ideal, generated by $x_{\rho_1} \cdots x_{\rho_k}$ whenever
            $\rho_1,\ldots,\rho_k$ do not span a cone of $\Sigma$.
    \end{itemize}
    If $\Sigma$ is simplicial, the same presentation holds with $\bQ$-coefficients.
\end{theorem}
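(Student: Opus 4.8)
The statement to prove is the standard presentation of the Chow ring of a unimodular fan $\Sigma$ in $N_{\bR}$ as the quotient of the polynomial ring on the rays by the linear ideal $L$ and the Stanley–Reisner ideal $I$. The plan is to identify the operational Chow ring $A^*(X(\Sigma))$ with its description in terms of piecewise polynomial functions, and then extract the presentation.

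First I would recall that for a smooth (hence, since $\Sigma$ is unimodular, simplicial) toric variety, the operational Chow ring $A^*(X(\Sigma))$ agrees with the Chow ring $A_*(X(\Sigma))$ (Fulton–Sturmfels, Fulton), and is generated by the classes $[V(\rho)]$ of the torus-invariant divisors corresponding to rays $\rho \in \Sigma_1$. This gives a surjection $\bZ[x_\rho \mid \rho \in \Sigma_1] \to A^*(X(\Sigma))$ sending $x_\rho \mapsto [V(\rho)]$. The relations are then: (i) for each $u \in M$, the principal divisor of the character $\chi^u$ is $\sum_\rho \langle u, v_\rho\rangle V(\rho)$, which is rationally equivalent to zero, giving the linear relations $L$; and (ii) if $\rho_1,\dots,\rho_k$ do not span a cone, then $V(\rho_1)\cap\cdots\cap V(\rho_k) = \varnothing$, so the product $x_{\rho_1}\cdots x_{\rho_k}$ maps to zero, giving the Stanley–Reisner relations $I$. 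The content of the theorem is that these are \emph{all} the relations. The cleanest route to this is via the Brion/Billera description: $A^*(\Sigma)$ is isomorphic to the ring $PP^*(\Sigma)$ of integral piecewise polynomial functions on $\Sigma$ modulo the globally linear functions $M$. One checks $PP^*(\Sigma) \cong \bZ[x_\rho]/I$ by sending $x_\rho$ to the Courant-type piecewise linear function that is $1$ on $v_\rho$ and $0$ on all other rays (well-defined precisely because $\Sigma$ is unimodular, and the vanishing of non-face monomials is exactly the statement that such products are supported on empty cones); then quotienting by the image of $M$, which is the span of the functions $u \mapsto \sum_\rho \langle u, v_\rho\rangle x_\rho$, yields the claimed presentation.

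For the $\bQ$-coefficient statement when $\Sigma$ is merely simplicial (not unimodular), the same argument works after tensoring with $\bQ$: the multiplicities appearing in the non-unimodular case become invertible, so the piecewise-polynomial description still gives the presentation, or alternatively one passes to a unimodular refinement and uses the birational invariance of Chow rings of complete (or the pullback behavior of operational Chow rings of) simplicial toric varieties over $\bQ$.

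The main obstacle, and the only nontrivial point, is verifying that the map $\bZ[x_\rho]/(L+I) \to A^*(X(\Sigma))$ is \emph{injective} — i.e., that there are no further relations. This is where one genuinely needs the piecewise-polynomial model (or, in the complete case, a dimension count via the combinatorics of $\Sigma$ together with Poincaré duality on the smooth complete toric variety). Since this is a well-known result with several standard proofs, I would simply cite \cite{brionPiecewisePolynomialFunctions1996, katzPiecewisePolynomialsMinkowski2008} for the surjectivity-plus-injectivity package and spell out only the identifications of generators and relations above, which is all that is needed for the applications in the rest of the paper.
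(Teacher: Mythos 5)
The paper does not prove this statement at all: it is quoted as a known result, with the citations to Brion and Katz--Payne carrying the content, which is also exactly where your argument defers the one genuinely nontrivial point (injectivity of $\bZ[x_{\rho}]/(L+I) \to A^*(X(\Sigma))$). Your identification of the generators, the two families of relations, and the piecewise-polynomial/Stanley--Reisner route is the standard and correct argument, so your proposal is consistent with (and slightly more detailed than) the paper's treatment.
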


\begin{definition}[{\cite{fultonIntersectionTheoryToric1997}}]
    Let $\Sigma$ be any fan. The \emph{group of $k$-dimensional Minkowski weights (or $k$-dimensional tropical fan
    cycles) on $\Sigma$} is the group $M_k(\Sigma)$ of functions $\omega : \Sigma_k \to \bZ$ satisfying the
    \emph{balancing condition}: for any cone $\tau \in \Sigma_{k-1}$,
    \[
        \sum_{\substack{\sigma \in \Sigma_k \\ \sigma \succ \tau}} \omega(\sigma)n_{\sigma,\tau} = 0 \mod{N_{\tau}}.
    \]
\end{definition}

In particular, in the definition of a tropical fan $(\Sigma,\omega)$, the weight $\omega$ is nothing more than a
$d$-dimensional Minkowski weight on $\Sigma$. The group of $k$-dimensional Minkowski weights on $\Sigma$ is thus
naturally interpreted as the group of $k$-dimensional tropical fan subcycles of $\Sigma$, \emph{with fan structures
induced by $\Sigma$}.

It is a standard fact that for any toric variety $X(\Sigma)$, the Chow group $A_{n-k}(X(\Sigma))$ is generated by the
classes of the torus orbit closures $V(\sigma)$ for $\sigma \in \Sigma_k$, see \cref{cor:chow_groups}. The relations on
$A_{n-k}(X(\Sigma))$ are described by the balancing condition, leading to the following fundamental description of
$M_k(\Sigma)$.

\begin{lemma}[{\cite[Proposition 2.1]{fultonIntersectionTheoryToric1997}}] \label{lem:mink_chow}
    For any fan $\Sigma$ in $N_{\bR} \cong \bR^n$, one has
    \[
        M_k(\Sigma) \cong \Hom(A_{n-k}(X(\Sigma)),\bZ).
    \]
\end{lemma}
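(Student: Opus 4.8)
The plan is to exhibit the isomorphism $M_k(\Sigma) \cong \Hom(A_{n-k}(X(\Sigma)),\bZ)$ by constructing an explicit pairing between Minkowski weights and torus-orbit classes, and then identifying the kernel of the resulting map with the balancing condition. First I would recall (citing the standard fact referenced just above, i.e.\ \cref{cor:chow_groups}) that $A_{n-k}(X(\Sigma))$ is generated by the classes $[V(\sigma)]$ for $\sigma \in \Sigma_k$, so that a homomorphism $A_{n-k}(X(\Sigma)) \to \bZ$ is the same as an assignment $\sigma \mapsto c(\sigma) \in \bZ$ for each $\sigma \in \Sigma_k$ that kills all relations among the $[V(\sigma)]$. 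This immediately identifies $\Hom(A_{n-k}(X(\Sigma)),\bZ)$ with the subgroup of $\bZ^{\Sigma_k}$ annihilating the module of relations, so the content of the lemma is precisely that these relations are generated by the balancing relations: for each $\tau \in \Sigma_{k-1}$ and each $u \in M_\tau := \Hom(N/N_\tau,\bZ)$ (equivalently, for $u$ running over a generating set of $M_\tau$), one has $\sum_{\sigma \succ \tau} \langle u, n_{\sigma,\tau}\rangle\, [V(\sigma)] = 0$ in $A_{n-k}(X(\Sigma))$.

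The key step is therefore to recall the precise description of relations in the Chow group of a toric variety. For a $(k-1)$-dimensional cone $\tau$, the orbit closure $V(\tau)$ is itself the toric variety $X(\Sigma^\tau)$ associated to the star fan, of dimension $n - k + 1$; its rational functions given by characters $\chi^u$ for $u$ in the character lattice of the torus of $V(\tau)$ (which is $M_\tau$) yield principal divisors on $V(\tau)$, and pushing these forward to $X(\Sigma)$ gives exactly the relations $\operatorname{div}(\chi^u) = \sum_{\sigma \succ \tau,\ \dim\sigma = k} \langle u, n_{\sigma,\tau}\rangle\, [V(\sigma)]$. Fulton's toric intersection theory (which one cites here) guarantees that $A_{n-k}(X(\Sigma))$ is the free abelian group on the $[V(\sigma)]$, $\sigma\in\Sigma_k$, modulo exactly these relations as $\tau$ ranges over $\Sigma_{k-1}$ and $u$ over $M_\tau$. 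Granting this, a function $\omega\colon \Sigma_k \to \bZ$ extends to a homomorphism $A_{n-k}(X(\Sigma))\to\bZ$ if and only if $\sum_{\sigma\succ\tau}\langle u, n_{\sigma,\tau}\rangle\,\omega(\sigma) = 0$ for all such $\tau$ and $u$, which says exactly that $\sum_{\sigma\succ\tau}\omega(\sigma)n_{\sigma,\tau} \equiv 0 \bmod N_\tau$ — the balancing condition defining $M_k(\Sigma)$.

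Assembling: the map $M_k(\Sigma) \to \Hom(A_{n-k}(X(\Sigma)),\bZ)$ sending $\omega$ to the homomorphism $[V(\sigma)] \mapsto \omega(\sigma)$ is well-defined by the above, visibly injective (since the $[V(\sigma)]$ generate), and surjective (every homomorphism is determined by its values on the generators, and those values satisfy balancing). The main obstacle is not conceptual but bookkeeping: one must set up the pairing between $n_{\sigma,\tau}$ (a lattice point in the relative interior of $\sigma$ generating $N_\sigma/N_\tau$) and $u \in M_\tau$ so that the sign conventions and the ``mod $N_\tau$'' in the balancing condition match Fulton's formula for $\operatorname{div}(\chi^u)$ on $V(\tau)$ exactly; this is a standard but care-requiring translation, and the cleanest route is simply to invoke \cite[Proposition~2.1]{fultonIntersectionTheoryToric1997} directly rather than reprove it.
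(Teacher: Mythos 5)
Your proposal is correct and matches the paper's (implicit) argument: the lemma is quoted directly from \cite[Proposition 2.1]{fultonIntersectionTheoryToric1997}, and the paragraph preceding it sketches exactly the reduction you spell out — $A_{n-k}(X(\Sigma))$ is generated by the classes $[V(\sigma)]$ for $\sigma \in \Sigma_k$, the relations are generated by the divisors of characters $\chi^u$, $u \in \tau^{\perp}\cap M$, on the orbit closures $V(\tau)$ for $\tau \in \Sigma_{k-1}$, and dualizing identifies $\Hom(A_{n-k}(X(\Sigma)),\bZ)$ with the functions on $\Sigma_k$ satisfying the balancing condition. The one substantive input, that these divisor relations generate \emph{all} relations, is precisely the content of the cited result, and you correctly flag that the cleanest course is to invoke it rather than reprove it.
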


\begin{definition}
    Let $(\Sigma,\omega)$ be a tropical fan. Define the \emph{(tropical) cap product}
    \begin{align*}
        A^k(\Sigma) \times M_j(\Sigma) &\to M_{j-k}(\Sigma), \\
        (\alpha,\eta) \mapsto \alpha \cap \eta
    \end{align*}
    as
    \begin{align*}
        A^k(X(\Sigma)) \times \Hom(A_{n-j}(X(\Sigma)),\bZ) \to \Hom(A_{n-j+k}(X(\Sigma)),\bZ), \\
        (\alpha,\eta) \mapsto (\beta \mapsto \eta(\alpha \cap \beta)),
    \end{align*}
    where $\alpha \cap \beta \in A_{n-j}(X(\Sigma))$ is the usual cap product between $\alpha \in A^k(X(\Sigma))$ and
    $\beta \in A_{n-j+k}(X(\Sigma))$.
\end{definition}

\begin{remark}
    When $\Sigma$ is unimodular or simplicial, $A^*(\Sigma)$ can be described in terms of piecewise polynomials, and
    there is a purely combinatorial interpretation of the above cap product, see for instance
    \cite{francoisCocyclesTropicalVarieties2012}. However, if $\Sigma$ is not simplicial, then piecewise polynomials do
    not give the correct notion for $A^*(\Sigma)$, cf. \cite{katzPiecewisePolynomialsMinkowski2008}.
\end{remark}

\begin{remark}
    For a tropical fan cycle $\cF$, one may define $A^*(\cF)$ (resp. $M_*(\cF)$) as the direct limit over all fan structures
    $\Sigma$ on $\cF$ of $A^*(\Sigma)$ (resp. $M_*(\Sigma)$), cf. e.g. \cite{grossCyclesCocyclesDuality2021}. This will
    not play a significant role for us.
\end{remark}

\begin{definition}
    A tropical fan $(\Sigma,\omega)$ of dimension $d$ is \emph{Poincar\'e} if $- \cap \omega : A^k(\Sigma) \to
    M_{d-k}(\Sigma)$ is an isomorphism for all $k$. The tropical fan $(\Sigma,\omega)$ is \emph{star-Poincar\'e} if all
    of its star fans are Poincar\'e.
\end{definition}

\begin{remark}
    If $\Sigma$ is unimodular, then the toric variety $X(\Sigma)$ is nonsingular, so $A_{n-d+k}(X(\Sigma)) \cong
    A^{d-k}(\Sigma)$. Poincar\'e duality in this case is therefore equivalent to the Chow ring $A^*(\Sigma)$ being a
    Poincar\'e duality ring of dimension $d$.
\end{remark}

\begin{example}
    Any complete fan is star-Poincar\'e by \cite{fultonIntersectionTheoryToric1997}.
\end{example}

Minkowski weights lead to a very useful characterization of irreducible tropical fans.

\begin{proposition}[{\cite[Lemma 2.20]{gathmannIrreducibleCyclesPoints2012}}] \label{prop:irred_criterion}
    A tropical fan $(\Sigma,\omega)$ is irreducible if and only if $M_d(\Sigma) \cong \bZ$.
\end{proposition}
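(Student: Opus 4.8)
The plan is to restate the claim as follows: $(\Sigma,\omega)$ is \emph{reducible} if and only if $\dim_{\bQ}\bigl(M_d(\Sigma)\otimes\bQ\bigr)\geq 2$. Since $M_d(\Sigma)$ is a free abelian group (a subgroup of $\bZ^{\Sigma_d}$) containing the nonzero element $\omega$, one has $M_d(\Sigma)\cong\bZ$ exactly when $M_d(\Sigma)\otimes\bQ=\bQ\,\omega$, i.e. when every top-dimensional Minkowski weight on $\Sigma$ is $\bQ$-proportional to $\omega$. The property of $\omega$ driving both implications is that it is \emph{strictly positive} on every cone of $\Sigma_d$.

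For the implication ``not proportional to $\omega$ $\Rightarrow$ reducible'', I would take $\eta\in M_d(\Sigma)$ not proportional to $\omega$, set $c=\max_{\sigma\in\Sigma_d}\eta(\sigma)/\omega(\sigma)\in\bQ$ (finite because $\omega>0$ everywhere), and clear denominators in $c\,\omega-\eta$ to obtain $w_0\in M_d(\Sigma)$ with: $w_0(\sigma)\geq 0$ for all $\sigma\in\Sigma_d$; $w_0\neq 0$ (otherwise $\eta=c\,\omega$); and $w_0(\sigma_0)=0$ for some maximal cone $\sigma_0$ achieving the maximum. Let $\Sigma'\subseteq\Sigma$ be the subfan consisting of the cones $\sigma\in\Sigma_d$ with $w_0(\sigma)>0$ together with all of their faces (nonempty since $w_0\neq 0$), and give it the weight $\omega'=w_0|_{\Sigma'_d}>0$. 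Then $(\Sigma',\omega')$ is balanced: at a wall $\tau\in\Sigma'_{d-1}$, the cones $\sigma\succ\tau$ of $\Sigma$ with $w_0(\sigma)=0$ contribute nothing, so the balancing sum for $(\Sigma',\omega')$ equals that for $w_0$ on $\Sigma$, which vanishes. Hence $(\Sigma',\omega')$ is a $d$-dimensional tropical fan with $|\Sigma'|\subsetneq|\Sigma|$: the maximal cone $\sigma_0$ is not in $\Sigma'$, so its relative interior lies in $|\Sigma|\setminus|\Sigma'|$. This shows $(\Sigma,\omega)$ is reducible.

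For the converse, suppose $(\Sigma',\omega')$ witnesses reducibility, with $\emptyset\neq|\Sigma'|\subsetneq|\Sigma|$ and $\dim\Sigma'=d$. Pass to a refinement $\wt\Sigma$ of $\Sigma$ in which $\Sigma'$ is refined to a subfan $\wt\Sigma'\subseteq\wt\Sigma$; let $\wt\omega'$ be the (balanced) refined weight on $\wt\Sigma'_d$, extended by $0$ to all of $\wt\Sigma_d$. This extension is still balanced on $\wt\Sigma$ --- at a wall not in $|\Sigma'|$ all adjacent top cones lie outside $|\Sigma'|$ and the balancing sum is empty, while at a wall in $|\Sigma'|$ the sum coincides with the balancing of $\wt\Sigma'$ --- so it defines $\hat\omega'\in M_d(\wt\Sigma)$. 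Since $\emptyset\neq|\Sigma'|\subsetneq|\Sigma|$, the weight $\hat\omega'$ is nonzero but vanishes on some top cone of $\wt\Sigma$, whereas the refinement $\wt\omega$ of $\omega$ is strictly positive; hence $\hat\omega'$ and $\wt\omega$ are $\bQ$-independent and $\dim_{\bQ}(M_d(\wt\Sigma)\otimes\bQ)\geq 2$. To conclude I would invoke that a top-dimensional Minkowski weight on a refinement is automatically constant on the set of maximal cones subdividing a fixed maximal cone of the coarser fan --- balancing at an interior wall of the subdivision forces the two adjacent weights to agree, and these cones are connected through such walls --- which produces a natural isomorphism $M_d(\wt\Sigma)\cong M_d(\Sigma)$ carrying $\wt\omega$ to $\omega$. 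Therefore $\dim_{\bQ}(M_d(\Sigma)\otimes\bQ)\geq 2$, as needed.

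I expect the genuinely delicate point to be the last one --- that passing to a refinement does not change the group of top-dimensional Minkowski weights (equivalently, that $M_d$ is intrinsic to the underlying tropical fan cycle) --- together with the routine but fiddly verification that extending a subcycle's weight by zero again yields a balanced weight. Everything else reduces to elementary convexity and unwinding the balancing condition.
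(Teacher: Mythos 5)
Your proof is correct. The paper does not prove this proposition itself but cites it from the literature (Gathmann et al., Lemma 2.20), and your argument --- producing a nonnegative, somewhere-vanishing balanced weight from a second generator via the $\max_\sigma \eta(\sigma)/\omega(\sigma)$ trick, and conversely extending a witness subcycle's weight by zero after passing to a common refinement, using the refinement-invariance of $M_d$ --- is the standard proof given there.
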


\begin{definition}
    The \emph{fundamental weight} of an irreducible tropical fan $\Sigma$ is the (unique) positive generator of
    $M_d(\Sigma) \cong \bZ$.
\end{definition}

\begin{lemma}
    If $(\Sigma,\omega)$ is a Poincar\'e tropical fan, then $\Sigma$ is irreducible and $\omega$ is the fundamental
    weight. If $(\Sigma,\omega)$ is star-Poincar\'e, then it is reduced and locally irreducible.
\end{lemma}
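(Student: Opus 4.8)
The plan is to deduce everything from the isomorphism $-\cap\omega:A^k(\Sigma)\to M_{d-k}(\Sigma)$ applied in the extreme degrees $k=0$ and $k=d$, together with \cref{prop:irred_criterion} and \cref{prop:red_loc_irred_nice}. First I would handle the Poincar\'e case. Taking $k=0$, the map $-\cap\omega:A^0(\Sigma)\to M_d(\Sigma)$ is an isomorphism; since $A^0(\Sigma)=A^0(X(\Sigma))\cong\bZ$ (as $X(\Sigma)$ is connected), this gives $M_d(\Sigma)\cong\bZ$, so $\Sigma$ is irreducible by \cref{prop:irred_criterion}. Moreover $\omega=1\cap\omega$ is the image of the generator $1\in A^0(\Sigma)$, hence is a generator of $M_d(\Sigma)\cong\bZ$; since $\omega$ takes positive values by definition of a tropical fan, it is \emph{the} positive generator, i.e.\ the fundamental weight.

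For the star-Poincar\'e case, by definition every star fan $\Sigma^\sigma$ is Poincar\'e, so by what was just proved each $\Sigma^\sigma$ is irreducible; hence $\Sigma$ is locally irreducible by definition. It remains to see $\Sigma$ is reduced, i.e.\ $\omega(\sigma)=1$ for every top-dimensional $\sigma$. For this I would look at the star fan $\Sigma^\tau$ where $\tau\prec\sigma$ is a facet of a chosen top-dimensional cone $\sigma$, so that $\Sigma^\tau$ is a $1$-dimensional tropical fan which is Poincar\'e, hence irreducible with $\omega^\tau$ its fundamental weight. A $1$-dimensional irreducible tropical fan with fundamental weight: the balancing condition $\sum_{\rho}\omega^\tau(\rho)n_\rho=0$ together with irreducibility (only $\bZ$ worth of Minkowski weights in top degree) forces, by a short gcd argument, that the fundamental weight of a $1$-dimensional irreducible tropical fan takes value $1$ on each ray — any common factor of the weights would give a smaller-weight balanced cycle with the same support, contradicting that $\omega^\tau$ generates. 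Since $\omega^\tau(\bar\sigma)=\omega(\sigma)$ by the definition of the inherited weight on the star fan (\cref{ex:local_defs}), we get $\omega(\sigma)=1$; as $\sigma$ was arbitrary, $\Sigma$ is reduced.

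The main obstacle is the last point: showing that a $1$-dimensional irreducible tropical fan has fundamental weight identically $1$ on its rays. One must be slightly careful that ``irreducible'' here is used in the sense of this paper (no tropical subfan of the same dimension with strictly smaller support) and combine it with \cref{prop:irred_criterion} to control $M_1$; the scaling argument then shows the gcd of the ray-weights must be $1$. Alternatively — and perhaps more cleanly — one can cite that for $1$-dimensional fans the balancing condition on the unique ray-set plus $M_1(\Sigma)\cong\bZ$ pins down the generator up to sign, and positivity fixes the sign. Everything else is a direct unwinding of definitions, so the proof should be short.
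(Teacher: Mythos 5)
The first half of your proposal (Poincar\'e $\Rightarrow$ irreducible with $\omega$ the fundamental weight, via $A^0(\Sigma)\cong\bZ$ and the cap product in degree $k=0$) is correct and is exactly the paper's argument, as is the deduction of local irreducibility. The problem is your argument for reducedness. The claim that a $1$-dimensional irreducible tropical fan has fundamental weight identically $1$ on its rays is false, and your gcd argument does not prove it: dividing out a common factor of the weights only shows that the \emph{gcd} of the ray-weights is $1$, not that each individual weight is $1$. The paper's own \cref{fig:nonreduced_R2} is a counterexample: the fan in $\bR^2$ with rays through $(1,0)$, $(0,1)$, $(-1,-2)$ and weights $1,2,1$ is balanced, has $M_1\cong\bZ$ generated by $(1,2,1)$ (so it is irreducible with $(1,2,1)$ as its fundamental weight, and one can check it is even Poincar\'e), yet it is not reduced. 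So knowing that the $1$-dimensional star fan $\Sigma^\tau$ is Poincar\'e and that $\omega^\tau$ is its fundamental weight does not pin down $\omega^\tau$ on individual rays, and your chain of implications breaks at the last step.

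The fix is to use the $0$-dimensional star fan at the top-dimensional cone $\sigma$ itself rather than the $1$-dimensional star fan at a facet $\tau\prec\sigma$. For $\sigma\in\Sigma_d$, the star fan $\Sigma^\sigma$ is $0$-dimensional, and $M_0(\Sigma^\sigma)\cong\bZ$ is generated by the weight $w(0)=1$ (there is no balancing condition to exploit or to obstruct here). If $\Sigma$ is star-Poincar\'e at $\sigma$, the first part of the lemma applied to $\Sigma^\sigma$ forces the induced weight $\omega^\sigma(0)=\omega(\sigma)$ to equal the fundamental weight $w(0)=1$. This is the paper's argument; it is shorter than your detour and avoids the false claim entirely.
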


\begin{proof}
    Since for any fan $\Sigma$, the corresponding toric variety $X(\Sigma)$ is irreducible, one has $A^0(\Sigma) \cong
    \bZ$. Poincar\'e duality implies that $A^0(\Sigma) \to M_d(\Sigma)$, $1 \mapsto \omega$ is an isomorphism, hence
    $\Sigma$ is irreducible by \cref{prop:irred_criterion} and $\omega$ is the fundamental weight. It follows
    immediately that a star-Poincar\'e tropical fan is locally irreducible. If $\sigma$ is a top-dimensional cone of
    $\Sigma$, then $\Sigma^{\sigma}$ is a $0$-dimensional fan, so $M_0(\Sigma^{\sigma}) \cong \bZ$, generated by the
    weight $w(0)=1$. In particular, if $\Sigma$ is star-Poincar\'e at $\sigma$, then $\omega(\sigma)=w(0)=1$.
\end{proof}

\begin{proposition} \label{prop:chow_prods}
    Let $(\Sigma,\omega),(\Sigma',\omega')$ be two tropical fans. Then $A^*(\Sigma \times \Sigma') \cong A^*(\Sigma)
    \otimes A^*(\Sigma')$ and $M_*(\Sigma \times \Sigma') \cong M_*(\Sigma) \otimes M_*(\Sigma')$. In particular,
    $\Sigma \times \Sigma'$ is Poincar\'e, if and only if $\Sigma$ and $\Sigma'$ are both Poincar\'e.
\end{proposition}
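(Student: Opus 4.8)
The plan is to reduce the statement to three inputs: (i) a Künneth-type formula for operational Chow rings of toric varieties, (ii) the corresponding formula for Minkowski weights, and (iii) the compatibility of these two isomorphisms with the cap product against the product weight $\omega \times \omega'$. First I would establish the Chow ring statement. Since $A^*(\Sigma)$ is by definition the operational Chow ring $A^*(X(\Sigma))$, and $X(\Sigma \times \Sigma') \cong X(\Sigma) \times X(\Sigma')$, I would invoke the known Künneth formula for operational Chow rings of toric varieties: the exterior product map $A^*(X(\Sigma)) \otimes A^*(X(\Sigma')) \to A^*(X(\Sigma) \times X(\Sigma'))$ is an isomorphism. (For complete toric varieties this is classical; in general it follows from the fact that the operational Chow ring of a toric variety is a direct sum indexed by cones, or from Fulton--Sturmfels together with the structure of $A_*$ of toric varieties; I would cite the relevant reference, e.g. \cite{fultonIntersectionTheoryToric1997}.)

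Next I would handle the Minkowski weights. By \cref{lem:mink_chow}, $M_k(\Sigma) \cong \Hom(A_{n-k}(X(\Sigma)),\bZ)$, so the claim $M_*(\Sigma \times \Sigma') \cong M_*(\Sigma) \otimes M_*(\Sigma')$ amounts to a Künneth statement for the $\bZ$-duals of the Chow \emph{groups} (homology) of toric varieties. The Chow groups $A_*(X(\Sigma))$ are finitely generated by orbit closures (\cref{cor:chow_groups}), and for the product the orbit closures of $X(\Sigma \times \Sigma')$ are products $V(\sigma) \times V(\sigma')$; I would check that the exterior product induces an isomorphism $A_*(X(\Sigma)) \otimes A_*(X(\Sigma')) \xrightarrow{\sim} A_*(X(\Sigma \times \Sigma'))$. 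Here one should be slightly careful, since in general homological Künneth requires a Tor term; but for toric varieties the Chow groups, while possibly having torsion, are built from a pure-dimensional cellular-type filtration by orbit closures, and Fulton--Sturmfels establish exactly this product decomposition. Dualizing (and using that the groups are finitely generated, so $\Hom(-,\bZ)$ of a direct sum is the product, with the tensor factoring appropriately degree by degree) yields the Minkowski weight statement. Concretely, the pairing is $(\omega \otimes \omega')(\sigma \times \sigma') = \omega(\sigma)\omega'(\sigma')$, and one verifies directly that such a function on $(\Sigma\times\Sigma')_{d+d'}$ is balanced iff $\omega,\omega'$ are — this is already \cite[Example 2.9]{gathmannTropicalFansModuli2009} quoted earlier — so at minimum the product weight lands in $M_{d+d'}(\Sigma\times\Sigma')$, and the dual-Künneth shows these span everything.

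For the final clause I would trace through definitions: the cap product $-\cap\omega : A^k(\Sigma)\to M_{d-k}(\Sigma)$ was defined via the classical cap product $A^k(X(\Sigma)) \times A_{n-d+k}(X(\Sigma)) \to A_{n-d}(X(\Sigma))$, which is compatible with exterior products (the cross product of cap products is the cap product of cross products — a standard property of the operational formalism in \cite[Chapter 17]{fultonIntersectionTheory1998}). Hence under the two Künneth isomorphisms, $-\cap(\omega\times\omega')$ is identified with $(-\cap\omega)\otimes(-\cap\omega')$ as a map $A^*(\Sigma)\otimes A^*(\Sigma') \to M_*(\Sigma)\otimes M_*(\Sigma')$. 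A tensor product of maps of finitely generated free-ish... — more precisely, a tensor product of two $\bZ$-module homomorphisms between finitely generated groups: if both are isomorphisms the tensor is an isomorphism, and conversely if the tensor product over $\bZ$ of two nonzero maps is an isomorphism then (since $A^0 \cong \bZ$ is a direct summand, one can restrict to get each factor back up to the other's contribution) each is an isomorphism. The only real subtlety — and the step I expect to be the main obstacle — is the converse direction without simpliciality or freeness hypotheses: showing that Poincaréness of $\Sigma\times\Sigma'$ forces Poincaréness of each factor. I would handle this by pinning down one factor using $A^0(\Sigma')\cong\bZ \subset A^*(\Sigma')$ and its dual top-degree Minkowski weight class, so that the product isomorphism restricts to $-\cap\omega$ on $\Sigma$ tensored with a fixed isomorphism $\bZ\to\bZ$ in the extreme degree of $\Sigma'$, yielding that $-\cap\omega$ is an isomorphism in every degree; symmetrically for $\Sigma'$. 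This avoids any appeal to torsion-freeness and keeps the argument valid for arbitrary (non-simplicial) fans, which is the point emphasized in the remark after \cref{thm:qlin_chow_main}.
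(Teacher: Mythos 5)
Your proposal is correct and follows essentially the same route as the paper: both arguments rest on the Chow--K\"unneth property of toric varieties (the paper cites \cite[Theorem 2]{fultonIntersectionTheorySpherical1995}), obtain the Minkowski-weight statement by dualizing via \cref{lem:mink_chow}, and identify $- \cap (\omega \times \omega')$ with the direct sum of tensor products of the individual cap products. Your extra care on the converse direction (using the $A^0 \cong \bZ$ summand to recover each factor) fills in a step the paper leaves as ``it follows that,'' but it is the same argument in substance.
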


\begin{proof}
    The first part follows from the well-known fact that toric varieties satisfy the \emph{Chow-K\"unneth property}: if
    $X(\Sigma)$ is a toric variety and $Z$ is any finite-type scheme, then $A_*(X(\Sigma)) \otimes A_*(Z) \to
    A_*(X(\Sigma) \times Z)$ is an isomorphism \cite[Theorem 2]{fultonIntersectionTheorySpherical1995}. (See also
    \cref{lem:weakly_linear} below.)

    For the second part, say $\dim \Sigma = d$, $\dim \Sigma' = d'$.
    Then
    \[
        A^k(\Sigma \times \Sigma') \cong \bigoplus_{i+j=k} A^i(\Sigma) \otimes A^j(\Sigma')
    \]
    and
    \[
        M_{d+d'-k}(\Sigma \times \Sigma') \cong \bigoplus_{i+j=k} M_{d-i}(\Sigma) \otimes M_{d'-j}(\Sigma'),
    \]
    and the cap product with the weight $\omega \times \omega'$ is given by
    \[
        \bigoplus_{i+j=k} A^i(\Sigma) \otimes A^j(\Sigma') \xrightarrow{\bigoplus (- \cap \omega, - \cap \omega')}
        \bigoplus_{i+j=k} M_{d-i}(\Sigma) \otimes M_{d'-j}(\Sigma').
    \]
    It follows that $\Sigma \times \Sigma'$ is Poincar\'e $\iff$ $\Sigma$ and $\Sigma'$ are both Poincar\'e.
\end{proof}

\begin{theorem} \label{thm:poincare_intrinsic}
    The property of being a star-Poincar\'e tropical fan is intrinsic to the support.
\end{theorem}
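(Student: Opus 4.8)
The plan is to show that star-Poincaréness only depends on the support of a tropical fan. Since any two representatives of the same tropical fan cycle have a common tropical refinement, it suffices to prove the following: if $\wt\Sigma$ is a tropical refinement of a reduced tropical fan $(\Sigma,\omega)$, then $(\Sigma,\omega)$ is star-Poincaré if and only if $(\wt\Sigma,\wt\omega)$ is. (One reduces to the reduced case, or carries the weight along; the weight plays no essential role since refinement preserves it on each top cone, and Poincaréness forces the weight to be fundamental by the lemma above.) The key observation is that star fans of $\wt\Sigma$ are obtained from star fans of $\Sigma$ by the two operations already understood: taking star fans (which preserves the class of Poincaré-or-not configurations we're tracking, by the recursive structure of star-Poincaré) and taking products with $\bR^k$, i.e. with a complete fan — indeed, for $v$ in the relative interior of $\wt\sigma \in \wt\Sigma$, lying in the relative interior of $\sigma \in \Sigma$, one has $\cF^v = |\wt\Sigma^{\wt\sigma}| \times \bR^{\dim\wt\sigma} = |\Sigma^{\sigma}| \times \bR^{\dim\sigma}$ as tropical fan cycles (\cref{ex:local_defs}). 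So the whole statement rests on: (a) star-Poincaréness is unchanged under $- \times \Delta$ for $\Delta$ complete, and (b) $|\wt\Sigma^{\wt\sigma}|$ and $|\Sigma^{\sigma}|$, which differ by a product with a complete (linear) factor, are simultaneously star-Poincaré.

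First I would dispatch (a): by \cref{prop:chow_prods}, $\Sigma \times \Delta$ is Poincaré iff $\Sigma$ and $\Delta$ are both Poincaré, and a complete fan is always star-Poincaré (hence Poincaré); moreover every star fan of $\Sigma \times \Delta$ is a product of a star fan of $\Sigma$ with a star fan of $\Delta$, and star fans of complete fans are complete hence Poincaré. So $\Sigma \times \Delta$ is star-Poincaré iff $\Sigma$ is star-Poincaré. This shows star-Poincaréness is stably invariant in the sense of \cref{def:nice_props}, once we know it is intrinsic to the support — but we can use the product statement directly here rather than circularly. Second, for (b): given a star fan $\Sigma^\sigma$ of the coarser fan, the refinement $\wt\Sigma$ induces a tropical refinement $\wt{\Sigma^\sigma}$ of $\Sigma^\sigma$, and conversely each $\wt\Sigma^{\wt\sigma}$ (for $\wt\sigma$ a cone of $\wt\Sigma$) is, up to a complete linear factor $\bR^k$, a star fan of such a refinement. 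So I would set up the induction on dimension $d$: assuming the theorem in dimensions $< d$, the star fans $\Sigma^\sigma$ and $\wt\Sigma^{\wt\sigma}$ at positive-dimensional cones have dimension $< d$, so "Poincaré" is already known to be intrinsic-to-support for them by the inductive hypothesis applied to their own supports (together with step (a) to strip the $\bR^k$). The only remaining case is $\sigma = 0$: we must show $A^k(\Sigma) \to M_{d-k}(\Sigma)$ is an isomorphism iff $A^k(\wt\Sigma) \to M_{d-k}(\wt\Sigma)$ is.

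The heart of the argument — and the step I expect to be the main obstacle — is this last comparison at the apex cone: relating the cap product with the fundamental weight before and after refinement. The natural tool is the pullback $\pi^* : A^*(\Sigma) \to A^*(\wt\Sigma)$ induced by the toric morphism $X(\wt\Sigma) \to X(\Sigma)$ refining the fan, together with the pushforward $\pi_* : M_*(\wt\Sigma) \to M_*(\Sigma)$ dual to $\pi^*$ on Chow groups (proper birational, so $\pi_*\pi^* = \mathrm{id}$ on $A^*(\Sigma)$ and $\pi_*(\wt\omega) = \omega$ since $\wt\omega$ is the refinement weight). The commuting square $\pi_* \circ (-\cap\wt\omega) \circ \pi^* = (-\cap\omega)$ then shows $-\cap\omega$ is a split injection if $-\cap\wt\omega$ is, and in particular that $\wt\Sigma$ Poincaré implies $A^*(\Sigma)$ is a retract of a Poincaré duality ring of the right dimension. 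For the converse and for upgrading "retract" to "isomorphism," I would use the description $M_k \cong \Hom(A_{n-k}(X),\bZ)$ (\cref{lem:mink_chow}) and compare ranks/torsion via the stratification: both $A_*(X(\Sigma))$ and $A_*(X(\wt\Sigma))$ are spanned by orbit closures, and the refinement induces compatible filtrations; combined with the local-at-each-cone Poincaré duality already established in lower dimension and the fact that $-\cap\omega$ being an isomorphism is detected after localizing at star fans plus one global constraint, one gets the equivalence. The technical care goes into handling the non-simplicial case, where one cannot use the piecewise-polynomial model and must work with operational Chow rings throughout; I would lean on the Chow–Künneth property and the behavior of operational Chow groups under toric birational morphisms, rather than any explicit presentation, to keep the argument valid for arbitrary $\Sigma$.
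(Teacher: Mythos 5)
There is a genuine gap. Your reductions at the start are fine and match the shape of the known arguments: passing to a common refinement, using \cref{ex:local_defs} and \cref{prop:chow_prods} to strip off complete factors, and inducting on dimension so that only the comparison at the apex cone remains. But that apex comparison --- showing that $-\cap\omega : A^k(\Sigma)\to M_{d-k}(\Sigma)$ is an isomorphism if and only if $-\cap\wt\omega : A^k(\wt\Sigma)\to M_{d-k}(\wt\Sigma)$ is, for an \emph{arbitrary} refinement $\wt\Sigma$ of $\Sigma$ --- is exactly the nontrivial content of the theorem, and your proposal does not actually prove it. The appeal to ``comparing ranks/torsion via the stratification'' and to the claim that ``$-\cap\omega$ being an isomorphism is detected after localizing at star fans plus one global constraint'' is not something established in the paper or standard; no such local-to-global detection principle for Poincar\'e duality of tropical fans is available off the shelf, and proving one would be comparable in difficulty to the theorem itself. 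The paper's proof is a citation: the result is known (and explicitly flagged as nontrivial), and every proof in the literature goes through the \emph{weak factorization theorem} for toric birational maps, which reduces two fans with the same support to a zig-zag of stellar subdivisions along rays; for a single stellar subdivision one has an explicit Keel-type decomposition of $A^*(\wt\Sigma)$ in terms of $A^*(\Sigma)$ and the Chow rings of star fans, from which Poincar\'e duality transfers in both directions. Your argument never reduces to such elementary moves, and for a general refinement there is no comparable explicit control of $A^*(\wt\Sigma)$ over $A^*(\Sigma)$.

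There are also two technical slips in the step you do carry out. First, for the proper birational toric morphism $\pi : X(\wt\Sigma)\to X(\Sigma)$ the fibers do not all have the same dimension, so the paper's pullback on Chow homology and its dual pushforward on Minkowski weights are not defined in the direction you use them; the natural map is $M_k(\Sigma)\to M_k(\wt\Sigma)$ (restriction of a Minkowski weight to the refinement), dual to the proper pushforward $A_*(X(\wt\Sigma))\to A_*(X(\Sigma))$, not a map $\pi_* : M_*(\wt\Sigma)\to M_*(\Sigma)$. Second, ``$\pi_*\pi^*=\mathrm{id}$ on $A^*(\Sigma)$'' does not parse for operational Chow rings, since there is no pushforward of operational classes; what the projection formula gives is a statement at the level of $\pi^*\alpha\cap[X(\wt\Sigma)]$, and extracting from it even the split injectivity of $-\cap\omega$ requires knowing injectivity of $\pi^*$ on operational Chow, which itself needs an argument (via Kimura's exact sequences, say) in the non-simplicial setting. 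The honest fix is to invoke weak factorization, as the paper and the references it cites do.
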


\begin{proof}
    This is by now a well-known (and nontrivial) result in the unimodular case, see
    \cite{adiprasitoHodgeTheoryCombinatorial2018, ardilaLagrangianGeometryMatroids2022, aminiHomologyTropicalFans2021,
    grossCyclesCocyclesDuality2021}. The proof uses the weak factorization theorem for toric varieties
    \cite{wlodarczykDecompositionBirationalToric1997}. With our definition of Poincar\'e duality for non-unimodular
    fans, a similar proof also works for the non-unimodular case, cf. \cite[Proposition
    3.5]{grossCyclesCocyclesDuality2021}.
\end{proof}

\begin{corollary} \label{cor:poincare_nice}
    The property of being a star-Poincar\'e tropical fan is intrinsic to the support, local, and stably invariant.
\end{corollary}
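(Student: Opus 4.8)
The plan is to deduce the corollary from \cref{thm:poincare_intrinsic}, which already supplies that being star-Poincar\'e is intrinsic to the support, together with \cref{prop:chow_prods} and two elementary observations about star fans.

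First I would establish locality. Suppose $(\Sigma,\omega)$ is star-Poincar\'e and let $\sigma \in \Sigma$. The point is that every star fan of $\Sigma^{\sigma}$ is itself a star fan of $\Sigma$: the star fan of $\Sigma^{\sigma}$ at the image of a cone $\tau \succ \sigma$ is canonically identified with $\Sigma^{\tau}$. Since every star fan of $\Sigma$ is Poincar\'e by hypothesis, so is every star fan of $\Sigma^{\sigma}$, and hence $\Sigma^{\sigma}$ is star-Poincar\'e. This identification is immediate from the definition of the star fan recalled in the notation subsection.

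Next, stable invariance. Since the property is intrinsic to the support by \cref{thm:poincare_intrinsic}, it suffices to show that $(\Sigma,\omega)$ is star-Poincar\'e if and only if $\Sigma \times \Delta$ is star-Poincar\'e for every complete fan $\Delta$ of dimension $\geq 1$. Every cone of $\Sigma \times \Delta$ has the form $\sigma \times \delta$ with $\sigma \in \Sigma$ and $\delta \in \Delta$, and its star fan is $(\Sigma \times \Delta)^{\sigma \times \delta} \cong \Sigma^{\sigma} \times \Delta^{\delta}$. Since $\Delta$ is complete, $\Delta^{\delta}$ is again complete, hence star-Poincar\'e, and in particular Poincar\'e, by the example following the definition of star-Poincar\'e. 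By \cref{prop:chow_prods}, $\Sigma^{\sigma} \times \Delta^{\delta}$ is Poincar\'e if and only if $\Sigma^{\sigma}$ is. Therefore $\Sigma \times \Delta$ is star-Poincar\'e if and only if $\Sigma^{\sigma}$ is Poincar\'e for every $\sigma \in \Sigma$, i.e., if and only if $\Sigma$ is star-Poincar\'e. (For the forward implication one may alternatively just take $\delta$ to be a maximal cone of $\Delta$, so that $\Delta^{\delta}$ is a point and $\Sigma^{\sigma} \times \Delta^{\delta} = \Sigma^{\sigma}$.)

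Since every step reduces to facts already in hand, there is no genuine obstacle; the only points needing a little care are the identifications of the star fans of a product (and of $\Sigma^{\sigma}$) with the expected products and subfans, both of which follow directly from the definitions.
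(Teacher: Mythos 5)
Your proposal is correct and follows essentially the same route as the paper: locality comes from the (definitional) fact that star fans of star fans are star fans, intrinsicness to the support is \cref{thm:poincare_intrinsic}, and stable invariance comes from the product decomposition of star fans together with \cref{prop:chow_prods} and the fact that complete fans are Poincar\'e. You have merely spelled out the details that the paper's one-line proof leaves implicit.
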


\begin{proof}
    ``Local'' is by definition, while ``stably invariant'' and ``intrinsic to the support'' follow by
    \cref{prop:chow_prods,thm:poincare_intrinsic}.
\end{proof}

In particular, it makes sense to speak of star-Poincar\'e tropical fan cycles.

\subsection{Morphisms} \label{sec:morphisms}

For a fan $\Sigma \subset N_{\bR}$ (resp. fan cycle $\cF$), let $N_{\Sigma,\bR}$ (resp. $N_{\cF,\bR}$) be the vector
subspace of $N_{\bR}$ spanned by $\Sigma$ (resp $\cF$), and let $N_{\Sigma} = N \cap N_{\Sigma,\bR}$ (resp. $N_{\cF} = N
\cap N_{\cF,\bR}$).

By definition, $N_{\Sigma}$ is spanned by a subset of a basis of $N$, so we do not need to worry about finite index
sublattices (cf. \cite[Proof of Proposition 3.3.9]{coxToricVarieties2011}). We can view $\Sigma$ either as a fan in
$N_{\Sigma}$ or as a fan in $N$; write $X(\Sigma;N_{\Sigma})$ and $X(\Sigma;N)$ for the corresponding toric varieties.
Extending a basis of $N_{\Sigma}$ to a basis of $N$ implies that \cite[Proposition 3.3.9]{coxToricVarieties2011}
\[
    X(\Sigma;N) \cong X(\Sigma;N_{\Sigma}) \times (\bC^*)^{\rk N - \rk N_{\Sigma}}.
\]
This implies in particular that $A^*(X(\Sigma;N)) \cong A^*(X(\Sigma;N_{\Sigma}))$ and $A_*(X(\Sigma;N)) \cong
A_*(X(\Sigma;N_{\Sigma}))$, so both $A^*(\Sigma)$ and $M_*(\Sigma)$ are independent of the choice of ambient space
between $N_{\Sigma}$ and $N$.

\begin{definition}
    Let $\Sigma \subset N_{\bR}$ and $\Sigma' \subset N'_{\bR}$ be two fans. A \emph{morphism} $f : \Sigma \to \Sigma'$
    is a map $f : \lvert \Sigma \rvert \to \lvert \Sigma' \rvert$ induced by a linear map $f : N_{\Sigma} \to
    N_{\Sigma'}$, such that $f(\sigma)$ is contained in a cone of $\Sigma'$ for all $\sigma \in \Sigma$ \cite[Section
    3.3]{coxToricVarieties2011}.

    Let $\cF \subset N_{\bR}$ and $\cF' \subset N'_{\bR}$ be two tropical fan cycles. A \emph{morphism} $f : \cF \to
    \cF'$ is a map $f : \cF \to \cF'$ induced by a morphism of fans $\Sigma \to \Sigma'$ for some fan structures
    $\Sigma$ on $\cF$ and $\Sigma'$ on $\cF'$.
\end{definition}

\begin{remark}
    There is no condition on the fundamental weights for a morphism of tropical fans or tropical fan cycles, cf.
    \cite[Definition 4.1]{allermannFirstStepsTropical2010}.
\end{remark}

\begin{definition}
    Let $f : (\Sigma,\omega) \to (\Sigma',\omega')$ be a morphism of tropical fans. The \emph{pullback of tropical fan
    cocycles} is the usual pullback morphism $f^* : A^*(\Sigma') = A^*(X(\Sigma')) \to A^*(X(\Sigma)) = A^*(\Sigma)$.
\end{definition}

\begin{definition}
    Let $f : (\Sigma,\omega) \to (\Sigma',\omega')$ be a morphism of tropical fans, and assume that the image of each
    cone of $\Sigma$ is a cone of $\Sigma'$. The \emph{pushforward of tropical fan cycles} is the morphism $f_* :
    M_k(\Sigma) \to M_k(\Sigma')$ sending $\eta \in M_k(\Sigma)$ to $f_*\eta \in M_k(\Sigma')$, defined by
    \[
        f_*\eta(\sigma') = \sum_{\substack{\sigma \in \Sigma_k \\ f(\sigma)=\sigma'}} \eta(\sigma) [N_{\sigma'} :
        f(N_{\sigma})].
    \]
\end{definition}

\begin{remark}
    Recall the Chow groups of an arbitrary toric variety are generated by the classes of the torus orbit closures.  If
    $f : (\Sigma,\omega) \to (\Sigma',\omega')$ is a morphism of tropical fans such that the image of each cone of
    $\Sigma$ is a cone of $\Sigma'$, then all fibers of the corresponding morphism of toric varieties $f : X(\Sigma) \to
    X(\Sigma')$ have the same dimension \cite[Lemma 4.1]{abramovichWeakSemistableReduction2000}. Therefore one can
    define a pullback morphism $f^* : A_{n'-k}(X(\Sigma')) \to A_{n-k}(X(\Sigma))$ by
    \[
        f^*[V(\sigma')] = [f^{-1}(V(\sigma'))] = \sum_{\substack{\sigma \in \Sigma_k \\ f(\sigma) = \sigma'}}
        [V(\sigma)] \cdot [N_{\sigma'} : f(N_{\sigma})],
    \]
    and $f_* : M_k(\Sigma) \to M_k(\Sigma')$ is the dual to this pullback under the isomorphism of \cref{lem:mink_chow},
    cf. \cite[Proposition 3.7, Corollaries 4.6, 4.7]{fultonIntersectionTheoryToric1997}.

    If $\Sigma'$ is unimodular then $f : X(\Sigma) \to X(\Sigma')$ is flat \cite[Exercise
    III.10.9]{hartshorneAlgebraicGeometry1977}, and the above pullback is the usual flat pullback as in \cite[Section
    1.7]{fultonIntersectionTheory1998}.
\end{remark}

\begin{lemma}[Projection formula]
    Let $f : (\Sigma,\omega) \to (\Sigma',\omega')$ be a morphism of tropical fans such that the image of every cone of
    $\Sigma$ is a cone of $\Sigma'$. Let $\alpha \in A^k(\Sigma')$ and $\eta \in M_j(\Sigma)$. Then
    \[
        f_*(f^*\alpha \cap \eta) = \alpha \cap f_*\eta.
    \]
\end{lemma}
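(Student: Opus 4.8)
\textit{Proof proposal.} The plan is to unwind all the definitions so that the claimed identity becomes an identity in a Chow group of $X(\Sigma)$, and then to deduce that identity from the projection formula for operational Chow classes together with the fact that the toric pullback on Chow groups commutes with the operational action. Write $X=X(\Sigma)$, $X'=X(\Sigma')$, $n=\rk N_\Sigma$, $n'=\rk N_{\Sigma'}$, and let $f^*:A_{n'-\ell}(X')\to A_{n-\ell}(X)$ be the toric pullback on Chow groups recalled in the preceding remark, so that $f_*:M_\ell(\Sigma)\to M_\ell(\Sigma')$ is its $\bZ$-dual under \cref{lem:mink_chow}. Both $f_*(f^*\alpha\cap\eta)$ and $\alpha\cap f_*\eta$ lie in $M_{j-k}(\Sigma')=\Hom(A_{n'-j+k}(X'),\bZ)$, so it is enough to check that they take the same value on an arbitrary class $\beta'\in A_{n'-j+k}(X')$. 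Chasing the definitions of the cap product and of $f_*$ through \cref{lem:mink_chow}, the left-hand side evaluates to $\eta\!\left(f^*\alpha\cap f^*\beta'\right)$ and the right-hand side to $\eta\!\left(f^*(\alpha\cap\beta')\right)$, where $\alpha\cap\beta'\in A_{n'-j}(X')$ is the operational cap product. Thus the lemma reduces to the identity
\[
    f^*\alpha\cap f^*\beta' = f^*(\alpha\cap\beta') \qquad\text{in } A_{n-j}(X),
\]
that is, to the statement that $f^*:A_*(X')\to A_*(X)$ is a map of modules over $A^*(X')$, acting on $A_*(X)$ through $f^*:A^*(X')\to A^*(X)$.

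When $\Sigma'$ is unimodular, $f$ is flat, $f^*$ is the ordinary flat pullback, and this identity is exactly the compatibility of an operational class with flat pullback (\cite[Chapter 17]{fultonIntersectionTheory1998}; cf. \cite[Proposition 3.7]{fultonIntersectionTheoryToric1997}). For a general fan $\Sigma'$ I would reduce to the unimodular case. Choose a unimodular refinement $\wt\Sigma'$ of $\Sigma'$ and, by weak semistable reduction \cite{abramovichWeakSemistableReduction2000}, a refinement $\wt\Sigma$ of $\Sigma$ fitting into a commutative square in which $\wt f:X(\wt\Sigma)\to X(\wt\Sigma')$ sends cones onto cones and has unimodular (hence flat) target, and the two vertical structure maps $q:X(\wt\Sigma)\to X$ and $q':X(\wt\Sigma')\to X'$ are proper and birational. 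Since $q'_*:A_*(X(\wt\Sigma'))\to A_*(X')$ is surjective (each orbit closure $V(\sigma')$ is the proper pushforward of an orbit closure of $X(\wt\Sigma')$ mapping birationally onto it), write $\beta'=q'_*\wt\beta'$. Combining functoriality of the operational pullback ($\wt f^*q'^*\alpha=q^*f^*\alpha$), the projection formula for the proper maps $q$ and $q'$ \cite[Chapter 17]{fultonIntersectionTheory1998}, the unimodular case applied to the flat map $\wt f$, and the compatibility $f^*\circ q'_*=q_*\circ\wt f^*$ of toric pullbacks with pushforward along the refinements, one computes
\begin{align*}
    f^*(\alpha\cap\beta')
    &= f^*q'_*\!\left(q'^*\alpha\cap\wt\beta'\right)
     = q_*\wt f^*\!\left(q'^*\alpha\cap\wt\beta'\right)
     = q_*\!\left(q^*f^*\alpha\cap\wt f^*\wt\beta'\right) \\
    &= f^*\alpha\cap q_*\wt f^*\wt\beta'
     = f^*\alpha\cap f^*q'_*\wt\beta'
     = f^*\alpha\cap f^*\beta',
\end{align*}
which is the required identity.

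The only step here that is not formal is the commutation $f^*\circ q'_*=q_*\circ\wt f^*$ of the toric preimage pullback with proper pushforward along a compatible refinement; this is a finite combinatorial check on orbit-closure classes, matching up the lattice indices $[N_{\sigma'}:f(N_\sigma)]$ appearing in the pullback formula, and is essentially contained in the construction of $f^*$ in \cite[Section 4]{fultonIntersectionTheoryToric1997}. Alternatively the whole reduction can be bypassed by invoking directly that the toric pullback $A_*(X')\to A_*(X)$ is a map of $A^*(X)$-modules, which follows from the same circle of ideas. I expect this combinatorial compatibility to be the main (and essentially the only) obstacle.
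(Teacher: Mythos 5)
Your proposal is correct and follows essentially the same route as the paper: both evaluate the two sides on an arbitrary class $\beta \in A_{n'-j+k}(X(\Sigma'))$, use that $f_*$ is by definition dual to the toric pullback on Chow groups, and thereby reduce the lemma to the single compatibility $f^*(\alpha\cap\beta)=f^*\alpha\cap f^*\beta$. The only difference is that the paper absorbs this compatibility into the discussion and citations of the preceding remark (the Fulton--Sturmfels pullback), whereas you spend the bulk of your write-up justifying it by reduction to the flat unimodular case; that additional argument is not part of the paper's proof but is a reasonable way to fill in the step.
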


\begin{proof}
    Let $\beta \in A_{n-j+k}(X(\Sigma'))$. Then
    \begin{align*}
        (\alpha \cap f_*\eta)(\beta) &= f_*\eta(\alpha \cap \beta) \\
                                     &= \eta(f^*(\alpha \cap \beta)) \\
                                     &= \eta(f^*\alpha \cap f^*\beta) \\
                                     &= (f^*\alpha \cap \eta)(f^*\beta) \\
                                     &= f_*(f^*\alpha \cap \eta).
    \end{align*}
    (Here the second and last equality follow from the discussion of the above remark.)
\end{proof}

\begin{definition} \label{def:trop_iso}
    Let $(\Sigma,\omega) \subset N_{\bR}$ and $(\Sigma',\omega') \subset N'_{\bR}$ be two tropical fans. An
    \emph{isomorphism} $f : (\Sigma,\omega) \xrightarrow{\sim} (\Sigma',\omega')$ is a morphism induced by an
    isomorphism $f : N_{\Sigma} \xrightarrow{\sim} N_{\Sigma'}$, such that $f(\sigma)$ is a cone of $\Sigma'$ for all
    $\sigma \in \Sigma$, $f^{-1}(\sigma')$ is a cone of $\Sigma$ for all $\sigma' \in \Sigma'$, $f_*\omega = \omega'$,
    and $(f^{-1})_*\omega'=\omega$.

    Let $(\cF,\omega) \subset N_{\bR}$ and $(\cF',\omega') \subset N'_{\bR}$ be two tropical fan cycles. An
    \emph{isomorphism} $f : (\cF,\omega) \xrightarrow{\sim} (\cF',\omega')$ is a morphism $f : \cF \to \cF'$ induced by
    an isomorphism of tropical fans for some fan structures $\Sigma$ on $\cF$ and $\Sigma'$ on $\cF'$.
\end{definition}

\begin{remark} \label{rmk:iso_fans_to_toric}
    In our definition, an isomorphism of fans $\Sigma \subset N_{\bR}$ and $\Sigma' \subset N'_{\bR}$ does not
    necessarily induce an isomorphism of toric varietes $X(\Sigma;N) \cong X(\Sigma;N')$, but instead an isomorphism of
    toric varieties $X(\Sigma;N_{\Sigma}) \cong X(\Sigma';N_{\Sigma'})$. Note this implies in particular that isomorphic
    tropical fans have isomorphic rings of tropical cocycles and groups of tropical cycles.
\end{remark}

\subsection{Divisors and tropical modifications}

Tropical modifications, introduced by Mikhalkin \cite{mikhalkinTropicalGeometryIts2007}, play a fundamental role in this
article. They are used in, e.g., \cite{shawTropicalIntersectionProduct2013} to describe intersection theory of linear
tropical fans, and in \cite{aminiHomologyTropicalFans2021} to understand homology of tropical fans. Our exposition is
based on \cite{allermannFirstStepsTropical2010, aminiHomologyTropicalFans2021}.

\subsubsection{Divisors and piecewise integral linear functions}

\begin{definition}
    Let $\Sigma$ be a fan. A \emph{linear function} on $\Sigma$ is a continuous function $\ell : \lvert \Sigma \rvert
    \to \bR$ which is the restriction of an integral linear function $\ell \in M = \Hom(N,\bZ)$. A \emph{piecewise
    integral linear function} on $\Sigma$ is a continuous function $\varphi : \lvert \Sigma \rvert \to \bR$, such that
    on each cone $\sigma \in \Sigma$, $\varphi\lvert_{\sigma}$ is identified with the restriction of an integral linear
    function $\varphi_{\sigma} \in M_{\sigma} = \Hom(N_{\sigma},\bZ)$. The group of piecewise integral linear functions
    on $\Sigma$ is denoted by $PP^1(\Sigma)$.
\end{definition}

\begin{proposition}[{\cite[Theorem 4.5, Corollary 4.6]{katzPiecewisePolynomialsMinkowski2008}}] \label{prop:pic}
    Let $\Sigma$ be any fan. Then
    \[
        \Pic(X(\Sigma)) \cong A^1(\Sigma) \cong PP^1(\Sigma)/M.
    \]
\end{proposition}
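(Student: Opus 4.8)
The plan is to prove the two isomorphisms separately: first the classical identification $\Pic(X(\Sigma)) \cong PP^1(\Sigma)/M$ of the Picard group with piecewise integral linear functions modulo globally linear ones, and then the identification $\Pic(X(\Sigma)) \cong A^1(\Sigma)$ of the Picard group with operational Chow cohomology in degree one. By the discussion in \cref{sec:morphisms} we may assume $\Sigma$ spans $N_\bR$, i.e.\ that $X(\Sigma)$ has no torus factor, since none of $A^1(\Sigma)$, $\Pic(X(\Sigma))$, or $PP^1(\Sigma)/M$ changes under adding a torus factor. For the first isomorphism, recall that $\varphi \in PP^1(\Sigma)$ determines a $T$-Cartier divisor $D_\varphi$, cut out on the invariant affine chart $U_\sigma$ by $\chi^{-\varphi_\sigma}$; the assignment $\varphi \mapsto \mathcal{O}_{X(\Sigma)}(D_\varphi)$ realizes every $T$-linearized line bundle, and two such functions give isomorphic (non-equivariant) line bundles if and only if they differ by an element of $M$. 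Since $X(\Sigma)$ has no torus factor, every line bundle is $T$-linearizable, so this descends to an isomorphism $PP^1(\Sigma)/M \xrightarrow{\sim} \Pic(X(\Sigma))$. This part is standard toric geometry.

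For the second isomorphism, I would use the first Chern class map $c_1 : \Pic(X(\Sigma)) \to A^1(\Sigma)$, which commutes with pullback, together with a unimodular refinement $\pi : X(\wt\Sigma) \to X(\Sigma)$, a proper birational toric morphism with smooth source. Over the smooth variety $X(\wt\Sigma)$ the map $c_1$ is an isomorphism $\Pic(X(\wt\Sigma)) \xrightarrow{\sim} A^1(\wt\Sigma)$ (\cref{thm:chow_pres} in degree one). For injectivity on $X(\Sigma)$: if $c_1(L)=0$ then $c_1(\pi^*L) = \pi^* c_1(L) = 0$, so $\pi^*L$ is trivial, and since $\pi_*\mathcal{O}_{X(\wt\Sigma)} = \mathcal{O}_{X(\Sigma)}$ the projection formula gives $L \cong \pi_*\pi^*L$ trivial. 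For surjectivity: given $c \in A^1(\Sigma)$, write $\pi^*c = c_1(L')$ with $L' \in \Pic(X(\wt\Sigma))$. For each $\sigma \in \Sigma$, the restriction of $c$ lies in $A^1(U_\sigma)$, which vanishes since $A^*(U_\sigma) = \bZ$ is concentrated in degree zero for an affine toric variety (e.g.\ via its contracting $\bC^*$-action, cf.\ \cite{fultonIntersectionTheoryToric1997}); hence $c_1(L'|_{\pi^{-1}(U_\sigma)}) = \pi^*(c|_{U_\sigma}) = 0$, and as $\pi^{-1}(U_\sigma) = X(\wt\Sigma|_\sigma)$ is smooth, $L'|_{\pi^{-1}(U_\sigma)}$ is trivial. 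By standard descent ($\pi_*\mathcal{O}_{X(\wt\Sigma)} = \mathcal{O}_{X(\Sigma)}$), a line bundle on $X(\wt\Sigma)$ trivial on each $\pi^{-1}(U_\sigma)$ descends, so $L' = \pi^*L$ for some $L \in \Pic(X(\Sigma))$, whence $\pi^*(c - c_1(L)) = 0$. By Kimura's theorem, $\pi^* : A^*(\Sigma) \to A^*(\wt\Sigma)$ is (split) injective for the proper surjection $\pi$ with smooth source, so $c = c_1(L)$. Combining the two isomorphisms gives $\Pic(X(\Sigma)) \cong A^1(\Sigma) \cong PP^1(\Sigma)/M$.

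The main obstacle is the surjectivity of $c_1$ — that every operational $1$-cocycle is the Chern class of a line bundle — which relies on the vanishing of $A^1$ on affine toric charts and on Kimura-type injectivity of pullback along a toric resolution; the identification $\Pic = PP^1/M$ and the smooth case are routine by comparison. An alternative, and arguably cleaner, route goes through equivariant Chow cohomology: Payne's theorem identifies $A^*_T(X(\Sigma))$ with the ring $PP^*(\Sigma)$ of piecewise polynomial functions, so $A^1_T(X(\Sigma)) \cong PP^1(\Sigma)$; the forgetful map $A^*_T \to A^*$ is an isomorphism after $\otimes_{A^*_T(\mathrm{pt})}\bZ$ in degrees $\le 1$ (even though this fails in higher degree, cf.\ the remark following \cref{thm:chow_pres}), and $A^1_T(\mathrm{pt}) = M$, giving $A^1(\Sigma) \cong PP^1(\Sigma)/M$ directly, which one recognizes as $\Pic(X(\Sigma))$ as above. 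Either way, the substance of the proof is the passage between the combinatorial/equivariant description and the operational Chow cohomology.
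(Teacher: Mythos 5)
The paper offers no argument for \cref{prop:pic} at all---it is quoted directly from Katz--Payne \cite{katzPiecewisePolynomialsMinkowski2008}---so your proposal is really being measured against that source. Much of your outline is fine: the identification $\Pic(X(\Sigma)) \cong PP^1(\Sigma)/M$ via $T$-Cartier divisors is standard for an arbitrary fan; your injectivity argument for $c_1$ (pull back along a toric resolution $\pi$, use $\Pic \hookrightarrow A^1 \cong \mathrm{Cl}$ on the smooth model, then $\pi_*\mathcal{O} = \mathcal{O}$ and the projection formula) is correct; and the surjectivity skeleton (trivialize $L'$ over each $\pi^{-1}(U_\sigma)$, descend using $\pi_*\mathcal{O}=\mathcal{O}$, finish with Kimura-type injectivity of $\pi^*$ \cite{shun-ichiFractionalIntersectionBivariant1992}, valid here because a proper birational toric morphism is an envelope) has the right shape.

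The genuine gap is the one step that carries all the content: the claim that $A^*(U_\sigma)=\bZ$, or even just $A^1(U_\sigma)=0$, for every affine toric chart. This is not in Fulton--Sturmfels \cite{fultonIntersectionTheoryToric1997}, whose Chow-cohomology computations concern complete fans, and it is not a formal consequence of the contracting $\bC^*$-action: to run that argument you need $\bA^1$-homotopy invariance (a specialization principle) for \emph{operational} Chow cohomology, which is itself a nontrivial statement requiring envelope/Kimura-type input, and for non-full-dimensional $\sigma$ the chart $U_\sigma$ has torus factors and no fixed point, so the contraction does not even literally apply (that part can be patched via the product decomposition of \cref{sec:morphisms}, but the full-dimensional case cannot be waved away). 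In fact $A^1(U_\sigma)=0$ \emph{is} the proposition for the fan of faces of $\sigma$, since $PP^1$ of a single cone consists of restrictions of linear functions, so $PP^1/M=0=\Pic(U_\sigma)$; it is exactly the nontrivial content of the cited theorem, while the vanishing of $A^k(U_\sigma)$ for all $k>0$ is a later and harder result (Edidin--Richey), not a folklore fact. Your alternative equivariant route has the same defect: granting $A^*_T \cong PP^*$ \cite{payneEquivariantChowCohomology2006}, the assertion that $A^*_T\otimes_{\mathrm{Sym}\,M}\bZ \to A^*$ is an isomorphism in degree one for arbitrary singular, non-complete toric varieties is again the substance of Katz--Payne's result, not an available input. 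To close the gap you would need to prove the degree-one vanishing on affine charts directly, e.g.\ via Kimura's exact sequence $0 \to A^1(U_\sigma) \to A^1(\tilde U_\sigma) \to A^1(\tilde U_\sigma \times_{U_\sigma} \tilde U_\sigma)$ for a toric resolution together with a combinatorial argument that any piecewise linear function on the subdivision satisfying the compatibility condition is linear on $\sigma$---which is essentially the work done in the cited source.
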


\begin{definition}[{\cite{allermannFirstStepsTropical2010}}]
    Let $\Sigma$ be a tropical fan. A \emph{tropical Cartier divisor} on $\Sigma$ is an element of $A^1(\Sigma)$.
\end{definition}

\begin{remark}
    While on the algebraic side $A^1(X(\Sigma))$ is really the group of Cartier divisor \emph{classes}, there is no such
    distinction on the tropical side. This is because, linear functions on $\Sigma$ actually define \emph{trivial}
    tropical divisors, as opposed to potentially nontrivial principal Cartier divisors on $X(\Sigma)$. See
    \cite{allermannFirstStepsTropical2010, allermannRationalEquivalenceTropical2016a} for more details.
\end{remark}

\begin{definition}
    Let $\Sigma$ be a tropical fan of dimension $d$. A \emph{tropical Weil divisor} on $\Sigma$ is an element of
    $M_{d-1}(\Sigma)$.
\end{definition}

\begin{definition}[{\cite{allermannFirstStepsTropical2010}}]
  \label{def:ord_vanishing}
    Let $\varphi$ be a piecewise integral linear function on a $d$-dimensional tropical fan $(\Sigma,\omega)$. The
    \emph{order of vanishing} of $\varphi$ along a cone $\tau \in \Sigma_{d-1}$ is
    \begin{align*}
      \ord_{\tau}(\varphi) = \varphi_{\tau}\left( \sum_{\substack{\sigma \in \Sigma_d \\ \sigma \succ \tau}}
      \omega(\sigma)n_{\sigma,\tau}\right) - \sum_{\substack{\sigma \in \Sigma_d \\ \sigma \succ \tau}}
      \varphi_{\sigma}(\omega(\sigma)n_{\sigma,\tau})
    \end{align*}
\end{definition}

\begin{lemma}[{\cite[Proposition 3.7]{allermannFirstStepsTropical2010}}]
    The function $\Sigma_{d-1} \to \bZ$, $\tau \mapsto \ord_{\tau}(\varphi)$ is a well-defined (independent of the
    choice of $n_{\sigma,\tau}$) $(d-1)$-dimensional Minkowski weight on $\Sigma$.
\end{lemma}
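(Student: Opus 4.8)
The plan is to verify directly from \cref{def:ord_vanishing} that $\tau \mapsto \ord_\tau(\varphi)$ satisfies the balancing condition of \cref{def:nice_props}... more precisely, the balancing condition in the definition of a Minkowski weight, and that the value $\ord_\tau(\varphi)$ does not depend on the choice of the lattice points $n_{\sigma,\tau}$. I would handle the well-definedness first, as it is the quicker point. If $n_{\sigma,\tau}$ and $n_{\sigma,\tau}'$ are two admissible choices, they differ by an element of $N_\tau$; since $\varphi_\tau$ and $\varphi_\sigma$ agree on $N_\tau$ (both restrict the continuous function $\varphi$, and $\tau \prec \sigma$), the two contributions $\varphi_\tau(\omega(\sigma)n_{\sigma,\tau})$ and $\varphi_\sigma(\omega(\sigma)n_{\sigma,\tau})$ each change by $\varphi_\tau(\omega(\sigma)(n_{\sigma,\tau}'-n_{\sigma,\tau}))$, so the difference appearing in $\ord_\tau(\varphi)$ is unchanged. (One also notes $\ord_\tau(\varphi)$ depends only on $\varphi$ near $\tau$, so we may compute locally.)

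For the balancing condition, fix a cone $\rho \in \Sigma_{d-2}$ and we must show
\[
    \sum_{\substack{\tau \in \Sigma_{d-1} \\ \tau \succ \rho}} \ord_\tau(\varphi)\, n_{\tau,\rho} \equiv 0 \pmod{N_\rho}.
\]
The strategy is to reduce to the star fan $\Sigma^\rho$, which is a tropical fan of dimension $2$ (with induced weight), and to observe that $\varphi$ induces a piecewise integral linear function $\varphi^\rho$ on $\Sigma^\rho$ whose orders of vanishing along rays are exactly $\ord_\tau(\varphi)$ for $\tau \succ \rho$. So it suffices to treat the case $\dim \Sigma = 2$, $\rho = 0$: one must show $\sum_{\tau \in \Sigma_1} \ord_\tau(\varphi)\, v_\tau \equiv 0 \pmod{0}$, i.e. literally $=0$ in $N_{\bR}$. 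Writing this out via \cref{def:ord_vanishing} gives a double sum over pairs $\tau \prec \sigma$ of terms of the form $\bigl(\varphi_\tau(n_{\sigma,\tau}) - \varphi_\sigma(n_{\sigma,\tau})\bigr) v_\tau$; the plan is to rearrange by choosing, for each $2$-dimensional $\sigma$ with rays $\tau_1, \tau_2$, compatible primitive generators $v_1, v_2$ so that $n_{\sigma,\tau_1} = v_2 + (\text{multiple of } v_1)$ etc., and then to check the telescoping cancellation cone by cone. The balancing of $\omega$ itself ($\sum_{\sigma \succ \tau}\omega(\sigma)n_{\sigma,\tau} \in N_\tau$) is what makes the $\varphi_\tau$-terms collapse correctly; the $\varphi_\sigma$-terms cancel in pairs because each $2$-cone contributes to exactly two rays with opposite-pointing quotient generators.

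The main obstacle is bookkeeping: getting the signs and the lattice-index subtleties right when expanding $\sum_\tau \ord_\tau(\varphi) n_{\tau,\rho}$ and matching terms across adjacent cones, all while working modulo $N_\rho$ rather than with honest equalities. An alternative that may streamline this — and which I would fall back on if the direct computation gets unwieldy — is to invoke \cref{prop:pic}: $\varphi$ defines a class in $A^1(\Sigma)$, and the assignment $\varphi \mapsto (\tau \mapsto \ord_\tau(\varphi))$ should be identified with the cap product $A^1(\Sigma) \times M_d(\Sigma) \to M_{d-1}(\Sigma)$ of the divisor class with the fundamental weight $\omega$; since that cap product lands in $M_{d-1}(\Sigma)$ by construction, balancing is automatic. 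However, since this lemma is cited from \cite{allermannFirstStepsTropical2010} and is genuinely elementary, I would present the direct local computation, using the $2$-dimensional reduction to keep the indexing manageable.
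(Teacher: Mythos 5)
The paper offers no proof of this lemma---it is quoted verbatim from \cite[Proposition 3.7]{allermannFirstStepsTropical2010}---so there is nothing internal to compare against; I will judge your argument on its own. Your treatment of well-definedness is correct: two admissible choices of $n_{\sigma,\tau}$ differ by an element of $N_{\tau}$, the first term of $\ord_{\tau}(\varphi)$ changes by $\varphi_{\tau}(\omega(\sigma)m)$ and the second by $\varphi_{\sigma}(\omega(\sigma)m)$, and these agree because $\varphi_{\sigma}$ and $\varphi_{\tau}$ restrict the same continuous function to $\tau$ and hence agree on $N_{\tau}$. The reduction of balancing at $\rho \in \Sigma_{d-2}$ to the two-dimensional star fan $\Sigma^{\rho}$ is also the right move, but note that $\varphi$ does not descend to $N/N_{\rho}$ as written: you must first subtract a linear extension $\tilde\ell \in M$ of $\varphi_{\rho}$ (which exists since $N_{\rho}$ is saturated in $N$), observe that $\ord_{\tau}(\varphi)=\ord_{\tau}(\varphi-\tilde\ell)$, and check that $\varphi-\tilde\ell$ descends to a piecewise integral linear function on $\Sigma^{\rho}$ with matching orders of vanishing.

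The genuine gap is in the two-dimensional computation itself. After expanding, each $2$-cone $\sigma$ with rays $\tau,\tau'$ contributes $\omega(\sigma)(\varphi_{\sigma}(n_{\sigma,\tau})v_{\tau}+\varphi_{\sigma}(n_{\sigma,\tau'})v_{\tau'})$, and these terms do not ``cancel in pairs'': the two summands point along different rays, and there is no pairing of adjacent cones that kills them---the vanishing of $\sum_{\tau}\ord_{\tau}(\varphi)v_{\tau}$ is a global linear dependence, not a local telescope. (In the standard tropical plane, for instance, the sum collapses only because $\sum_{\tau}v_{\tau}=0$.) Moreover $n_{\sigma,\tau}$ is $v_{\tau'}$ plus a multiple of $v_{\tau}$ only when $N_{\sigma}=N_{\tau}+N_{\tau'}$; in general $v_{\tau'}\equiv m_{\sigma}\,n_{\sigma,\tau} \bmod N_{\tau}$ with $m_{\sigma}=[N_{\sigma}:N_{\tau}+N_{\tau'}]$. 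The clean way to finish is to pair against covectors: for piecewise integral linear $\varphi,\psi$ set $\langle\varphi,\psi\rangle=\sum_{\tau}\ord_{\tau}(\varphi)\psi(v_{\tau})$. Writing $n_{\sigma,\tau}=\alpha v_{\tau}+m_{\sigma}^{-1}v_{\tau'}$ and $n_{\sigma,\tau'}=m_{\sigma}^{-1}v_{\tau}+\alpha'v_{\tau'}$ in $N_{\sigma}\otimes\bQ$, one checks that each cone's contribution to $\langle\varphi,\psi\rangle$ is symmetric in $\varphi$ and $\psi$, so $\langle\varphi,\psi\rangle=\langle\psi,\varphi\rangle$; taking $\psi=\ell$ globally linear gives $\ell(\sum_{\tau}\ord_{\tau}(\varphi)v_{\tau})=\langle\ell,\varphi\rangle=\sum_{\tau}\ord_{\tau}(\ell)\varphi(v_{\tau})=0$, and since $\ell$ is arbitrary the sum vanishes. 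Your fallback via $\varphi\cap\omega=-\div(\varphi)$ is logically sound (the cap product is balanced by construction), but it rests on the Fulton--Sturmfels computation of the toric cap product, which is substantially heavier than the lemma itself; as the primary argument it would invert the intended order of development.
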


\begin{definition}[{\cite[Definition 3.4]{allermannFirstStepsTropical2010}}]
  \label{def:principal_weil}
  Let $\varphi$ be a piecewise integral linear function on a $d$-dimensional tropical fan $(\Sigma,\omega)$. The
  \emph{principal tropical Weil divisor} associated to $\varphi$ is the tropical fan cycle $\div(\varphi) \in
  M_{d-1}(\Sigma)$ defined by $\div(\varphi)(\tau) = \ord_{\tau}(\varphi)$.
\end{definition}

When we wish to view $\div(\varphi)$ as a tropical fan rather than a tropical fan cycle, we write it as $\div(\varphi) =
(\Delta,\delta)$, where $\Delta$ is the support of $\div(\varphi)$, i.e the $(d-1)$-dimensional fan consisting of the
cones $\tau \in \Sigma_{d-1}$ for which $\ord_{\tau}(\varphi) \neq 0$, and $\delta(\tau) = \ord_{\tau}(\varphi)$.  Thus
by abuse of notation $\div(\varphi)$ could refer to either the tropical fan cycle $\div(\varphi) \in M_{d-1}(\Sigma)$ or
the tropical fan $(\Delta,\delta)$.

\begin{proposition}[{\cite[Proposition 4.8]{aminiHomologyTropicalFans2021}}]
    Let $(\Sigma,\omega)$ be a $d$-dimensional tropical fan and let $\varphi \in A^1(\Sigma)$. Then
    \[
      \varphi \cap \omega = -\div(\varphi)
    \]
\end{proposition}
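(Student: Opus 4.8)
The plan is to verify the identity $\varphi \cap \omega = -\div(\varphi)$ by unwinding the definition of the tropical cap product and comparing with the formula for the order of vanishing. Both sides live in $M_{d-1}(\Sigma) \cong \Hom(A_{n-d+1}(X(\Sigma)),\bZ)$, so it suffices to evaluate each side on a generator $[V(\tau)]$ for $\tau \in \Sigma_{d-1}$ and check equality. First I would choose a nice representative for $\varphi$: using \cref{prop:pic}, $\varphi$ corresponds to a piecewise integral linear function, and I may pick a representative so that $\varphi_\sigma = 0$ for the top-dimensional cones $\sigma$ I care about — but cleaner is to reduce, by adding a global linear function $u \in M$ (which changes neither side of the identity, since it acts trivially on $A^1(\Sigma)$ and since $\div$ is additive with $\div(u) = 0$), to the situation where $\varphi$ vanishes on enough cones to simplify the local computations. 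Alternatively, one works directly with the general formula in \cref{def:ord_vanishing}.

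The key computation is the evaluation $(\varphi \cap \omega)([V(\tau)])$ for $\tau \in \Sigma_{d-1}$. By definition of the cap product, this equals $\omega(\varphi \cap [V(\tau)])$, where $\varphi \cap [V(\tau)] \in A_{n-d}(X(\Sigma))$. Now $\varphi \cap [V(\tau)]$ is the class of the Cartier divisor associated to $\varphi$ restricted to $V(\tau) \cong X(\Sigma^\tau)$: since $\Sigma^\tau$ is a one-dimensional fan (its rays correspond to the $\sigma \in \Sigma_d$ with $\sigma \succ \tau$), one has an explicit formula (this is the standard toric intersection formula, cf. \cite{fultonIntersectionTheoryToric1997}) expressing $\varphi \cap [V(\tau)]$ as a $\bZ$-linear combination $\sum_{\sigma \succ \tau} c_\sigma [V(\sigma)]$ of the torus-fixed points (or, after quotienting, as an element of $A_0$). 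Pairing with $\omega$ — which on each $\sigma \in \Sigma_d$ returns $\omega(\sigma)$ — and keeping track of the lattice indices $n_{\sigma,\tau}$, the resulting expression must be rearranged into exactly
\[
    \varphi_\tau\!\left(\sum_{\sigma \succ \tau} \omega(\sigma) n_{\sigma,\tau}\right) - \sum_{\sigma \succ \tau} \varphi_\sigma\!\left(\omega(\sigma) n_{\sigma,\tau}\right) = \ord_\tau(\varphi) = -(-\div(\varphi))(\tau),
\]
up to the overall sign, which is the content of the proposition.

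**The main obstacle** I expect is bookkeeping the sign and the lattice-index factors correctly: the toric formula for $\varphi \cap [V(\tau)]$ on the one-dimensional star fan $\Sigma^\tau$ involves the pairing of $\varphi_\sigma$ against the primitive generators $n_{\sigma,\tau}$ modulo $N_\tau$, and one must be careful that the cap product convention used in the \emph{operational} Chow ring (where $A^1$ pulls back and caps against $A_*$) matches the convention implicit in \cref{def:ord_vanishing}, with the minus sign in the statement absorbing the discrepancy between ``divisor of a function'' and ``first Chern class''. A clean way to organize this is: reduce to $\dim \Sigma = d$ with $\tau$ fixed, pass to the star fan $\Sigma^\tau$ of dimension $1$, invoke the known one-dimensional case (or \cite[Proposition 2.1]{fultonIntersectionTheoryToric1997} together with the description of Cartier divisors on a toric curve), and then pull back along the inclusion $V(\tau) \hookrightarrow X(\Sigma)$ using the projection formula. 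Since this identity is quoted from \cite[Proposition 4.8]{aminiHomologyTropicalFans2021}, I would also remark that the statement is already established there, and our contribution is only to note it holds verbatim with the operational (possibly non-simplicial) Chow ring $A^1(\Sigma)$ in place of the piecewise-linear description, which follows since both sides are compatible with refinement and the identity is insensitive to replacing $\Sigma$ by a unimodular refinement.
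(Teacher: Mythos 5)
Your proposal is correct and takes essentially the same route as the paper: the paper's proof simply cites the reduced unimodular case from the result of Amini--Piquerez and remarks that the identical local computation (the one you sketch, evaluating $\omega(\varphi \cap [V(\tau)])$ cone by cone) extends to arbitrary fans because the description $A^1(\Sigma) \cong PP^1(\Sigma)/M$ holds for non-simplicial $\Sigma$ as well (\cref{prop:pic}). Your alternative handling of the non-unimodular case by passing to a unimodular refinement and using compatibility of $\div$, the cap product, and the projection formula with refinements is a minor variant of the same idea and also works.
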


\begin{proof}
    This is proved in the reduced unimodular case in \cite[Proposition 4.8]{aminiHomologyTropicalFans2021}. A similar
    proof works in the general case, using in particular that the description of elements of $A^1(\Sigma)$ as piecewise
    integral linear functions modulo globally linear functions holds for arbitrary fans $\Sigma$ (\cref{prop:pic}).
\end{proof}

\begin{remark}
    Our definition of $\ord_{\tau}(\varphi)$ agrees with the one in \cite{aminiHomologyTropicalFans2021} (in the reduced
    case), but is the negative of the one in \cite{allermannFirstStepsTropical2010}. This is because we work with the
    ``min'' convention in tropical geometry, while \cite{allermannFirstStepsTropical2010} works with the ``max''
    convention.
\end{remark}

\subsubsection{Tropical modifications}

The definition of the tropical divisor associated to a piecewise integral linear function is motivated by the following
observation.

Let $\varphi : \Sigma \to \bR$ be a piecewise integral linear function on a tropical fan $(\Sigma,\omega)$ in $N_{\bR}$. The
graph of $\varphi$ gives a fan $\Gamma_\varphi(\Sigma) \subset \wN_{\bR} = N_{\bR} \times \bR$ with cones
\[
  \wt\sigma = \{(x,\varphi(x)) \mid x \in \sigma\}.
\]
A direct computation \cite[Construction 3.3]{allermannFirstStepsTropical2010} shows that, for a codimension one cone
$\tau$ of $\sigma$,
\[
    \sum_{\substack{\sigma \succ \tau, \\ \dim \sigma = \dim \tau + 1}} (n_{\sigma,\tau},\varphi_{\sigma}(n_{\sigma,\tau})) =
    \left(0,-\ord_{\tau}(\varphi)\right) \mod{N_{\tau}}.
\]
In particular, $\Gamma_\varphi(\Sigma)$ fails the balancing condition around the cones $\wt\tau$ for which
$\ord_{\tau}(\varphi)$ is nonzero. Balancing is restored by adding the cones
\[
    \tau_{\geq} = \wt\tau + \bR_{\geq 0}(\vec{0},1)
\]
for $\tau \in \div(\varphi)$, with weight $\ord_{\tau}(\varphi)$.

\begin{definition}
    The \emph{tropical modification} of a tropical fan $\Sigma \subset N_{\bR}$ with respect to a piecewise integral
    linear function $\varphi$ is the tropical fan $\cT\cM_{\varphi}(\Sigma) \subset \wN_{\bR} = N_{\bR} \times \bR$ with cones
    \begin{itemize}
      \item $\wt\sigma = \{(x,\varphi(x)) \mid x \in \sigma\}$, for $\sigma \in \Sigma$,
      \item $\tau_{\geq} = \wt\tau + \bR_{\geq 0}(\vec{0},1)$ for $\tau \in \div(\varphi)$,
    \end{itemize}
    and weights $\wt\omega(\wt\sigma) = \omega(\sigma)$ and $\wt\omega(\tau_{\geq}) = \ord_{\tau}(\varphi)$.
\end{definition}

Tropical modifications are indeed tropical fans by \cite[Proposition 3.7]{allermannFirstStepsTropical2010}---see
\cite[Section 4.5]{mikhalkinTropicalGeometry2018} and \cite[Sections 4-5]{aminiHomologyTropicalFans2021} (in the reduced
case) for more details. See \cref{sec:example_fans} for examples.

\begin{remark} \label{rmk:degen_mods}
    An important special case of tropical modifications occurs when the divisor $\Delta = \div(\varphi)$ is trivial, in
    which case the tropical modification is said to be \emph{degenerate} \cite[5.1.4]{aminiHomologyTropicalFans2021}. In
    general a degenerate tropical modification may differ from the original fan \cite[Example
    11.4]{aminiHomologyTropicalFans2021}; however, this does not occur if $\varphi$ is a \emph{globally} integral linear
    function, in which case the modification is simply a linear re-embedding of $\Sigma$, and therefore isomorphic to
    $\Sigma$. In particular, degenerate tropical modifications of Poincar\'e tropical fans are isomorphic to the
    original fan, see \cite[5.1.4]{aminiHomologyTropicalFans2021}.
\end{remark}

\begin{remark}
    Tropical modifications depend both on the divisor $\Delta$ and the piecewise integral linear function $\varphi$.  In
    \cite[Example 4.2]{brugalleInflectionPointsReal2012}, the authors gives an example of two different tropical
    modifications along the same divisor. On the other hand, if $\Sigma$ is Poincar\'e, then $\div : A^1(\Sigma) \to
    M_{d-1}(\Sigma)$ is an isomorphism, so any two piecewise integral linear functions which define the same tropical
    Weil divisor differ by a linear function. This linear function induces an isomorphism between the two tropical
    modifications, cf. \cite[5.1.5]{aminiHomologyTropicalFans2021}. Thus when $\Sigma$ is Poincar\'e, we write
    $\cT\cM_{\Delta}(\Sigma)$ to denote any tropical modification of $\Sigma$ with respect to a piecewise integral
    linear function $\varphi$ such that $\div(\varphi)=\Delta$.
\end{remark}

\begin{lemma}
    If $\Sigma$ is a unimodular tropical fan, then any tropical modification of $\Sigma$ is also unimodular.
\end{lemma}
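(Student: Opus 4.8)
The plan is to work directly with the cones of the tropical modification and verify that each one is generated by part of a basis of the lattice $\wN = N \times \bZ$. Recall $\cT\cM_\varphi(\Sigma)$ has two types of cones: the graph cones $\wt\sigma = \{(x,\varphi(x)) \mid x \in \sigma\}$ for $\sigma \in \Sigma$, and the ``vertical'' cones $\tau_\geq = \wt\tau + \bR_{\geq 0}(\vec 0,1)$ for $\tau \in \div(\varphi)$. So it suffices to check unimodularity on each of these two families separately, using that $\Sigma$ is unimodular by hypothesis.

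First I would handle the graph cones. If $\sigma \in \Sigma$ is unimodular, say $\sigma = \mathrm{cone}(v_1,\ldots,v_k)$ with $v_1,\ldots,v_k$ part of a basis of $N$, then $\wt\sigma$ is generated by $(v_1,\varphi_\sigma(v_1)),\ldots,(v_k,\varphi_\sigma(v_k))$ — here I use that $\varphi$ restricts to an integral linear function $\varphi_\sigma \in M_\sigma$ on $\sigma$, so these are genuine lattice points of $\wN$. The key observation is that the lattice map $N \times \bZ \to N \times \bZ$ sending $(x,t) \mapsto (x, t - \varphi_\sigma(x))$ — extending $\varphi_\sigma$ from $N_\sigma$ to $N$ arbitrarily, e.g. by zero off a complementary summand — is an isomorphism of $\wN$ (it is unipotent, with integer entries), and it carries the generators $(v_i,\varphi_\sigma(v_i))$ to $(v_i,0)$. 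Since $v_1,\ldots,v_k$ extend to a basis of $N$, the vectors $(v_1,0),\ldots,(v_k,0)$ extend to a basis of $N \times \bZ$ (adjoin the missing $v_j$'s and $(\vec 0,1)$), hence so do their images under the inverse isomorphism. Thus $\wt\sigma$ is unimodular in $\wN$.

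Next the vertical cones. For $\tau \in \div(\varphi) \subset \Sigma_{d-1}$, write $\tau = \mathrm{cone}(v_1,\ldots,v_{d-1})$ with the $v_i$ part of a basis of $N$; then $\tau_\geq$ is generated by $(v_1,\varphi_\tau(v_1)),\ldots,(v_{d-1},\varphi_\tau(v_{d-1})),(\vec 0,1)$. Applying the same shearing isomorphism associated to $\varphi_\tau$ (now extending $\varphi_\tau$ from $N_\tau$), these map to $(v_1,0),\ldots,(v_{d-1},0),(\vec 0,1)$, which again extend to a basis of $N \times \bZ$. So $\tau_\geq$ is unimodular. Finally I would note that one should also check the \emph{weight} of $\tau_\geq$ is $1$, since unimodularity of a tropical fan in the strong sense used here might be taken to include reducedness — but in fact ``unimodular'' for a fan is a purely cone-theoretic/lattice condition on the fan $\Sigma$, independent of weights, so nothing further is needed; the only mild subtlety is that $\cT\cM_\varphi(\Sigma)$ must first be a bona fide fan (cones meet in common faces), which is part of the cited construction of tropical modifications.

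The main obstacle is essentially bookkeeping: making precise the ``shearing'' lattice automorphism of $\wN$ attached to a local linear piece $\varphi_\sigma$ and checking it is well-defined and integral. Concretely, $\varphi_\sigma \in M_\sigma = \Hom(N_\sigma,\bZ)$ is only defined on the sublattice $N_\sigma$; but since $N_\sigma$ is a direct summand of $N$ (as noted in \cref{sec:morphisms}), $\varphi_\sigma$ extends to an element of $M = \Hom(N,\bZ)$, and then $(x,t)\mapsto(x,t-\varphi_\sigma(x))$ is the desired automorphism of $\wN$. Once this is set up the rest is immediate, so I expect the proof to be genuinely short — the cited fact \cite[Proposition 3.7]{allermannFirstStepsTropical2010} that tropical modifications are tropical fans already carries most of the structural weight.
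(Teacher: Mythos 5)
Your proof is correct and follows the same route as the paper, which simply observes that the graph cones $\wt\sigma$ and the vertical cones $\wt\tau + \bR_{\geq 0}(\vec 0,1)$ are "clearly" unimodular; your shearing automorphism $(x,t)\mapsto(x,t-\varphi_\sigma(x))$ is exactly the right way to make that "clearly" precise.
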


\begin{proof}
    Clearly if $\sigma \in \Sigma$ is unimodular, then the cone $\wt\sigma$ is also unimodular. The possible new cones
    of $\Sigma$ have the form $\wt\tau + \bR_{\geq 0}(\vec{0},1)$ for some $\tau \in \Sigma$, hence these are clearly
    unimodular as well.
\end{proof}

\begin{lemma} \label{lem:red_mod}
    A tropical modification of a reduced tropical fan along a trivial or reduced tropical divisor is reduced.
\end{lemma}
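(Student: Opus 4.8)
The plan is to unwind the definition of a tropical modification and read off the weights of the maximal cones directly. Write $(\Sigma,\omega)$ for the given reduced $d$-dimensional tropical fan, $\varphi$ for the piecewise integral linear function defining the modification, and $(\Delta,\delta) = \div(\varphi)$ for the associated tropical divisor, which by hypothesis is either trivial (empty support) or reduced, i.e.\ $\delta(\tau) = 1$ for every $\tau$ in its support.

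First I would recall that the top-dimensional cones of $\cT\cM_{\varphi}(\Sigma)$ fall into two families: the graphs $\wt\sigma = \{(x,\varphi(x)) \mid x \in \sigma\}$ for $\sigma \in \Sigma_d$, and the cones $\tau_{\geq} = \wt\tau + \bR_{\geq 0}(\vec{0},1)$ for $\tau \in \Delta$. A quick dimension count shows both consist of $d$-dimensional cones: $\wt\sigma$ is the image of $\sigma$ under the linear embedding $x \mapsto (x,\varphi_{\sigma}(x))$, while $\tau_{\geq}$ is obtained from the $(d-1)$-dimensional cone $\wt\tau$ by adjoining the ray $\bR_{\geq 0}(\vec{0},1)$, whose direction $(\vec{0},1)$ is not in the linear span of $\wt\tau$ (that span consists of vectors of the form $(x,\varphi_{\tau}(x))$, none of which is a nonzero vertical vector). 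The same observation shows the two families are disjoint, since a graph $\wt\sigma$ contains no vertical ray; hence the weight function $\wt\omega$ is unambiguously specified on each maximal cone by the definition of $\cT\cM_{\varphi}(\Sigma)$.

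Next I would read off the weights. For a cone of the first type, $\wt\omega(\wt\sigma) = \omega(\sigma) = 1$ since $(\Sigma,\omega)$ is reduced. For a cone of the second type, $\wt\omega(\tau_{\geq}) = \ord_{\tau}(\varphi) = \delta(\tau)$. If $\Delta$ is trivial there are no cones of the second type and the conclusion is immediate; if $\Delta$ is reduced then $\delta(\tau) = 1$ for every such $\tau$, so again every maximal cone of $\cT\cM_{\varphi}(\Sigma)$ has weight one. In either case $\cT\cM_{\varphi}(\Sigma)$ is reduced.

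Since the argument is a direct unwinding of definitions, there is no real obstacle; the only point requiring a moment's care is the bookkeeping of the previous two paragraphs --- confirming that the cones $\tau_{\geq}$ are genuinely top-dimensional (so that their weights $\ord_{\tau}(\varphi)$ must be included in the check) and that the two families of maximal cones do not overlap, so that $\wt\omega$ is well defined.
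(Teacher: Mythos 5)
Your proof is correct and is exactly the unwinding of definitions that the paper intends by its one-word proof (``Immediate''): the maximal cones of $\cT\cM_{\varphi}(\Sigma)$ are the graphs $\wt\sigma$ with weight $\omega(\sigma)=1$ and the cones $\tau_{\geq}$ with weight $\ord_{\tau}(\varphi)=\delta(\tau)=1$. The extra bookkeeping you supply (both families are $d$-dimensional and disjoint) is accurate and harmless.
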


\begin{proof}
    Immediate.
\end{proof}

\begin{proposition}[{\cite[Proposition 5.2]{aminiHomologyTropicalFans2021}}] \label{prop:star_fan_mod}
    Let $\varphi$ be a rational function on a tropical fan $\Sigma$, and let $\wt\Sigma = \cT\cM_{\varphi}(\Sigma)$. Let
    $\Delta = \div(\varphi)$ (possibly trivial). The star fans of $\wt\Sigma$ are described as follows.
    \begin{enumerate}
      \item If $\sigma \in \Sigma$, then $\wt\Sigma^{\wt\sigma} \cong \cT\cM_{\varphi^{\sigma}}(\Sigma^{\sigma})$, where
          $\varphi^{\sigma}$ is any piecewise integral linear function on $\Sigma^{\sigma}$ induced by $\varphi$. In
          particular, if $\sigma \not\in \Delta$, then $\wt\Sigma^{\wt\sigma}$ is a degenerate tropical modification of
          $\Sigma^{\sigma}$.
      \item If $\tau \in \Delta$, then $\wt\Sigma^{\tau_{\geq}} \cong \Delta^{\tau}$.
    \end{enumerate}
\end{proposition}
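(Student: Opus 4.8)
The plan is to analyze the cones of $\wt\Sigma = \cT\cM_\varphi(\Sigma)$ that contain a given cone, and identify the quotient fan directly from the combinatorial description of a tropical modification. Recall $\wt\Sigma$ has two types of cones: the graph cones $\wt\sigma$ for $\sigma \in \Sigma$, and the ``vertical'' cones $\tau_{\geq} = \wt\tau + \bR_{\geq 0}(\vec 0, 1)$ for $\tau \in \Delta$. The key observation is that a cone of $\wt\Sigma$ contains $\wt\sigma$ if and only if it is of the form $\wt\sigma'$ for $\sigma' \succ \sigma$ in $\Sigma$, or of the form $\tau_{\geq}$ for some $\tau \in \Delta$ with $\tau \succ \sigma$; and a cone contains $\tau_\geq$ if and only if it is of the form $\tau'_\geq$ for $\tau' \in \Delta$ with $\tau' \succ \tau$ (since $(\vec 0, 1)$ lies in the relative interior direction, no graph cone $\wt\sigma'$ can contain $\tau_\geq$).

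For part (1): quotienting $\wt N_\bR = N_\bR \times \bR$ by $N_{\wt\sigma, \bR}$, I would use that $\wt\sigma = \{(x,\varphi(x)) : x \in \sigma\}$ projects isomorphically onto $\sigma$, so $N_{\wt\sigma,\bR} = \{(x,\varphi_\sigma(x)) : x \in N_{\sigma,\bR}\}$ maps isomorphically to $N_{\sigma,\bR}$ under the projection $\pi : N_\bR \times \bR \to N_\bR$. Hence $(N_\bR \times \bR)/N_{\wt\sigma,\bR} \cong (N_\bR/N_{\sigma,\bR}) \times \bR$ via a suitable linear isomorphism, and under this identification the image of $\wt\sigma'$ becomes the graph of $\varphi^\sigma$ over the image of $\sigma'$, where $\varphi^\sigma$ is the induced piecewise integral linear function on $\Sigma^\sigma$; similarly the image of $\tau_\geq$ becomes the vertical cone $\overline\tau + \bR_{\geq 0}(\vec 0,1)$ in $\Sigma^\sigma$. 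Checking that $\div(\varphi^\sigma) = \Delta^\sigma$ (with correct weights) is exactly \cref{def:ord_vanishing} computed in the star fan, using that the star fan weight $\omega^\sigma$ is inherited from $\omega$. When $\sigma \notin \Delta$, no $\tau_\geq$ contains $\wt\sigma$ with $\tau = \sigma$, so $\Delta^\sigma$ is trivial and the modification is degenerate by definition.

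For part (2): quotienting by $N_{\tau_\geq,\bR}$, note $N_{\tau_\geq,\bR} = N_{\wt\tau,\bR} + \bR(\vec 0,1)$, which is $N_{\tau,\bR} \times \bR$ after the same graph identification. So $(N_\bR \times \bR)/N_{\tau_\geq,\bR} \cong N_\bR/N_{\tau,\bR}$, with the $\bR$-factor collapsed. The cones of $\wt\Sigma$ containing $\tau_\geq$ are exactly the $\tau'_\geq$ for $\tau' \in \Delta_{\succ\tau}$, and their images under this quotient are the cones $\overline{\tau'}$ of $\Delta^\tau$; the weights match since $\wt\omega(\tau'_\geq) = \ord_{\tau'}(\varphi) = \delta(\tau')$, which is precisely the star-fan weight $\delta^\tau(\overline{\tau'})$ on $\Delta^\tau$. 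I expect the main obstacle to be bookkeeping the lattices carefully — verifying that the various projections and quotients respect the integral structure (not just the real vector spaces) so that the isomorphisms are genuine isomorphisms of tropical fans in the sense of \cref{def:trop_iso}, including the weight conditions $f_*\wt\omega = \omega^\sigma$ etc. This is routine but requires care, especially in tracking $n_{\sigma,\tau}$-type generators through the quotient; I would lean on the already-established fact (from the discussion preceding \cref{def:trop_iso}) that one may work with the sublattice spanned by the fan, avoiding finite-index issues.
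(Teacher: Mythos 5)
The paper does not prove this proposition at all---it is quoted directly from Amini--Piquerez \cite[Proposition 5.2]{aminiHomologyTropicalFans2021}---so there is no in-paper argument to compare against. Your direct verification is sound and is essentially the argument one would give (and essentially the one in the cited reference): identify the cones of $\wt\Sigma$ containing $\wt\sigma$ (resp.\ $\tau_{\geq}$), pass to the quotient, and match weights. The two key combinatorial facts you isolate are both correct: the cones containing $\wt\sigma$ are exactly the $\wt{\sigma'}$ with $\sigma' \succ \sigma$ and the $\tau_{\geq}$ with $\tau \succ \sigma$, $\tau \in \Delta$ (one checks $\wt\eta$ and $\eta_{\geq}$ really are faces of $\tau_{\geq}$ using the supporting functionals $(x,t)\mapsto t-\varphi_\tau(x)$ and lifts of supporting functionals of $\eta \prec \tau$); and no graph cone contains $\tau_{\geq}$ because graph cones meet the vertical ray only at the origin. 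Your quotient identification $(x,t)\mapsto(\bar x,\,t-\varphi_\sigma(x))$ in part (1) does require extending $\varphi_\sigma$ integrally from $N_\sigma$ to $N$, which works because $N_\sigma$ is saturated; different extensions change $\varphi^\sigma$ by a linear function, which is exactly the ambiguity the statement allows.

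Two small points of care. First, your sentence ``no $\tau_{\geq}$ contains $\wt\sigma$ with $\tau=\sigma$'' should read that \emph{no} $\tau_{\geq}$ contains $\wt\sigma$ at all when $\sigma\notin\Delta$: since $\Delta$ is a subfan (closed under faces), $\tau\in\Delta$ and $\tau\succ\sigma$ would force $\sigma\in\Delta$. Second, the weight match $\wt\omega(\tau_{\geq})=\ord_\tau(\varphi)=\ord_{\bar\tau}(\varphi^\sigma)$ uses that every top-dimensional cone of $\Sigma$ containing $\tau$ also contains $\sigma$ (as $\tau\succ\sigma$), so the two sums in \cref{def:ord_vanishing} range over the same index set; it is worth saying this explicitly. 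Neither point is a gap, just tightening.
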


\begin{proposition} \label{prop:trop_mod_mink}
    Let $p : \wt\Sigma = \cT\cM_{\varphi}\Sigma \to \Sigma$ be a tropical modification along a tropical divisor $\Delta
    = \div(\varphi)$. Then $p_* : M_k(\wt\Sigma) \to M_k(\Sigma)$ is injective for all $k$.
\end{proposition}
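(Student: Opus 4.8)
The plan is to analyze the tropical modification $p : \wt\Sigma = \cT\cM_\varphi\Sigma \to \Sigma$ cone-by-cone and reduce the injectivity of $p_*$ on Minkowski weights to a purely combinatorial statement about how the graph cones $\wt\sigma$ lie over the cones $\sigma$. The key observation is that $p$ restricts to a bijection between the graph cones $\{\wt\sigma : \sigma \in \Sigma\}$ of $\wt\Sigma$ and the cones of $\Sigma$, with $p(\wt\sigma) = \sigma$ and $p(N_{\wt\sigma}) = N_\sigma$ (the linear map $p$ is projection onto $N_\bR$, and it is injective on each graph cone since $\wt\sigma$ is the graph of a linear function over $\sigma$), so $[N_\sigma : p(N_{\wt\sigma})] = 1$ for all such cones. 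The only cones of $\wt\Sigma$ not of this form are the ``vertical'' cones $\tau_{\geq} = \wt\tau + \bR_{\geq 0}(\vec 0, 1)$ for $\tau \in \Delta$, and these are collapsed by $p$: $p(\tau_{\geq}) = \tau$ has dimension one less than $\tau_{\geq}$, so $\tau_{\geq}$ contributes nothing to the pushforward of a $k$-dimensional weight (there is no $k$-cone of $\wt\Sigma$ of the form $\tau_{\geq}$ mapping onto a $k$-cone of $\Sigma$ unless we are looking in complementary dimensions, but even then $p$ does not preserve dimension there).

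With this in hand, I would argue as follows. Let $\eta \in M_k(\wt\Sigma)$ with $p_*\eta = 0$. By the pushforward formula, for every $\sigma \in \Sigma_k$ we have $0 = (p_*\eta)(\sigma) = \sum_{\wt\sigma' : p(\wt\sigma') = \sigma} \eta(\wt\sigma')[N_\sigma : p(N_{\wt\sigma'})]$. The cones $\wt\sigma'$ of $\wt\Sigma_k$ mapping onto $\sigma$ are: the unique graph cone $\wt\sigma$ over $\sigma$ (contributing $\eta(\wt\sigma) \cdot 1$), and possibly vertical cones $\tau_{\geq}$ with $p(\tau_{\geq}) = \tau \subseteq \sigma$ — but $p(\tau_{\geq}) = \tau$ is a cone of $\Sigma$, and it equals $\sigma$ only if $\tau = \sigma$, in which case $\dim \tau_{\geq} = k+1 \neq k$, a contradiction. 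Hence the only contribution is from the graph cone, giving $\eta(\wt\sigma) = 0$ for all $\sigma \in \Sigma_k$. It remains to show $\eta$ vanishes on the vertical cones $\tau_{\geq} \in \wt\Sigma_k$, i.e.\ on those $\tau \in \Delta$ with $\dim\tau = k-1$. For this I use the balancing condition for $\eta$ around the codimension-one face $\wt\tau$ of $\wt\Sigma$ (where $\tau \in \Delta_{k-1}$): the cones of $\wt\Sigma_k$ containing $\wt\tau$ are the graph cones $\wt\sigma$ with $\sigma \succ \tau$, $\dim\sigma = k$, together with the single vertical cone $\tau_{\geq}$. Balancing reads $\sum_{\sigma \succ \tau} \eta(\wt\sigma) n_{\wt\sigma,\wt\tau} + \eta(\tau_{\geq}) n_{\tau_{\geq},\wt\tau} \equiv 0 \bmod N_{\wt\tau}$. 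Since each $\eta(\wt\sigma) = 0$ by the previous step, this forces $\eta(\tau_{\geq}) n_{\tau_{\geq},\wt\tau} \equiv 0 \bmod N_{\wt\tau}$; but $n_{\tau_{\geq},\wt\tau}$ generates the rank-one quotient $N_{\tau_{\geq}}/N_{\wt\tau}$ (indeed one may take it to be $(\vec 0, 1)$, which is visibly not in $N_{\wt\tau} \subseteq N_\bR \times \{?\}$ spanned by the graph), so $\eta(\tau_{\geq}) = 0$. Therefore $\eta = 0$ and $p_*$ is injective.

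The main obstacle — and the only place where care is genuinely needed — is the bookkeeping in the balancing step: I must be certain that the cones of $\wt\Sigma$ containing a given $\wt\tau$ (for $\tau \in \Delta_{k-1}$) are \emph{exactly} the graph cones over cones $\sigma \succ \tau$ plus the single vertical ray extension $\tau_{\geq}$, with no other vertical cones $\tau'_{\geq}$ intruding. This follows from the structure of $\wt\Sigma$: a vertical cone $\tau'_{\geq}$ contains $\wt\tau$ only if $\wt\tau$ is a face of $\tau'_{\geq} = \wt{\tau'} + \bR_{\geq 0}(\vec 0,1)$, and since $\wt\tau$ lies in the graph (has no vertical component), this forces $\wt\tau \preceq \wt{\tau'}$, hence $\tau \preceq \tau'$; as both have dimension $k-1$ we get $\tau = \tau'$. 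One should also note the edge case where $\Delta$ is trivial (the degenerate case), where there are no vertical cones at all and $p_*$ is visibly injective since it is the dual of the pullback along a linear re-embedding. I would record the cone-structure facts as a preliminary lemma (or cite the relevant parts of \cref{prop:star_fan_mod} and the construction), then give the two-step argument above; alternatively, one can phrase the whole thing dually via \cref{lem:mink_chow} by showing $p^* : A_*(X(\Sigma)) \to A_*(X(\wt\Sigma))$ is surjective, but the direct Minkowski-weight computation is cleaner.
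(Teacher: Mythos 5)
Your proposal is correct and follows essentially the same route as the paper's proof: first deduce $\eta(\wt\sigma)=0$ for all $\sigma\in\Sigma_k$ from the pushforward formula (only the graph cone lies over each $\sigma$, with index one), then apply the balancing condition of $\eta$ at $\wt\tau$ for $\tau\in\Delta_{k-1}$ to force $\eta(\tau_{\geq})=0$. The extra bookkeeping you supply (the index computation and the verification that no other vertical cone contains $\wt\tau$) is consistent with, and slightly more detailed than, what the paper records.
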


\begin{proof}
    Given a cone $\sigma \in \Sigma_k$, the only $k$-dimensional cone $\tau$ of $\wt\Sigma$ such that $p(\tau) = \sigma$
    is $\tau = \wt\sigma$. It follows that for any $w \in M_k(\wt\Sigma)$ and any $\sigma \in \Sigma_k$, $p_*w(\sigma) =
    w(\wt\sigma)$.

    Suppose $p_*w=0$, i.e., $p_*w(\sigma)=w(\wt\sigma)=0$ for all $\sigma \in \Sigma_k$. The only other possible
    $k$-dimensional cones of $\wt\Sigma$ are $\tau_{\geq}$ for $\tau \in \Delta_{k-1}$, so we just need to show that
    $w(\tau_{\geq}) = 0$ for all such $\tau$. But given $\tau \in \Delta_{k-1}$, the $k$-dimensional cones of
    $\wt\Sigma$ containing $\wt\tau$ are precisely the cones $\wt\sigma$ for $\sigma \in \Sigma_k$, $\sigma \succ \tau$,
    and $\tau_{\geq}$. So the balancing condition of $w \in M_k(\wt\Sigma)$ at $\wt\tau \in \Sigma_{k-1}$ says
    \[
        \sum_{\substack{\sigma \in \Sigma_k \\ \sigma \succ \tau}} w(\wt\sigma)n_{\wt\sigma,\wt\tau} +
        w(\tau_{\geq})n_{\tau_{\geq},\wt\tau} = 0 \mod N_{\wt\tau}.
    \]
    Since by assumption $w(\wt\sigma) = 0$ for all $\sigma \in \Sigma_k$, this reduces to the equation
    \[
        w(\tau_{\geq})n_{\tau_{\geq},\wt\tau} = 0 \mod N_{\wt\tau}.
    \]
    By definition $n_{\tau_{\geq},\wt\tau} \neq 0 \mod N_{\wt\tau}$, so we conclude that $w(\tau_{\geq}) = 0$, hence
    $p_*$ is injective.
\end{proof}

\begin{corollary} \label{cor:irred_mod}
    Any tropical modification of an irreducible tropical fan is irreducible. Tropical modifications of locally
    irreducible tropical fans along trivial or locally irreducible divisors are locally irreducible.
\end{corollary}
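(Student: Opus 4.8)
The plan is to deduce both statements from the numerical irreducibility criterion \cref{prop:irred_criterion} together with the injectivity of the modification pushforward \cref{prop:trop_mod_mink}.

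First I would prove the first statement. Let $(\Sigma,\omega)$ be an irreducible tropical fan of dimension $d$ and let $p : \wt\Sigma = \cT\cM_\varphi(\Sigma) \to \Sigma$ be any tropical modification, where I allow $\Delta = \div(\varphi)$ to be trivial (i.e. $p$ degenerate). Since the new cones $\tau_{\geq}$ have dimension $\dim \tau + 1 = d$, the fan $\wt\Sigma$ is again pure of dimension $d$, so by \cref{prop:irred_criterion} it suffices to show $M_d(\wt\Sigma) \cong \bZ$. By \cref{prop:trop_mod_mink} the map $p_* : M_d(\wt\Sigma) \to M_d(\Sigma)$ is injective, and $M_d(\Sigma) \cong \bZ$ because $\Sigma$ is irreducible; hence $M_d(\wt\Sigma)$ is isomorphic to a subgroup of $\bZ$. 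This subgroup is nonzero, since the weight $\wt\omega$ is a nonzero element of $M_d(\wt\Sigma)$, so $M_d(\wt\Sigma) \cong \bZ$ and $\wt\Sigma$ is irreducible.

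Next I would deduce the second statement from the first and the description of star fans of a modification in \cref{prop:star_fan_mod}. Assume $\Sigma$ is locally irreducible and $\Delta = \div(\varphi)$ is trivial or locally irreducible; I must show that every star fan of $\wt\Sigma = \cT\cM_\varphi(\Sigma)$ is irreducible. By \cref{prop:star_fan_mod}, each star fan of $\wt\Sigma$ is either of the form $\wt\Sigma^{\wt\sigma} \cong \cT\cM_{\varphi^\sigma}(\Sigma^\sigma)$ for some $\sigma \in \Sigma$, or of the form $\wt\Sigma^{\tau_{\geq}} \cong \Delta^\tau$ for some $\tau \in \Delta$. In the first case $\Sigma^\sigma$ is irreducible because $\Sigma$ is locally irreducible, so the first statement --- applied to the possibly degenerate modification $\cT\cM_{\varphi^\sigma}(\Sigma^\sigma)$ --- shows $\wt\Sigma^{\wt\sigma}$ is irreducible. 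In the second case, if $\Delta$ is trivial there are simply no cones $\tau_{\geq}$ to consider, and otherwise $\Delta^\tau$ is irreducible because $\Delta$ is locally irreducible. Hence all star fans of $\wt\Sigma$ are irreducible, i.e. $\wt\Sigma$ is locally irreducible.

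I do not expect a genuine obstacle here; the only points that need a little care are that \cref{prop:trop_mod_mink} and the first statement must be invoked for possibly \emph{degenerate} modifications (both arguments go through unchanged, since the proof of \cref{prop:trop_mod_mink} already permits $\Delta$ trivial), and that one should record the pure $d$-dimensionality of $\wt\Sigma$ so that $\wt\omega \in M_d(\wt\Sigma)$ is meaningful.
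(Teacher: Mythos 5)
Your proposal is correct and follows essentially the same route as the paper: injectivity of $p_*$ from \cref{prop:trop_mod_mink} plus the criterion \cref{prop:irred_criterion} for the first claim, and \cref{prop:star_fan_mod} for the local statement. The only cosmetic difference is that the paper concludes $M_d(\wt\Sigma)\cong\bZ$ by noting $p_*\wt\omega=\omega$ makes $p_*$ surjective, whereas you observe that the image is a nonzero subgroup of $\bZ$; both are fine.
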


\begin{proof}
    Suppose $p : (\wt\Sigma,\wt\omega) \to (\Sigma,\omega)$ is a tropical modification and $\Sigma$ is irreducible of
    dimension $d$. Then $p_* : M_d(\wt\Sigma) \to M_d(\Sigma) \cong \bZ \cdot \omega$ is injective by
    \cref{prop:trop_mod_mink}, and $p_*\wt\omega = \omega$ (cf. the proof of \cref{prop:trop_mod_mink}). Thus $p_* :
    M_d(\wt\Sigma) \to M_d(\Sigma)$ is an isomorphism, so $\wt\Sigma$ is irreducible by \cref{prop:irred_criterion}. The
    local case follows from \cref{prop:star_fan_mod}.
\end{proof}

\begin{proposition} \label{prop:trop_mod_chow}
    Let $p : \wt\Sigma = \cT\cM_{\varphi}\Sigma \to \Sigma$ be a tropical modification along a tropical divisor $\Delta
    = \div(\varphi)$. Then $p^* : A^*(\Sigma) \to A^*(\wt\Sigma)$ is a surjective morphism of graded rings, inducing a
    surjection $p^* : A^k(\Sigma) \to A^k(\wt\Sigma)$ for all $k$.
\end{proposition}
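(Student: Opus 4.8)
**Proof proposal for Proposition (surjectivity of $p^* : A^*(\Sigma) \to A^*(\wt\Sigma)$).**

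The plan is to work with the toric varieties directly and exploit the very explicit structure of a tropical modification. Write $\wt\Sigma = \cT\cM_{\varphi}(\Sigma)$ in $\wN_{\bR} = N_{\bR} \times \bR$. The rays of $\wt\Sigma$ are exactly the rays $\wt\rho$ (the graph-lifts of rays $\rho \in \Sigma_1$) together with possibly one extra ray $\rho_0 = \bR_{\geq 0}(\vec 0, 1)$ appearing when $\div(\varphi)$ is nontrivial. First I would reduce to the unimodular case: by \cref{thm:poincare_intrinsic}-style refinement arguments (or more simply, since $A^*$ only depends on the fan and the statement is about a fixed morphism), one can refine $\Sigma$ and $\wt\Sigma$ compatibly so that both are unimodular; a tropical modification of a unimodular fan is unimodular by the lemma above, and refinements do not change $A^*$. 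Then I would invoke the Stanley–Reisner presentation of \cref{thm:chow_pres}: $A^*(\wt\Sigma) = \bZ[x_{\wt\rho}, x_{\rho_0}]/(L+I)$ and $A^*(\Sigma) = \bZ[x_\rho]/(L+I)$.

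The key computation is to identify $p^*$ on generators. Since $p : \wt\Sigma \to \Sigma$ is induced by the projection $\wN \to N$, and each ray $\wt\rho$ maps isomorphically onto $\rho$, while $\rho_0$ maps to $0$, the pullback of a torus-invariant divisor behaves as follows: $p^* [D_\rho] = [D_{\wt\rho}] + c_\rho [D_{\rho_0}]$ for suitable integers $c_\rho$ (coming from how $\varphi$ twists the graph), and in particular every $x_{\wt\rho}$ lies in the image of $p^*$ modulo the class $x_{\rho_0}$. Thus the main point is to show $x_{\rho_0} \in \operatorname{im}(p^*)$ as well. For this I would use that $\varphi$ itself, regarded as a piecewise linear function on $\Sigma$, determines a divisor class $[\varphi] \in A^1(\Sigma) = PP^1(\Sigma)/M$ (\cref{prop:pic}), and the pullback $p^*[\varphi]$ is represented on $\wt\Sigma$ by the piecewise linear function which is the last coordinate — i.e. essentially $x_{\rho_0}$ up to linear functions and the $x_{\wt\rho}$ terms. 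Concretely, the function $\wt x \mapsto \wt x_{n+1}$ on $\wt\Sigma$ is piecewise linear, vanishes on every cone $\wt\sigma$ (by construction of the graph), and takes value $1$ on $\rho_0$; comparing with the standard expression of a PL function as $\sum (\text{value on }\rho)\, x_\rho$ shows $x_{\rho_0}$ differs from an element of the $\bZ[x_{\wt\rho}]$-subalgebra by $p^*$ of a class on $\Sigma$. Since $A^*(\Sigma)$ is generated in degree $1$ and we have just shown all degree-one generators $x_{\wt\rho}$ and $x_{\rho_0}$ of $A^*(\wt\Sigma)$ lie in the image, $p^*$ is surjective; it is a graded ring homomorphism because pullback always is, and surjectivity in each degree $k$ follows since the image is a subring containing all degree-one elements.

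The main obstacle is the bookkeeping for $x_{\rho_0}$: one must pin down precisely which class $\beta \in A^1(\Sigma)$ has $p^*\beta \equiv x_{\rho_0} \bmod (x_{\wt\rho})$, and verify the identity at the level of the Stanley–Reisner / PL-function presentation rather than just heuristically. I expect this is cleanest phrased via \cref{prop:pic}: $p^*$ on $A^1$ corresponds to pulling back piecewise integral linear functions along the linear map $\wN \to N$, and the PL function "last coordinate minus $\varphi$" is visibly the pullback of nothing (it vanishes on the graph part) plus $x_{\rho_0}$-type contributions, which forces the conclusion. A secondary, purely technical point is justifying the reduction to the unimodular case compatibly for both fans; this is routine given that any common refinement of $\Sigma$ lifts to a refinement of $\wt\Sigma$ (refine the graph and re-modify), and $A^*$ is a refinement invariant.
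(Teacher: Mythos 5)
Your treatment of the unimodular case is essentially the paper's argument: in the Stanley--Reisner presentation of \cref{thm:chow_pres} one has $p^*x_{\rho}=x_{\wt\rho}$ (in fact with no $x_{\rho_0}$-correction: the Courant function of $\rho$ pulls back to the piecewise linear function taking value $1$ at $v_{\wt\rho}$ and $0$ at $v_{\rho_0}$), and the globally linear function ``last coordinate'' $\wt m=(\vec 0,1)$ supplies the relation $x_{\rho_0}=-\sum_{\rho}\langle\wt m,v_{\wt\rho}\rangle x_{\wt\rho}=-p^*[\varphi]$, which puts $x_{\rho_0}$ in the image and finishes that case. One slip along the way: the last coordinate does \emph{not} vanish on the graph cones $\wt\sigma$ --- it restricts there to $\varphi\circ p$; it is the difference ``last coordinate minus $p^*\varphi$'' that is the Courant function of $\rho_0$, as you in effect correct in your final paragraph.

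The genuine gap is your reduction to the unimodular case. You assert that ``refinements do not change $A^*$,'' but that is false for the operational Chow ring used here: a refinement $\Sigma'\to\Sigma$ corresponds to a proper birational toric morphism $X(\Sigma')\to X(\Sigma)$, and $A^*(\Sigma)\to A^*(\Sigma')$ is injective but in general far from surjective (compare a non-simplicial complete fan with a unimodular refinement of it). Since the proposition is stated for arbitrary, possibly non-simplicial $\Sigma$ --- and the paper emphasizes that this generality is the point --- the step cannot be waved away. Moreover, knowing that $(p')^*:A^*(\Sigma')\to A^*(\wt\Sigma')$ is surjective for compatible unimodular refinements does not formally yield surjectivity of $p^*:A^*(\Sigma)\to A^*(\wt\Sigma)$: a class of $A^*(\wt\Sigma)$, viewed in $A^*(\wt\Sigma')$, is hit by some $\beta'\in A^*(\Sigma')$ which need not descend to $A^*(\Sigma)$. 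The paper closes this by invoking Kimura's description of the operational Chow ring of a singular variety in terms of a resolution (an envelope), which characterizes the image of $A^*(\Sigma)$ inside $A^*(\Sigma')$ by a compatibility condition that one then verifies for $\beta'$. You would need to supply an argument of this kind, or some other descent mechanism, to complete the non-unimodular case.
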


\begin{proof}
    In the case that $\Sigma$, hence $\wt\Sigma$, is unimodular, this is a direct verification using the presentation of
    the Chow ring of a unimodular fan in \cref{thm:chow_pres}. If the modification is degenerate, the result is
    immediate. Otherwise, $A^*(\wt\Sigma)$ is generated over $A^*(\Sigma)$ by the class $x_0$ corresponding to the
    unique additional ray $0_{\geq}$, and the lattice point $\wt m = (\vec{0},1) \in \wM \cong M \times \bZ$ gives the
    relation
    \[
      x_0 = -\sum_{\rho \in \Sigma_1} \langle \wt m, v_{\wt\rho} \rangle x_{\rho}.
    \]
    See \cite[Proposition 6.5]{aminiHomologyTropicalFans2021} for more details.

    If $\Sigma$ is not unimodular, let $\Sigma'$ be a unimodular refinement of $\Sigma$. This induces a unimodular
    refinement $\wt\Sigma'$ of $\wt\Sigma$ as well, allowing one to easily reduce to the unimodular case by using
    Kimura's description of the Chow ring of a singular variety in terms of a resolution
    \cite{shun-ichiFractionalIntersectionBivariant1992}, cf. \cite[Section 2]{payneEquivariantChowCohomology2006}.
\end{proof}

\begin{theorem} \label{thm:poincare_mod}
    Let $p : (\wt\Sigma,\wt\omega) \to (\Sigma,\omega)$ be a tropical modification.
    \begin{enumerate}
      \item If $- \cap \omega : A^k(\Sigma) \to M_{d-k}(\Sigma)$ is injective (resp. surjective) then $- \cap \wt\omega
          : A^k(\wt\Sigma) \to M_{d-k}(\wt\Sigma)$ is injective (resp. surjective).
      \item If $- \cap \omega: A^k(\Sigma) \to M_{d-k}(\Sigma)$ is injective, then $p^* : A^k(\Sigma) \to
          A^k(\wt\Sigma)$ is injective, hence an isomorphism by \cref{prop:trop_mod_chow}.
      \item If $- \cap \omega: A^k(\Sigma) \to M_{d-k}(\Sigma)$ is surjective, then $p_* : M_{d-k}(\wt\Sigma) \to
          M_{d-k}(\Sigma)$ is surjective, hence an isomorphism by \cref{prop:trop_mod_mink}.
    \end{enumerate}
    In particular if $(\Sigma,\omega)$ is a Poincar\'e tropical fan, then $(\wt\Sigma,\wt\omega)$ is a Poincar\'e
    tropical fan, and $p^* : A^*(\Sigma) \to A^*(\wt\Sigma)$ and $p_* : M_*(\wt\Sigma) \to M_*(\Sigma)$ are
    isomorphisms. Furthermore, a tropical modification of a star-Poincar\'e tropical fan along a trivial or
    star-Poincar\'e tropical divisor is star-Poincar\'e.
\end{theorem}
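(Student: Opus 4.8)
The plan is to reduce the entire statement to a single commutative square together with two facts already proved: $p^*$ is surjective (\cref{prop:trop_mod_chow}) and $p_*$ is injective (\cref{prop:trop_mod_mink}). First I would record the elementary compatibilities of $p : \wt\Sigma \to \Sigma$. Every cone of $\wt\Sigma$ maps to a cone of $\Sigma$ — the graph cone $\wt\sigma$ maps to $\sigma$, and the extra cone $\tau_{\geq}$ maps to $\tau \in \Delta \subset \Sigma_{d-1}$ — so $p_*$ is defined and the projection formula applies. Moreover $\wt\sigma$ is the unique $d$-dimensional cone of $\wt\Sigma$ lying over a given top cone $\sigma$ of $\Sigma$, and $p$ restricts to a lattice isomorphism $N_{\wt\sigma} \xrightarrow{\sim} N_{\sigma}$, whence $p_*\wt\omega = \omega$ (and in particular $\dim \wt\Sigma = d$). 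Applying the projection formula with $\eta = \wt\omega$ gives, for all $\alpha \in A^k(\Sigma)$,
\[
    p_*(p^*\alpha \cap \wt\omega) = \alpha \cap p_*\wt\omega = \alpha \cap \omega,
\]
i.e. the square with top row $p^* : A^k(\Sigma) \to A^k(\wt\Sigma)$, bottom row $p_* : M_{d-k}(\wt\Sigma) \to M_{d-k}(\Sigma)$, and vertical maps $-\cap\omega$, $-\cap\wt\omega$ commutes.

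Parts (1)--(3) are then diagram chases in this square. For the injective half of (1): if $\wt\alpha \cap \wt\omega = 0$, write $\wt\alpha = p^*\alpha$ using surjectivity of $p^*$; then $\alpha \cap \omega = p_*(p^*\alpha \cap \wt\omega) = 0$, so $\alpha = 0$ by injectivity of $-\cap\omega$, and hence $\wt\alpha = p^*\alpha = 0$. For (2): if $p^*\alpha = 0$, the same computation gives $\alpha \cap \omega = 0$, so $\alpha = 0$; thus $p^*$ is injective, hence an isomorphism by \cref{prop:trop_mod_chow}. For (3): if $-\cap\omega$ is surjective, then the composite $p_* \circ (-\cap\wt\omega) \circ p^*$ is surjective, so $p_*$ is surjective, hence an isomorphism by \cref{prop:trop_mod_mink}. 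For the surjective half of (1): combining the square with (3), $(-\cap\wt\omega) \circ p^* = p_*^{-1} \circ (-\cap\omega)$ is a composite of surjections, so $-\cap\wt\omega$ is surjective. The ``in particular'' clause is then immediate: if $\Sigma$ is Poincar\'e, then $-\cap\omega$ is bijective for every $k$, so by (1) $-\cap\wt\omega$ is bijective for every $k$, i.e. $\wt\Sigma$ is Poincar\'e, and $p^*$, $p_*$ are isomorphisms by (2), (3).

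For the final ``furthermore'' clause I would invoke the star-fan description of a tropical modification (\cref{prop:star_fan_mod}): every star fan of $\wt\Sigma$ is either $\wt\Sigma^{\wt\sigma} \cong \cT\cM_{\varphi^{\sigma}}(\Sigma^{\sigma})$ for some $\sigma \in \Sigma$, or $\wt\Sigma^{\tau_{\geq}} \cong \Delta^{\tau}$ for some $\tau \in \Delta$. In the first case $\Sigma^{\sigma}$ is Poincar\'e since $\Sigma$ is star-Poincar\'e, so its tropical modification is Poincar\'e by the ``in particular'' clause just proven (this also covers the degenerate modifications occurring when $\sigma \notin \Delta$). In the second case there is nothing to check when $\Delta$ is trivial, and when $\Delta$ is star-Poincar\'e the fan $\Delta^{\tau}$ is Poincar\'e by definition; taking $\sigma = 0$ also recovers that $\wt\Sigma$ itself is Poincar\'e. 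Hence $\wt\Sigma$ is star-Poincar\'e. The only part of the argument requiring genuine care is this last paragraph — checking that \cref{prop:star_fan_mod} really exhausts all cones of $\wt\Sigma$ and that the trivial and degenerate boundary cases are handled correctly; everything else is a formal consequence of the commutative square and the cited surjectivity/injectivity results.
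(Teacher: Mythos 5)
Your proof is correct and follows essentially the same route as the paper: the paper's proof is precisely the commutative square $p_*(p^*\alpha\cap\wt\omega)=\alpha\cap\omega$ (via the projection formula), combined with \cref{prop:trop_mod_chow,prop:trop_mod_mink,prop:star_fan_mod}. You have simply written out the diagram chases and the star-fan case analysis that the paper leaves implicit.
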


\begin{proof}
    Everything follows easily from \cref{prop:trop_mod_mink,prop:trop_mod_chow,prop:star_fan_mod} and the diagram
    \[
        \begin{tikzcd}
            A^k(\wt\Sigma) \ar[r, "- \cap \wt\omega"] & M_{d-k}(\wt\Sigma) \ar[d, "p_*"] \\
            A^k(\Sigma) \ar[u, "p^*"] \ar[r, "- \cap \omega"] & M_{d-k}(\Sigma),
        \end{tikzcd}
    \]
    which commutes by the projection formula.
\end{proof}

Recall that the properties of being reduced, locally irreducible, or star-Poincar\'e are intrinsic to the support,
local, and stably invariant \cref{prop:red_loc_irred_nice,cor:poincare_nice}. The above results show that these
properties are also preserved by tropical modifications, in the following sense.

\begin{definition}
    A property $\cP$ of tropical fans is \emph{preserved by tropical modifications} if whenever $\Sigma$ is a tropical
    fan satisfying $\cP$ and $\varphi$ is a piecewise integral linear function on $\Sigma$  such that $\div(\varphi)$ is
    either trivial or also satisfies $\cP$, then $\cT\cM_{\varphi}(\Sigma)$  satisfies $\cP$.
\end{definition}

\begin{corollary} \label{cor:mod_props}
    The properties of being reduced, locally irreducible, or star-Poincar\'e, are preserved by tropical modifications.
\end{corollary}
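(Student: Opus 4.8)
The plan is to unwind the definition of a property being \emph{preserved by tropical modifications} and then verify each of the three properties against results already assembled above. Recall that a property $\cP$ is preserved by tropical modifications if, whenever $\Sigma$ satisfies $\cP$ and $\varphi$ is a piecewise integral linear function on $\Sigma$ with $\div(\varphi)$ either trivial or satisfying $\cP$, the modification $\cT\cM_{\varphi}(\Sigma)$ satisfies $\cP$. So the proof is just a matter of matching this template to the correct earlier statement in each of the three cases.

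First, for the \emph{reduced} case, this is precisely \cref{lem:red_mod}: a tropical modification of a reduced fan along a trivial or reduced tropical divisor is reduced. Second, for the \emph{locally irreducible} case, it is the second assertion of \cref{cor:irred_mod}: tropical modifications of locally irreducible fans along trivial or locally irreducible divisors are locally irreducible. Third, for the \emph{star-Poincaré} case, it is the final sentence of \cref{thm:poincare_mod}: a tropical modification of a star-Poincaré fan along a trivial or star-Poincaré tropical divisor is star-Poincaré. Concatenating these three statements yields the corollary.

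I do not expect any obstacle: the only case carrying genuine content is star-Poincaré, and the work there is already packaged inside \cref{thm:poincare_mod}, which itself rests on the surjectivity of $p^*$ on Chow rings (\cref{prop:trop_mod_chow}), the injectivity of $p_*$ on Minkowski weights (\cref{prop:trop_mod_mink}), the description of star fans of a modification (\cref{prop:star_fan_mod}), and the commuting square furnished by the projection formula. Hence the corollary is a straightforward bookkeeping consequence of \cref{lem:red_mod,cor:irred_mod,thm:poincare_mod}, and the write-up should be a single short paragraph citing these three results.
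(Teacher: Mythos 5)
Your proof is correct and matches the paper's own argument exactly: the paper also proves this corollary by citing \cref{lem:red_mod}, \cref{cor:irred_mod}, and \cref{thm:poincare_mod} for the three respective properties. Nothing further is needed.
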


\begin{proof}
    This is \cref{lem:red_mod,cor:irred_mod,thm:poincare_mod}.
\end{proof}

\begin{proposition} \label{prop:trop_mod_product}
    Let $(\Sigma,\omega), (\Sigma',\omega')$ be two tropical fans. Suppose $(\Sigma,\omega)$ is reduced. Let $\varphi$
    be a piecewise integral linear function on $\Sigma'$ and define a piecewise integral linear function $\psi$ on
    $\Sigma \times \Sigma'$ by $\psi(x,y) = \varphi(y)$. Then
    \[
        \div(\psi) = \Sigma \times \div(\varphi) \;\; \text{ and } \;\; \cT\cM_{\psi}(\Sigma \times \Sigma') = \Sigma \times \cT\cM_{\varphi}\Sigma'.
    \]
\end{proposition}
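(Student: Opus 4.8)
The plan is to prove both identities by an explicit computation: first the divisor $\div(\psi)$ via the order-of-vanishing formula (\cref{def:ord_vanishing}), and then the cones and weights of $\cT\cM_\psi(\Sigma\times\Sigma')$ directly from the definition of a tropical modification.

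First I would note that $\psi$ really is a piecewise integral linear function on $\Sigma\times\Sigma'$: on a cone $\sigma\times\sigma'$ one has $N_{\sigma\times\sigma'}=N_\sigma\oplus N'_{\sigma'}$, and $\psi$ restricts there to $\varphi_{\sigma'}$ precomposed with the projection $N_\sigma\oplus N'_{\sigma'}\to N'_{\sigma'}$, which is integral linear. Write $d=\dim\Sigma$, $d'=\dim\Sigma'$. Every codimension-one cone of $\Sigma\times\Sigma'$ is either $\tau\times\sigma'$ with $\tau\in\Sigma_{d-1}$, $\sigma'\in\Sigma'_{d'}$ (whose adjacent top cones are the $\sigma\times\sigma'$ with $\sigma\succ\tau$), or $\sigma\times\tau'$ with $\sigma\in\Sigma_d$, $\tau'\in\Sigma'_{d'-1}$ (whose adjacent top cones are the $\sigma\times\sigma'$ with $\sigma'\succ\tau'$); the relevant primitive normal vector is $(n_{\sigma,\tau},0)$, resp.\ $(0,n_{\sigma',\tau'})$ — the extra component being irrelevant by well-definedness of $\ord$ — and the weight is $(\omega\times\omega')(\sigma\times\sigma')=\omega(\sigma)\omega'(\sigma')$. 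Substituting into \cref{def:ord_vanishing} and using that $\psi$ annihilates the $N_\bR$-factor, one gets $\ord_{\tau\times\sigma'}(\psi)=0$ for every cone of the first type, while for the second type the formula collapses to $\omega(\sigma)\,\ord_{\tau'}(\varphi)=\ord_{\tau'}(\varphi)$, using that $(\Sigma,\omega)$ is reduced. Hence the maximal cones of $\div(\psi)$ are exactly the $\sigma\times\tau'$ with $\sigma\in\Sigma_d$ and $\tau'\in\div(\varphi)$, each with weight $\ord_{\tau'}(\varphi)$; since $\Sigma$ is pure and $\Delta:=\div(\varphi)$ is pure of dimension $d'-1$, these cones together with their faces are precisely the cones of the product fan $\Sigma\times\Delta$, with weights agreeing with the product weight $(\omega\times\delta)(\sigma\times\tau')=\ord_{\tau'}(\varphi)$. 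This is the identity $\div(\psi)=\Sigma\times\div(\varphi)$.

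For the second identity I would compare the two fans, both living in $(N\times N')_\bR\times\bR$, identified with $N_\bR\times(N'_\bR\times\bR)$ by reassociation $((x,y),t)\leftrightarrow(x,(y,t))$. Since $\psi(x,y)=\varphi(y)$ depends only on $y$, the graph cone $\wt{(\sigma\times\sigma')}=\{((x,y),\varphi(y))\mid x\in\sigma,\ y\in\sigma'\}$ corresponds exactly to $\sigma\times\wt{\sigma'}$, with matching weight $\omega(\sigma)\omega'(\sigma')$; and by the first part the cones appearing in $\div(\psi)$ are the $\sigma\times\tau'$ with $\sigma\in\Sigma$, $\tau'\in\Delta$, so the extra cone $(\sigma\times\tau')_{\geq}=\wt{(\sigma\times\tau')}+\bR_{\geq 0}(\vec{0},1)$ corresponds to $\sigma\times\bigl(\wt{\tau'}+\bR_{\geq 0}(\vec{0},1)\bigr)=\sigma\times\tau'_{\geq}$, with weight $\ord_{\sigma\times\tau'}(\psi)=\ord_{\tau'}(\varphi)$ on one side and $\omega(\sigma)\,\ord_{\tau'}(\varphi)=\ord_{\tau'}(\varphi)$ on the other, using reducedness once more. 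As these exhaust all cones of both fans, $\cT\cM_\psi(\Sigma\times\Sigma')=\Sigma\times\cT\cM_\varphi\Sigma'$.

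The argument is essentially bookkeeping; the only point requiring care is the order-of-vanishing computation for the two types of codimension-one cones of $\Sigma\times\Sigma'$, together with the (elementary) verification that $\div(\psi)$ carries exactly the product fan structure $\Sigma\times\Delta$ — which is where purity of $\Sigma$ and the reducedness hypothesis on $(\Sigma,\omega)$ enter.
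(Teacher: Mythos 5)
Your proof is correct and follows essentially the same route as the paper's: compute $\ord_\psi$ at the two types of codimension-one cones of the product (getting $0$ on $\tau\times\sigma'$ by linearity of $\varphi_{\sigma'}$, and $\omega(\sigma)\ord_{\tau'}(\varphi)=\ord_{\tau'}(\varphi)$ on $\sigma\times\tau'$ by reducedness), then identify the graph cones and the added cones of the modification with $\sigma\times\wt{\sigma'}$ and $\sigma\times\tau'_{\geq}$. The only cosmetic difference is your choice of normal vectors $(n_{\sigma,\tau},0)$ and $(0,n_{\sigma',\tau'})$ in place of the paper's $(n_{\sigma,\tau},v)$ and $(v,n_{\sigma',\tau'})$ with $v$ in the relevant relative interior, which you correctly justify via the well-definedness of $\ord$ modulo the face lattice.
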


\begin{proof}
    Say $\dim \Sigma = d$, $\dim \Sigma' = d'$. Then a codimension one cone of $\Sigma \times \Sigma'$ either looks like
    $\sigma \times \tau'$ for $\sigma \in \Sigma_d$, $\tau' \in \Sigma'_{d'-1}$, or $\tau \times \sigma'$ for $\tau \in
    \Sigma_{d-1}$, $\sigma' \in \Sigma'_{d'-1}$.

    In the former case, the top-dimensional cones containing $\sigma \times \tau'$ are all of the form $\sigma \times
    \sigma'$, where $\sigma'$ is a top-dimensional cone of $\Sigma'$ containing $\tau'$. Let $n_{\sigma \times
    \sigma',\sigma \times \tau'} = (v,n_{\sigma',\tau'})$, where $v$ is any lattice point in the relative interior of
    $\sigma$, and $n_{\sigma',\tau'}$ is any lattice point in the relative interior of $\sigma'$ whose image generates
    the one-dimensional quotient lattice $N_{\sigma'}/N_{\tau'}$. Then the image of $n_{\sigma \times \sigma', \sigma
    \times \tau'}$ generates the one-dimensional quotient lattice $N_{\sigma \times \sigma'}/N_{\sigma \times \tau'}
    \cong 0 \times N_{\sigma'}/N_{\tau'}$. Therefore,
    \begin{align*}
        \ord_{\sigma \times \tau'}(\psi) &= \psi_{\sigma \times \tau'}\left(\sum_{\substack{\sigma' \in \Sigma'_{d'} \\ \sigma' \succ \tau'}}
                                            \omega(\sigma)\omega(\sigma')(v,n_{\sigma',\tau'})\right) - \sum_{\substack{\sigma' \in \Sigma'_{d'} \\ \sigma' \succ \tau'}}
                                            \psi_{\sigma \times \sigma'}(\omega(\sigma)\omega(\sigma')(v,n_{\sigma',\tau'})) \\
                                        &= \varphi_{\tau'}\left(\sum_{\substack{\sigma' \in \Sigma'_{d'} \\ \sigma'
                                            \succ \tau'}} \omega(\sigma)\omega(\sigma')n_{\sigma',\tau'}\right) -
                                            \sum_{\substack{\sigma' \in \Sigma'_d \\ \sigma' \succ \tau'}}
                                            \varphi_{\sigma'}(\omega(\sigma)\omega(\sigma')n_{\sigma',\tau'}) \\
                                        &= \varphi_{\tau'}\left(\sum_{\substack{\sigma' \in \Sigma'_{d'} \\ \sigma'
                                            \succ \tau'}} \omega(\sigma')n_{\sigma',\tau'}\right) - \sum_{\substack{\sigma'
                                            \in \Sigma'_d \\ \sigma' \succ \tau'}}
                                            \varphi_{\sigma'}(\omega(\sigma')n_{\sigma',\tau'}) \\
                                        &= \ord_{\tau'}(\varphi).
    \end{align*}
    (The second-to-last equality is because $(\Sigma,\omega)$ is reduced, so $\omega(\sigma)=1$ for all $\sigma \in
    \Sigma_d$.)

    In the latter case, the top-dimensional cones of $\Sigma \times \Sigma'$ containing $\tau \times \sigma'$ are those
    of the form $\sigma \times \sigma'$ for $\sigma$ a top-dimensional cone of $\Sigma$ containing $\tau$.  Let
    $n_{\sigma \times \sigma', \tau \times \sigma'} = (n_{\sigma,\tau},v)$, where $n_{\sigma,\tau}$ is any lattice point
    in the relative interior of $\sigma$ whose image generates the one-dimensional quotient lattice
    $N_{\sigma}/N_{\tau}$, and $v$ is any lattice point in the relative interior of $\sigma'$. Then the image of
    $n_{\sigma \times \sigma', \tau \times \sigma'}$ generates the one-dimensional quotient lattice $N_{\sigma \times
    \sigma'}/N_{\tau \times \sigma'} \cong N_{\sigma}/N_{\tau} \times 0$.  Therefore,
    \begin{align*}
      \ord_{\tau \times \sigma'}(\psi) &= \psi_{\tau \times
      \sigma'}\left(\sum_{\substack{\sigma \in \Sigma_d \\ \sigma \succ \tau}}
    \omega(\sigma)\omega(\sigma')(n_{\sigma,\tau},v)\right) -
    \sum_{\substack{\sigma \in \Sigma_d \\ \sigma \succ \tau}} \psi_{\sigma
    \times \sigma'}(\omega(\sigma)\omega(\sigma')(n_{\sigma,\tau},v)) \\
                                      &= \varphi_{\sigma'}\left(\sum_{\substack{\sigma \in
                                        \Sigma_d \\ \sigma \succ \tau}}
                                        \omega(\sigma)\omega(\sigma')v\right)
                                        - \sum_{\substack{\sigma \in \Sigma_d \\
                                          \sigma \succ \tau}} \varphi_{\sigma'}(\omega(\sigma)\omega(\sigma')v) \\
                                      &= 0
    \end{align*}
    by linearity of $\varphi_{\sigma'}$. It follows that $\div(\psi) = \Sigma \times \div(\varphi)$.

    Now if $\sigma \times \sigma'$ is any cone of $\Sigma \times \Sigma'$, then
    \begin{align*}
      \wt{\sigma \times \sigma'} &= \{(x,y,\psi(x,y)) \mid (x,y) \in \sigma \times \sigma'\} \\
                                 &= \{(x,y,\varphi(y)) \mid (x,y) \in \sigma \times \sigma'\} \\
                                 &= \sigma \times \wt{\sigma'}.
    \end{align*}
    If $\sigma \times \tau \in \div(\psi) = \Sigma \times \div(\varphi)$, then
    \begin{align*}
      (\sigma \times \tau)_{\geq} &= \wt{\sigma \times \tau} + \bR_{\geq 0}(0,0,1) \\
                                  &= \sigma \times \wt\tau + \bR_{\geq 0}(0,0,1) \\
                                  &= \sigma \times \tau_{\geq}.
    \end{align*}
    It follows that $\cT\cM_{\psi}(\Sigma \times \Sigma') = \Sigma \times \cT\cM_{\varphi}\Sigma'$.
\end{proof}

It will be useful to us to speak of tropical modifications without referring to a specific tropical fan structure.

\begin{definition}
    Let $\cF$ be a tropical fan cycle. A \emph{piecewise integral linear function} on $\cF$ is a continuous function
    $\varphi : \cF \to \bR$ which is piecewise integral linear for some fan structure $\Sigma$ on $\cF$. The
    \emph{tropical modification} $\cT\cM_{\varphi}\cF$ of $\cF$ with respect to $\varphi$ is the tropical fan cycle in
    $\wN_{\bR} = N_{\bR} \times \bR$ defined by $\cT\cM_{\varphi}(\Sigma)$.
\end{definition}

\section{Quasilinear fans} \label{sec:qlin_fans}

\begin{definition} \label{def:qlin_fans}
    A reduced tropical fan cycle $\cF$ is \emph{quasilinear} if it is isomorphic to a complete tropical fan cycle, or a
    tropical modification of a quasilinear tropical fan cycle along a quasilinear tropical divisor.

    A tropical fan $\Sigma$ is \emph{quasilinear} if it is supported on a quasilinear tropical fan cycle.
\end{definition}

\begin{remark}
    The definition of quasilinearity is inductive on $\rk N_{\cF}$.
\end{remark}

\begin{theorem} \label{thm:qlin_poincare}
    Quasilinear tropical fans are reduced, locally irreducible, and star-Poincar\'e.
\end{theorem}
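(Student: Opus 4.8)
The plan is to argue by induction on $\rk N_{\cF}$, mirroring the inductive structure of \cref{def:qlin_fans}. All three properties in question---being reduced, locally irreducible, and star-Poincar\'e---are intrinsic to the support (\cref{prop:red_loc_irred_nice,cor:poincare_nice}) and are phrased entirely in terms of the weight, the groups $M_*$, and the cap product, hence are invariant under isomorphism of tropical fan cycles (cf. \cref{rmk:iso_fans_to_toric}). So it suffices to verify them for the two building blocks appearing in \cref{def:qlin_fans}.

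For the base case, suppose $\cF$ is (isomorphic to) a complete tropical fan cycle. Then $\cF$ is reduced, since a complete fan carries weight $1$ on every top-dimensional cone, and it is star-Poincar\'e by \cite{fultonIntersectionTheoryToric1997}; the lemma that a star-Poincar\'e tropical fan is reduced and locally irreducible then supplies local irreducibility as well.

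For the inductive step, suppose $\cF \cong \cT\cM_{\varphi}(\cF')$ for a quasilinear fan cycle $\cF'$ and a function $\varphi$ whose divisor $\Delta = \div(\varphi)$ is either trivial or a quasilinear tropical divisor. In the non-degenerate case the new ray $0_{\geq}$ contributes the last coordinate direction to the span, so $\rk N_{\cF'} = \rk N_{\cF}-1$ and $\rk N_{\Delta} \le \rk N_{\cF'} < \rk N_{\cF}$; the degenerate case reduces at once to $\cF'$ via \cref{rmk:degen_mods}. Thus the inductive hypothesis applies to $\cF'$ and, when $\Delta$ is nontrivial, to $\Delta$: both are reduced, locally irreducible, and star-Poincar\'e. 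In particular $\Delta$ is reduced (being a quasilinear fan cycle), which is precisely the hypothesis needed, so \cref{cor:mod_props}, applied to a fan structure on $\cF'$, yields that $\cT\cM_{\varphi}(\cF')$---hence $\cF$---is reduced, locally irreducible, and star-Poincar\'e.

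I do not anticipate a serious obstacle: the substantive content is already packaged in \cref{lem:red_mod}, \cref{cor:irred_mod}, and above all \cref{thm:poincare_mod} (whose proof rests on \cref{prop:trop_mod_mink}, \cref{prop:trop_mod_chow}, and the projection formula). The only points needing care are (i) confirming that the induction on $\rk N_{\cF}$ is genuinely well-founded---this is where the caveat about degenerate modifications in \cref{rmk:degen_mods} is used---and (ii) checking that a quasilinear tropical divisor satisfies the reducedness hypothesis of \cref{cor:mod_props}, which is immediate from \cref{def:qlin_fans}.
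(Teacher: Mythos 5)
Your argument is correct and is essentially the paper's proof made explicit: the paper simply observes that all three properties are intrinsic to the support and preserved by tropical modifications (\cref{prop:red_loc_irred_nice,cor:poincare_nice,cor:mod_props}), hold for complete fans, and therefore propagate through the inductive definition of quasilinearity, which is exactly the induction you carry out. One small wording caution: the hypothesis of \cref{cor:mod_props} for the property ``star-Poincar\'e'' is that the divisor be trivial or itself star-Poincar\'e (not merely reduced), but this is harmless since your inductive hypothesis already supplies all three properties for $\Delta$.
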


\begin{proof}
    Recall from \cref{cor:mod_props,prop:red_loc_irred_nice,cor:poincare_nice} that these properties are intrinsic to
    the support, local, stably invariant, and preserved by tropical modifications. The result follows by the definition
    of quasilinearity.
\end{proof}

\begin{corollary}
    Let $\wt\Sigma$ be a degenerate tropical modification of a quasilinear tropical fan $\Sigma$. Then $\wt\Sigma \cong
    \Sigma$, and in particular $\wt\Sigma$ is also quasilinear.
\end{corollary}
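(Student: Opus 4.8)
The plan is to reduce immediately to the two ingredients already in hand: the fact that quasilinear tropical fans are star-Poincar\'e (\cref{thm:qlin_poincare}), and the fact recorded in \cref{rmk:degen_mods} that a degenerate tropical modification of a Poincar\'e tropical fan is isomorphic to the original fan. So the argument is essentially just a matter of assembling these with a small amount of bookkeeping.

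First I would observe that $\wt\Sigma$ is reduced: a degenerate tropical modification is by definition a tropical modification along the trivial divisor, so the reducedness of $\wt\Sigma$ follows from the reducedness of $\Sigma$ (guaranteed by \cref{thm:qlin_poincare}, or by definition, since $\Sigma$ is quasilinear) together with \cref{lem:red_mod}. This is the point that makes $\wt\Sigma$ eligible to be called quasilinear at all. Next, since $\Sigma$ is quasilinear, \cref{thm:qlin_poincare} gives that $\Sigma$ is star-Poincar\'e, and in particular Poincar\'e --- the fan itself is its own star fan at the cone $\{0\}$. Applying \cref{rmk:degen_mods} to the degenerate modification $p : \wt\Sigma \to \Sigma$ then yields an isomorphism of tropical fans $\wt\Sigma \cong \Sigma$.

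Finally, since quasilinearity of a reduced tropical fan cycle is defined in \cref{def:qlin_fans} purely in terms of its isomorphism class, and isomorphism of tropical fan cycles is transitive, the isomorphism $\wt\Sigma \cong \Sigma$ together with the quasilinearity of $\Sigma$ shows that $\wt\Sigma$ is quasilinear; equivalently, $\wt\Sigma$ is supported on the same quasilinear tropical fan cycle as $\Sigma$. I do not expect any genuine obstacle here: the entire content sits in \cref{thm:qlin_poincare} and \cref{rmk:degen_mods}, and the only step requiring a moment's care is the verification that $\wt\Sigma$ is reduced.
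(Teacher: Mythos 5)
Your proof is correct and follows the same route as the paper: invoke \cref{thm:qlin_poincare} to see that $\Sigma$ is (star-)Poincar\'e, then apply the discussion of \cref{rmk:degen_mods} to conclude $\wt\Sigma \cong \Sigma$, whence quasilinearity transfers. The extra reducedness check via \cref{lem:red_mod} is harmless but not needed, since the isomorphism with $\Sigma$ already carries it.
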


\begin{proof}
    Since quasilinear tropical fans are star-Poincar\'e, this follows from the discussion of \cref{rmk:degen_mods}.
\end{proof}

\begin{theorem} \label{thm:prod_qlin}
    Let $\cF$ and $\cG$ be two reduced tropical fan cycles. Then $\cF \times \cG$ is quasilinear if and only if $\cF$
    and $\cG$ are both quasilinear.
\end{theorem}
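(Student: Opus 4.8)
The plan is to prove both directions by induction on $\rk N_{\cF} + \rk N_{\cG}$, mirroring the inductive structure of \cref{def:qlin_fans}. Since quasilinearity is defined up to isomorphism, and since $\cF \times \cG$ is reduced if and only if both $\cF$ and $\cG$ are reduced (by \cref{prop:red_loc_irred_nice}), the statement makes sense; we may also freely replace $\cF$ and $\cG$ by isomorphic fan cycles.

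For the ``if'' direction, suppose $\cF$ and $\cG$ are both quasilinear. If both are complete, then $\cF \times \cG$ is complete, hence quasilinear. Otherwise, say $\cG$ is (isomorphic to) a tropical modification $\cT\cM_{\varphi}(\cG')$ of a quasilinear fan cycle $\cG'$ along a quasilinear tropical divisor $\Delta = \div(\varphi)$ (possibly trivial). Define $\psi$ on $\cF \times \cG'$ by $\psi(x,y) = \varphi(y)$. Since $\cF$ is reduced, \cref{prop:trop_mod_product} gives $\div(\psi) = \cF \times \Delta$ and $\cT\cM_{\psi}(\cF \times \cG') = \cF \times \cT\cM_{\varphi}(\cG') = \cF \times \cG$. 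By induction $\cF \times \cG'$ is quasilinear (both factors are quasilinear with smaller total rank), and $\cF \times \Delta$ is either trivial or, by induction again, quasilinear (as $\cF$ is quasilinear and $\Delta$ is quasilinear of smaller rank). Hence $\cF \times \cG = \cT\cM_{\psi}(\cF \times \cG')$ is a tropical modification of a quasilinear fan cycle along a quasilinear divisor, so it is quasilinear. A symmetric argument handles the case where $\cF$ is a nontrivial modification.

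For the ``only if'' direction, suppose $\cF \times \cG$ is quasilinear; we must show each factor is. This is the harder direction, since the inductive definition of $\cF \times \cG$ need not respect the product decomposition. The approach is to combine two facts. First, quasilinear fans are star-Poincar\'e (\cref{thm:qlin_poincare}), and by \cref{prop:chow_prods}, $\Sigma \times \Sigma'$ is Poincar\'e if and only if both $\Sigma$ and $\Sigma'$ are; applying this to all star fans (and using that star fans of $\cF \times \cG$ at product cones $\sigma \times \sigma'$ are $\cF^{v} \times \cG^{w}$-type products) shows that $\cF$ and $\cG$ are themselves star-Poincar\'e, in particular reduced and locally irreducible. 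But star-Poincar\'e alone is weaker than quasilinear, so this does not finish the argument by itself. The key additional input must come from unwinding the inductive definition of $\cF \times \cG$: express $\cF \times \cG$ (up to isomorphism) as a tower of tropical modifications terminating in a complete fan, and show by downward induction on this tower that at each stage the fan cycle is a product of two quasilinear factors whose product of ranks strictly decreases. Concretely, if $\cF \times \cG \cong \cT\cM_{\varphi}(\mathcal{H})$ with $\mathcal{H}$ quasilinear and $\div(\varphi)$ quasilinear, one analyzes how the added ray $0_{\geq}$ and the graph embedding interact with the product projections $\cF \times \cG \to \cF$, $\cF \times \cG \to \cG$; since the modification only adds cones in the new $\bR$-direction, one should be able to recognize $\mathcal{H}$ and $\div(\varphi)$ as products compatible with (a relabeling of) the original decomposition, reducing to a lower-rank instance.

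The main obstacle is precisely this last step: a general isomorphism $\cF \times \cG \cong \cT\cM_{\varphi}(\mathcal{H})$ does not obviously arise from a product of isomorphisms, and the splitting of $\mathcal{H}$ into quasilinear factors must be extracted from the geometry of the modification rather than assumed. I expect one must argue that, after the isomorphism, the function $\varphi$ (or rather the linear function governing the added ray) is ``pulled back'' from one of the two factors — using that a piecewise integral linear function on a product whose divisor splits as a product must, up to a globally linear function, depend on only one factor when that factor is reduced and locally irreducible (a converse-type statement to \cref{prop:trop_mod_product}). Establishing this rigidity statement, and then feeding it into the downward induction, is the technical heart of the proof; everything else is bookkeeping with \cref{prop:trop_mod_product,prop:chow_prods,thm:qlin_poincare}.
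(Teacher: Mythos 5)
Your ``if'' direction is correct and is essentially the paper's argument: write the non-complete factor as a modification, use \cref{prop:trop_mod_product} to push the modification across the product, and invoke the inductive hypothesis for the base $\cF \times \cG'$ and the divisor $\cF \times \Delta$.

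The ``only if'' direction, however, contains a genuine gap that you yourself flag but do not close. Your proposal ends by saying that one ``must argue'' a rigidity statement (that a piecewise integral linear function on a product whose divisor is a product depends, up to a global linear function, on only one factor) and that ``establishing this rigidity statement \ldots is the technical heart of the proof.'' That heart is missing: as written, the argument is a plan, not a proof. The star-Poincar\'e detour via \cref{prop:chow_prods} and \cref{thm:qlin_poincare} does not help, since (as you note) star-Poincar\'e is strictly weaker than quasilinear, so that paragraph can be deleted. The way the paper closes the gap is more concrete and avoids your PL-function rigidity lemma entirely. First, since $\cF \times \cG$ is quasilinear and not complete, after composing with the defining isomorphism one may assume the projection dropping the last coordinate (a coordinate of $N_{\cG}$, since $\cG$ is not complete) realizes $\cF \times \cG$ as a modification of a quasilinear cycle of the form $\cF \times \cG'$ along a quasilinear divisor $\cD$; induction applied to $\cF \times \cG'$ already gives that $\cF$ and $\cG'$ are quasilinear. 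Second, one works with Weil divisors rather than functions: the K\"unneth decomposition
\[
    M_{d+d'-1}(\cF \times \cG') \cong \bigl(M_d(\cF) \otimes M_{d'-1}(\cG')\bigr) \oplus \bigl(M_{d-1}(\cF) \otimes M_{d'}(\cG')\bigr)
\]
from \cref{prop:chow_prods} writes $\cD = \cF \times \cD' + \cD'' \times \cG'$, and irreducibility of the quasilinear divisor $\cD$ (\cref{thm:qlin_poincare}, \cref{prop:irred_criterion}) forces one summand to vanish; the geometry of the modification (the new ray points in the $\cG$-direction) rules out $\cD = \cD'' \times \cG'$, so $\cD = \cF \times \cD'$. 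Finally, because quasilinear fans are Poincar\'e, a modification is determined up to isomorphism by its Weil divisor, so $\cT\cM_{\cD}(\cF \times \cG') \cong \cF \times \cT\cM_{\cD'}(\cG')$ by \cref{prop:trop_mod_product}, whence $\cG \cong \cT\cM_{\cD'}\cG'$ is quasilinear. In short: your instinct about where the difficulty lies is right, but the decisive tools are the K\"unneth splitting of Minkowski weights plus irreducibility, and Poincar\'e duality to replace functions by divisors --- not the unproven rigidity statement your outline rests on.
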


\begin{proof}
    Note that $\cF \times \cG$ is complete $\iff$ $\cF$ and $\cG$ are both complete. Therefore, we can assume without
    loss of generality that $\cG$ and $\cF \times \cG$ are not complete. We will prove the result by induction on the
    rank $n$ of the ambient lattice of $\cF \times \cG$. Note the base case $n\leq 2$ is trivial.

    ($\impliedby$) Suppose $\cF$ and $\cG$ are both quasilinear. By assumption $\cG$ is a non-complete quasilinear
    tropical fan cycle, so $\cG \cong \cT\cM_{\cD}\cG'$ for some quasilinear tropical fan cycle $\cG'$ and quasilinear
    tropical divisor $\cD$ on $\cG'$. Then by \cref{prop:trop_mod_product}
    \[
      \cF \times \cG \cong \cF \times \cT\cM_{\cD}\cG' \cong \cT\cM_{\cF \times \cD}(\cF \times \cG'),
    \]
    so by induction $\cF \times \cG$ is quasilinear.

    ($\implies$) Suppose $\cF \times \cG$ is quasilinear. By assumption both $\cF \times \cG$ and $\cG$ are
    non-complete, so (after composing with an isomorphism if necessary) we can assume that the projection to the last
    coordinate realizes $\cF \times \cG$ as a tropical modification of a quasilinear tropical fan of the form $\cF
    \times \cG'$ along a quasilinear tropical divisor $\cD$. By induction $\cG'$ is quasilinear. Since by
    \cref{prop:chow_prods} we have
    \[
        M_{d + d'-1}(\cF \times \cG') = (M_d(\cF) \otimes M_{d'-1}(\cG')) \oplus (M_{d-1}(\cF) \otimes M_{d'}(\cG')),
    \]
    it follows that $\cD = \cF \times \cD' + \cD'' \times \cG'$ for some (possibly trivial) divisors $\cD'$ on $\cG'$
    and $\cD''$ on $\cF$. Since $\cD$ is quasilinear, it is irreducible. Both $\cF \times \cD'$ and $\cD'' \times \cG'$
    are contained in $\cD$ and if nontrivial then they have the same dimension as $\cD$. It follows by irreducibility
    that one of them must be trivial and the other must be $\cD$ itself. Since $\cF \times \cG$ looks like the tropical
    modification of $\cF \times \cG'$ along $\cD$, the only possibility is that $\cD = \cF \times \cD'$ and $\cD''
    \times \cG'$ is trivial. So we conclude that
    \[
        \cF \times \cG \cong \cT\cM_{\cF \times \cD'}(\cF \times \cG') \cong \cF \times \cT\cM_{\cD'}\cG',
    \]
    hence $\cG = \cT\cM_{\cD'}\cG'$. Since both $\cD'$ and $\cG'$ are quasilinear by induction, it follows that $\cG$ is
    quasilinear.
\end{proof}

\begin{theorem} \label{thm:star_qlin}
    Star fans of quasilinear tropical fans are quasilinear.
\end{theorem}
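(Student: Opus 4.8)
The plan is to induct on $n = \rk N_{\cF}$, reformulating the claim as: every star fan cycle $\cF^v$ (for $v \in \cF$) of a quasilinear fan cycle $\cF$ is quasilinear. If $v = 0$ then $\cF^v = \cF$; otherwise, writing $\cF^v = \lvert \Sigma^{\sigma} \rvert \times \bR^{\dim \sigma}$ as in \cref{ex:local_defs}, \cref{thm:prod_qlin} lets one pass freely between $\cF^v$ and the star fan $\Sigma^{\sigma}$. Since isomorphisms of tropical fan cycles carry star fans to star fans and preserve quasilinearity, and since a degenerate tropical modification of a quasilinear (hence Poincar\'e) fan cycle is isomorphic to the original (\cref{thm:qlin_poincare}, \cref{rmk:degen_mods}), one may assume $\cF$ is \emph{either} a complete tropical fan cycle \emph{or} a non-degenerate tropical modification $\cF = \cT\cM_{\varphi}(\cG)$ of a quasilinear fan cycle $\cG$ along a quasilinear tropical divisor $\cD = \div(\varphi)$. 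In the second case non-degeneracy forces $\rk N_{\cG} = n - 1$ and $\rk N_{\cD} \le n - 1$, so the inductive hypothesis is available for both $\cG$ and $\cD$.

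First I would dispatch the complete case: there $\cF$ is the whole linear span $N_{\cF,\bR}$, so $\cF^v = N_{\cF,\bR}$ for every $v$, which is again complete and thus quasilinear. For the modification case, I would choose a fan structure $\Sigma$ on $\cG$ making $\varphi$ piecewise integral linear, so that $\wt\Sigma = \cT\cM_{\varphi}(\Sigma)$ is a fan structure on $\cF$, and invoke \cref{prop:star_fan_mod}, which presents every star fan of $\wt\Sigma$ in one of two shapes. The star fans $\wt\Sigma^{\tau_{\ge}} \cong \Delta^{\tau}$ for $\tau \in \Delta = \cD$ are star fans of the quasilinear $\cD$, hence quasilinear by induction. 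For the star fans $\wt\Sigma^{\wt\sigma} \cong \cT\cM_{\varphi^{\sigma}}(\Sigma^{\sigma})$ with $\sigma \in \Sigma$: when $\sigma = 0$ this is $\cF$ itself (the case $v = 0$); when $\sigma \neq 0$, $\Sigma^{\sigma}$ is a star fan of $\cG$ and hence quasilinear by induction, and either $\sigma \notin \Delta$, in which case the modification is degenerate and so isomorphic to the quasilinear $\Sigma^{\sigma}$, or $\sigma \in \Delta$, in which case $\div(\varphi^{\sigma}) = \Delta^{\sigma}$ is a star fan of $\cD$ and hence quasilinear by induction, so that $\cT\cM_{\varphi^{\sigma}}(\Sigma^{\sigma})$ is quasilinear directly from \cref{def:qlin_fans}. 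These exhaust all star fans, so the induction closes.

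The main obstacle is making the induction well-founded, since tropical modifications need not raise the ambient rank; the reduction in the first paragraph, which uses that quasilinear fans are Poincar\'e to strip off degenerate modifications and so genuinely decrease $\rk N_{\cG}$ below $\rk N_{\cF}$, is essential and should be argued carefully. The only other point needing (minor) care is the identification $\div(\varphi^{\sigma}) = \Delta^{\sigma}$ of the restricted divisor, which follows from \cref{def:ord_vanishing} and is compatible with \cref{prop:star_fan_mod}; granting it, the argument is a clean unwinding of the recursive \cref{def:qlin_fans} against the structural results on products (\cref{thm:prod_qlin}) and on star fans of modifications (\cref{prop:star_fan_mod}).
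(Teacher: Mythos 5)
Your proof is correct and follows essentially the same route as the paper's: induction on the rank of the ambient lattice, combined with \cref{prop:star_fan_mod} to enumerate the star fans of a tropical modification and \cref{thm:prod_qlin} (together with the discussion after \cref{def:nice_props}) to pass between star fans and the local fan cycles $\cF^v$. The paper's version is much terser, and your explicit handling of degenerate modifications via Poincar\'e duality and of the well-foundedness of the induction supplies exactly the details the paper leaves implicit.
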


\begin{proof}
    Since star fans of complete fans are complete, and the star fans of a tropical modification
    $\cT\cM_{\Delta}(\Sigma)$ look either like (possibly degenerate) modifications of star fans of $\Sigma$ along star
    fans of $\Delta$, or star fans of $\Sigma$ (\cref{prop:star_fan_mod}), it follows by induction on the rank of the
    ambient lattice that if star fans of $\Sigma$ and $\Delta$ are quasilinear, then so are star fans of
    $\cT\cM_{\Delta}(\Sigma)$. The result now follows by \cref{thm:prod_qlin} and the discussion following
    \cref{def:nice_props}.
\end{proof}

\begin{corollary}
    The property of being a quasilinear tropical fan is intrinsic to the support, local, stably invariant, and preserved
    by tropical modifications.
\end{corollary}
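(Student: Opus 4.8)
The plan is to dispatch the four assertions in turn, in each case reducing to a result already established in this section. \emph{Intrinsic to the support} is immediate from \cref{def:qlin_fans}: a tropical fan is quasilinear exactly when it is supported on a quasilinear tropical fan cycle, so any two fan structures on a fixed tropical fan cycle are quasilinear together or not at all. \emph{Local} is precisely \cref{thm:star_qlin}, already proved: every star fan of a quasilinear tropical fan is quasilinear.

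For \emph{stable invariance} I would invoke \cref{thm:prod_qlin}. A complete fan $\Delta$ is reduced (weight one on all top-dimensional cones) and is, by definition, a quasilinear complete tropical fan cycle. If $\Sigma$ is quasilinear then it is reduced by \cref{thm:qlin_poincare}, so \cref{thm:prod_qlin} applies to the tropical fan cycles underlying $\Sigma$ and $\Delta$ (both reduced) and shows $\Sigma \times \Delta$ is quasilinear. Conversely, if $\Sigma \times \Delta$ is quasilinear for some complete $\Delta$ of dimension $\geq 1$, then $\Sigma \times \Delta$ is reduced, whence $\Sigma$ is reduced by stable invariance of reducedness (\cref{prop:red_loc_irred_nice}); then \cref{thm:prod_qlin} again applies and forces both $\Sigma$ and $\Delta$ to be quasilinear, in particular $\Sigma$.

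For \emph{preservation under tropical modifications}, suppose $\Sigma$ is quasilinear and $\varphi$ is a piecewise integral linear function on $\Sigma$ with $\div(\varphi)$ either trivial or quasilinear. If $\div(\varphi)$ is trivial, then $\cT\cM_\varphi(\Sigma)$ is a degenerate tropical modification of $\Sigma$, hence isomorphic to $\Sigma$ by the corollary following \cref{thm:qlin_poincare}, hence quasilinear. If $\div(\varphi)$ is quasilinear, then it is reduced, so $\cT\cM_\varphi(\Sigma)$ is reduced by \cref{lem:red_mod}, and by construction its support is a tropical modification of the quasilinear tropical fan cycle underlying $\Sigma$ along the quasilinear tropical divisor $\div(\varphi)$; it is therefore quasilinear straight from \cref{def:qlin_fans}. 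The genuine content of the corollary lies entirely in \cref{thm:prod_qlin} and \cref{thm:star_qlin}, both of which are proved above; granting these, I expect no real obstacle—only the bookkeeping of checking the reducedness hypotheses needed to apply \cref{thm:prod_qlin} and \cref{lem:red_mod}, and of routing the trivial-divisor case through the degenerate-modification corollary rather than reading it off \cref{def:qlin_fans} directly.
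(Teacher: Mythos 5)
Your proposal is correct and follows essentially the same route as the paper, which simply cites \cref{def:qlin_fans} for ``intrinsic to the support'' and ``preserved by tropical modifications'' and \cref{thm:star_qlin,thm:prod_qlin} for ``local'' and ``stably invariant.'' The only difference is that you spell out the reducedness bookkeeping and explicitly route the trivial-divisor case through the degenerate-modification corollary, which the paper leaves implicit.
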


\begin{proof}
    ``Intrinsic to the support'' and ``preserved by tropical modifications'' are by definition, and ``local'' and
    ``stably invariant'' are by \cref{thm:star_qlin,thm:prod_qlin}.
\end{proof}

\subsection{Shellable fans}

\begin{definition}[{\cite[Definitions 5.3, 5.4]{aminiHomologyTropicalFans2021}}]
    The class of \emph{(tropically) shellable} tropical fans is the smallest class of reduced tropical fans containing
    the $0$-fan and the (unique) complete fan in $\bR^1$, and which is closed under products, tropical modifications
    along trivial or shellable fans, and refinements and coarsenings along rays in the relative interior of cones whose
    star fans are shellable.
\end{definition}

The motivation for this terminology comes from the observation that the simplicial complex associated to a tropically
shellable tropical fan is shellable in the usual sense.

\begin{theorem} \label{thm:qlin_shellable}
    A reduced tropical fan is quasilinear if and only if it is shellable.
\end{theorem}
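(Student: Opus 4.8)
The plan is to establish the two inclusions separately by matching up the recursive descriptions of the two classes, using the structural results of \cref{sec:qlin_fans}.

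\emph{Shellable fans are quasilinear.} Since the shellable class is by definition the \emph{smallest} class of reduced tropical fans closed under the listed operations, it is enough to check that the class $\mathcal Q$ of quasilinear tropical fans contains the generators and satisfies all of those closure properties; the inclusion then follows by minimality. The $0$-fan and the complete fan in $\bR^1$ are complete, hence quasilinear. The class $\mathcal Q$ is closed under products by \cref{thm:prod_qlin}, under tropical modifications along trivial or quasilinear divisors directly by \cref{def:qlin_fans}, and --- because quasilinearity is intrinsic to the support --- under refinements and coarsenings along rays in the relative interiors of cones. Hence $\mathcal Q$ contains the shellable class.

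\emph{Quasilinear fans are shellable.} I would induct on $n=\rk N_{\cF}$, with $\cF$ the support of the fan, strengthening the assertion to: every fan structure on a quasilinear fan cycle is shellable, and so are all of its star fans. Carrying the star-fan clause is what legitimizes the refinement/coarsening moves used below, since those are allowed only along cones whose star fans are shellable; in our situation such star fans are quasilinear by \cref{thm:star_qlin} and of rank $<n$, hence shellable by induction. The base case $n\le 1$ is immediate. For the inductive step one splits according to \cref{def:qlin_fans}. If $\cF\cong \cT\cM_{\cD}(\cF')$ with $\cF'$ quasilinear and $\cD$ a quasilinear or trivial divisor, then (in the non-degenerate case) $\cF'$ and $\cD$ have rank $<n$, so by induction a fan structure $\cT\cM_{\varphi}(\Sigma')$ with $\Sigma'$ shellable and $\div(\varphi)$ a shellable or trivial fan structure on $\cD$ is shellable, because the shellable class is closed under tropical modifications along trivial or shellable fans; every other fan structure on $\cF$ is obtained from this one by the admissible refinement/coarsening moves, and one transports the conclusion across the isomorphism (the shellable class is plainly stable under isomorphisms of tropical fans).

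The crux is the remaining case, $\cF\cong$ a complete tropical fan cycle: after identifying $\cF$ with $N_{\bR}\cong\bR^n$ one must show \emph{every} complete fan structure on $\bR^n$ is shellable. The standard fan $\Delta_1^{\times n}$, with $\Delta_1$ the complete fan in $\bR^1$, is shellable as an $n$-fold product. Any complete fan $\Sigma$ on $\bR^n$ is joined to $\Delta_1^{\times n}$ by a finite sequence of star subdivisions and star assemblies at rays, by the toric weak factorization theorem \cite{wlodarczykDecompositionBirationalToric1997} (already invoked in the proof of \cref{thm:poincare_intrinsic}); each move is a refinement or coarsening along a ray in the relative interior of a nonzero cone of a complete fan, whose star fan is a complete tropical fan cycle of rank $<n$, hence shellable by the rank induction. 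So every move is among the operations defining the shellable class, and $\Sigma$ --- together with its star fans, which are again complete of lower rank --- is shellable. This completes the induction. I expect this complete case to be the main obstacle, since it is where the nontrivial external input (weak factorization) enters and where the rank induction is genuinely needed to verify the side condition on star fans; a minor further point requiring care is the bookkeeping for degenerate modifications, where $\cD$ is trivial and $\cF\cong\cF'$ with no drop in rank, which one handles by a secondary induction on the length of a derivation of quasilinearity, observing that a degenerate modification of an inductively shellable fan is shellable by the ``modification along a trivial fan'' clause.
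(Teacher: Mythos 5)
Your proof is correct, and it is essentially the mirror image of the paper's. For ``shellable $\implies$ quasilinear'' you use minimality of the shellable class after verifying that the quasilinear fans contain the generators and satisfy all of the closure properties; the paper proves this direction instead by an induction on $\rk N_{\Sigma}$ that unwinds a shellable derivation and applies \cref{thm:prod_qlin} together with closure of quasilinearity under modifications, while the paragraph it nominally devotes to the converse in fact records exactly your minimality observation. For ``quasilinear $\implies$ shellable,'' where the paper says little beyond citing the closure properties, you supply the substance that is being elided: that \emph{every} complete fan structure on $\bR^n$ must be shown shellable --- which you obtain from the product fan $\Delta_1^{\times n}$ and toric weak factorization, with the star-fan side condition on each single-ray refinement or coarsening discharged by the rank induction, the same external input the paper already invokes for \cref{thm:poincare_intrinsic} --- and that passing between fan structures on a quasilinear support requires the same moves. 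Two minor remarks: the star-fan clause you carry in the induction hypothesis is automatic from \cref{thm:star_qlin} together with the rank induction, so it is bookkeeping rather than a genuine strengthening; and the secondary induction on derivation length for degenerate modifications is unnecessary, since a degenerate modification is itself one of the permitted operations (modification along a trivial divisor), so shellability of the input fan immediately yields shellability of the output.
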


\begin{proof}
    ($\implies$) The class of quasilinear tropical fans includes the $0$ fan and the unique complete fan in $\bR^1$, is
    closed under products by \cref{thm:prod_qlin}, and is closed under refinements/coarsenings and tropical
    modifications by definition. Therefore, any reduced quasilinear fan is shellable.

    ($\impliedby$) Let $\Sigma$ be a reduced tropical fan. We will prove by induction on $n = \rk N_{\Sigma}$  that
    $\Sigma$ is shellable $\implies$ $\Sigma$ is quasilinear. In the case $n=1$ the result is immediate, as the
    shellable tropical fans in $\bR^1$ are precisely the (unique) complete fan and the $0$ fan, and these are also
    precisely the quasilinear tropical fans in $\bR^1$. Now assume the result for all $k < n$. Let $\Sigma$ be a
    shellable tropical fan in $\bR^n$. Then $\Sigma$ is supported on some fan which is obtained either by a tropical
    modification of a shellable tropical fan in $\bR^{n-1}$ along a trivial or shellable tropical divisor, or as the
    product of two shellable tropical fans in smaller ambient spaces. By induction, all of these smaller fans are
    quasilinear. Since products and tropical modifications of quasilinear fans are quasilinear, it follows that $\Sigma$
    is quasilinear.
\end{proof}

\begin{corollary}
    Quasilinear tropical fans are smooth in the sense of \cite{aminiHomologyTropicalFans2021}, and simplicial
    quasilinear tropical fans satisfy hard Lefschetz and the Hodge-Riemann relations.
\end{corollary}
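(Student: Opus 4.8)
The plan is to deduce both assertions directly from \cref{thm:qlin_shellable}, which identifies the class of quasilinear tropical fans with the class of shellable tropical fans in the sense of Amini and Piquerez; the two statements are then precisely the corresponding facts for shellable fans established in \cite{aminiHomologyTropicalFans2021}.

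For the first assertion I would recall that Amini and Piquerez prove every shellable tropical fan is smooth: smoothness holds for the $0$-fan and for the complete fan in $\bR^1$, and it is preserved under exactly the operations (products, tropical modifications along trivial or smooth divisors, and refinements/coarsenings along rays with smooth star fans) generating the shellable class. Since by \cref{thm:qlin_shellable} every reduced quasilinear fan is shellable, it is smooth. Alternatively, one could reprove this directly, mimicking the inductive arguments of \cref{thm:prod_qlin,thm:star_qlin}, but this would merely duplicate the work of \cite{aminiHomologyTropicalFans2021}.

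For the second assertion I would invoke the theorem of \cite{aminiHomologyTropicalFans2021} that shellable tropical fans satisfy the K\"ahler package. Restricting to simplicial $\Sigma$, the Chow ring $A^*(\Sigma)$ has the classical presentation of \cref{thm:chow_pres} over $\bQ$, and hard Lefschetz and the Hodge--Riemann relations carry their usual meaning, so the K\"ahler package specializes to exactly the claimed statement. The reason for restricting to simplicial fans here is only that these two properties are unambiguously defined in that setting; Poincar\'e duality, the remaining part of the K\"ahler package, is already recorded for all quasilinear fans (simplicial or not) in \cref{thm:qlin_poincare}.

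The step demanding the most attention is not any computation but the bookkeeping check that the inductive description of shellability reproduced before \cref{thm:qlin_shellable} agrees on the nose with the one in \cite{aminiHomologyTropicalFans2021}, so that their smoothness and K\"ahler-package results apply without modification — but this is exactly what \cref{thm:qlin_shellable} together with the preceding definition provides, so I do not expect a genuine obstacle.
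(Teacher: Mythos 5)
Your proposal is correct and follows exactly the paper's argument: apply \cref{thm:qlin_shellable} to identify quasilinear fans with shellable fans, then invoke the smoothness and K\"ahler-package results of \cite{aminiHomologyTropicalFans2021} (their Theorem 5.7). The additional remarks about the simplicial restriction and the bookkeeping of definitions are sensible but do not change the route.
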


\begin{proof}
    This follows from the above theorem and \cite[Theorem 5.7]{aminiHomologyTropicalFans2021}.
\end{proof}

\subsection{Examples} \label{sec:example_fans}

Recall that by a \emph{linear fan} we mean any (reduced) tropical fan supported on the Bergman fan of a matroid.

\begin{theorem} \label{thm:lin_qlin}
    Linear fans are quasilinear.
\end{theorem}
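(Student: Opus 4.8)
The plan is to prove that linear fans are quasilinear by induction on the rank of the ambient lattice, using the fact that the Bergman fan of a matroid can be built up from smaller Bergman fans via tropical modifications along other Bergman fans. The key structural input is the well-known description of the Bergman fan $\cF_M$ of a matroid $M$ on ground set $\{0,1,\ldots,n\}$: projecting away the coordinate corresponding to one element $e$ of the ground set exhibits $\cF_M$ (up to tropical equivalence, i.e. after refinement) as a tropical modification of a lower-rank object. More precisely, one wants to realize $\cF_M$ as $\cT\cM_{\varphi}(\cF_{M/e} \times \bR$ or $\cF_{M\setminus e})$, where the modifying divisor is itself a Bergman fan. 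This is essentially the tropical incarnation of the deletion-contraction recursion for matroids, and versions of it appear in the work on tropical linear spaces and in \cite{shawTropicalIntersectionProduct2013}; I would cite the appropriate statement rather than reprove it.

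First I would set up the base case: a rank-one (loopless) matroid has Bergman fan equal to a point or the complete fan in $\bR^1$ (after the standard reduction modulo the lineality space / the all-ones vector), which is quasilinear by definition. Next, for the inductive step, fix a loopless matroid $M$ of rank $r$ on $\{0,\ldots,n\}$ and choose a non-coloop, non-loop element $e$ (handling the easy cases where every element is a coloop — then $M$ is a free matroid and its Bergman fan is a linear space, hence complete after the reduction, hence quasilinear — or where $e$ is a loop, which is excluded by looplessness). Then I would invoke the deletion-contraction description to write (a refinement of) $\cF_M$ as a tropical modification of (a refinement of) $\cF_{M\setminus e}$ along the divisor $\cF_{M/e}$, suitably embedded; the relevant piecewise-linear function $\varphi$ is the tropical analogue of the linear form cutting out the hyperplane $x_e = 0$. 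Both $M \setminus e$ and $M/e$ are matroids on smaller ground sets (and, after the standard reduction, live in lattices of strictly smaller rank), so by the inductive hypothesis their Bergman fans are quasilinear; hence by \cref{def:qlin_fans} the fan $\cF_M$ is a tropical modification of a quasilinear fan along a quasilinear divisor, so it is quasilinear. Finally, since quasilinearity is intrinsic to the support (the corollary following \cref{thm:star_qlin}), the same conclusion holds for any reduced tropical fan supported on $\cF_M$, which is exactly the assertion that linear fans are quasilinear.

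The main obstacle I expect is making the deletion-contraction tropical modification statement precise and correctly matching it to \cref{def:qlin_fans} — in particular, identifying the correct piecewise integral linear function $\varphi$ on $\cF_{M\setminus e}$ whose divisor $\div(\varphi)$ is (a fan supported on) $\cF_{M/e}$, and checking that the rank bookkeeping works so that the induction genuinely descends. One subtlety is that $\cF_{M\setminus e}$ and $\cF_{M/e}$ naturally live in different ambient tori than $\cF_M$, so one must be careful with the lineality spaces and with which coordinate is being modified; another is the degenerate case where $e$ is a coloop, in which case the "modification" is degenerate and one should instead note directly that $\cF_M \cong \cF_{M/e} \times \bR$ (or appeal to the fact that a free matroid gives a complete fan). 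Once these combinatorial identifications are in hand, the inductive argument is formal given \cref{thm:prod_qlin} and the definition of quasilinearity.
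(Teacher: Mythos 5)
Your proposal is correct and follows essentially the same route as the paper: the paper's proof also invokes the deletion--contraction description of the Bergman fan (citing \cite[Proposition 2.25]{shawTropicalIntersectionProduct2013}) to realize $\Sigma_M$ as either complete or a tropical modification of $\Sigma_{M\setminus i}$ along $\Sigma_{M/i}$, and concludes by induction. The additional care you take with base cases, coloops, and lineality spaces is sensible bookkeeping that the paper leaves implicit.
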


\begin{proof}
    Let $\Sigma_M$ be the Bergman fan associated to the matroid $M$. Then either $\Sigma_M$ is complete, or $\Sigma_M$
    is a tropical modification of $\Sigma_{M \setminus i}$ along $\Sigma_{M/i}$ \cite[Proposition
    2.25]{shawTropicalIntersectionProduct2013}. The result follows by induction.
\end{proof}

\begin{example} \label{ex:quasilinear_fans}
    We classify (up to isomorphism) all quasilinear tropical fan cycles in $\bR^n$ for $n \leq 3$. Note the trivial fan
    is quasilinear, and complete fans are quasilinear. These are all of the possible fans in $\bR^n$ for $n \leq 1$. For
    $n=2,3$, we only have to consider nontrivial, non-complete fans.
    \begin{enumerate}
        \item There are (up to isomorphism) two one-dimensional quasilinear fans in $\bR^2$.  They are pictured in
            \cref{fig:lines_R2}. We call the first fan the ``classical line'' and the second fan the ``(standard)
            tropical line'' (in $\bR^2$).  The trivial line is a degenerate modification of $\bR^1$; the tropical line
            is a modification of $\bR^1$ along $0$.
            \begin{figure}[htpb]
            \begin{center}
            \begin{subfigure}[b]{0.4\textwidth}
              \centering
              \begin{tikzpicture}[scale=1, transform shape]
                \draw[thick,->] (0,0) -- (1,1);
                \draw[thick,->] (0,0) -- (-1,-1);
                \filldraw[black] (0,0) circle (2pt);
              \end{tikzpicture}
              \caption{The classical line.}%
            \end{subfigure}
            \begin{subfigure}[b]{0.4\textwidth}
              \centering
              \begin{tikzpicture}[scale=1, transform shape]
                \draw[thick,->] (0,0) -- (1.4,0);
                \draw[thick,->] (0,0) -- (-1,-1);
                \draw[thick,->] (0,0) -- (0,1.4);
              \end{tikzpicture}
              \caption{The standard tropical line.}%
              \end{subfigure}
            \end{center}
            \caption{One-dimensional quasilinear fans in $\bR^2$.}%
            \label{fig:lines_R2}
            \end{figure}
        \item There are three types of one-dimensional quasilinear fans in $\bR^3$. The first is a classical line $\bR^1
            \subset \bR^3$.

            The seecond is the fan shown in \cref{fig:mixed_line_R3}; it can be viewed either as a degenerate
            modification of the standard tropical line in $\bR^2$, or as a nondegenerate modification of the classical
            line in $\bR^2$.

            The third is the fan shown in \cref{fig:trop_line_R3}, known as the ``standard tropical line in $\bR^3$.''
            \begin{figure}[htpb]
            \begin{center}
            \begin{subfigure}[b]{0.4\textwidth}
              \centering
              \tdplotsetmaincoords{45}{15}
              \begin{tikzpicture}[scale=1, transform shape, tdplot_main_coords]
                \draw[thick,->] (0,0,0) -- (1,1,0);
                \draw[thick,->] (0,0,0) -- (-1,-1,-1);
                \draw[thick,->] (0,0,0) -- (0,0,2);
              \end{tikzpicture}
              \caption{A degenerate modification of the standard tropical line in $\bR^2$}%
              \label{fig:mixed_line_R3}
            \end{subfigure}
            \begin{subfigure}[b]{0.4\textwidth}
            \tdplotsetmaincoords{35}{25}
            \begin{tikzpicture}[scale=1, transform shape, tdplot_main_coords]
              \draw[thick,->] (0,0,0) -- (1,0,0);
              \draw[thick,->] (0,0,0) -- (0,1,0);
              \draw[thick,->] (0,0,0) -- (-1,-1,-1);
              \draw[thick, ->, red] (0,0,0,) -- (0,0,2);
            \end{tikzpicture}
            \caption{A nondegenerate modification of the standard tropical line in $\bR^2$}%
            \label{fig:trop_line_R3}
            \end{subfigure}
            \end{center}
            \end{figure}
        \item There are three types of two-dimensional quasilinear fans in $\bR^3$. The first is a classical plane
            $\bR^2 \subset \bR^3$, which can be viewed as a degenerate tropical modification of $\bR^2$.

          The second is a tropical modification of $\bR^2$ along a classical line, pictured in \cref{fig:trop_modR2_x}
          It is also isomorphic to the product of the standard tropical line in $\bR^2$ with $\bR^1$.

          The third is a tropical modification of $\bR^2$ along the standard tropical line, pictured in
          \cref{fig:trop_modR2_xy0}. This fan is known as the ``standard tropical plane (in $\bR^3$).''
          \begin{figure}[htpb]
          \begin{center}
          \begin{subfigure}[b]{0.4\textwidth}
            \centering
            \tdplotsetmaincoords{45}{15}
            \begin{tikzpicture}[scale=1, transform shape, tdplot_main_coords]
              \draw[thick] (0,0,0) -- (1,0,0);
              \draw[thick] (0,0,0) -- (-1,0,-1);

              \draw[thick, red] (0,0,0) -- (0,1,0);
              \draw[thick, red] (0,0,0) -- (0,-1,0);
              \draw[thick, red] (0,0,0) -- (0,0,1);
              \draw[thick,red] (0,1,0) -- (0,1,1);
              \draw[thick,red] (0,-1,0) -- (0,-1,1);
              \draw[thick,red] (0,1,1) -- (0,0,1);
              \draw[thick,red] (0,-1,1) -- (0,0,1);

              \draw[thick] (0,1,0) -- (1,1,0);
              \draw[thick] (0,-1,0) -- (1,-1,0);
              \draw[thick] (1,0,0) -- (1,1,0);
              \draw[thick] (1,0,0) -- (1,-1,0);

              \draw[thick] (0,-1,0) -- (-1,-1,-1);
              \draw[thick] (0,1,0) -- (-1,1,-1);
              \draw[thick] (-1,0,-1) -- (-1,-1,-1);
              \draw[thick] (-1,0,-1) -- (-1,1,-1);
            \end{tikzpicture}
            \caption{The tropical modification of $\bR^2$ with respect to
            $\varphi'(x,y)=\min\{x,0\}$.}%
            \label{fig:trop_modR2_x}
          \end{subfigure}
          \begin{subfigure}[b]{0.4\textwidth}
          \tdplotsetmaincoords{35}{25}
          \begin{tikzpicture}[scale=1, transform shape, tdplot_main_coords]
            \draw[thick, red] (0,0,0) -- (1,0,0);
            \draw[thick, red] (0,0,0) -- (0,1,0);
            \draw[thick, red] (0,0,0) -- (-1,-1,-1);

            \draw[thick, red] (0,0,0) -- (0,0,1);
            \draw[thick, red] (1,0,0) -- (1,0,1);
            \draw[thick, red] (0,1,0) -- (0,1,1);
            \draw[thick, red] (-1,-1,-1) -- (-1,-1,0);
            \draw[thick, red] (0,0,1) -- (1,0,1);
            \draw[thick, red] (0,0,1) -- (0,1,1);
            \draw[thick, red] (0,0,1) -- (-1,-1,0);

            \draw[thick] (1,0,0) -- (1,1,0);
            \draw[thick] (0,1,0) -- (1,1,0);
            \draw[thick] (1,0,0) -- (0,-1,-1);
            \draw[thick] (-1,-1,-1) -- (0,-1,-1);
            \draw[thick] (0,1,0) -- (-1,0,-1);
            \draw[thick] (-1,-1,-1) -- (-1,0,-1);
          \end{tikzpicture}
          \caption{The tropical modification of $\bR^2$ with respect to
          $\varphi(x,y)=\min\{x,y,0\}$.}%
          \label{fig:trop_modR2_xy0}
          \end{subfigure}
          \end{center}
          \end{figure}
    \end{enumerate}
\end{example}

\begin{example}
    The line $2y=3x$ in $\bR^2$ is isomorphic to $\bR^1$ via the linear map $\bR^2 \to \bR^1$, $(x,y) \mapsto y-x$. Thus
    this line is a quasilinear tropical fan cycle, even though neither coordinate projection $\bR^2 \to \bR^1$ realizes
    it as a tropical modification.
\end{example}

\begin{example} \label{ex:arb_line}
    More generally, let $a,b \in \bZ \setminus \{0\}$ and let $\Sigma$ be the line $ax=by$ in $\bR^2$. Let $d =
    \gcd(a,b)$.  Then there exist integers $x,y$ such that $ax + by = d$, and so the linear map
    \[
        N_{\Sigma} \to \bZ, \;\; (a,b) \mapsto \frac{a}{d}x + \frac{b}{d}y = 1
    \]
    induces an isomorphism of $\Sigma$ with $\bR^1$. Thus $\Sigma$ is quasilinear.
\end{example}

\begin{example}
    The fan $\Sigma$ in $\bR^2$ pictured in \cref{fig:cross_R2} is not irreducible, and therefore not quasilinear. On the
    other hand, if $\varphi(x,y) = x-y$ for $x+y \geq 0$ and $0$ otherwise, then the tropical modification of $\Sigma$
    with respect to $\varphi$ is isomorphic to the standard tropical line in $\bR^3$ \cite[Example
    11.4]{aminiHomologyTropicalFans2021}. This is a nontrivial degenerate modification of $\Sigma$.
    \begin{figure}[htpb]
    \begin{center}
    \centering
    \begin{tikzpicture}[scale=1, transform shape]
      \draw[thick,->] (0,0) -- (1,0);
      \draw[thick,->] (0,0) -- (0,1);
      \draw[thick,->] (0,0) -- (-1,0);
      \draw[thick,->] (0,0) -- (0,-1);
      \filldraw[black] (0,0) circle (2pt);
    \end{tikzpicture}
    \end{center}
    \caption{A reducible fan in $\bR^2$.}%
    \label{fig:cross_R2}
    \end{figure}
\end{example}

\begin{example}
    The fan $\Sigma$ in $\bR^2$ pictured in \cref{fig:nonreduced_R2} is tropical with weights $1,2,1$ on the rays through
    $(1,0)$, $(0,1)$, $(-1,-2)$, respectively. In particular this fan is not reduced, and therefore not quasilinear.
    Note $\Sigma$ is the tropical modification of $\bR^1$ with respect to the function $\varphi(x) = \min\{2x,0\}$; the
    divisor of this function is the origin with weight $2$.
    \begin{figure}[htpb]
    \begin{center}
    \centering
    \begin{tikzpicture}[scale=1, transform shape]
      \draw[thick,->] (0,0) -- (1.7,0);
      \draw[thick,->] (0,0) -- (0,1.7);
      \draw[thick,->] (0,0) -- (-1,-2);
      \filldraw[black] (0,0) circle (2pt);
    \end{tikzpicture}
    \end{center}
    \caption{A nonreduced fan in $\bR^2$.}%
    \label{fig:nonreduced_R2}
    \end{figure}
\end{example}

\section{Quasilinear varieties} \label{sec:qlin_vars}

\subsection{Preliminaries}

We will assume some familiarity with the basics of tropicalizations and tropical compactifications of closed
subvarieties of tori, see for instance \cite{tevelevCompactificationsSubvarietiesTori2007,
luxtonResultsTropicalCompactifications2011, gublerGuideTropicalizations2013}, \cite[Chapter
6]{maclaganIntroductionTropicalGeometry2015}. We quickly summarize here the relevant definitions and facts.

Let $T$ be an algebraic torus and $X(\Sigma)$ a toric variety with torus $T$.  Recall that the closure $\oY \subset
X(\Sigma)$ of a pure-dimensional closed subscheme $Y \subset T$ is a \emph{tropical compactification} if $\oY$ is proper
and the multiplication map $m : \oY \times T \to X(\Sigma)$ is faithfully flat (i.e., flat and surjective)
\cite[Definition 1.1]{tevelevCompactificationsSubvarietiesTori2007}. This implies in particular that $\oY$ intersects
each torus orbit $O(\sigma)$ of $X(\Sigma)$ nontrivially in the expected dimension, inducing a stratification of $\oY$
in dimension $k$ by strata of the form $Y_{\sigma} = \oY \cap O(\sigma)$ for $\sigma \in \Sigma_k$. The tropical
compactification $\oY \subset X(\Sigma)$ is called \emph{sch\"on} if the multiplication map $m : \oY \times T \to
X(\Sigma)$ is in addition smooth \cite[Definition 1.3]{tevelevCompactificationsSubvarietiesTori2007}, or equivalently
all strata $Y_{\sigma}$ of $\oY$ are smooth \cite[Lemma 2.7]{hackingHomologyTropicalVarieties2008}.

The tropicalization\footnote{This is not the way tropicalization is usually defined, but nevertheless it agrees with the
usual definition---see \cite[Chapter 6]{maclaganIntroductionTropicalGeometry2015}.} $\trop(Y)$ of $Y$ is the tropical
fan cycle defined by the fan $\Sigma$ of any tropical compactification $\oY \subset X(\Sigma)$ of $Y$, where the weight
$\omega(\sigma)$ of a top-dimensional cone $\sigma$ of $\Sigma$ is equal to the length of the zero-dimensional scheme
$Y_{\sigma}$. We refer to $\omega$ as the \emph{induced weight (from $Y$)} when we wish to emphasize that it is the
weight coming from $\trop(Y)$, rather than some other weight on $\trop(Y)$ as an abstract tropical fan cycle.

While it is not the case that every tropical fan supported on $\trop(Y)$ yields a tropical compactification of $Y$
\cite[Example 3.10]{sturmfelsEliminationTheoryTropical2008}, it is true that if $\Sigma$ gives a tropical
compactification of $Y$, then so does any refinement of $\Sigma$ \cite[Proposition
2.5]{tevelevCompactificationsSubvarietiesTori2007}; furthermore, if $Y$ admits a sch\"on tropical compactification, then
\emph{any} fan supported on $\trop(Y)$ also gives a sch\"on tropical compactification of $Y$, hence it makes sense to
call $Y$ itself sch\"on \cite[Theorem 1.5]{luxtonResultsTropicalCompactifications2011}.

\begin{lemma} \label{lem:trop_integral}
    If $Y \subset T$ is any pure-dimensional closed subscheme such that $\trop(Y)$ is irreducible and the induced weight
    is the fundamental weight, then $Y$ is irreducible and generically reduced. In particular, if $\trop(Y)$ is reduced
    and irreducible, then $Y$ is irreducible and generically reduced.
\end{lemma}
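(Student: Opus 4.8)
The plan is to reduce to the standard fact that the tropicalization of a closed subvariety $Y$ of a torus is irreducible if $Y$ is irreducible, by unwinding what the hypotheses say about a tropical compactification $\oY \subset X(\Sigma)$. First I would choose a fan structure $\Sigma$ on $\trop(Y)$ giving a tropical compactification $\oY \subset X(\Sigma)$; by passing to a refinement (which preserves the property of being a tropical compactification by \cite[Proposition 2.5]{tevelevCompactificationsSubvarietiesTori2007}) I may assume $\Sigma$ is as nice as needed. Write $Y = Y_1 \cup \cdots \cup Y_r$ as the union of its irreducible components, each of dimension $d = \dim Y$ since $Y$ is pure-dimensional, and let $m_i$ be the multiplicity of $Y$ along $Y_i$ (i.e.\ the length of the local ring of $Y$ at the generic point of $Y_i$). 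The key point is the additivity of tropicalization with multiplicities: $\trop(Y) = \sum_i m_i \, \trop((Y_i)_{\mathrm{red}})$ as tropical fan cycles, and each $\trop((Y_i)_{\mathrm{red}})$ is an effective $d$-dimensional tropical fan cycle supported on $\trop(Y)$, hence defines a nonzero element of $M_d(\Sigma)$.

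Next I would invoke the hypothesis that $\trop(Y)$ is irreducible, so by \cref{prop:irred_criterion} we have $M_d(\Sigma) \cong \bZ$, generated by the fundamental weight $\omega_0$. Each $\trop((Y_i)_{\mathrm{red}})$ is therefore a positive integer multiple $c_i \omega_0$ with $c_i \geq 1$, and the induced weight $\omega$ of $\trop(Y)$ equals $\sum_i m_i c_i \, \omega_0$. But by hypothesis $\omega = \omega_0$, forcing $r = 1$, $m_1 = 1$, and $c_1 = 1$. Thus $Y$ has a unique irreducible component $Y_1$, occurring with multiplicity one, i.e.\ $Y$ is irreducible and generically reduced. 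The second sentence of the lemma is then immediate: a reduced tropical fan has weight $1$ on all top cones, and if it is also irreducible then this all-ones weight is (a positive multiple of, hence equal to) the fundamental weight, so the hypotheses of the first statement are met.

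The main obstacle is justifying the additivity statement $\trop(Y) = \sum_i m_i \, \trop((Y_i)_{\mathrm{red}})$ at the level of weighted tropical fan cycles, i.e.\ that the induced weight on a top-dimensional cone $\sigma$ is $\sum_i m_i \cdot (\text{length of }(Y_i)_\sigma)$. I would handle this by working stratum by stratum: for $\sigma$ a top cone, $Y_\sigma = \oY \cap O(\sigma)$ is a zero-dimensional scheme, and its length is additive over the contributions of the components $\oY_i \cap O(\sigma)$ weighted by the $m_i$, which follows from the corresponding additivity of lengths of zero-dimensional schemes together with the fact (a consequence of the tropical compactification hypothesis, cf.\ \cite[Chapter 6]{maclaganIntroductionTropicalGeometry2015}) that each $\oY_i$ meets $O(\sigma)$ in the expected dimension along the part of $\trop(Y_i)$ containing $\sigma$. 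Alternatively one can cite the Sturmfels--Tevelev multiplicity formula / the general compatibility of tropicalization with the cycle structure in \cite[Chapter 6]{maclaganIntroductionTropicalGeometry2015}; with that input in hand the rest of the argument is the short Minkowski-weight computation above.
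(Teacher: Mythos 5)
Your proposal is correct and follows essentially the same route as the paper: decompose $Y$ into irreducible components with multiplicities, use additivity of tropicalization (the paper cites Osserman--Payne for this), use irreducibility of $\trop(Y)$ to identify $M_d(\Sigma)\cong\bZ$ generated by the fundamental weight, and conclude from $1=\sum_i m_i c_i$ that there is a single component with multiplicity one. The only cosmetic difference is that the paper first argues the supports $\trop(Y_i)$ all coincide with $\trop(Y)$ before comparing weights, whereas you pass directly to the Minkowski-weight computation; both are valid.
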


\begin{proof}
    Let $Y = Y_1 \cup \cdots \cup Y_k$ be the decomposition of $Y$ into its irreducible components, with multiplicities
    $a_1,\ldots,a_k$. Then $\trop(Y) = \trop(Y_1) \cup \cdots \cup \trop(Y_k)$ by \cite[Definition 2.5.2, Proposition
    2.7.2]{ossermanLiftingTropicalIntersections2013}. Each $\trop(Y_i)$ is a tropical fan cycle of the same dimension as
    $\trop(Y)$, with support contained in $\trop(Y)$, so by irreducibility of $\trop(Y)$ we have
    \[
      \trop(Y) = \trop(Y_1) = \cdots = \trop(Y_k)
    \]
    as sets. Furthermore,
    \[
      \omega = a_1\omega_1 + \cdots + a_k\omega_k,
    \]
    where $\omega$, $\omega_i$ are the induced weights on $\trop(Y)$, $\trop(Y_i)$. Since $\trop(Y_i)=\trop(Y)$ for all
    $i$, we see that $\omega_i(\sigma) \neq 0$ for every cone $\sigma$ in $\trop(Y)$. Since $\trop(Y)$ is irreducible
    and $\omega$ is the fundamental weight on $\trop(Y)$, it follows by \cref{prop:irred_criterion} that for
    $i=1,\ldots,k$ there is some integer $b_i > 0$ such that $\omega_i = b_i\omega$. Then
    \[
      \omega = (a_1b_1 + \cdots + a_kb_k)\omega,
    \]
    hence
    \[
      1 = a_1b_1 + \cdots + a_kb_k.
    \]
    Since the $a_i$'s and $b_i$'s are all positive integers, it follows that $k=1$ and $a_1=b_1=1$, so $Y$ is
    irreducible and generically reduced. The last sentence follows from the first, since the statement that $\trop(Y)$
    is reduced and irreducible means that $\trop(Y)$ is irreducible and reduced with the induced weight, thus the
    induced weight is the fundamental weight.
\end{proof}

\begin{lemma} \label{lem:trop_strata}
    Let $\oY \subset X(\Sigma)$ be a tropical compactification of $Y \subset T$.  Then for every $\sigma \in \Sigma$,
    $Y_{\sigma} = \oY \cap O(\sigma)$ is nonempty of pure dimension $\dim \oY - \dim \sigma$, and $\trop(Y_{\sigma}
    \subset O(\sigma))$ is a tropical fan cycle supported on $\Sigma^{\sigma}$, where the weight on $\Sigma^{\sigma}$
    induced by $Y_{\sigma}$ is the same as the weight induced by $\Sigma$.
\end{lemma}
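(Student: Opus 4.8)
The plan is to derive everything from the faithful flatness of the multiplication map $m \colon \oY \times T \to X(\Sigma)$, together with the compatibility of this flatness with restriction to an orbit closure. For nonemptiness, surjectivity of $m$ puts every orbit $O(\sigma)$ in its image, and if $m(y,t)\in O(\sigma)$ then $y = t^{-1}\cdot m(y,t) \in \oY\cap O(\sigma) = Y_\sigma$, so $Y_\sigma\ne\emptyset$. For the dimension, I would observe that the fibre of $m$ over a point $x\in O(\tau)$ is, via projection to the $T$-factor, isomorphic to the preimage of $Y_\tau$ under the orbit map $T\to O(\tau)$, $t\mapsto t^{-1}x$, which is a torsor under the subtorus $T_\tau$ of dimension $\dim\tau$; in particular the fibre over a point of the open orbit $T = O(0)$ is isomorphic to $Y$, which is pure of dimension $d:=\dim\oY$. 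Since $m$ is flat and surjective, $X(\Sigma)$ is irreducible, and every irreducible component of $\oY\times T$ dominates $X(\Sigma)$ (because $T\cdot Y_i = T$ is dense for each component $Y_i$ of $Y$), every fibre of $m$ is pure of dimension $d$; comparing with the description above forces $Y_\sigma$ to be pure of dimension $d-\dim\sigma$.

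For the support and the weight, I would pass to the orbit closure $V(\sigma)$, which is the toric variety $X(\Sigma^\sigma)$ with dense torus $O(\sigma) = T/T_\sigma$, the ambient torus $T$ acting on it through this quotient. Because $V(\sigma)$ is $T$-invariant, $m^{-1}(V(\sigma)) = (\oY\cap V(\sigma))\times T$, and the induced map $(\oY\cap V(\sigma))\times T\to V(\sigma)$ is faithfully flat, being a base change of $m$. This map factors through the faithfully flat map $\mathrm{id}\times(T\to O(\sigma))$ followed by the multiplication $\mu\colon(\oY\cap V(\sigma))\times O(\sigma)\to V(\sigma)$, so by faithfully flat descent of flatness $\mu$ is faithfully flat. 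Now $\oY\cap V(\sigma)$ is proper and meets the dense torus $O(\sigma)$ in exactly $Y_\sigma$; applying the dimension argument above to $\mu$ shows moreover that every irreducible component of $\oY\cap V(\sigma)$ meets $O(\sigma)$ and has dimension $d-\dim\sigma = \dim Y_\sigma$, so that $\oY\cap V(\sigma)$ coincides with the closure $\overline{Y_\sigma}$ of $Y_\sigma$ in $V(\sigma)$. Thus $\overline{Y_\sigma} = \oY\cap V(\sigma)\subseteq X(\Sigma^\sigma)$ is a tropical compactification of $Y_\sigma\subseteq O(\sigma)$, and in particular $\trop(Y_\sigma)$ is supported on $\Sigma^\sigma$.

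It then remains to match the weights. By definition, the weight that $Y_\sigma$ induces on a top-dimensional cone of $\Sigma^\sigma$ — namely the image $\bar\tau$ of a cone $\tau\in\Sigma_d$ with $\tau\succ\sigma$ — is the length of the zero-dimensional scheme $\overline{Y_\sigma}\cap O_{V(\sigma)}(\bar\tau)$; but $O_{V(\sigma)}(\bar\tau)$ is the orbit $O(\tau)$ of $X(\Sigma)$ (which lies in $V(\sigma)$), so this scheme is $(\oY\cap V(\sigma))\cap O(\tau) = \oY\cap O(\tau) = Y_\tau$, of length $\omega(\tau)$ — which is exactly the value at $\bar\tau$ of the weight $\omega^\sigma$ that $\Sigma = \trop(Y)$ induces on $\Sigma^\sigma$ (see \cref{ex:local_defs}). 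Hence the two weights agree. The step I expect to need the most care is the identification $\oY\cap V(\sigma) = \overline{Y_\sigma}$, i.e. that $\oY\cap V(\sigma)$ carries no component disjoint from $O(\sigma)$: this rests on the purity half of the dimension count, not merely on faithful flatness, and it is also the natural place to instead cite the corresponding foundational statement on tropical compactifications (e.g. \cite{tevelevCompactificationsSubvarietiesTori2007}, \cite[Chapter 6]{maclaganIntroductionTropicalGeometry2015}).
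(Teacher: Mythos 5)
Your argument is correct, but it is worth noting that the paper does not actually prove this lemma: it simply cites \cite[Propositions 13.13, 14.3, 14.7]{gublerGuideTropicalizations2013}. What you have written is essentially a reconstruction of the standard proof underlying those references (and \cite[Section 2]{tevelevCompactificationsSubvarietiesTori2007}, \cite[Chapter 6]{maclaganIntroductionTropicalGeometry2015}): faithful flatness of the multiplication map, the fiber-dimension formula for flat morphisms of finite type over an irreducible base, and descent of flatness to identify $\oY \cap V(\sigma) \subset X(\Sigma^\sigma)$ as a tropical compactification of $Y_\sigma$. Two points deserve a touch more explicitness than ``applying the dimension argument above to $\mu$.'' First, purity of $\oY\cap V(\sigma)$ of dimension $d-\dim\sigma$ follows from computing the fibers of $\mu$ over \emph{all} orbits $O(\tau)\subset V(\sigma)$ (each fiber over $O(\tau)$ is a $(T_\tau/T_\sigma)$-torsor over $Y_\tau$, hence pure of dimension $d-\dim\sigma$ by the first part applied to $\tau$); then a component of $\oY\cap V(\sigma)$ contained in the boundary would lie in some $\oY\cap V(\tau)$ with $\tau\supsetneq\sigma$, which is pure of strictly smaller dimension $d-\dim\tau$ --- so ruling out such components is really a downward induction on codimension, exactly the subtlety you flag at the end. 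Second, for the weight comparison the scheme-theoretic identity $(\oY\cap V(\sigma))\cap O(\tau)=\oY\cap O(\tau)$ is immediate since $O(\tau)\subset V(\sigma)$, so the length computation is legitimate once one knows $\oY\cap V(\sigma)=\overline{Y_\sigma}$; this matches the weight $\omega^\sigma(\bar\tau)=\omega(\tau)$ induced from $\Sigma$ as in \cref{ex:local_defs}. With those two clarifications your proof is complete and self-contained, which is more than the paper offers.
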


\begin{proof}
    See \cite[Proposition 13.13, 14.3, 14.7]{gublerGuideTropicalizations2013}.
\end{proof}

\begin{remark} \label{rmk:in_degens}
    The tropicalization of $Y$ can equivalently be defined as the closure in $N_{\bR}$ of the set of points $w \in N$
    such that the initial degeneration $\text{in}_w(Y)$ is nonempty, see for instance
    \cite{gublerGuideTropicalizations2013}. If $\Sigma$ is a fan structure on $\trop(Y)$ giving a tropical
    compactification $\oY \subset X(\Sigma)$, and $w$ is in the relative interior of a cone $\sigma \in \Sigma$, then
    $\text{in}_w(Y) \cong Y_{\sigma} \times (\bC^*)^{\dim \sigma}$ \cite[Lemma
    3.6]{helmMonodromyFiltrationsTopology2012}, reflecting the fact that $\trop(Y)^{w} = \lvert \Sigma^{\sigma} \rvert
    \times \bR^{\dim \sigma}$ (cf. \cref{ex:local_defs}).
\end{remark}

\subsection{Quasilinear varieties}

\begin{definition}
  A closed subvariety $Y$ of an algebraic torus $T$ is called \emph{quasilinear} if $\trop(Y)$ is a quasilinear tropical
  fan cycle.
\end{definition}

\begin{remark}
    We emphasize in particular that this definition implies $\trop(Y)$ is irreducible, and reduced with the induced
    weight from $Y$, cf. \cref{thm:qlin_poincare}.
\end{remark}

For examples of quasilinear varieties, see \cref{sec:examples_vars,sec:applications}.

\begin{lemma} \label{lem:mod_bir}
    Fix an algebraic torus $\wT$ with cocharacter lattice $\wN$.  Let $\wY \subset \wT$ be an irreducible closed
    subvariety such that $\trop(\wY)$ is irreducible and the induced weight is the fundamental weight. If there is a
    linear map $p : \wN_{\bR} \to N_{\bR}$, $\rk N = \rk \wN - 1$, realizing $\trop(\wY)$ as a tropical modification of
    an irreducible tropical fan cycle $\cF$ in $N_{\bR}$, then the corresponding projection of tori $\pi : \wT \to T$
    restricts to a birational morphism with finite fibers $\pi : \wY \to Y$ to an irreducible closed subvariety $Y
    \subset T$ such that $\trop(Y) = \cF$ as tropical fan cycles.
\end{lemma}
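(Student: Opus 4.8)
The plan is to take $Y$ to be the Zariski closure of $\pi(\wY)$ in $T$ and to verify the three assertions in turn, the essential input being the behavior of tropicalization under monomial maps. We may assume $p\colon \wN \to N$ is surjective (if not, replace $N$ by $p(\wN)$ and $T$ by the corresponding subtorus, which changes nothing in the statement), and then choose a splitting $\wN = N \oplus \bZ e$ with $\bZ e = \ker p$ and $p$ the first projection; thus $\wT \cong T \times \bC^*$, $\pi$ is the first projection, and $\ker \pi$ is the one-dimensional subtorus $S$ with $\trop(S) = \bR e$. Since $\wY$ is irreducible, $\pi(\wY)$ is an irreducible constructible subset of $T$, so $Y := \overline{\pi(\wY)}$ is an irreducible closed subvariety of $T$ and $\pi$ restricts to a dominant morphism $\pi\colon \wY \to Y$.

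First I would check that $\pi\colon \wY \to Y$ has finite fibers. The fiber over a point $y$ is $\wY \cap \pi^{-1}(y)$, a closed subscheme of the coset $\pi^{-1}(y)$, which is a translate of $S \cong \bC^*$; hence it is finite unless it equals all of $\pi^{-1}(y)$, in which case $\wY$ contains a translate $tS$ of $S$ and so $\bR e = \trop(tS) \subseteq \trop(\wY)$. But $\trop(\wY) = \cT\cM_{\varphi}(\cF)$ is covered by the graph cones $\wt\sigma = \{(x,\varphi(x)) : x \in \sigma\}$ and the extra cones $\tau_{\geq} = \wt\tau + \bR_{\geq 0}e$, and $-e$ lies on none of these (it would require $x = 0$ and $\varphi(0) = -1$ on a graph cone, or a negative coefficient on $e$ on an extra cone). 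So $\bR e \not\subseteq \trop(\wY)$, a contradiction, and all fibers are finite. Consequently $\dim Y = \dim \wY = \dim \trop(\wY) = \dim \cF =: d$ (a tropical modification preserves dimension), so $\pi\colon \wY \to Y$ is a dominant morphism of $d$-dimensional irreducible varieties, hence generically finite of some degree $\delta \geq 1$.

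It remains to show $\delta = 1$ and to identify $\trop(Y)$. By the Sturmfels--Tevelev description of images under monomial maps \cite{sturmfelsEliminationTheoryTropical2008} (see also \cite[Chapter 3]{maclaganIntroductionTropicalGeometry2015}), one has $\trop(Y) = p(\trop(\wY)) = p(\cT\cM_{\varphi}(\cF)) = \cF$ as sets, and for a cone $\sigma$ containing a generic point of $\cF$ in its relative interior the multiplicity formula reads
\[
    \delta \cdot \omega_Y(\sigma) = \sum_{p(\wt\rho) = \sigma} \omega_{\wt Y}(\wt\rho)\,[N_{\sigma} : p(\wN_{\wt\rho})],
\]
where $\omega_Y$, $\omega_{\wt Y}$ are the induced weights and $\wt\rho$ ranges over top-dimensional cones of $\trop(\wY)$ mapping onto $\sigma$. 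The only such cone is the graph cone $\wt\sigma$, since each extra cone $\tau_{\geq}$ maps onto $\tau \in \div(\varphi)$, which has dimension $d - 1$; and $p$ restricts to an isomorphism $\wN_{\wt\sigma} \xrightarrow{\sim} N_{\sigma}$ because $\wt\sigma$ is the graph of the \emph{integral} linear function $\varphi_{\sigma}$, so the index there is $1$. Moreover, tropical modifications of irreducible tropical fans are irreducible with $p_{*}$ an isomorphism on top-dimensional Minkowski weights carrying the fundamental weight to the fundamental weight (\cref{cor:irred_mod} and its proof); as the induced weight on $\trop(\wY)$ is by hypothesis its fundamental weight, this gives $\omega_{\wt Y}(\wt\sigma) = \omega_{\cF}(\sigma)$. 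The formula collapses to $\delta\,\omega_Y(\sigma) = \omega_{\cF}(\sigma)$, and since $\cF$ is irreducible $\omega_Y$ is a positive integer multiple of $\omega_{\cF}$, forcing $\delta = 1$ and $\omega_Y = \omega_{\cF}$. Therefore $\pi\colon \wY \to Y$ is a birational morphism with finite fibers onto the irreducible closed subvariety $Y \subset T$, and $\trop(Y) = \cF$ as tropical fan cycles.

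The main obstacle I anticipate is the weight bookkeeping: one must check that the Sturmfels--Tevelev multiplicity formula, combined with the precise hypothesis that the induced weight on $\trop(\wY)$ equals its fundamental weight, really forces $\delta = 1$ and matches the weight on $\cF$---this hinges on $p_{*}$ being an isomorphism on top-dimensional Minkowski weights, which itself rests on the structure of the star fans of a tropical modification (\cref{prop:star_fan_mod}). A minor point still requiring an explicit argument is that $\cT\cM_{\varphi}(\cF)$ contains no point of the form $-e$, which is what rules out positive-dimensional fibers of $\pi\colon\wY\to Y$.
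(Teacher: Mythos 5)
Your proof is correct and follows essentially the same route as the paper: take $Y=\overline{\pi(\wY)}$, use the Sturmfels--Tevelev results to get $\trop(Y)=p(\trop(\wY))=\cF$, and apply the multiplicity formula together with the fundamental-weight hypothesis and \cref{cor:irred_mod} to force $\delta=1$. The only divergence is the finite-fibers step, where you argue geometrically that each fiber is a closed subscheme of a $\bC^*$-coset and exclude a full coset by checking $-e\notin\cT\cM_{\varphi}(\cF)$, whereas the paper tropicalizes the fiber and observes that a fan consisting of the single ray $0_{\geq}$ cannot be balanced; both arguments are valid.
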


\begin{proof}
    Let $Y = \overline{\pi(\wY)}$. Then $Y$ is an irreducible closed subvariety of $T$, and $\pi : \wY \to Y$ is
    dominant, so by \cite[Proposition 2.8]{sturmfelsEliminationTheoryTropical2008},
    \[
        \trop(Y) = p(\trop(\wY)) = \cF.
    \]
    In particular,
    \[
        \dim Y = \dim \trop(Y) = \dim \trop(\wY) = \dim \wY,
    \]
    so $\pi : \wY \to Y$ is a dominant morphism of irreducible varieties of the same dimension, hence is generically
    finite, say of degree $\delta$. Let $\wt\omega$ and $\omega$ be the induced weights on $\trop(\wY)$ and $\trop(Y)$.
    By \cite[Theorem 3.12]{sturmfelsEliminationTheoryTropical2008},
    \[
        \omega = \frac{1}{\delta}p_*\wt\omega.
    \]
    But since $\wt\omega$ is the fundamental weight on $\trop(\wY)$, it pushes forward to the fundamental weight on
    $\trop(Y)$ by \cref{cor:irred_mod}. Thus $\omega=p_*\wt\omega$ and $\delta=1$. Therefore, $\pi : \wY \to Y$ is
    dominant and generically finite of degree one, i.e., it is birational.

    Let $y \in Y$. Then $\pi^{-1}(y) \subset \wY$ is a closed subscheme of $\wY$ of dimension $\leq 1$. Suppose
    $\pi^{-1}(y)$ is nonempty. Then $\pi : \pi^{-1}(y) \to y$ is dominant, so by \cite[Proposition
    2.8]{sturmfelsEliminationTheoryTropical2008}, $p(\trop(\pi^{-1}(y)))=\trop(y) = 0 \in N_{\bR}$. Thus if $\wt\Sigma$ is a fan
    structure on $\trop(\wY)$ inducing a fan structure on $\trop(\pi^{-1}(y))$, then a cone $\sigma \in \wt\Sigma$ lies
    in $\trop(\pi^{-1}(y)) \iff p(\sigma)=0$. Since $p$ realizes $\wt\Sigma$ as a tropical modification of a fan
    structure on $\cF$, we see that the only possible cones of $\trop(\pi^{-1}(y))$ are the $0$-cone, and $0_{\geq}$.
    But there is no way for the one-dimensional fan consisting of just the ray $0_{\geq}$ to be balanced, so we conclude
    that $\trop(\pi^{-1}(y))=0$, and in particular $\dim \pi^{-1}(y) = \dim \trop(\pi^{-1}(y)) = 0$. Thus every fiber of
    $\pi : \wY \to Y$ is either empty or zero-dimensional.
\end{proof}

We introduce two general properties of varieties which will be useful later (\cref{sec:chow}).

\begin{definition} \label{def:chow_free}
    An irreducible variety $Y$ is \emph{Chow-free} if $A_{\dim Y}(Y) = \bZ \cdot [Y]$ and $A_k(Y) = 0$ for $k \neq \dim
    Y$.
\end{definition}

\begin{lemma}
    Any open subvariety of a Chow-free variety is Chow-free.
\end{lemma}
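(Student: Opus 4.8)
The plan is to deduce everything from the right-exact excision sequence for Chow groups. Let $Y$ be a Chow-free variety of dimension $d$, and let $j : U \hookrightarrow Y$ be a nonempty open subvariety; write $Z = Y \setminus U$, a closed subscheme of $Y$ given its reduced structure. Since $Y$ is irreducible, $U$ is dense, so $\dim Z \leq d - 1$ while $\dim U = d$, and $U$ is again irreducible (and reduced), so ``Chow-free'' makes sense for it.

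First I would invoke the excision sequence \cite[Proposition 1.8]{fultonIntersectionTheory1998}: for every $k$ there is an exact sequence
\[
    A_k(Z) \xrightarrow{i_*} A_k(Y) \xrightarrow{j^*} A_k(U) \to 0,
\]
where $i : Z \hookrightarrow Y$ is the inclusion and $j^*$ is the restriction of cycles. For $k > d$ we have $A_k(U) = 0$ trivially, since $\dim U = d$. For $0 \le k < d$, the hypothesis that $Y$ is Chow-free gives $A_k(Y) = 0$, and exactness then forces $A_k(U) = 0$. This handles all degrees $k \neq d$.

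It remains to treat $k = d$. Here $A_d(Z) = 0$ because $\dim Z < d$, so $j^* : A_d(Y) \to A_d(U)$ is surjective; combined with $A_d(Y) = \bZ \cdot [Y]$ and $j^*[Y] = [U]$, we get $A_d(U) = \bZ \cdot [U]$. The only point requiring a word of care is that this surjection $\bZ \cdot [Y] \twoheadrightarrow A_d(U)$ is in fact an isomorphism: since $U$ is a $d$-dimensional variety, its top Chow group is freely generated by the fundamental class $[U]$ (the top-dimensional piece of a variety's Chow group is free on the fundamental classes of its top-dimensional irreducible components, cf.\ \cite[\S 1.3]{fultonIntersectionTheory1998}), so $[U]$ has infinite order and $A_d(U) \cong \bZ \cdot [U]$. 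Together with the vanishing in the other degrees, this shows $U$ is Chow-free.

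There is no serious obstacle here; the argument is a direct application of excision, and the only subtlety is the identification $j^*[Y] = [U]$ together with the fact that $[U]$ generates $A_d(U) \cong \bZ$, which rules out $A_d(U)$ being a proper torsion quotient of $\bZ \cdot [Y]$.
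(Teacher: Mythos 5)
Your proof is correct and follows the same route as the paper, which deduces the claim directly from the excision sequence $A_k(Z) \to A_k(Y) \to A_k(U) \to 0$ together with $\dim Z < \dim Y$. You simply spell out the degree-by-degree bookkeeping (including the freeness of $A_d(U)$ on $[U]$) that the paper leaves as "immediate."
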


\begin{proof}
    Let $Y$ be Chow-free, $U \subset Y$ any (nonempty) open subvariety, and $Z = Y \setminus U$. Then $\dim Z < \dim Y$,
    so the result is immediate from the standard exact sequence
    \[
        A_k(Z) \to A_k(Y) \to A_k(U) \to 0.
    \]
\end{proof}

\begin{example}
    Affine space is Chow-free, so any open subvariety of affine space is Chow-free. In particular, any open subvariety
    of an algebraic torus is Chow-free.
\end{example}

\begin{definition} \label{def:weakly_linear}
    A variety $Y$ is \emph{linearly stratified} if it is isomorphic to an affine space or the complement of a linearly
    stratified variety in a linearly stratified variety, or if it contains a linearly stratified variety $Z$ such that
    the complement $Y \setminus Z$ is also linearly stratified.
\end{definition}

\begin{remark}
    Linearly stratified varieties are called ``linear'' in \cite{totaroChowGroupsChow2014,
    jannsenMixedMotivesAlgebraic2006}.  We rename them ``linearly stratified'' to avoid confusion with our notion of
    linear varieties.

    It follows from the definition that a variety stratified by linearly stratified varieties is itself linearly
    stratified. Thus the class of linearly stratified varieties is strictly larger than both the class of linear varieties (in our sense) and
    the class of quasilinear varieties.
\end{remark}

Recall from \cref{sec:morphisms} that if $\cF$ is a tropical fan cycle in $N_{\bR}$, then $N_{\cF} = N_{\cF,\bR} \cap
N$, where $N_{\cF,\bR}$ is the vector subspace of $N_{\bR}$ spanned by $\cF$.

\begin{definition}
    Let $Y \subset T$ be a closed subvariety. The \emph{minimal torus} of $Y$ is the torus $T_Y = N_{\trop Y} \otimes
    \bC^* \subset T$.
\end{definition}

\begin{definition}
    Let $Y \subset T$ and $Y' \subset T'$ be closed subvarieties of algebraic tori $T$ and $T'$ with respect character
    lattices $N$ and $N'$. A \emph{tropical isomorphism} $Y \xrightarrow{\sim} Y'$ is a linear isomorphism $N_{\trop Y}
    \xrightarrow{\sim} N_{\trop Y'}$, inducing an isomorphism $\trop(Y) \xrightarrow{\sim} \trop(Y')$, and such that the
    corresponding isomorphism of minimal tori $T_Y \xrightarrow{\sim} T'_{Y'}$ restricts to an isomorphism $Y
    \xrightarrow{\sim} Y'$.
\end{definition}

\begin{remark} \label{rmk:trop_iso}
    By \cite[Proposition 5.3]{jellConstructingSmoothFully2020}, if $Y \subset T$ and $T_Y$ is the minimal torus of $Y$,
    then some translate $Y' \cong Y$ of $Y$ is contained in $T_Y$, and $\trop(Y' \subset T_Y) \cong \trop(Y \subset T)$.
    Thus $Y \subset T$ is tropically isomorphic to $Y' \subset T_Y$, so there is no harm in replacing the original
    embedding $Y \subset T$ with $Y' \subset T_Y$. For the rest of this section we will therefore assume that $T$ is
    itself the minimal torus of $Y$.
\end{remark}

\begin{definition}
    Let $Y \subset T$ be a closed subvariety of an algebraic torus $T$, and $f$ a regular function on $Y$. The
    \emph{very affine graph} of $f$ on $Y$ is the closed subvariety $\Gamma^a_f = \Gamma_f \cap (Y \times \bC^*) \subset
    T \times \bC^*$, where $\Gamma_f \subset Y \times \bA^1$ is the usual graph of $f$.
\end{definition}

In equations, $\Gamma^a_f = V(z-f) \subset Y \times \bC^*_z$. Recall that the usual graph $\Gamma_f$ of $f$ is
isomorphic to $Y$ via the projection $Y \times \bA^1 \to Y$; likewise $\Gamma^a_f$ is isomorphic to $Y \setminus V(f)$
via the projection $Y \times \bC^* \to Y$.

\begin{theorem} \label{thm:qlin_vars}
    Quasilinear varieties are smooth, irreducible, rational, Chow-free, and linearly stratified.
\end{theorem}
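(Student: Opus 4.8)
The plan is to prove \cref{thm:qlin_vars} by induction on $\rk N$, exploiting the inductive definition of quasilinearity and the structural results on quasilinear fans established in \cref{sec:qlin_fans}. By \cref{rmk:trop_iso} we may assume $T$ is the minimal torus of $Y$, so $N = N_{\trop Y}$. By \cref{thm:qlin_poincare}, $\trop(Y)$ is irreducible and reduced with the induced weight, so by \cref{lem:trop_integral} $Y$ is already irreducible and generically reduced; combined with $Y$ being a variety (reduced), it is irreducible. The base case is when $\trop(Y)$ is isomorphic to a complete tropical fan cycle, i.e. $\trop(Y) = N_{\bR}$. In that case $\trop(Y)$ has full dimension $n = \rk N$, which forces $Y = T$ itself (a $d$-dimensional subvariety with $\trop(Y) = \bR^d$ and $T$ the minimal torus must be the whole torus). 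A torus is smooth, irreducible, rational, Chow-free (it is an open subvariety of affine space), and linearly stratified, so the base case holds.

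For the inductive step, assume the result for all smaller lattices, and suppose $\trop(Y)$ is (isomorphic to) a tropical modification $\cT\cM_\cD(\cF)$ of a quasilinear tropical fan cycle $\cF$ in $N'_\bR$ with $\rk N' = \rk N - 1$, along a quasilinear tropical divisor $\cD$. I would like to descend along the modification: the projection $p : N_\bR \to N'_\bR$ realizing the modification induces a projection of tori $\pi : T \to T'$, and I want to apply \cref{lem:mod_bir} to conclude that $\pi$ restricts to a birational morphism with finite fibers $\pi : Y \to Y_0$ onto an irreducible closed subvariety $Y_0 \subset T'$ with $\trop(Y_0) = \cF$. The hypotheses of \cref{lem:mod_bir} are met since $Y$ is irreducible and $\trop(Y)$ is irreducible with fundamental induced weight. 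By the inductive hypothesis, $Y_0$ is smooth, irreducible, rational, Chow-free, and linearly stratified. Since $\pi : Y \to Y_0$ is birational, $Y$ is rational. The finite birational morphism $\pi$ realizes $Y$ as (a partial normalization of) $Y_0$; more precisely, I expect to identify $Y$ explicitly as the graph of a rational function on $Y_0$, coming from the piecewise integral linear function $\varphi$ defining the modification — this is exactly the content of the criterion in \cref{thm:qlin_criterion_main}, but at this stage I would either invoke a direct description or argue as follows: $Y \subset T = T' \times \bC^*_z$ and $Y$ maps birationally to $Y_0 \subset T'$, with the extra coordinate $z$ restricting to an invertible function on $\pi^{-1}(Y_0 \setminus D_0)$ for the appropriate divisor $D_0 \subset Y_0$ (the stratum corresponding to $\cD$), and with a graph-of-$f$ structure over the strata, so that $Y$ is obtained from $Y_0$ by the quasilinear deformation construction of \cref{thm:qlin_criterion_main}.

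Granting the identification of $Y$ as the closure of $\{z = f\}$ for a rational function $f$ on $Y_0$ with $V(f)$ quasilinear (or $f$ nonvanishing), I get all five properties: $Y$ is smooth because over $Y_0 \setminus D_0$ it is the graph of the regular nonvanishing function $f$ (isomorphic to $Y_0 \setminus D_0$), and over a neighborhood of $D_0$ it is the blowup-type chart $\{z = f\}$ which is smooth since $D_0 = V(f)$ is a smooth (indeed quasilinear) Cartier divisor in the smooth variety $Y_0$; $Y$ is Chow-free by the localization sequence, writing $Y$ as the union of an open piece isomorphic to an open subvariety of the Chow-free $Y_0$ together with a closed piece that is a $\bC^*$- or $\bA^1$-bundle (hence Chow-free by homotopy invariance and the projective bundle formula) over the Chow-free $D_0$; and $Y$ is linearly stratified by the same stratification, since $D_0$ and $Y_0$ are linearly stratified by induction, the relevant bundles over them are linearly stratified, and a variety stratified by linearly stratified varieties is linearly stratified (\cref{def:weakly_linear} and the following remark). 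The main obstacle I anticipate is making the identification of $Y$ as a graph $\{z = f\}$ precise and verifying that $V(f)$ is quasilinear — i.e. bridging \cref{lem:mod_bir} to the deformation picture of \cref{thm:qlin_criterion_main}; this requires knowing that the stratum $Y_{D_0}$ of $Y_0$ corresponding to the cone $\cD$ tropicalizes correctly (which follows from \cref{lem:trop_strata}) and that $f$ vanishes exactly along it. Once this geometric picture is in place, the verification of the five listed properties is routine bookkeeping with the localization sequence and homotopy invariance of Chow groups.
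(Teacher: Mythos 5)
Your overall strategy (reduce to the minimal torus, induct on the rank of the ambient lattice, and use \cref{lem:mod_bir} to project $Y$ birationally with finite fibers onto a quasilinear $Y_0$ with $\trop(Y_0)=\cF$) is exactly the paper's, but the step you flag as ``the main obstacle'' is resolved incorrectly, and the resolution you propose is inconsistent with what you have already established. The missing idea is Zariski's main theorem: by induction $Y_0$ is smooth, hence normal, so the birational morphism $\pi : Y \to Y_0$ with finite fibers is an \emph{open embedding}. Consequently $\pi(Y)$ is a dense affine open subvariety of $Y_0$, its complement $D_0$ is empty or of pure codimension one, and $Y \cong Y_0 \setminus D_0$. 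Once this is in place, all five properties are immediate from closure properties: open subvarieties of smooth/irreducible/rational/Chow-free varieties retain those properties, and the complement of a linearly stratified variety in a linearly stratified variety is linearly stratified by \cref{def:weakly_linear}. (One still has to show $D_0$ is quasilinear, via $\trop(D_0)=\cD$, \cref{lem:trop_integral}, and \cref{cor:int_var} to rule out nonreducedness; your appeal to \cref{lem:trop_strata} does not do this, since that lemma concerns boundary strata of a compactification, not divisors inside the open variety.)

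The concrete error is your description of $Y$ as ``an open piece isomorphic to an open subvariety of $Y_0$ together with a closed piece that is a $\bC^*$- or $\bA^1$-bundle over $D_0$.'' No such closed piece exists: \cref{lem:mod_bir} says every fiber of $\pi$ is finite, so $Y$ contains nothing positive-dimensional over any point of $Y_0$, and indeed the very affine graph $V(z-f) \subset Y_0 \times \bC^*_z$ is isomorphic to $Y_0 \setminus V(f)$ with \emph{no} points over $V(f)$ (there $z=f=0 \notin \bC^*$). You appear to be conflating the open variety $Y$ with a tropical compactification of it, where the extra ray $0_{\geq}$ of the modified fan does produce a boundary divisor lying over $D_0$ --- but the theorem is about $Y$ itself. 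As a result your arguments for smoothness, Chow-freeness, and linear stratification rest on a decomposition of $Y$ that is false; the projective-bundle and homotopy-invariance inputs are not needed and not applicable. The fix is not to make your graph picture more precise but to replace it with the open-embedding statement above.
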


\begin{proof}
    Let $\wY \subset \wT$ be a quasilinear variety. Following \cref{rmk:trop_iso}, we reduce to the case where $\wT$ is
    the minimal torus of $\wY$. If $\trop(\wY) = \wN_{\bR}$, then $\wY=\wT$ and we are done. Otherwise, $\trop(\wY)$ is
    a tropical modification of a quasilinear tropical fan cycle $\cF$ in $N_{\bR}$, $\rk N = \rk \wN - 1$, along a
    quasilinear tropical divisor $\cD$. By \cref{lem:mod_bir}, the corresponding projection of tori restricts to a
    birational morphism with finite fibers $\pi : \wY \to Y$ to some variety $Y \subset T$ with $\trop(Y) = \cF$ (as
    tropical fan cycles). Then $Y$ is quasilinear, so by induction on the dimension of the ambient torus, $Y$ is smooth,
    irreducible, rational, Chow-free, and linearly stratified. In particular, $Y$ is normal, so by a version of
    Zariski's main theorem, $\pi : \wY \to Y$ is an open embedding \cite[Ch. 4, Corollary
    4.6]{liuAlgebraicGeometryArithmetic2002}. Since $\pi$ is a morphism of affine varieties, it is also affine, so
    $\pi(\wY) \cong \wY$ is a dense affine open subvariety of $Y$. Therefore, by
    \cite[\href{https://stacks.math.columbia.edu/tag/0BCV}{Lemma 0BCV}]{stacks-project}, $D = Y \setminus \pi(\wY)$ is
    either empty or has codimension one in $Y$. If $D$ is empty, then $\pi : \wY \to Y$ is an isomorphism, and
    $\trop(\wY) \to \trop(Y)$ is a degenerate modification, so we're done.  Suppose $D$ is nonempty, and give $D$ the
    reduced induced scheme structure. Since $Y$ is Chow-free, every divisor on $Y$ is principal, so $D = V(f)$ for some
    regular function $f$ on $Y$. Then $\wY$ is the very affine graph of $h=fg$, where $g$ is some regular function on
    $Y$ such that either $g$ is nonvanishing or the support of $\div(g)$ is contained in $D$. Note that $V(h)$ is
    supported on $D$ but has a possibly nonreduced structure. By \cite[Proposition
    4.1]{renTropicalizationPezzoSurfaces2016}, $\trop(\wY)$ is a tropical modification of $\trop(Y)$ along
    $\trop(V(h))$. On the other hand, $\trop(\wY)$ is also a tropical modification of $\trop(Y)$ along $\cD$. Since the
    center of the modification is the image of the locus where $\trop(\wY) \not\cong \trop(Y)$, with the weights
    inherited from the $\trop(\wY)$, it follows that $\trop(V(h)) = \cD$ as tropical fan cycles. Then since $D$ is the
    reduced scheme structure on $V(h)$, it further follows that $\trop(D) = \cD$ as tropical fan cycles, hence $D$ is a
    quasilinear variety. We conclude that $\wY \cong Y \setminus D$ is the complement of a quasilinear divisor in a
    quasilinear variety. Since by induction both $Y$ and $D$ are smooth, irreducible, rational, Chow-free, and linearly
    stratified, it follows that $\wY$ is as well.
\end{proof}

We extract from the above theorem and its proof the following characterization of quasilinear varieties.

\begin{corollary}
    Let $\wY \subset \wT$ be a quasilinear variety. Then, up to tropical isomorphism, either $\wY$ is an algebraic
    torus, or there is a projection of tori $\pi : \wT \to T$, where $\dim T = \dim \wT - 1$, realizing $\wY$ as the
    very affine graph of a regular function $f$ on a quasilinear variety $Y \subset T$, such that $D = V(f)$ is also
    quasilinear.
\end{corollary}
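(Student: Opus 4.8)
The plan is to repackage \cref{thm:qlin_vars} and its proof. First I would use \cref{rmk:trop_iso} to reduce to the case where $\wT$ is the minimal torus of $\wY$. If $\trop(\wY)$ is complete, then $\trop(\wY) = \wN_{\bR}$, so $\wY = \wT$ is an algebraic torus and we are in the first alternative. Otherwise, by definition of quasilinearity, $\trop(\wY) \cong \cT\cM_{\varphi}(\cF)$ for a quasilinear tropical fan cycle $\cF \subset N_{\bR}$ with $\rk N = \rk\wN - 1$ and $\cD = \div(\varphi)$ a quasilinear tropical divisor. Since $\cF$ is Poincar\'e, a degenerate modification of $\cF$ would be isomorphic to $\cF$ (\cref{rmk:degen_mods}), which is impossible because $\wT$ is minimal; so $\cD \neq 0$. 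Write $\wN = N \times \bZ$ and $\wT = T \times \bC^*_z$, with $z$ the coordinate on the second factor (the character of $(0,1) \in \wM$) and $\pi\colon \wT \to T$ the projection. By \cref{lem:mod_bir} together with the argument in the proof of \cref{thm:qlin_vars}, $\pi$ restricts to an open immersion of $\wY$ onto $Y \setminus D$, where $Y \subset T$ is a quasilinear variety with $\trop(Y) = \cF$, and (since $\cD \neq 0$, the modification is non-degenerate) $D := Y \setminus \pi(\wY)$ is a nonempty quasilinear variety with $\trop(D) = \cD$.

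Next I would transport the invertible regular function $z|_{\wY}$ along the isomorphism $\pi\colon \wY \xrightarrow{\sim} Y \setminus D$ to get an invertible regular function $f_0$ on $Y \setminus D$; by construction $\wY$ is its graph inside $Y \times \bC^*_z \subset \wT$. It then remains only to show that $f_0$ extends to a regular function $f$ on $Y$ whose zero scheme is exactly $D$: granting this, $\wY = \{(y, f(y)) : y \in Y \setminus D\} = V(z - f) \cap (Y \times \bC^*) = \Gamma^a_f$, with $D = V(f)$ quasilinear, which is the assertion. Since $Y$ is normal, this reduces to proving that $f_0$ has order of vanishing exactly $1$ along the prime divisor $D$: then $f_0$, being invertible away from $D$ and of nonnegative order along $D$, is regular in codimension one, hence extends to $f \in \mathcal{O}(Y)$ with $\div(f) = D$; and as $Y$ and $D$ are smooth by \cref{thm:qlin_vars}, the zero scheme of $f$ is the reduced divisor $D$.

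For the order computation I would choose a unimodular fan structure $\wt\Sigma$ on $\trop(\wY)$ refining a modification $\cT\cM_{\varphi}(\Sigma)$ and giving a tropical compactification $\overline{\wY} \subset X(\wt\Sigma)$, with induced birational toric morphism $\bar\pi\colon X(\wt\Sigma) \to X(\Sigma)$. The ray $\rho_0 = 0_{\geq} = \bR_{\geq 0}(0,1)$ is a ray of $\cT\cM_{\varphi}(\Sigma)$ with star fan $\cD = \trop(D)$ by \cref{prop:star_fan_mod}(2), so by \cref{lem:trop_strata} the stratum $\overline{\wY} \cap O(\rho_0)$ is a pure-dimensional closed subscheme of $O(\rho_0) \cong T$ with tropicalization $\trop(D)$, hence a quasilinear variety by \cref{cor:qlin_scheme}. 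Since $\bar\pi(\overline{\wY})$ is the closure of $Y$ in $X(\Sigma)$ while $D$ lies in the open torus, $\bar\pi$ carries $\overline{\wY} \cap O(\rho_0)$ isomorphically onto $D$. Finally, since $z$ is the character of $(0,1)$, its principal divisor on $X(\wt\Sigma)$ contains the boundary divisor $V(\rho_0)$ with coefficient $\langle (0,1),(0,1)\rangle = 1$; restricting to $\overline{\wY}$, and using that $\overline{\wY}$ meets the boundary properly and that the stratum $\overline{\wY} \cap O(\rho_0) = D$ is reduced, one finds that $z$ vanishes to order $1$ along the component of $\overline{\wY} \cap V(\rho_0)$ lying over $D$ — which, transported to $Y$ via the birational morphism $\bar\pi$, says precisely that $\ord_D(f_0) = 1$.

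The step I expect to be the main obstacle is this last one: showing that the new coordinate $z$ vanishes along $D$ to order \emph{exactly} one. This is the algebraic shadow of $\cD = \div(\varphi)$ being reduced (equivalently, of $\trop(\wY)$ carrying its fundamental weight), and making it rigorous requires carefully matching the tropical weight data at the ray $\rho_0$ with the order of vanishing along $D$, through the boundary divisor $V(\rho_0)$ and the birational morphism $\bar\pi$. If one preferred to avoid the transversality bookkeeping, one could instead invoke that tropical compactifications of quasilinear varieties are sch\"on, but that (\cref{cor:qlin_schon}) is only established later.
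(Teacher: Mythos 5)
Your skeleton up to the production of $f$ is exactly the paper's: reduce to the minimal torus, split off the torus case, rule out a degenerate modification by minimality, and use \cref{lem:mod_bir} together with the proof of \cref{thm:qlin_vars} to get the open immersion $\pi : \wY \xrightarrow{\sim} Y \setminus D$ with $Y$ and $D$ quasilinear. Where you genuinely diverge is in how $f$ is obtained. You take the transported coordinate $f_0 = z|_{\wY} \circ \pi^{-1}$ and set out to prove directly that it extends to $Y$ with divisor exactly $D$, which forces you into the order-of-vanishing computation that you yourself flag as the main obstacle. The paper instead harvests a conclusion of \cref{thm:qlin_vars} that you did not use: $Y$ is Chow-free (\cref{def:chow_free}), so $A_{\dim Y - 1}(Y)=0$, every divisor on $Y$ is principal, and $D = V(f)$ for some regular function $f$ in one line; $\wY$ is then the very affine graph of $f$ up to a unit. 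Chow-freeness is carried along in \cref{thm:qlin_vars} precisely so that it can be invoked here (and again in the Chow-ring section), and this is the shortcut you missed.

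To be fair to your instinct: identifying $\wY$ with $\Gamma^a_{uf}$ for a \emph{unit} $u$ (rather than with the graph of $uf^m$) still requires $\ord_D(f_0)=1$, which is the algebraic shadow of $\trop(\wY)$ being a \emph{reduced} modification; the paper's ``it follows that'' leaves this implicit, and you have correctly isolated it. But your argument for it is not complete as written, and has some inaccuracies: the chosen unimodular refinement of $\cT\cM_{\varphi}(\Sigma)$ must first be arranged to give a tropical compactification (take a common refinement with a fan that does, since you rightly cannot invoke \cref{cor:qlin_schon} yet); the toric morphism $X(\wt\Sigma) \to X(\Sigma)$ is not birational (its generic fiber is $\bC^*$) --- only its restriction to the closure of $\wY$ is; and the passage from ``$z$ has coefficient $1$ along $V(\rho_0)$ and the stratum over $\rho_0$ is reduced'' to ``$\ord_D(f_0)=1$'' still needs the intersection-multiplicity and ramification bookkeeping you allude to but do not carry out. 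All of this is repairable, but the Chow-freeness route avoids constructing $f$ from the coordinate altogether.
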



\begin{example}
    \begin{enumerate}
        \item Let $\wY$ be the very affine graph of $f=z-1$ on $\bC^*$. Then $\trop(\wY)$ is the tropical modification
            of $\trop(\bC^*)=\bR$ with respect to the function $\trop(f)=\min\{x,0\}$. This yields the standard tropical
            line, hence $\wY$ is quasilinear, cf. \cref{fig:lines_R2}(b).
        \item Let $\wY'$ be the very affine graph of $g=z(z-1)$ on $\bC^*$. Then $\trop(\wY')$ is the tropical
            modification of $\trop(\bC^*)=\bR$ with respect to the function $\trop(g)=\min\{2x,x\}$. This yields the
            one-dimensional reduced tropical fan $\Sigma$ in $\bR^2$ with rays $(1,1)$, $(0,1)$, and $(-1,-2)$. The linear map
            $\bR^2 \to \bR^2$ given by the matrix
            \[
                \begin{bmatrix}
                    1 & 0 \\
                    1 & 1
                \end{bmatrix}
            \]
            induces an isomorphism of the standard tropical line with $\Sigma$, hence $\wY'$ is quasilinear.
        \item Let $\wY''$ be the very affine graph of $h=(z-1)^2$ on $\bC^*$. Then $\trop(\wY'')$ is the tropical
            modification of $\trop(\bC^*)=\bR$ with respect to the function $\trop(h)=\min\{2x,x,0\}$. This yields the
            nonreduced tropical fan shown in \cref{fig:nonreduced_R2}. Thus $\wY''$ is not quasilinear.
    \end{enumerate}
    In particular, observe that $\wY \cong \wY' \cong \wY'' \cong \bC^* \setminus \{1\}$, but quasilinearity depends on
    the given embedding of $\bC^* \setminus \{1\}$ in $(\bC^*)^2$: $\wY$ and $\wY'$ are quasilinear, while $\wY''$ is
    not.
\end{example}

\section{Quasilinear tropical compactifications} \label{sec:qlin_comps}

\subsection{Strata}

\begin{definition}
    A tropical compactification $\oY \subset X(\Sigma)$ of a closed subvariety $Y \subset T$ is \emph{quasilinear} if $Y
    \subset T$ is a quasilinear variety.
\end{definition}

\begin{theorem} \label{thm:qlin_strata}
    Suppose $\oY \subset X(\Sigma)$ is a quasilinear tropical compactification. Then for every cone $\sigma \in \Sigma$,
    the stratum $Y_{\sigma} = \oY \cap O(\sigma)$ of $\oY$ is quasilinear, and in particular smooth, irreducible,
    rational, Chow-free, and linearly stratified.
\end{theorem}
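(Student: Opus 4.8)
The plan is to reduce the statement about strata to the results already established for quasilinear varieties (\cref{thm:qlin_vars}) by identifying the tropicalization of each stratum with a star fan of the tropical fan $\Sigma$. First I would invoke \cref{lem:trop_strata}: for any cone $\sigma \in \Sigma$, the stratum $Y_\sigma = \oY \cap O(\sigma)$ is nonempty of pure dimension $\dim \oY - \dim \sigma$, and its tropicalization $\trop(Y_\sigma \subset O(\sigma))$ is supported on the star fan $\Sigma^\sigma$, with the weight induced from $\Sigma$ agreeing with the weight induced by $Y_\sigma$. Since $Y$ is quasilinear, $\trop(Y) = \lvert \Sigma \rvert$ is a quasilinear tropical fan cycle, and by \cref{thm:star_qlin} (star fans of quasilinear tropical fans are quasilinear) the star fan $\Sigma^\sigma$ is again quasilinear; in particular, it is reduced and irreducible by \cref{thm:qlin_poincare}, so the induced weight on $\trop(Y_\sigma)$ is the fundamental weight.

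The key step is then to conclude that $Y_\sigma$ itself is a quasilinear variety, i.e.\ that it is reduced and irreducible with $\trop(Y_\sigma)$ equal to the quasilinear fan cycle $\lvert \Sigma^\sigma \rvert$. This is exactly the content of \cref{cor:qlin_scheme}: a pure-dimensional closed subscheme of a torus whose tropicalization is quasilinear is automatically a quasilinear variety. Here the ambient torus is $O(\sigma)$, whose cocharacter lattice is $N/N_\sigma$, and $\Sigma^\sigma$ is a fan in $(N/N_\sigma)_\bR = N_\bR/N_{\sigma,\bR}$, so the setup of \cref{cor:qlin_scheme} applies directly. Applying it gives that $Y_\sigma$ is reduced and irreducible, hence a quasilinear variety. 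Finally, \cref{thm:qlin_vars} applied to $Y_\sigma$ yields that it is smooth, irreducible, rational, Chow-free, and linearly stratified, which is the full assertion.

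The main subtlety to be careful about is the interplay between the fan structure $\Sigma$ (which must be one giving an actual tropical compactification, not merely a fan supported on $\trop Y$) and the intrinsic notion of a quasilinear tropical fan cycle: one needs that \cref{lem:trop_strata} produces $\trop(Y_\sigma)$ as a genuine tropical fan cycle supported on $\lvert \Sigma^\sigma \rvert$, and that quasilinearity of this cycle is exactly what feeds into \cref{cor:qlin_scheme}. Since quasilinearity is intrinsic to the support, local, and stably invariant (the corollary following \cref{thm:star_qlin}), and since \cref{thm:star_qlin} is stated for fans rather than only for fan cycles, there is no gap: the star fan of any fan structure on the quasilinear cycle $\trop(Y)$ is quasilinear, and hence so is its support $\lvert \Sigma^\sigma \rvert = \trop(Y_\sigma)$. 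No induction is needed here beyond what is already packaged in the cited results, so the proof is short.
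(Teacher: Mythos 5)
Your proof is correct and follows exactly the paper's own argument: \cref{lem:trop_strata} to identify $\trop(Y_\sigma)$ with a tropical fan cycle supported on $\Sigma^\sigma$, \cref{thm:star_qlin} for quasilinearity of the star fan, \cref{cor:qlin_scheme} to upgrade the scheme $Y_\sigma$ to a quasilinear variety, and \cref{thm:qlin_vars} for the final list of properties. The extra care you take about the intrinsic nature of quasilinearity is accurate but not needed beyond what the cited results already provide.
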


\begin{proof}
    If $\sigma=0$ then this follows by definition. Suppose $\sigma \neq 0$. By \cref{lem:trop_strata}, $Y_{\sigma}$ is
    pure-dimensional, and $\trop(Y_{\sigma})$ is a tropical fan cycle supported on $\Sigma^{\sigma}$, with induced
    weight from $Y_{\sigma}$ the same as the induced weight from $\Sigma$. Since star fans of quasilinear tropical fans
    are quasilinear (\cref{thm:star_qlin}), and in particular reduced and irreducible, it follows by
    \cref{lem:trop_integral} that $Y_{\sigma}$ is irreducible and generically reduced. Thus if $Y_{\sigma}^{red}$ is the
    reduced induced scheme structure on $Y_{\sigma}$, then $Y_{\sigma}^{red}$ irreducible and reduced. Furthermore,
    $\trop(Y_{\sigma}^{red}) = \trop(Y_{\sigma})$ as tropical fan cycles, and in particular $Y_{\sigma}^{red}$ is a
    quasilinear variety, hence is smooth, irreducible, rational, Chow-free, and linearly stratified by
    \cref{thm:qlin_vars}. Now it follows by \cref{rmk:in_degens} and \cite[Theorem
    11]{cartwrightGrobnerStratificationTropical2012} that $Y_{\sigma} = Y_{\sigma}^{red}$.
\end{proof}

\begin{corollary} \label{cor:qlin_schon}
    Quasilinear varieties are sch\"on.
\end{corollary}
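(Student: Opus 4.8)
The plan is to deduce this immediately from \cref{thm:qlin_strata} together with the characterization of sch\"onness recalled in the preliminaries. First I would fix a quasilinear variety $Y \subset T$ and choose a fan structure $\Sigma$ on $\trop(Y)$ yielding a tropical compactification $\oY \subset X(\Sigma)$; such a $\Sigma$ exists by Tevelev's theorem, and any refinement of it works as well. By \cref{thm:qlin_strata}, every stratum $Y_{\sigma} = \oY \cap O(\sigma)$ is a quasilinear variety, hence smooth by \cref{thm:qlin_vars} (which is in any case part of the statement of \cref{thm:qlin_strata}).

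Next I would invoke the equivalence, recalled above following \cite{hackingHomologyTropicalVarieties2008}, that a tropical compactification is sch\"on precisely when all of its strata are smooth. Since we have just verified this for $\oY \subset X(\Sigma)$, this particular compactification is sch\"on, i.e.\ its multiplication map $m : \oY \times T \to X(\Sigma)$ is smooth and surjective. Finally, by \cite{luxtonResultsTropicalCompactifications2011} (also recalled in the preliminaries), a variety admitting a single sch\"on tropical compactification is itself sch\"on, and in fact every fan structure on $\trop(Y)$ then gives a sch\"on compactification; hence $Y$ is sch\"on.

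I do not expect any genuine obstacle here: all the substantive content is contained in \cref{thm:qlin_strata} (which rests in turn on \cref{thm:star_qlin} and \cref{thm:qlin_vars}), and this corollary is merely the translation of ``all strata are smooth'' into the language of sch\"onness via the two standard equivalences. The only points needing a word of care are that a tropical compactification exists at all and that one applies the strata-smoothness criterion rather than attempting to verify smoothness of $m$ directly — but both facts are exactly those assembled in the discussion preceding \cref{lem:trop_integral}.
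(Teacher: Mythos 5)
Your proposal is correct and follows essentially the same route as the paper: apply \cref{thm:qlin_strata} to conclude all strata of a quasilinear tropical compactification are quasilinear (hence smooth), then invoke \cite[Lemma 2.7]{hackingHomologyTropicalVarieties2008} and the definition of sch\"onness for $Y$ itself via \cite{luxtonResultsTropicalCompactifications2011}. The only difference is that you spell out the existence of a tropical compactification and the passage from one sch\"on compactification to sch\"onness of $Y$, which the paper leaves implicit in its preliminaries.
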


\begin{proof}
    By the theorem, if $\oY \subset X(\Sigma)$ is any quasilinear tropical compactification, then all strata
    $Y_{\sigma}$ of $\oY$ are quasilinear, and in particular smooth. The result follows by \cite[Lemma
    2.7]{hackingHomologyTropicalVarieties2008}.
\end{proof}

\subsection{Chow and cohomology rings} \label{sec:chow}

We now explain our interest in Chow-free and linearly stratified varieties, following \cite[Section
5.2]{fultonIntroductionToricVarieties1993} and \cite{fultonIntersectionTheorySpherical1995, totaroChowGroupsChow2014}.

The following lemma is well-known.

\begin{lemma}
    Let $Z$ be a variety with a stratification by locally closed strata which are all Chow-free. Then $A_k(Z)$ is
    generated by the classes of the closures of the $k$-dimensional strata.
\end{lemma}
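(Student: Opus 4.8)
The plan is to prove the statement by induction on the number of strata, using the standard excision sequence for Chow groups. Let $Z = \bigsqcup_{i} S_i$ be the stratification into locally closed Chow-free strata, and order the strata so that $S_1$ is closed in $Z$ (possible since any stratification has a closed stratum, and more generally one can build up a filtration $Z = Z_m \supset Z_{m-1} \supset \cdots \supset Z_1 \supset Z_0 = \emptyset$ by closed subvarieties with $Z_j \setminus Z_{j-1} = S_j$). The base case $m=1$ is the hypothesis that $Z = S_1$ is itself Chow-free, hence $A_k(Z)$ is generated by $[Z]$ (when $k = \dim Z$) and is zero otherwise — in particular generated by closures of $k$-dimensional strata.

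For the inductive step, I would set $U = Z \setminus Z_{m-1} = S_m$, an open subvariety which is Chow-free, and apply the right-exact excision sequence
\[
    A_k(Z_{m-1}) \xrightarrow{\;j_*\;} A_k(Z) \xrightarrow{\;i^*\;} A_k(U) \to 0,
\]
where $j : Z_{m-1} \hookrightarrow Z$ is the closed embedding and $i : U \hookrightarrow Z$ the open one. By the inductive hypothesis applied to the stratification of $Z_{m-1}$ by $S_1, \ldots, S_{m-1}$ (each still Chow-free), $A_k(Z_{m-1})$ is generated by the classes $[\overline{S_j}]$ for $k$-dimensional strata $S_j$ with $j < m$, where closures are taken in $Z_{m-1}$; pushing forward along $j$, these map to the classes $[\overline{S_j}]$ in $A_k(Z)$ (closures now taken in $Z$, which agree since $Z_{m-1}$ is closed). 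Meanwhile $A_k(U)$ is either $0$ or $\bZ \cdot [U] = \bZ \cdot [S_m]$ depending on whether $k = \dim S_m$, since $U = S_m$ is Chow-free; in the latter case one lifts the generator $[U] \in A_k(U)$ to $[\overline{S_m}] \in A_k(Z)$, using that $i^*[\overline{S_m}] = [\overline{S_m} \cap U] = [S_m] = [U]$. Combining the generators coming from $A_k(Z_{m-1})$ with (the lift of) the generator of $A_k(U)$ via surjectivity of $i^*$ yields that $A_k(Z)$ is generated by the classes of closures of $k$-dimensional strata.

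The only subtlety — and the step I would be most careful about — is the choice of a filtration by \emph{closed} subvarieties refining the stratification, i.e. ensuring one can always peel off an open stratum. This is where "stratification by locally closed strata" must be interpreted in the usual sense that the closure of each stratum is a union of strata, so that for any maximal-dimensional (hence open) stratum $S_m$ the complement $Z \setminus S_m$ is closed and inherits a stratification by the remaining strata. Given that, the argument is a routine dévissage and no genuine obstacle remains; the excision sequence and the elementary structure of Chow groups of Chow-free varieties do all the work.
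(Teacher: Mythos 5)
Your proof is correct and is essentially the same dévissage as the paper's: both run the excision sequence $A_k(Z') \to A_k(Z) \to A_k(Z\setminus Z') \to 0$ against a closed filtration refining the stratification and use Chow-freeness to identify generators, the only cosmetic difference being that you peel off one open stratum at a time while the paper filters by the dimension of strata. The subtlety you flag (the frontier condition guaranteeing such a filtration exists) is implicitly assumed in the paper's proof as well, so there is no gap.
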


\begin{proof}
    We induct on $n = \dim Z$. When $n=0$ the claim is obvious. Suppose the result holds for all $i < n$.

    Let $Z_i$ be the union of the closed strata of dimension $\leq i$. This gives a filtration
    \[
      \emptyset = Z_{-1} \subset Z_0 \subset Z_1 \subset \cdots \subset Z_n = Z.
    \]
    Each $Z_i \setminus Z_{i-1}$ is the disjoint union of the strata of dimension $i$.

    Consider the exact sequence
    \[
      A_k(Z_{i-1}) \to A_k(Z_i) \to A_k(Z_i \setminus Z_{i-1}) \to 0.
    \]

    If $k=i$, then for dimension reasons $A_i(Z_{i-1}) = 0$, so $A_i(Z_i) \cong A_i(Z_i \setminus Z_{i-1})$, hence by
    assumption is generated by the classes of the closures of the $i$-dimensional strata.

    If $k < i$, then by assumption $A_i(Z_{i} \setminus Z_{i-1}) = 0$, so $A_k(Z_i)$ is generated by $A_k(Z_{i-1})$.

    By induction, $A_k(Z)$ is generated by $A_k(Z_k)$, which is generated by the classes of the closures of the
    $k$-dimensional strata.
\end{proof}

\begin{corollary} \label{cor:chow_groups}
    \begin{enumerate}
        \item For a toric variety $X(\Sigma)$, $A_{n-d+k}(X(\Sigma))$ is generated by the classes of the torus orbit closures
            $V(\sigma)$, for $\sigma \in \Sigma_{d-k}$.
        \item For a tropical compactification $\oY \subset X(\Sigma)$, if all strata $Y_{\sigma}$ are irreducible and
            Chow-free, then $A_k(\oY)$ is generated by the classes of the closures $\oY_{\sigma} = \oY \cap V(\sigma)$
            for $\sigma \in \Sigma_{d-k}$.
    \end{enumerate}
\end{corollary}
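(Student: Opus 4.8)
The plan is to derive both statements from the preceding lemma, by exhibiting the relevant stratifications with Chow-free strata. For (1), $X(\Sigma)$ is stratified by its torus orbits $O(\sigma)$, $\sigma \in \Sigma$, and each $O(\sigma) \cong (\bC^*)^{n-\dim\sigma}$ is an open subvariety of an affine space, hence Chow-free. The orbit $O(\sigma)$ is locally closed in $X(\Sigma)$ with closure $V(\sigma)$ of dimension $n-\dim\sigma$, so the strata of dimension $n-d+k$ are precisely the $O(\sigma)$ with $\sigma \in \Sigma_{d-k}$, and the preceding lemma yields that $A_{n-d+k}(X(\Sigma))$ is generated by the classes $[V(\sigma)]$, $\sigma \in \Sigma_{d-k}$.

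For (2), the strata $Y_{\sigma}=\oY\cap O(\sigma)$ are irreducible and Chow-free by hypothesis and they form a stratification of $\oY$ by locally closed subsets; by \cref{lem:trop_strata} each $Y_{\sigma}$ is nonempty of pure dimension $d-\dim\sigma$, so the strata of dimension $k$ are exactly those with $\sigma\in\Sigma_{d-k}$. The preceding lemma then gives that $A_k(\oY)$ is generated by the classes of the closures $\overline{Y_{\sigma}}$, taken in $\oY$, for $\sigma\in\Sigma_{d-k}$, and it remains only to identify $\overline{Y_{\sigma}}$ with $\oY_{\sigma}=\oY\cap V(\sigma)$. For this I would use that $\oY\cap V(\sigma)$ is itself a tropical compactification of $Y_{\sigma}$ inside $V(\sigma)\cong X(\Sigma^{\sigma})$ and hence of pure dimension $d-\dim\sigma$: since $V(\sigma)=\bigsqcup_{\tau\succeq\sigma}O(\tau)$, we have $\oY\cap V(\sigma)=\bigsqcup_{\tau\succeq\sigma}Y_{\tau}$ with $Y_{\sigma}$ open and dense, and any irreducible component of $\oY\cap V(\sigma)$ not meeting $O(\sigma)$ would lie in $\bigcup_{\tau\succ\sigma}Y_{\tau}$, a finite union of locally closed sets of dimension $<d-\dim\sigma$, contradicting purity; so every component meets the irreducible set $Y_{\sigma}$, forcing $\oY\cap V(\sigma)=\overline{Y_{\sigma}}$.

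I expect this last identification --- equivalently, the standard fact that the strata of a tropical compactification are cut out by the torus orbits and their closures by the orbit closures --- to be the only point that goes beyond the elementary stratification lemma; everything else is dimension bookkeeping together with a direct appeal to that lemma.
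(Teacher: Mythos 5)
Your proof is correct and takes essentially the same route as the paper, which simply declares the corollary ``immediate from the lemma''; you are filling in the dimension bookkeeping, the Chow-freeness of the torus orbits, and the identification $\oY\cap V(\sigma)=\overline{Y_\sigma}$ that the paper leaves implicit. The one caveat is that your justification of the purity of $\oY\cap V(\sigma)$ --- calling it ``a tropical compactification of $Y_\sigma$'' --- quietly presupposes that it is the closure of $Y_\sigma$, which is the very point at issue; it is cleaner to observe that flatness of the multiplication map $m:\oY\times T\to X(\Sigma)$ forces every irreducible component of $m^{-1}(V(\sigma))=(\oY\cap V(\sigma))\times T$ to dominate the irreducible base $V(\sigma)$ and hence to meet $O(\sigma)$, after which your density argument closes the identification.
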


\begin{proof}
    Immediate from the lemma.
\end{proof}

\begin{lemma}[{\cite{totaroChowGroupsChow2014, fultonIntersectionTheorySpherical1995}}]
    \label{lem:weakly_linear}
    Let $\oY$ be a linearly stratified variety.
    \begin{enumerate}
        \item (Chow-K\"unneth) If $Z$ is any finite-type scheme, then $A_*(\oY) \otimes A_*(Z) \to A_*(\oY \times Z)$ is
            an isomorphism.
        \item Assume $\oY$ is proper.
            \begin{enumerate}
                \item (Kronecker-Poincar\'e duality) The natural map $A^k(\oY) \to \Hom(A_k(\oY),\bZ)$ is an isomorphism
                    for all $k$. In particular, if $\oY$ is smooth, then $A^*(\oY)$ is a Poincar\'e duality ring of
                    dimension $\dim \oY$.
                \item (Homology isomorphism) If $\oY$ is nonsingular, then the cycle class map $A_k(\oY) \cong
                    H_{2k}(\oY)$ is an isomorphism and $H_{2k+1}(\oY) =0$ for all $k$, inducing an isomorphism $A_*(\oY)
                    \cong H_*(\oY)$.
            \end{enumerate}
    \end{enumerate}
\end{lemma}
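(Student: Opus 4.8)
The plan is to reduce all three statements to the theory of \emph{linear varieties} of \cite{totaroChowGroupsChow2014} --- our linearly stratified varieties are exactly Totaro's linear varieties (see also \cite{fultonIntersectionTheorySpherical1995} for the toric and spherical cases of part (1)) --- and to organize the arguments as inductions on the recursive structure of \cref{def:weakly_linear}. In every case the base case is an affine space, and the inductive step concerns a variety obtained from a closed linearly stratified $W \subseteq Y'$ with linearly stratified complement $U = Y' \setminus W$, where $Y'$ is itself linearly stratified; the remaining clauses of \cref{def:weakly_linear} fall under this heading (either $\oY = U$ or $\oY = Y'$), and in all of them two of the three groups in the localization sequence $A_*(W) \to A_*(Y') \to A_*(U) \to 0$ are controlled by induction.

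For part (1): if $\oY \cong \bA^m$, then $\oY \times Z \to Z$ is a trivial affine bundle, so flat pullback and homotopy invariance of Chow groups \cite[Theorem 3.3]{fultonIntersectionTheory1998} give $A_k(\oY \times Z) \cong A_{k-m}(Z) = \bigoplus_i A_i(\bA^m) \otimes A_{k-i}(Z)$, and this isomorphism is the Chow--K\"unneth map. For the inductive step, apply $- \otimes_{\bZ} A_*(Z)$ to the right-exact localization sequence above and compare it, via the Chow--K\"unneth maps, to the localization sequence of $W \times Z \hookrightarrow Y' \times Z$. The first two vertical maps are isomorphisms by induction, so surjectivity of the third is automatic; for injectivity one uses that the Chow groups of a linearly stratified variety are finitely generated free abelian (so the tensor products are computed termwise), after which a short diagram chase finishes. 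This is exactly the argument of \cite{totaroChowGroupsChow2014}.

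For part (2), assume $\oY$ proper; then $\oY$ is a linear variety by part (1). Part (2a) is Totaro's Kronecker duality theorem: the natural map $A^k(\oY) \to \Hom(A_k(\oY),\bZ)$ is an isomorphism for all $k$. When $\oY$ is in addition smooth of dimension $n$, capping with the fundamental class gives $A^k(\oY) \cong A_{n-k}(\oY)$ \cite[Corollary 17.4]{fultonIntersectionTheory1998}; feeding this into Kronecker duality (and using the finite freeness of $A_*(\oY)$) shows the cup-product pairing $A^k(\oY) \times A^{n-k}(\oY) \to A^n(\oY) \cong \bZ$ is perfect, i.e.\ $A^*(\oY)$ is a Poincar\'e duality ring of dimension $n$. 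For part (2b), when $\oY$ is nonsingular one runs the same induction on the construction of $\oY$, now comparing the Chow-homology localization sequences with the Borel--Moore homology localization sequences; the base case is $H_*(\bA^m)$, and at each stage one verifies that the cycle class maps $A_k \to H_{2k}$ are isomorphisms and $H_{2k+1} = 0$, the vanishing of odd homology forcing the long exact sequences to split into compatible short exact sequences. For $\oY$ proper, Borel--Moore homology is ordinary homology, so summing over $k$ yields $A_*(\oY) \cong H_*(\oY)$. This too is \cite{totaroChowGroupsChow2014}.

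The bookkeeping in the two inductions is routine; the genuine content --- and hence the main obstacle --- is what must be imported from \cite{totaroChowGroupsChow2014}, namely the injectivity half of the K\"unneth isomorphism and the Kronecker duality $A^k(\oY) \cong \Hom(A_k(\oY),\bZ)$. Both are nontrivial precisely because the strata produced by \cref{def:weakly_linear} are open, hence non-proper, so one has to control how operational cohomology classes and rational equivalences propagate across the localization sequences; this is exactly the subject of \cite{totaroChowGroupsChow2014}, and I would invoke it rather than reprove it.
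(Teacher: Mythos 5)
Your proof takes the same route as the paper: all three statements are imported from \cite{totaroChowGroupsChow2014} (Propositions 1 and 2) and \cite{fultonIntersectionTheorySpherical1995}, and the paper's own proof is precisely this citation, so the overall strategy is fine. Two corrections to your supporting sketch, though. First, linearly stratified varieties are \emph{not} exactly Totaro's linear varieties --- his class is slightly narrower --- so one genuinely needs the observation that his proofs go through for the class of \cref{def:weakly_linear}; your plan of re-running the induction on the recursive structure of that definition supplies this, but the claim of an exact match should be weakened. Second, your stated mechanism for the injectivity half of the K\"unneth map is incorrect: Chow groups of linearly stratified varieties are finitely generated but need \emph{not} be free (already a complete toric surface with rays $(\pm 1,\pm 1)$ has $2$-torsion in $A_1$), and even granting freeness, a diagram chase on the two right-exact localization sequences cannot produce injectivity of the remaining vertical map --- a four-lemma argument requires control of one more term to the left, which is exactly what Totaro obtains by extending the localization sequence using the first higher Chow groups $A_*(-,1)$ and proving a simultaneous surjectivity statement for those. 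Since your final paragraph explicitly defers the injectivity (and Kronecker duality) to \cite{totaroChowGroupsChow2014} rather than reproving them, this does not invalidate the argument, but the parenthetical ``so the tensor products are computed termwise \ldots a short diagram chase finishes'' misrepresents what is being invoked and should be dropped.
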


\begin{proof}
    The first two parts are \cite[Propositions 1,2]{totaroChowGroupsChow2014}.  (The second part in fact follows from
    the first part. Totaro's proof of the first part is actually for a slighter narrower class of varieties, but his
    proof also works for linearly stratified varieties as we have defined them, cf. \cite[Comments after Definition
    2.3]{gonzalesEquivariantOperationalChow2015}.) The last part is also a consequence of the first part \cite[Corollary
    to Theorem 2]{fultonIntersectionTheorySpherical1995}.
\end{proof}

For the remainder of this section fix any $d$-dimensional closed subvariety $Y \subset T \cong (\bC^*)^n$ and a tropical
compactification $i : \oY \hookrightarrow X(\Sigma)$ of $Y$, and let $\omega$ be the weight on $\Sigma$ induced by $Y$.
Recall that $\omega(\sigma)$ can be defined as $\int_{\oY} i^*[V(\sigma)]$ \cite[Theorem
6.7.5]{maclaganIntroductionTropicalGeometry2015}, and via the isomorphism $M_d(\Sigma) \cong \Hom(A_{n-d}(X(\Sigma)),\bZ)$,
we view $\omega$ as a homomorphism $A_{n-d}(X(\Sigma)) \to \bZ$.

Since $i : \oY \hookrightarrow X(\Sigma)$ is a tropical compactification, the multiplication map $m : \oY \times T \to
X(\Sigma)$ is flat, of relative dimension $d$. Therefore, there is a flat pullback morphism \cite[Section
1.7]{fultonIntersectionTheory1998}
\begin{align*}
    m^* : A_{n-d+k}(X(\Sigma)) &\to A_{k+n}(\oY \times T), \\
    [V(\sigma)] &\mapsto [m^{-1}(V(\sigma))] = [(\oY \cap V(\sigma)) \times T].
\end{align*}
There is a natural isomorphism $A_*(\oY \times T) \cong A_*(\oY) \otimes A_*(T) \cong A_*(\oY)$, under which the above
map is identified with
\begin{align*}
    i^* : A_{n-d+k}(X(\Sigma)) &\to A_{k}(\oY), \\
    [V(\sigma)] &\mapsto [\oY \cap V(\sigma)].
\end{align*}

\begin{corollary} \label{cor:chow_sur}
    If all strata $Y_{\sigma}$ of $\oY \subset X(\Sigma)$ are irreducible and Chow-free, then $i^* :
    A_{n-d+k}(X(\Sigma)) \to A_k(\oY)$ is surjective for all $k$.
\end{corollary}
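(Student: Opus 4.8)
The plan is to combine the two parts of \cref{cor:chow_groups} with the explicit description of $i^*$ established just above the statement. First I would invoke \cref{cor:chow_groups}(2): since by hypothesis every stratum $Y_{\sigma}$ of $\oY$ is irreducible and Chow-free, the group $A_k(\oY)$ is generated by the classes $[\oY \cap V(\sigma)]$ as $\sigma$ ranges over $\Sigma_{d-k}$. Hence it suffices to show that each such class lies in the image of $i^*$.

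Next I would recall that by \cref{cor:chow_groups}(1) the source $A_{n-d+k}(X(\Sigma))$ is generated by the orbit-closure classes $[V(\sigma)]$ for $\sigma \in \Sigma_{d-k}$, and that — via the flat pullback $m^*$ along the multiplication map $m : \oY \times T \to X(\Sigma)$ (which exists because $i$ is a tropical compactification) together with the K\"unneth identification $A_*(\oY \times T) \cong A_*(\oY)$ — the map $i^*$ sends $[V(\sigma)]$ to $[\oY \cap V(\sigma)]$. Putting these together, $i^*$ carries a generating set of $A_{n-d+k}(X(\Sigma))$ onto a generating set of $A_k(\oY)$, so it is surjective.

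There is essentially no obstacle: all the real work has been done in setting up \cref{cor:chow_groups} and in identifying $i^*$ with the geometric ``restrict to $\oY$'' map, and the corollary is a formal consequence. The only point worth flagging is that flatness of $m$ is genuinely needed so that $m^*$ is defined and the formula $m^*[V(\sigma)] = [(\oY \cap V(\sigma)) \times T]$ holds — but this is precisely what is recorded in the discussion preceding the statement.
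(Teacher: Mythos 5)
Your proposal is correct and is exactly the paper's argument: the paper's proof is just ``immediate from the above discussion and \cref{cor:chow_groups},'' i.e., $A_k(\oY)$ is generated by the classes $[\oY \cap V(\sigma)]$ for $\sigma \in \Sigma_{d-k}$, and these are the images under $i^*$ of the orbit-closure classes $[V(\sigma)]$ via the flat pullback along $m$. No further comment is needed.
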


\begin{proof}
    Immediate from the above discussion and \cref{cor:chow_groups}.
\end{proof}

Taking duals, we get a morphism
\[
    i_* : \Hom(A_k(\oY),\bZ) \to \Hom(A_{n-d+k}(X(\Sigma)),\bZ) \cong M_{d-k}(\Sigma).
\]

\begin{proposition}
    The following diagram commutes
    \begin{equation} \label{eq:key_diag}
        \begin{tikzcd}
            A^k(\Sigma) \ar[r, "- \cap \omega"] \ar[d, "i^*"] & M_{d-k}(\Sigma) \\
            A^k(\oY) \ar[r, "\cD_{\oY}"] & \Hom(A_k(\oY),\bZ) \ar[u, "i_*"],
        \end{tikzcd}
    \end{equation}
    where the maps are defined as follows.
    \begin{enumerate}
        \item $- \cap \omega : A^k(\Sigma) \to M_{d-k}(\Sigma)$ is the tropical cap product.
        \item $i^* : A^k(\Sigma) = A^k(X(\Sigma)) \to A^k(\oY)$ is the natural pullback of Chow cohomology groups.
        \item $\cD_{\oY} : A^k(\oY) \to \Hom(A_k(\oY),\bZ)$ is the ``Kronecker-Poincar\'e'' duality map, $\alpha \mapsto
            \left(\beta \mapsto \int_{\oY} \alpha \cap \beta\right)$.
        \item $i_* : \Hom(A_k(\oY),\bZ) \to \Hom(A_{n-d+k}(X(\Sigma)),\bZ) \cong M_{d-k}(\Sigma)$ is the dual to the
            pullback morphism $A_{n-d+k}(X(\Sigma)) \to A_{k+n}(\oY \times T) \xrightarrow{\sim} A_k(\oY)$ defined
            above.
    \end{enumerate}
\end{proposition}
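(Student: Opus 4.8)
The plan is to check that the two composites $A^k(\Sigma)\to M_{d-k}(\Sigma)$ agree by evaluating on an arbitrary $\alpha\in A^k(\Sigma)$ and then on an arbitrary $\beta\in A_{n-d+k}(X(\Sigma))$, using the identification $M_{d-k}(\Sigma)\cong\Hom(A_{n-d+k}(X(\Sigma)),\bZ)$. Unwinding the four maps, the composite through the top--right corner sends $\alpha$ to the functional $\beta\mapsto\omega(\alpha\cap\beta)$, with $\alpha\cap\beta\in A_{n-d}(X(\Sigma))$ and $\omega$ read as a homomorphism $A_{n-d}(X(\Sigma))\to\bZ$; the composite through the bottom--left corner sends $\alpha$ to $\beta\mapsto\int_{\oY}(i^*\alpha\cap i^*\beta)$, where $i^*\beta\in A_k(\oY)$ is the cycle pullback introduced just above and $i^*\alpha\in A^k(\oY)$ the operational pullback (the symbol $i^*$ thus denotes two different maps, as in the statement). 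Since, as recalled above, the homomorphism $\omega\colon A_{n-d}(X(\Sigma))\to\bZ$ is $\gamma\mapsto\int_{\oY}i^*\gamma$, the whole statement reduces to the cycle-level identity $i^*(\alpha\cap\beta)=i^*\alpha\cap i^*\beta$ in $A_0(\oY)$, to which one then applies the degree map $\int_{\oY}$ (legitimate because $\oY$ is proper).

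To prove that identity, I would recall that the cycle pullback factors as $i^*=\kappa\circ m^*$, where $m\colon\oY\times T\to X(\Sigma)$ is the (flat) multiplication map, $m^*$ is flat pullback, and $\kappa$ is the inverse of the isomorphism $p^*\colon A_*(\oY)\xrightarrow{\sim}A_*(\oY\times T)$ induced by the projection $p$. Flat pullback is compatible with the operational cap product, so $m^*(\alpha\cap\beta)=m^*\alpha\cap m^*\beta$ and likewise for $p^*$; combining this with $\kappa p^*=\mathrm{id}$ in a short diagram chase, the desired identity follows once one knows
\[
    m^*\alpha=p^*(i^*\alpha)\qquad\text{in }A^k(\oY\times T).
\]

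This last equality is the heart of the matter, and the step I expect to be the main obstacle; everything else is formal. The two maps $m$ and $i\circ p$ from $\oY\times T$ to $X(\Sigma)$ are obtained from the single map $i\times\mathrm{id}_T\colon\oY\times T\to X(\Sigma)\times T$ by post-composing, respectively, with the torus action $a\colon X(\Sigma)\times T\to X(\Sigma)$ and with the projection $\mathrm{pr}\colon X(\Sigma)\times T\to X(\Sigma)$; so by functoriality of operational pullback it suffices to show $a^*\alpha=\mathrm{pr}^*\alpha$ in $A^*(X(\Sigma)\times T)$. Here I would use that $X(\Sigma)\times T$ is again a toric variety, for which the projection induces an isomorphism $\mathrm{pr}^*\colon A^*(X(\Sigma))\xrightarrow{\sim}A^*(X(\Sigma)\times T)$ (the Chow ring is unchanged by the free torus factor; see \cref{sec:morphisms}), whose inverse is restriction $\iota^*\colon A^*(X(\Sigma)\times T)\xrightarrow{\sim}A^*(X(\Sigma))$ along $\iota\colon X(\Sigma)=X(\Sigma)\times\{1\}\hookrightarrow X(\Sigma)\times T$ (since $\mathrm{pr}\circ\iota=\mathrm{id}$). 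As $a\circ\iota=\mathrm{pr}\circ\iota=\mathrm{id}$, both $a^*\alpha$ and $\mathrm{pr}^*\alpha$ restrict under $\iota^*$ to $\alpha$, hence coincide; the content of this step is just that an operational class on a toric variety does not see a free torus factor.

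Putting it together: $i^*(\alpha\cap\beta)=\kappa(m^*\alpha\cap m^*\beta)=\kappa\big(p^*(i^*\alpha)\cap p^*(i^*\beta)\big)=\kappa\big(p^*(i^*\alpha\cap i^*\beta)\big)=i^*\alpha\cap i^*\beta$, where the second equality uses the displayed identity together with $m^*\beta=p^*(i^*\beta)$, the third uses compatibility of $p^*$ with cap products, and the last uses $\kappa p^*=\mathrm{id}$; applying $\int_{\oY}$ then yields the proposition. The only other point requiring (routine) care is that the isomorphism $A_*(\oY\times T)\cong A_*(\oY)$ used above is indeed $\kappa=(p^*)^{-1}$ and is compatible with flat pullback and cap products, which is immediate from its construction.
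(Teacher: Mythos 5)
Your proof is correct and follows the same route as the paper: both unwind the four maps to reduce commutativity of the square to the identity $\omega(\alpha\cap\beta)=\int_{\oY} i^*\alpha\cap i^*\beta$, which by the definition of $\omega$ comes down to the compatibility $i^*(\alpha\cap\beta)=i^*\alpha\cap i^*\beta$ in $A_0(\oY)$. The only difference is that the paper asserts this last identity without comment, whereas you prove it by factoring the cycle-level $i^*$ through the multiplication map and comparing $a^*$ with $\mathrm{pr}^*$ on $A^*(X(\Sigma)\times T)$; that elaboration is sound (the surjectivity of $\mathrm{pr}^*\colon A^*(X(\Sigma))\to A^*(X(\Sigma)\times T)$ you need is exactly the content of the isomorphism recorded in \cref{sec:morphisms}).
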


\begin{proof}
    Unwinding definitions, we are simply asking that for any $\alpha \in A^k(X(\Sigma))$, $\beta \in
    A_{n-d+k}(X(\Sigma))$, one has
    \[
        \omega(\alpha \cap \beta) = \int_{\oY} i^*\alpha \cap i^*\beta,
    \]
    where $i^*\alpha$ is the pullback on Chow cohomology groups and $i^*\beta$ is the pullback on Chow homology
    groups as defined above. But $i^*\alpha \cap i^*\beta = i^*(\alpha \cap \beta)$, where now $i^*(\alpha \cap \beta)$
    is the pullback $i^*: A_{n-d}(X(\Sigma)) \to A_0(\oY)$. The desired equality now follows from the definition of
    $\omega$.
\end{proof}

\begin{corollary} \label{cor:chow_inj}
    If $(\Sigma,\omega)$ is Poincar\'e, then $i^* : A^k(\Sigma) \to A^k(\oY)$ is injective and $i_* : \Hom(A_k(\oY),\bZ)
    \to M_{d-k}(\Sigma)$ is surjective for all $k$.
\end{corollary}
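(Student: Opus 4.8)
The plan is to read off both claims directly from the commutative square \eqref{eq:key_diag}, together with the hypothesis that $(\Sigma,\omega)$ is Poincar\'e. Commutativity of that square says precisely that the tropical cap product factors as the composite
\[
    A^k(\Sigma) \xrightarrow{i^*} A^k(\oY) \xrightarrow{\cD_{\oY}} \Hom(A_k(\oY),\bZ) \xrightarrow{i_*} M_{d-k}(\Sigma),
\]
and by definition of a Poincar\'e tropical fan the two ends of this composite are linked by an \emph{isomorphism} $-\cap\omega$.

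From this, injectivity of $i^*$ is immediate: if $\alpha \in \ker(i^* : A^k(\Sigma) \to A^k(\oY))$, then $\alpha\cap\omega = i_*(\cD_{\oY}(i^*\alpha)) = 0$, so $\alpha = 0$ since $-\cap\omega$ is injective. Surjectivity of $i_*$ is equally immediate: given $\eta \in M_{d-k}(\Sigma)$, Poincar\'e duality produces $\alpha \in A^k(\Sigma)$ with $\alpha\cap\omega = \eta$, whence $\eta = i_*(\cD_{\oY}(i^*\alpha))$ lies in the image of $i_*$. Both arguments work for every $k$.

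There is essentially no obstacle here once the diagram \eqref{eq:key_diag} is in hand — the content is entirely contained in the preceding proposition, which itself unwinds to the identity $\omega(\alpha\cap\beta) = \int_{\oY} i^*\alpha\cap i^*\beta$ and the compatibility of the two pullbacks. The only point to be careful about is that neither $i^*$ nor $\cD_{\oY}$ is individually forced to be an isomorphism; one extracts exactly ``$i^*$ injective'' from the left end of the factorization and ``$i_*$ surjective'' from the right end, and no more. Upgrading these one-sided statements to the isomorphisms of \cref{thm:qlin_chow_main} will require the additional inputs that all strata of $\oY$ are Chow-free (giving $i^*$ surjective by \cref{cor:chow_sur}) and that $\oY$ is linearly stratified (giving $\cD_{\oY}$ an isomorphism by \cref{lem:weakly_linear}), both of which hold in the quasilinear case by \cref{thm:qlin_strata}.
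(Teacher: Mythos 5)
Your proof is correct and is exactly the paper's argument: the paper's proof is simply ``Immediate from the proposition,'' meaning that the commutativity of \eqref{eq:key_diag} factors the isomorphism $-\cap\omega$ as $i_*\circ\cD_{\oY}\circ i^*$, forcing the first map to be injective and the last to be surjective. Your closing remarks about which additional hypotheses upgrade these one-sided statements to isomorphisms also match \cref{cor:chow_iso} precisely.
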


\begin{proof}
    Immediate from the proposition.
\end{proof}

\begin{corollary} \label{cor:chow_iso}
    If $(\Sigma,\omega)$ is Poincar\'e and all strata $Y_{\sigma}$ of $\oY$ are irreducible and Chow-free, then $i_* :
    \Hom(A_k(\oY),\bZ) \to M_{d-k}(\Sigma)$ is an isomorphism for all $k$. If in addition, either all strata
    $Y_{\sigma}$ are linearly stratified, or both $\oY$ and $X(\Sigma)$ are nonsingular, then $i^* : A^k(\Sigma) \to A^k(\oY)$
    and $\cD_{\oY} : A^k(\oY) \to \Hom(A_k(\oY),\bZ)$ are both isomorphisms, i.e., all arrows in \eqref{eq:key_diag} are
    isomorphisms.
\end{corollary}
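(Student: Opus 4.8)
The strategy is a diagram chase on \eqref{eq:key_diag}, feeding in the one-sided statements already established. First, \cref{cor:chow_sur} --- which uses only that the strata $Y_\sigma$ are irreducible and Chow-free --- gives that the flat pullback $i^* : A_{n-d+k}(X(\Sigma)) \to A_k(\oY)$ is surjective, so its $\bZ$-dual $i_* : \Hom(A_k(\oY),\bZ) \to M_{d-k}(\Sigma)$ is injective. On the other hand \cref{cor:chow_inj} --- which uses only that $(\Sigma,\omega)$ is Poincar\'e --- gives that $i_*$ is surjective. Hence $i_*$ is an isomorphism for all $k$, which is the first assertion.

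For the remaining claims, note that $- \cap \omega$ and $i_*$ are now both isomorphisms, so commutativity of \eqref{eq:key_diag} shows $\cD_{\oY} \circ i^* = i_*^{-1} \circ (- \cap \omega)$ is an isomorphism; in particular $\cD_{\oY}$ is surjective and $i^*$ is injective. It therefore suffices to prove that one of $i^*$, $\cD_{\oY}$ is an isomorphism, since then the identity $\cD_{\oY} \circ i^* = i_*^{-1} \circ (- \cap \omega)$ forces the other to be one as well, and all four arrows of \eqref{eq:key_diag} become isomorphisms. Suppose first that every stratum $Y_\sigma$ is linearly stratified. Since $\oY$ is proper (it is a tropical compactification) and is stratified by the locally closed, linearly stratified subvarieties $Y_\sigma$, the remark following \cref{def:weakly_linear} shows that $\oY$ is itself linearly stratified; then \cref{lem:weakly_linear}(2)(a) (Kronecker-Poincar\'e duality) is precisely the assertion that $\cD_{\oY}$ is an isomorphism, and we are done.

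Suppose instead that $\oY$ and $X(\Sigma)$ are both nonsingular; here I would show directly that $i^* : A^k(\Sigma) \to A^k(\oY)$ is an isomorphism. Since $Y = \oY \cap T$ is by hypothesis an irreducible stratum, $\oY$ is irreducible of dimension $d$, so Poincar\'e duality for the smooth varieties $X(\Sigma)$ and $\oY$ identifies $A_{n-k}(X(\Sigma)) \cong A^k(X(\Sigma))$ and $A_{d-k}(\oY) \cong A^k(\oY)$. Under these identifications the flat pullback $i^* : A_{n-k}(X(\Sigma)) \to A_{d-k}(\oY)$ from the construction of \eqref{eq:key_diag} becomes the operational pullback $i^* : A^k(X(\Sigma)) \to A^k(\oY)$. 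Since the former is surjective by \cref{cor:chow_sur} (applied with index $d-k$) and the latter is injective by \cref{cor:chow_inj}, the operational pullback $i^* : A^k(\Sigma) \to A^k(\oY)$ is an isomorphism for $0 \le k \le d$; for $k > d$ the group $A^k(\oY)$ vanishes and injectivity forces $A^k(\Sigma) = 0$ as well, so there is nothing to prove. Then $\cD_{\oY} = i_*^{-1} \circ (- \cap \omega) \circ (i^*)^{-1}$ is an isomorphism, completing this case.

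The one point demanding care is the identification, in the nonsingular case, of the ``restriction of cycles'' map $[V(\sigma)] \mapsto [\oY \cap V(\sigma)]$ with the operational pullback $i^*$ under Poincar\'e duality: one must verify that the scheme-theoretic multiplicities computing the flat pullback agree with the refined intersection multiplicities computing the Gysin pullback of the regular embedding $i : \oY \hookrightarrow X(\Sigma)$. This holds because the strata of a tropical compactification have the expected dimension, so the relevant intersections are proper, and $\oY$ is smooth (hence Cohen-Macaulay) while $X(\Sigma)$ is regular, which forces the higher $\operatorname{Tor}$ sheaves to vanish; alternatively one may factor the multiplication map $m$ as the regular embedding $\oY \times T \hookrightarrow X(\Sigma) \times T$ followed by the flat action morphism $X(\Sigma) \times T \to X(\Sigma)$ and invoke compatibility of flat pullback with refined Gysin pullback. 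Granting this, everything else is a formal consequence of \cref{cor:chow_sur,cor:chow_inj,lem:weakly_linear} and the commutativity of \eqref{eq:key_diag}.
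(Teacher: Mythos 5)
Your proof is correct and follows essentially the same route as the paper: combine \cref{cor:chow_sur} and \cref{cor:chow_inj} to get that $i_*$ is an isomorphism, use \cref{lem:weakly_linear} for $\cD_{\oY}$ in the linearly stratified case, and use Poincar\'e duality on the smooth varieties $\oY$ and $X(\Sigma)$ to identify the homology-level pullback with the operational $i^*$ in the nonsingular case, then chase the diagram \eqref{eq:key_diag}. Your explicit justification that the cycle-restriction map agrees with the operational pullback under this identification is a point the paper leaves implicit, and is a welcome addition.
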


\begin{proof}
    The first sentence follows from \cref{cor:chow_sur,cor:chow_inj}. For the second, first suppose all strata
    $Y_{\sigma}$ are linearly stratified. Then by \cref{lem:weakly_linear}, the Kronecker-Poincar\'e duality map $\cD_{\oY}:
    A^k(\oY) \to \Hom(A_k(\oY),\bZ)$ is an isomorphism, i.e., all arrows in \eqref{eq:key_diag} are isomorphisms, except
    possibly $i^* : A^k(\Sigma) \to A^k(\oY)$. But of course, it now follows that this is also an isomorphism. If
    instead both $\oY$ and $X(\Sigma)$ are nonsingular, then $A_{n-d+k}(X(\Sigma)) \cong A^{d-k}(X(\Sigma))$ and
    $A_k(\oY) \cong A^{d-k}(\oY)$, so the map $i_* : \Hom(A_k(\oY),\bZ) \to M_{d-k}(\Sigma) \cong
    \Hom(A_{n-d+k}(X(\Sigma)),\bZ)$ is just the dual to $i^* : A^{d-k}(\Sigma) \to A^{d-k}(\oY)$, and the result
    follows.
\end{proof}

\begin{corollary} \label{cor:hom_iso}
    Suppose all the conditions of \cref{cor:chow_iso} hold, i.e., $\Sigma$ is Poincar\'e, all strata $Y_{\sigma}$ of
    $\oY$ are irreducible, Chow-free, and linearly stratified, and both $\oY$ and $X(\Sigma)$ are nonsingular. Then the cycle
    class map $A_k(\oY) \to H_{2k}(\oY)$ is an isomorphism and $H_{2k+1}(\oY)=0$ for all $k$, inducing isomorphisms
    \begin{align*}
        H^*(\oY) \cong A^*(\oY) \cong A^*(X(\Sigma)).
    \end{align*}
\end{corollary}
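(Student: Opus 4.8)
The plan is to assemble the pieces already established. From \cref{cor:chow_iso}, the stated hypotheses ($\Sigma$ Poincar\'e; all strata $Y_{\sigma}$ irreducible, Chow-free, and linearly stratified; $\oY$ and $X(\Sigma)$ nonsingular) immediately give that $i^* : A^k(\Sigma) \to A^k(\oY)$ is an isomorphism for every $k$, hence a graded ring isomorphism $A^*(\oY) \cong A^*(X(\Sigma))$. So the only remaining task is to produce the isomorphism $H^*(\oY) \cong A^*(\oY)$.

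For this, the key point is that $\oY$ \emph{itself}---not merely its strata---is linearly stratified. Indeed, the torus-orbit stratification of $X(\Sigma)$ pulls back to a stratification of $\oY$ by the locally closed subvarieties $Y_{\sigma} = \oY \cap O(\sigma)$, each of which is linearly stratified by hypothesis; by the remark following \cref{def:weakly_linear}, a variety admitting a stratification by linearly stratified varieties is itself linearly stratified. Since $\oY$ is moreover proper (being a tropical compactification) and nonsingular (by hypothesis), \cref{lem:weakly_linear}(2) applies directly: the cycle class map gives isomorphisms $A_k(\oY) \cong H_{2k}(\oY)$ with $H_{2k+1}(\oY) = 0$ for all $k$, and the Kronecker--Poincar\'e pairing makes $A^*(\oY)$ a Poincar\'e duality ring. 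Combining the homology isomorphism $A_*(\oY) \cong H_*(\oY)$ with Poincar\'e duality on the smooth proper variety $\oY$ on both the Chow and singular sides---and using that the cycle class map is a ring homomorphism compatible with these dualities---upgrades this to a ring isomorphism $A^*(\oY) \cong H^*(\oY)$. Composing with the isomorphism $A^*(\oY) \cong A^*(X(\Sigma))$ above yields the chain $H^*(\oY) \cong A^*(\oY) \cong A^*(X(\Sigma))$.

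I do not expect any serious obstacle: this corollary is essentially a bookkeeping consequence of \cref{cor:chow_iso} and \cref{lem:weakly_linear}. The only two places requiring a moment's care are (i) the deduction that $\oY$ is linearly stratified from the linear stratification of its strata, which is exactly the content of the remark after \cref{def:weakly_linear} applied to the orbit stratification, and (ii) the passage from the homology isomorphism $A_*(\oY) \cong H_*(\oY)$ to a \emph{ring} isomorphism of cohomology, which is standard once one knows the cycle class map on a smooth proper variety intertwines Chow and singular Poincar\'e duality.
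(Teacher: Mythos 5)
Your proposal is correct and follows exactly the paper's route: the paper's proof is simply ``Immediate from \cref{lem:weakly_linear,cor:chow_iso},'' and you have filled in precisely the implicit steps --- that $\oY$ is itself linearly stratified because it is stratified by the linearly stratified $Y_{\sigma}$, that \cref{lem:weakly_linear}(2) then gives the homology isomorphism and odd-degree vanishing for the proper nonsingular $\oY$, and that this combines with \cref{cor:chow_iso} to produce the chain of ring isomorphisms.
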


\begin{proof}
    Immediate from \cref{lem:weakly_linear,cor:chow_iso}.
\end{proof}

\begin{theorem} \label{thm:qlin_chow}
    If $i : \oY \hookrightarrow X(\Sigma)$ is a quasilinear tropical compactification, then $i^* : A^k(\Sigma) \to
    A^k(\oY)$ is an isomorphism for all $k$, inducing an isomorphism of Chow rings
    \[
        A^*(\oY) \cong A^*(X(\Sigma)).
    \]
    If in addition $X(\Sigma)$ is nonsingular, then so is $\oY$, and we have $H^{2k+1}(\oY)=0$ and
    $H^{2k}(\oY) \cong A^k(\oY) \cong A^k(X(\Sigma))$ for all $k$, inducing isomorphisms
    \begin{align*}
        H^*(\oY) \cong A^*(\oY) \cong A^*(X(\Sigma)).
    \end{align*}
\end{theorem}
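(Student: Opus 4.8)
The plan is to assemble the theorem from the structural results already established, so that it becomes essentially a corollary of \cref{thm:qlin_poincare,thm:qlin_strata,cor:chow_iso,cor:hom_iso}. First I would note that if $i : \oY \hookrightarrow X(\Sigma)$ is a quasilinear tropical compactification, then by definition $Y$ is a quasilinear variety, so $\trop(Y)$ is a quasilinear tropical fan cycle and $\Sigma$ is a quasilinear tropical fan. The weight $\omega$ on $\Sigma$ induced from $Y$ is reduced and, since $\trop(Y)$ is irreducible, equals the fundamental weight; by \cref{thm:qlin_poincare} the pair $(\Sigma,\omega)$ is therefore a (star-)Poincar\'e tropical fan. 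Moreover, by \cref{thm:qlin_strata}, every stratum $Y_\sigma = \oY \cap O(\sigma)$ is a quasilinear variety, hence irreducible, Chow-free, and linearly stratified.

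These are exactly the hypotheses of \cref{cor:chow_iso}: $(\Sigma,\omega)$ is Poincar\'e, all strata $Y_\sigma$ are irreducible and Chow-free, and in addition all strata are linearly stratified. Applying that corollary gives that $i^* : A^k(\Sigma) \to A^k(\oY)$ is an isomorphism for every $k$; since $i^*$ is a ring homomorphism, this is the desired isomorphism of Chow rings $A^*(\oY) \cong A^*(X(\Sigma))$.

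For the second assertion, suppose in addition that $X(\Sigma)$ is nonsingular. By \cref{cor:qlin_schon} the quasilinear variety $Y$ is sch\"on, so the multiplication map $m : \oY \times T \to X(\Sigma)$ is smooth and surjective; since $X(\Sigma)$ is nonsingular it follows that $\oY \times T$ is nonsingular, and hence so is $\oY$ (a product of nonempty varieties is nonsingular if and only if each factor is). Now all hypotheses of \cref{cor:hom_iso} hold, so the cycle class map induces $A_k(\oY) \cong H_{2k}(\oY)$ and $H_{2k+1}(\oY) = 0$ for all $k$; combining with \cref{cor:chow_iso} yields $H_*(\oY) \cong A_*(\oY) \cong A_*(X(\Sigma))$. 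Applying Poincar\'e duality on the smooth proper variety $\oY$, together with the cycle class map $A^k(\oY) \cong H^{2k}(\oY)$, translates this into the stated cohomological form $H^{2k+1}(\oY) = 0$ and $H^{2k}(\oY) \cong A^k(\oY) \cong A^k(X(\Sigma))$.

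Since every step cites a previously established result, there is no genuine obstacle here; the only points requiring a moment's care are recording that the weight induced from $Y$ is really the fundamental weight (so that ``$\Sigma$ is Poincar\'e'' matches ``$(\Sigma,\omega)$ is Poincar\'e'' for the correct $\omega$), and the passage from the homological statement of \cref{cor:hom_iso} to the cohomological form in the theorem, which is routine Poincar\'e duality for the smooth projective variety $\oY$.
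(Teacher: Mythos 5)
Your proof is correct and follows essentially the same route as the paper: both verify the hypotheses of \cref{cor:chow_iso} and \cref{cor:hom_iso} via \cref{thm:qlin_strata} and \cref{thm:qlin_poincare}, and deduce nonsingularity of $\oY$ from sch\"onness. Your extra care in checking that the induced weight is the fundamental weight, and in passing from the homological to the cohomological statement, is a welcome but inessential elaboration of what the paper leaves implicit.
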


\begin{proof}
    By \cref{thm:qlin_strata}, all strata $Y_{\sigma}$ of $\oY$ are smooth, irreducible, Chow-free, and linearly
    stratified, and by \cref{thm:qlin_poincare}, $\Sigma$ is (star-)Poincar\'e. The first statement is now immediate from
    \cref{cor:chow_iso}. For the second statement, if $X(\Sigma)$ is nonsingular then since $\oY \subset X(\Sigma)$ is
    a sch\"on tropical compactification, $\oY$ is also nonsingular, and the result follows by \cref{cor:hom_iso}.
\end{proof}

\section{Quasilinear criteria and examples} \label{sec:qlin_criteria}

\subsection{General criteria}

\begin{theorem} \label{thm:qlin_criterion_main}
    Let $Y \subset T$ be a quasilinear variety, and $f$ a regular function on $Y$ such that either $f$ is nonvanishing
    or $D = V(f) \subset Y$ is also quasilinear in $T$. Let $\wY \subset T \times \bC^*$ be the very affine graph of $f$
    on $Y$. Then $\trop(\wY)$ is a tropical modification of $\trop(Y)$ along $\trop(D)$, and in particular $\wY$ is
    quasilinear.
\end{theorem}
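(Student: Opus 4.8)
The plan is to identify $\trop(\wY)$ explicitly with a tropical modification of $\trop(Y)$ and then apply \cref{def:qlin_fans}. Throughout, write $\pi\colon T\times\bC^*\to T$ for the projection and $p\colon\wN_\bR=N_\bR\times\bR\to N_\bR$ for the underlying linear map. Since $z=f$ on $\wY=V(z-f)$ and $z$ is invertible, $\pi$ restricts to an isomorphism of $\wY$ onto the dense open subvariety $Y\setminus D\subseteq Y$; in particular $\wY$ is an irreducible (very affine) variety, equal to $Y$ when $f$ is nonvanishing. Choosing a Laurent polynomial lifting $f$, let $\varphi=\trop(f)\colon\trop(Y)\to\bR$ be the associated tropicalization; after passing to a suitable fan structure on $\trop(Y)$ this is a well-defined piecewise integral linear function, which is globally integral linear precisely when $f$ is nonvanishing.

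The first main step is to prove $\trop(\wY)=\cT\cM_\varphi(\trop Y)$ as tropical fan cycles in $\wN_\bR$. For the inclusion $\trop(\wY)\subseteq\cT\cM_\varphi(\trop Y)$ I would argue pointwise: a point of $\trop(\wY)$ is the valuation vector of a point $(\tilde y,\tilde z)$ of $\wY$ over a valued extension, where $\tilde z=f(\tilde y)$, so its last coordinate is $\operatorname{val}(f(\tilde y))\ge\varphi(\operatorname{val}\tilde y)$, with equality unless the initial form of $f$ vanishes on the initial degeneration — which forces the point onto the graph of $\varphi$ or onto one of the upward cones of the modification. For the reverse inclusion and the matching of weights, I would use that $\cT\cM_\varphi(\trop Y)$ is irreducible of dimension $d=\dim\trop(Y)$ (\cref{cor:irred_mod}, using that $\trop(Y)$ is irreducible), so the $d$-dimensional tropical cycle $\trop(\wY)$ contained in it must have the same support; then, since $\pi\colon\wY\to Y$ is birational and $\trop(Y)$ is reduced, the pushforward statement \cite[Theorem 3.12]{sturmfelsEliminationTheoryTropical2008} (exactly as in \cref{lem:mod_bir}) shows the induced weight on $\trop(\wY)$ pushes forward under $p$ to the fundamental weight of $\trop(Y)$, which — using injectivity of $p_*$ and that $p_*$ of the modification's fundamental weight is the fundamental weight (\cref{prop:trop_mod_mink,cor:irred_mod}) — forces $\trop(\wY)$ to carry the fundamental weight of $\cT\cM_\varphi(\trop Y)$. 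Hence the two coincide, and in particular $\trop(\wY)$ is reduced (\cref{lem:red_mod}).

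The second step is to identify the divisor of the modification: $\div(\varphi)=\trop(D)$ as tropical cycles on $\trop(Y)$. When $f$ is nonvanishing this is just the statement $\div(\varphi)=0=\trop(\emptyset)$, since $\varphi$ is globally linear. In general I would realize $\trop(D)$ as the ``boundary at infinity'' of $\trop(\wY)$: compactifying the $\bC^*_z$-factor to $\bA^1$, the closure of $\wY$ meets the new torus orbit (where $z\to 0$, i.e.\ $f\to 0$) in a copy of $D$, so by \cref{lem:trop_strata} $\trop(D)$ is the star fan of a suitable fan structure on $\trop(\wY)=\cT\cM_\varphi(\trop Y)$ along the ray $\bR_{\ge0}(\vec{0},1)$, which by \cref{prop:star_fan_mod} is exactly $\div(\varphi)$; the hypothesis that $D$ is quasilinear (hence, by \cref{cor:qlin_scheme}, reduced) is what guarantees that no spurious multiplicities appear in this identification.

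Finally, with $\trop(\wY)=\cT\cM_\varphi(\trop Y)$, where $\trop(Y)$ is quasilinear by hypothesis and $\div(\varphi)=\trop(D)$ is either trivial (when $f$ is nonvanishing) or quasilinear (when $D$ is quasilinear, since then $\trop(D)$ is a quasilinear tropical fan cycle), \cref{def:qlin_fans} immediately gives that $\trop(\wY)$ is quasilinear, i.e.\ $\wY$ is a quasilinear variety. The main obstacle is the second step, the comparison $\div(\trop f)=\trop(V(f))$ with the correct weights: this is the point where the choice of fan structure must be handled carefully, and where the full quasilinearity hypothesis on $D$ — rather than merely $\trop(D)$ being a tropical divisor — is genuinely used.
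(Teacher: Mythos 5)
Your overall strategy---identify $\trop(\wY)$ directly as $\cT\cM_{\varphi}(\trop Y)$ for an explicit $\varphi$ and then invoke \cref{def:qlin_fans}---is genuinely different from the paper's, but it has a gap at its foundation: the choice $\varphi = \trop(f)$ for a Laurent polynomial lift of $f$ is wrong in general, because $\trop(f)$ does not see the relations in the coordinate ring of $Y$ and is therefore distorted by cancellation. A minimal counterexample, entirely within the hypotheses of the theorem: let $Y = V(x+y+1) \subset (\bC^*)^2$ (linear, hence quasilinear) and $f = x+y$. Then $f \equiv -1$ on $Y$, so $f$ is nonvanishing, $D = \emptyset$, and $\trop(\wY) = \trop(Y) \times \{0\}$ is a degenerate modification, as the theorem asserts. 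But $\trop(f) = \min\{x,y\}$ is not globally linear on $\trop(Y)$ (the standard tropical line): its divisor on $\trop(Y)$ is the origin with weight $1$, and $\cT\cM_{\min\{x,y\}}(\trop Y)$ is the standard tropical line in $\bR^3$, with rays $(1,0,0)$, $(0,1,0)$, $(0,0,1)$, $(-1,-1,-1)$, which does not even contain the ray $(-1,-1,0)$ of $\trop(\wY)$. So both of your key claims fail for this $\varphi$: the containment $\trop(\wY) \subseteq \cT\cM_{\trop(f)}(\trop Y)$ fails (your pointwise argument only gives that the last coordinate is $\geq \trop(f)$ of the projection; when the minimal terms cancel, the strict-inequality locus need not lie over $\div(\trop(f)\vert_{\trop Y})$, as the ray $(-1,-1)$ shows), and the identification $\div(\trop(f)\vert_{\trop Y}) = \trop(D)$ fails as well. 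One can of course choose a better representative (namely $-1$) in this toy example, but your argument is written for an arbitrary lift, gives no criterion for selecting a cancellation-free one, and producing such a representative in general is essentially as hard as the theorem itself.

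The paper avoids this issue entirely by defining $\varphi$ intrinsically rather than from a defining equation: it chooses compatible unimodular fan structures on $\trop(Y)$ and $\trop(D)$, uses sch\"onness and \cref{thm:qlin_strata} to obtain simple normal crossings compactifications with reduced and irreducible strata, and then reads off $\trop(\wY)$ by geometric tropicalization of the boundary of $\wY \cong Y \setminus D$ inside $\oY$; there one has $\varphi(v_{\rho}) = \val_{D_{\rho}}(f)$, the order of vanishing of $f$ along the boundary divisor $D_{\rho}$, which is automatically globally linear when $f$ is a unit on $Y$. Your second step---realizing $\trop(D)$ as the star of $\trop(\wY)$ along the ray through $(\vec{0},1)$---is close in spirit to this and is a good instinct, but as written it is circular (it presupposes a fan structure on $\trop(\wY)$ of modification type, which is exactly what step 1 was supposed to produce), and \cref{lem:trop_strata} does not directly apply to the non-proper closure in $T \times \bA^1$. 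I would recommend reworking the argument around geometric tropicalization from the start.
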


\begin{proof}
    We prove the result in the case that $D = V(f)$ is nontrivial, leaving the (easier) case that $f$ is nonvanishing to
    the reader.

    Choose unimodular fan structures $\Sigma$ on $\trop(Y)$ and $\Delta$ on $\trop(D)$ such that every cone of $\Delta$
    is a cone of $\Sigma$. Since by \cref{cor:qlin_schon} quasilinear varieties are sch\"on, it follows that the
    closures $\oY \subset X(\Sigma)$ and $\oD \subset X(\Delta)$ are sch\"on tropical compactifications \cite[Theorem
    1.5]{luxtonResultsTropicalCompactifications2011}, and in particular (since $X(\Sigma)$ and $X(\Delta)$ are smooth),
    $Y \subset \oY$ and $D \subset \oD$ are smooth compactifications with simple normal crossings boundary \cite[Theorem
    1.4]{tevelevCompactificationsSubvarietiesTori2007}. By \cref{thm:qlin_strata}, all boundary strata of $\oY$ and
    $\oD$ are quasilinear, and in particular reduced and irreducible. Therefore the fans $\Sigma$ and $\Delta$ can be
    recovered from $\oY$ and $\oD$ via geometric tropicalization \cite[Section 2]{hackingStablePairTropical2009},
    \cite[Theorem 2.8]{cuetoImplicitizationSurfacesGeometric2012a}. Namely, a ray $\rho$ of $\Sigma$ is recovered from
    the corresponding irreducible boundary divisor $D_{\rho} = \oY_{\rho}$ as the ray through the vector
    $[\val_{D_{\rho}}] = (\val_{D_{\rho}}(m_1\vert_Y),\ldots,\val_{D_{\rho}}(m_n\vert_Y))$, where $m_1,\ldots,m_{n}$ are
    a basis of $M$, and a collection of such rays forms a cone precisely when the corresponding boundary divisors
    intersect; likewise for recovering $\Delta$ from $\oD$.  Furthermore, $\oD$ is also obtained as the closure of $D$
    in $\oY$, and $\oD$ intersects a stratum $Y_{\sigma}$ of $\oY \iff$ $\sigma \in \Delta$. It follows that $\oY$ is
    also a smooth simple normal crossings compactification of $\wY \cong Y \setminus D$, so we can also recover
    $\trop(\wY)$ from the boundary of $\wY \subset \oY$ via geometric tropicalization. Now the irreducible components of
    the boundary are $\oD$, and $D_{\rho} = \oY_{\rho}$ for $\rho$ a ray of $\Sigma$, a collection of boundary divisors
    $D_{\rho_1},\ldots,D_{\rho_k}$ intersect $\iff \rho_1,\ldots,\rho_k$ form a cone of $\Sigma$, and a collection
    of boundary divisors $\oD, D_{\rho_1},\ldots,D_{\rho_k}$ intersect $\iff \rho_1,\ldots,\rho_k$ form a cone of
    $\Delta$. So geometric tropicalization implies that $\trop(\wY)$ is the support of the fan $\wt\Sigma \subset
    \wN_{\bR} =N_{\bR} \times \bR$, whose rays are
    \begin{align*}
        \wt\rho &=
        \bR_{\geq 0}(\val_{D_{\rho}}(m_1\vert_{\wY}),\ldots,\val_{D_{\rho}}(m_n\vert_{\wY}),\val_{D_{\rho}}(m_{n+1}\vert_{\wY})) \\
                &= \{(x,\varphi(x)) \mid x \in \rho\}, \\
        0_{\geq} &= \bR_{\geq
        0}(\val_{\oD}(m_1\vert_{\wY}),\ldots,\val_{\oD}(m_n\vert_{\wY}),\val_{\oD}(m_{n+1}\vert_{\wY})) \\
                 &= \bR_{\geq 0}(0,\ldots,0,1),
    \end{align*}
    where $\varphi : \lvert \Sigma \rvert \to \bR$ is the piecewise integral linear function defined by setting
    $\varphi(v_{\rho}) = \val_{D_{\rho}}(m_{n+1}\vert_{\wY})$ for each ray $\rho$, and extending by linearity on each
    cone of $\Sigma$. From the above description of the boundary complex of $\wY \subset \oY$, we see that a collection
    of rays $\wt\rho_1,\ldots,\wt\rho_k$ form a cone of $\wt\Sigma$ precisely if the corresponding rays
    $\rho_1,\ldots,\rho_k$ form a cone of $\Sigma$, and a collection of rays $0_{\geq}, \wt\rho_1,\ldots,\wt\rho_k$
    form a cone of $\wt\Sigma$ precisely if the corresponding rays $\rho_1,\ldots,\rho_k$ form a cone of $\Delta$. It
    follows that $\wt\Sigma$ is precisely the tropical modification of $\Sigma$ with respect to the piecewise integral
    linear function $\varphi$, and $\div(\varphi)=\Delta$.
\end{proof}

\begin{remark}
    A more general variation of the above theorem appears in \cite[Proposition
    4.1]{renTropicalizationPezzoSurfaces2016}.
\end{remark}

\begin{corollary}
    Let $Y \subset T$ be a quasilinear variety and $D_1,\ldots,D_k \subset Y$ quasilinear hypersurfaces on $Y$ such that
    all nonempty intersections of the $D_i$ are also quasilinear in $T$. Then $Y \setminus (D_1 \cup \cdots \cup D_k)
    \subset T \times T^k$ is also quasilinear.
\end{corollary}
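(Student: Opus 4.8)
The plan is to induct on $k$, with \cref{thm:qlin_criterion_main} serving both as the base case and as the workhorse of the inductive step: we realize $U := Y \setminus (D_1 \cup \cdots \cup D_k)$ as an iterated very affine graph, peeling off one divisor at a time. Throughout we use that a quasilinear variety is Chow-free and smooth by \cref{thm:qlin_vars}, hence has trivial Picard group, so that every prime divisor on it is the zero locus of a regular function. For $k=1$, writing $D_1 = V(f)$ with $f$ regular on $Y$, the assertion that $U = Y \setminus D_1 = \Gamma^a_f$ is quasilinear is exactly \cref{thm:qlin_criterion_main} (taking $D = D_1$); the case of zero hypersurfaces is the hypothesis that $Y$ is quasilinear.

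For the inductive step, assume the statement for all collections of fewer than $k$ hypersurfaces. First I would write $D_k = V(f)$ for a regular function $f$ on $Y$ and form the very affine graph $Y' := \Gamma^a_f \cong Y \setminus D_k \subset T \times \bC^*$, which is a quasilinear variety by \cref{thm:qlin_criterion_main}. For $i < k$ set $D_i' := D_i \cap Y' = D_i \setminus D_k$, discarding it when $D_i \subseteq D_k$. The crux is to check that $Y'$ together with the surviving $D_i'$ again satisfies the hypotheses of the corollary. Given a nonempty $S \subseteq \{1,\dots,k-1\}$ with $\bigcap_{i\in S} D_i' \neq \emptyset$, put $Z := \bigcap_{i\in S} D_i$, which is quasilinear in $T$ by hypothesis, and $g := f|_Z$; then $V(g) = Z \cap D_k = \bigcap_{i\in S\cup\{k\}} D_i$ is quasilinear in $T$ by hypothesis, or empty (it cannot equal $Z$, since $\bigcap_{i\in S} D_i' \neq \emptyset$). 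In either case \cref{thm:qlin_criterion_main} applies to $g$ on $Z$, and since $\bigcap_{i\in S} D_i' = Z \setminus V(g) = \Gamma^a_g$, we conclude that $\bigcap_{i\in S} D_i'$ is quasilinear in $T \times \bC^*$; taking $S$ a singleton shows in particular that each nonempty $D_i'$ is a quasilinear hypersurface on $Y'$. Applying the inductive hypothesis to $Y'$ with the $D_i'$ then yields that $Y' \setminus (D_1' \cup \cdots \cup D_{k-1}') = Y \setminus (D_1 \cup \cdots \cup D_k)$ is quasilinear, inside the torus $(T \times \bC^*) \times (\bC^*)^{k-1} = T \times (\bC^*)^k$, with $U$ realized as the graph of $(f_1,\dots,f_k)$ where $D_i = V(f_i)$. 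This completes the induction.

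I expect the only real difficulty to be bookkeeping in the inductive step rather than any new geometric content: one must keep careful track of which traces $D_i'$ survive and verify that all of their nonempty intersections remain quasilinear, while correctly handling the degenerate cases where some $D_i$ is contained in $D_k$ or where an intersection of traces becomes empty. One should also invoke \cref{cor:qlin_scheme} to ensure that the various intersections $\bigcap_{i\in S} D_i$ and $\bigcap_{i\in S\cup\{k\}} D_i$, which are a priori only controlled through their tropicalizations, are in fact reduced of the expected pure dimension, so that the hypotheses of \cref{thm:qlin_criterion_main} genuinely apply at each stage of the recursion.
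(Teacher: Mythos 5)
Your proof is correct and follows essentially the same route as the paper's: induction on $k$ with \cref{thm:qlin_criterion_main} serving as both the base case and the engine of the inductive step. The paper merely reverses the order of peeling---it removes $D_1,\ldots,D_{k-1}$ first by the inductive hypothesis and then removes the trace $D_k' = D_k \setminus ((D_k\cap D_1)\cup\cdots\cup(D_k\cap D_{k-1}))$, which is quasilinear by a second application of the inductive hypothesis to $D_k$, whereas you remove $D_k$ first and verify the hypotheses for the surviving traces $D_i'$ directly via the theorem; the difference is purely organizational.
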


\begin{proof}
    The proof is by induction on $k$. The base case $k=1$ is immediate from \cref{thm:qlin_criterion_main}. Assume the
    result for $k-1$.  Then $Y' = Y \setminus (D_1 \cup \cdots \cup D_{k-1})$ is quasilinear by induction, and $Y
    \setminus (D_1 \cup \cdots \cup D_k) = Y' \setminus D_k'$, wher $D_k' = D_k \setminus ((D_k \cap D_1) \cup \cdots
    \cup (D_k \cap D_{k-1}))$. Since $D_k$ and all intersections of the $D_i$'s are quasilinear, $D_k'$ is also
    quasilinear by induction. Thus $Y' \setminus D_k'$ is quasilinear.
\end{proof}

\begin{theorem}
    Let $Y_1 \subset T_1$ and $Y_2 \subset T_2$ be two closed subvarieties of tori.  Then $Y_1 \times Y_2 \subset T_1
    \times T_2$ is quasilinear, if and only if both $Y_1 \subset T_1$ and $Y_2 \subset T_2$ are quasilinear.
\end{theorem}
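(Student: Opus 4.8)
The plan is to reduce this to the purely tropical statement \cref{thm:prod_qlin} via the compatibility of tropicalization with products. Recall that, by definition, a closed subvariety of a torus is quasilinear precisely when its tropicalization is a quasilinear tropical fan cycle. So, granting the identity of tropical fan cycles (with induced weights)
\[
    \trop(Y_1 \times Y_2) = \trop(Y_1) \times \trop(Y_2),
\]
the assertion becomes: $\trop(Y_1) \times \trop(Y_2)$ is a quasilinear tropical fan cycle if and only if both $\trop(Y_i)$ are --- which is exactly \cref{thm:prod_qlin}, once one notes that quasilinear tropical fan cycles are reduced (\cref{thm:qlin_poincare}) so that the hypotheses of that theorem are available in each direction.

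First I would prove the tropicalization-of-products identity. Pick tropical compactifications $\oY_i \subset X(\Sigma_i)$ of $Y_i \subset T_i$. Then $X(\Sigma_1) \times X(\Sigma_2) \cong X(\Sigma_1 \times \Sigma_2)$ is a toric variety with torus $T_1 \times T_2$, the product $\oY_1 \times \oY_2$ is a proper closed subvariety, and its multiplication map $(\oY_1 \times \oY_2) \times (T_1 \times T_2) \to X(\Sigma_1 \times \Sigma_2)$ is, up to reordering factors, the product $m_1 \times m_2$ of the two multiplication maps $m_i : \oY_i \times T_i \to X(\Sigma_i)$, hence flat and surjective. Thus $\oY_1 \times \oY_2 \subset X(\Sigma_1 \times \Sigma_2)$ is a tropical compactification of $Y_1 \times Y_2$, so $\trop(Y_1 \times Y_2)$ is supported on $\Sigma_1 \times \Sigma_2 = \trop(Y_1) \times \trop(Y_2)$. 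For the weights: the stratum of $\oY_1 \times \oY_2$ over a top-dimensional cone $\sigma_1 \times \sigma_2$ is $(Y_1)_{\sigma_1} \times (Y_2)_{\sigma_2}$, a zero-dimensional scheme whose length is the product of the lengths of $(Y_1)_{\sigma_1}$ and $(Y_2)_{\sigma_2}$; hence the induced weight on $\trop(Y_1 \times Y_2)$ is the product $\omega_1 \times \omega_2$ of the induced weights. (Alternatively one can obtain the set-theoretic statement from \cite[Chapter 6]{maclaganIntroductionTropicalGeometry2015} and handle the weights separately.)

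With the identity in hand the two directions are immediate. If $Y_1$ and $Y_2$ are quasilinear, then $\trop(Y_1)$ and $\trop(Y_2)$ are quasilinear tropical fan cycles, in particular reduced, so \cref{thm:prod_qlin} gives that $\trop(Y_1) \times \trop(Y_2) = \trop(Y_1 \times Y_2)$ is quasilinear, i.e. $Y_1 \times Y_2$ is quasilinear. Conversely, if $Y_1 \times Y_2$ is quasilinear, then $\trop(Y_1) \times \trop(Y_2)$ is quasilinear and in particular reduced; since the weight function of a product of tropical fans is the product of the two weight functions, both $\trop(Y_i)$ are reduced, so \cref{thm:prod_qlin} applies and shows both $\trop(Y_i)$ are quasilinear, i.e. both $Y_i$ are quasilinear. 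The only real content beyond quoting \cref{thm:prod_qlin} is the tropicalization-of-products identity, and within that the small bookkeeping to match the induced weight with the product weight is the step I would be most careful about; everything else is routine.
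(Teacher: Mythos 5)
Your proposal is correct and follows essentially the same route as the paper, which simply cites the identity $\trop(Y_1 \times Y_2) = \trop(Y_1) \times \trop(Y_2)$ (with a reference to the literature rather than the direct verification you give) and then invokes \cref{thm:prod_qlin}. Your extra care in checking the reducedness hypothesis of \cref{thm:prod_qlin} for the converse direction is a nice touch that the paper leaves implicit.
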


\begin{proof}
    We have $\trop(Y_1 \times Y_2) = \trop(Y_1) \times \trop(Y_2)$ \cite[Theorem
    3.3.4]{cuetoTropicalImplicitization2010}, so the result is immediate from \cref{thm:prod_qlin}.
\end{proof}

%
%

\subsection{Examples} \label{sec:examples_vars}

Recall a closed subvariety $Y \subset T$ is \emph{linear} if the equations defining $Y$ are linear, or equivalently $Y$
is isomorphic to the complement of a hyperplane arrangement.

\begin{theorem} \label{thm:lin_qlin_var}
    Linear varieties are quasilinear.
\end{theorem}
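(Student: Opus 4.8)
The plan is to deduce this from \cref{thm:lin_qlin}, which already shows that every linear fan (Bergman fan of a matroid) is quasilinear. Recall that a linear variety $Y \subset T \cong (\bC^*)^n$ is, after discarding any coordinates that vanish identically on it, the intersection $L \cap (\bC^*)^n$ of a linear subspace $L \subset \bC^n$ with the torus; equivalently, it is the complement of a hyperplane arrangement $\mathcal{A}$ in $L \cong \bA^{\dim L}$ (and, adding the hyperplane at infinity, isomorphic to the complement of an arrangement in projective space). It is a classical theorem of Sturmfels and Ardila--Klivans (see \cite[Chapter 4]{maclaganIntroductionTropicalGeometry2015}) that the tropicalization of such a variety, computed in its minimal torus, is precisely the Bergman fan $\Sigma_M$ of the matroid $M$ realized by the functionals $x_i|_L \in L^\vee$, and moreover that all weights equal $1$; in particular $\trop(Y)$ is a reduced linear fan.

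First I would invoke \cref{rmk:trop_iso} to replace $Y \subset T$ by the tropically isomorphic embedding in its minimal torus, so that $\trop(Y)$ is literally $\Sigma_M$ as a reduced tropical fan cycle. By \cref{thm:lin_qlin}, $\Sigma_M$ is a quasilinear tropical fan cycle; hence $\trop(Y)$ is quasilinear, and $Y$ is a quasilinear variety by definition. (Equivalently, without passing to the minimal torus, $\trop(Y \subset T)$ is $\Sigma_M$ times a trivial linear factor, which is quasilinear if and only if $\Sigma_M$ is, by stable invariance, cf. \cref{thm:prod_qlin}.)

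Alternatively --- and more in the spirit of \cref{sec:qlin_criteria} --- one can argue purely inductively, avoiding the structure theorem for tropicalizations of linear spaces: induct on the number $n$ of hyperplanes of $\mathcal{A}$, the base case being when $Y$ is an algebraic torus (complete fan, hence quasilinear). For the inductive step, pick $H \in \mathcal{A}$, let $Y'$ be the complement of $\mathcal{A} \setminus \{H\}$, and set $D = H \cap Y'$, the complement in $H$ of the restricted arrangement; both $Y'$ and $D$ are linear varieties with fewer hyperplanes, hence quasilinear by induction, and $D = V(f)$ where $f = \ell_H/\ell_{H_0}$ is a regular function on $Y'$ ($H_0$ any hyperplane of $\mathcal{A}\setminus\{H\}$). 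Since $Y$ is then the very affine graph of $f$ on $Y'$, \cref{thm:qlin_criterion_main} shows $Y$ is quasilinear. Either way, the only real work is bookkeeping --- tracking the correct matroid/arrangement and ambient torus at each step --- and in the direct approach the entire geometric content is the cited identification of $\trop(Y)$ with a Bergman fan, so there is no serious obstacle.
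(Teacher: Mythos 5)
Your primary argument is exactly the paper's proof: the tropicalization of a linear variety is (the support of) the Bergman fan of the associated matroid, which is quasilinear by \cref{thm:lin_qlin}, so the variety is quasilinear by definition. The extra care about the minimal torus and the alternative inductive argument via \cref{thm:qlin_criterion_main} are both fine but not needed; the paper's one-line proof is your first route.
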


\begin{proof}
    The tropicalization of a linear variety is the (support of the) Bergman fan of the matroid defining the
    corresponding hyperplane arrangement, so the result follows by \cref{thm:lin_qlin}.
\end{proof}

\begin{example}
    Let $a,b \in \bZ \setminus \{0\}$, $m \in \bC^*$, and let $Y = \{x^a=my^b\} \subset (\bC^*)^2$. Then $\trop(Y)$ is
    the line $\{ax=by\} \subset \bR^2$, hence $Y$ is quasilinear by \cref{ex:arb_line}.

    Similarly, if $c \in \bZ \setminus \{0\}$, then $Y = \{z^c=mx^ay^b\} \subset (\bC^*)^3$ is quasilinear; its
    tropicalization is a classical plane in $\bR^3$.
\end{example}

\begin{example}
    Let $a,b \in \bZ \setminus \{0\}$, $m,n \in \bC^*$. Then $Y = \{z=mx^a-ny^b\} \subset (\bC^*)^3$ is quasilinear; its
    tropicalization is a modification of $\bR^2$ along the line $\{ax=by\}$.
\end{example}

\begin{example}
    Let $H_1 = \{x=y\}$, $H_2 = \{xy=1\}$, and let $Y = (\bC^*)^2 \setminus (H_1 \cup H_2)$. Then
    \[
        Y = \{z=x-y, w=xy-1\} \subset (\bC^*)^4
    \]
    is not quasilinear---its tropicalization has a top-dimensional cone of with weight two, corresponding to the two
    intersection points of $H_1$ and $H_2$.
\end{example}

\begin{example}
    Continuing with the previous example, let $H_1 = \{x=y\}$, $H_2 = \{xy=1\}$, $H_3 = \{x=1\}$, and let $\wY = (\bC^*)^2
    \setminus (H_1 \cup H_2 \cup H_3)$. Then $\wY$ is quasilinear, even though $Y = (\bC^*)^2 \setminus (H_1 \cup H_2)$
    is not. To see this, let $Y' = (\bC^*)^2 \setminus (H_1 \cup H_3)$. Then
    \[
        Y' = \{z=x-y, w=x-1\} \subset (\bC^*)^4
    \]
    is linear, hence quasilinear. Let
    \[
        H_2' = H_2 \cap Y' = \{xy=1, z=x-y, w=x-1\} \subset (\bC^*)^4.
    \]
    Then $\wY = Y' \setminus H_2'$, so we just need to show that $H_2'$ is quasilinear. But the projection $(\bC^*)^4
    \to (\bC^*)^3$ dropping the $z$ coordinate realizes $H_2'$ as the graph of the regular function $f=x-y$ on the variety
    \[
        H_2'' = \{xy=1, w=x-1\} \subset (\bC^*)^3.
    \]
    The projection $(\bC^*)^3 \to (\bC^*)^2$ dropping the coordinate $y$ realizes $H_2''$ as the graph of the
    nonvanishing regular function $1/x$ on the (quasi)linear variety
    \[
        \{w=x-1\} \subset (\bC^*)^2,
    \]
    thus $H_2''$ is quasilinear. Now $V = V(f) \subset H_2''$ is given by
    \[
        \{x=y, xy=1, w=x-1\} = \{x=y=-1,w=-2\} \subset (\bC^*)^3,
    \]
    hence $V$ is (quasi)linear. (Indeed, it is a single point.) It follows that $H_2'$ is quasilinear, hence $\wY = Y'
    \setminus H_2'$ is quasilinear.
\end{example}

The above example is representative of our general procedure for determining when a variety is quasilinear, see
\cref{sec:applications}.

\section{Applications} \label{sec:applications}

\subsection{Moduli of line arrangements}

For $n \geq 4$, let $M(3,n)$ denote the moduli space of arrangements of $n$ lines in general position in $\bP^2$. By
fixing the last 4 lines and scaling the equations of the remaining lines, we can write $M(3,n)$ as the complement in
$\bA^{2(n-4)}$ of the $3 \times 3$ minors of the matrix
\[
    \begin{bmatrix}
        1 & 1 & \cdots & 1 & 1 & 1 & 0 & 0 \\
        x_1 & x_2 & \cdots & x_{n-4} & 1 & 0 & 1 & 0 \\
        y_1 & y_2 & \cdots & y_{n-4} & 1 & 0 & 0 & 1
    \end{bmatrix}.
\]
The $3 \times 3$ minors of this matrix include the hyperplanes $x_i=0$ and $y_i=0$, inducing the embedding of $M(3,n)$
in its intrinsic torus $(\bC^*)^{\binom{n}{3}-n}$.

\begin{theorem} \label{thm:M36_quasilinear}
    The moduli space $M(3,6)$ is quasilinear.
\end{theorem}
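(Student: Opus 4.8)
The plan is to build $M(3,6)$ up from a linear variety by applying the graph criterion \cref{thm:qlin_criterion_main} a few times. Taking $n=6$ in the matrix above and computing its twenty $3\times 3$ minors, one finds that all but two of them are constant or affine-linear; the two exceptions are
\[
    Q_1 = x_1 y_2 - x_2 y_1, \qquad Q_2 = (x_1-1)(y_2-1) - (x_2-1)(y_1-1),
\]
and both are binomials in suitable coordinates --- $Q_1$ in the coordinates $x_i, y_i$, and $Q_2$ in the coordinates $x_i-1$, $y_i-1$. The twelve non-constant linear minors are the forms $x_i$, $y_i$, $x_i-1$, $y_i-1$, $x_1-y_1$, $x_2-y_2$, $x_1-x_2$, $y_1-y_2$. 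Let $L\subset \bA^4$ be the complement of the twelve hyperplanes these cut out; then $L$ is the complement of a hyperplane arrangement, hence linear, hence quasilinear by \cref{thm:lin_qlin_var}, and $M(3,6) = L \setminus (\{Q_1=0\}\cup\{Q_2=0\})$. Write $D_i = L\cap\{Q_i=0\}$, a hypersurface on $L$.

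Suppose we have shown that $D_1$ and $D_2$ are quasilinear and that $D_1\cap D_2 = \emptyset$. Since $L$ is Chow-free (\cref{thm:qlin_vars}), the hypersurface $D_1$ is the zero locus of a regular function on $L$, so \cref{thm:qlin_criterion_main} shows the very affine graph $L\setminus D_1$ is quasilinear. Because $D_1\cap D_2=\emptyset$, the hypersurface $D_2$ is unchanged in $L\setminus D_1$ and still quasilinear; moreover $L\setminus D_1$ is again Chow-free, so $D_2$ is cut out there by a regular function, and a second application of \cref{thm:qlin_criterion_main} shows $M(3,6) = (L\setminus D_1)\setminus D_2$ is quasilinear. (We use \cref{rmk:trop_iso} freely to pass between torus embeddings.) So it remains to establish the two claims.

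For $D_1\cap D_2=\emptyset$: the subtorus $\{Q_1=0\}\cap(\bC^*)^4$ is parametrized by $(x_1,x_2,y_1,y_2)=(ta,a,tb,b)$, and there $Q_2$ restricts to $-(ta-a)+(tb-b) = (1-t)(a-b)$, which is nowhere zero because $L$ excludes $\{x_1=x_2\}=\{t=1\}$ and $\{x_1=y_1\}=\{a=b\}$. For the quasilinearity of $D_1$, the same parametrization realizes $D_1$ as the complement in $(\bC^*)^3$ (coordinates $a,b,t$) of the four hyperplanes $\{a=1\}$, $\{b=1\}$, $\{t=1\}$, $\{a=b\}$ (from $x_2=1$, $y_2=1$, $x_1=x_2$, $x_1=y_1$) together with the two binomials $\{ta=1\}$, $\{tb=1\}$ (from $x_1=1$, $y_1=1$). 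Now repeat the argument one level down: the complement $L'$ of the four hyperplanes in $(\bC^*)^3$ is linear; each of $\{ta=1\}$, $\{tb=1\}$ is a subtorus whose intersection with $L'$, after parametrizing it by $(\bC^*)^2$, is the complement of three hyperplanes in $(\bC^*)^2$ and hence linear; and $\{ta=1\}\cap\{tb=1\}$ forces $a=b$, which $L'$ excludes, so it is empty. Two applications of \cref{thm:qlin_criterion_main} as above then show $D_1 = L'\setminus(\{ta=1\}\cup\{tb=1\})$ is quasilinear. The variety $D_2$ is treated identically: since $Q_2$ is binomial in the torus coordinates $x_i-1$, $y_i-1$ on $L$, the analogous substitution presents $D_2$ as the complement in $(\bC^*)^3$ of four hyperplanes and two binomials $\{ta=-1\}$, $\{tb=-1\}$, to which the same analysis applies.

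The main difficulty is bookkeeping rather than anything conceptual: all the inputs --- \cref{thm:lin_qlin_var}, \cref{thm:qlin_criterion_main}, and \cref{thm:qlin_vars} (used to make the relevant divisors principal) --- are already in hand. The real content is to compute the twenty minors correctly, to notice that the two non-linear ones become binomial in appropriate coordinates (so their zero loci are subtori with clean monomial parametrizations), and then to track carefully, at each level of the recursion, the full list of hypersurfaces together with all their pairwise intersections, verifying that each such intersection is again of the same binomial-in-a-torus type or else empty. This dichotomy is the crux: here $D_1\cap D_2$ and the two deeper two-binomial intersections all turn out to be empty, which is exactly what keeps the recursion finite and makes the method go through --- and it is the failure of such an intersection to be quasilinear that obstructs the corresponding argument for $M(3,8)$.
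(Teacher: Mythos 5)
Your proof is correct and follows essentially the same route as the paper: the same decomposition $M(3,6)=M_1\setminus(Q_1\cup Q_2)$ with $M_1$ the (linear) complement of the twelve linear minors, the same verification that $Q_1\cap Q_2=\emptyset$ (which the paper only asserts, and your computation $(1-t)(a-b)$ confirms), and the same reliance on \cref{thm:lin_qlin_var}, \cref{thm:qlin_criterion_main} and its corollary. The only difference is organizational: where the paper checks quasilinearity of $Q_1$ by intersecting the quadric $Q=\{x_1y_2=x_2y_1\}$ with the linear hypersurfaces directly in the coordinates $x_i,y_i$, you monomially parametrize $Q\cong(\bC^*)^3$ and recurse on a complement of hyperplanes and binomial hypersurfaces --- the same underlying case analysis.
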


\begin{proof}
    Observe that $M=M(3,6)$ is the complement in $(\bC^*)^4$ of the 10 hypersurfaces defined by the following 10
    equations.
    \begin{align*}
        x_1=1, \;\; x_2=1, \;\; x_1=x_2, \;\; y_1=1, \;\; y_2=1, \;\; y_1=y_2, \\
        x_1=y_1, \;\; x_2=y_2, \;\; x_1y_2=x_2y_1, \;\; (x_1-1)(y_2-1)=(x_2-1)(y_1-1).
    \end{align*}
    Let $M_1 \subset (\bC^*)^{12}$ be the complement of the linear hypersurfaces. Then $M_1$ is linear, and in
    particular quasilinear. Equations for $M_1 \subset (\bC^*)^{12}$ can be written as follows.
    \begin{align*}
        z_1=x_1-1, \;\; z_2=x_2-1, \;\; z_{12}=x_1-x_2, \\
        w_1=y_1-1, \;\; w_2=y_2-1, \;\; w_{12}=y_1-y_2, \\
        u_1=x_1-y_1, \;\; u_2=x_2-y_2.
    \end{align*}
    Let $Q_1 = \{x_1y_2=x_2y_1\} \subset M_1$ and $Q_2 = \{(x_1-1)(y_2-1)=(x_2-1)(y_1-1)\} = \{z_1w_2=z_2w_1\} \subset
    M_1$. Then $M = M_1 \setminus (Q_1 \cup Q_2)$, so to show $M$ is quasilinear it suffices to show that $Q_1$, $Q_2$,
    and $Q_1 \cap Q_2$ are all quasilinear. A direct computation shows that in fact $Q_1 \cap Q_2 = \emptyset$, so it is
    enough to show that $Q_1$ and $Q_2$ are quasilinear. But by the symmetry of the equations, we see that it is enough
    to check that $Q_1$ is quasilinear.

    Let $Q = \{x_1y_2=x_2y_1\} \subset (\bC^*)^4$. Then $Q_1$ is the complement in $Q$ of the linear hypersurfaces. So
    to show $Q_1 \subset (\bC^*)^{14}$ is quasilinear, it suffices to show $Q \subset (\bC^*)^4$ intersects any
    collection of the linear hypersurfaces quasilinearly. Up to symmetry, there are three types of intersections of $Q$
    with a single linear hypersurface:
    \begin{enumerate}
        \item $Q \cap \{x_1=y_1\} = \{x_1=y_1,x_2=y_2\}$ is linear.
        \item $Q \cap \{x_1=x_2\} = \{x_1=x_2,y_1=y_2\}$ is linear.
        \item $Q \cap \{x_1=1\} = \{x_1=1, y_2=x_2y_1\}$. The projection dropping the coordinate $y_2$ realizes this as
            the graph of the nonvanishing regular function $x_2y_1$ on the (quasi)linear variety $\{x_1=1\} \subset
            (\bC^*)^3$, hence $Q \cap \{x_1=1\}$ is quasilinear.
    \end{enumerate}
    Finally, any intersection of $Q$ with two or more linear hypersurfaces is linear.  Thus, $Q_1$ is linear.
\end{proof}

As an immediate consequence of \cref{thm:M36_quasilinear}, we recover the main result of
\cite{luxtonLogCanonicalCompactification2008}, describing the log canonical compactification of $M(3,6)$.

\begin{corollary} \label{cor:M36_schon}
    The moduli space $M(3,6)$ is sch\"on, and its stable pair compactification $\oM(3,6)$ is the log canonical
    compactification.
\end{corollary}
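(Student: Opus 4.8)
The first assertion is immediate: $M(3,6)$ is quasilinear by \cref{thm:M36_quasilinear}, hence sch\"on by \cref{cor:qlin_schon}. In particular every fan structure on $\trop M(3,6)$ yields a sch\"on tropical compactification \cite[Theorem 1.5]{luxtonResultsTropicalCompactifications2011}, and for a unimodular fan the resulting compactification is smooth with simple normal crossings boundary \cite[Theorem 1.4]{tevelevCompactificationsSubvarietiesTori2007}.

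For the second assertion, the plan is to feed this sch\"onness into the established comparison between sch\"on, tropical, and log canonical compactifications. Recall that by \cite{hackingCompactificationModuliSpace2006} (building on the Chow quotient description of \cite{keelGeometryChowQuotients2006}; see also \cite{alexeevModuliWeightedHyperplane2015}) the normalization of the main component of $\oM(3,6)$ is itself a tropical compactification $\oM^{\Sigma_0}(3,6) \subset X(\Sigma_0)$ of $M(3,6)$, for an explicit fan structure $\Sigma_0$ on $\trop M(3,6)$ recording combinatorial types of (regular) matroid subdivisions. Since the compactification is sch\"on, the pair $(\oM^{\Sigma_0}(3,6),\partial)$ is toroidal, hence log canonical, and the class $K+\partial$ is pinned down by $\Sigma_0$. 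It then remains to check: (i) $\Sigma_0$ is the coarsest fan structure on $\trop M(3,6)$; and (ii) $K+\partial$ is ample on $\oM^{\Sigma_0}(3,6)$ --- equivalently, on a smooth tropical compactification dominating it, $K+\partial$ is semiample with ample model exactly $\oM^{\Sigma_0}(3,6)$. Granting (i) and (ii), $\oM(3,6)$ is by definition the log canonical compactification of $M(3,6)$.

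Points (i) and (ii) are combinatorial assertions about $\Sigma_0$ and about the restriction to $\trop M(3,6)$ of the nef cone of $X(\Sigma_0)$; they are carried out in \cite{luxtonLogCanonicalCompactification2008}, whose argument is complete once $M(3,6)$ is known to be sch\"on --- and this last, previously the substantive missing input, is now supplied by \cref{thm:M36_quasilinear}, which is why the corollary is deduced from it rather than proved independently. I expect (ii), the semiampleness/ampleness verification, to be the delicate point, as one must rule out that passage to the log canonical ring contracts a further boundary divisor. An alternative route that bypasses (i)--(ii) entirely is to combine \cref{thm:qlin_chow} with the explicit presentation of the Chow ring of $\oM(3,6)$ from \cite[Section 7]{schockIntersectionTheoryStable2022}, identifying $\oM(3,6)$ with the log canonical model directly.
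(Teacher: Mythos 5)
Your proof is correct and follows essentially the same route as the paper: sch\"onness comes from quasilinearity via \cref{thm:M36_quasilinear} and \cref{cor:qlin_schon}, and the identification of the stable pair compactification with the log canonical compactification is delegated to the Hacking--Keel--Tevelev/Luxton machinery. The paper packages the combinatorial input you split into (i) and (ii) as the single statement that the Dressian $Dr(3,6)$ is a convexly disjoint fan supported on $\trop(M(3,6))$, which together with sch\"onness yields the conclusion.
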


\begin{proof}
    Sch\"onness of $M(3,6)$ is immediate from \cref{thm:M36_quasilinear} and \cref{cor:qlin_schon}. The stable pair compactification $\oM(3,6)$ is
    obtained as the closure of $M(3,6)$ in the toric variety associated to the Dressian $Dr(3,6)$ (see
    \cite{hackingCompactificationModuliSpace2006, alexeevModuliWeightedHyperplane2015} for more details). The Dressian
    $Dr(3,6)$ is a convexly disjoint fan supported on $\trop(M(3,6))$ (meaning that any convex subset of $Dr(3,6)$ is
    contained in a cone of $Dr(3,6)$). This together with the sch\"onness of $M(3,6)$ implies that $\oM(3,6)$ is the log
    canonical compactification, see \cite[Proof of Theorem 9.14]{hackingStablePairTropical2009},
    \cite{luxtonLogCanonicalCompactification2008}.
\end{proof}

\begin{remark}
    In general, let $M(r,n)$ denote the moduli space of arrangements of $n$ hyperplanes in $\bP^{r-1}$ in general
    position, and $\oM^m(r,n)$ the normalization of the main irreducible component of its stable pair compactification
    $\oM(r,n)$ \cite{hackingCompactificationModuliSpace2006}. In \cite{keelGeometryChowQuotients2006}, Keel and Tevelev
    show that $\oM^m(r,n)$ is not log canonical except possibly in the cases $(r,n)=(2,n)$, $(3,6)$, $(3,7)$, $(3,8)$
    (and those obtained by duality $\oM^m(r,n) \cong \oM^m(n-r,n)$), and they conjecture that in these cases
    $\oM^m(r,n)$ is indeed the log canonical compactification. They prove this for the case $r=2$, in which case
    $\oM^m(2,n) \cong \oM_{0,n}$ \cite{keelGeometryChowQuotients2006}. The case $(3,6)$ was shown by Luxton
    \cite{luxtonLogCanonicalCompactification2008}; the above corollary gives a much shorter proof of Luxton's result.
    (For another proof of this case, without using any tropical geometry, see \cite[Section
    7]{schockIntersectionTheoryStable2022}.) The cases $(3,7)$ and $(3,8)$ have recently been proven by Corey
    \cite{coreyInitialDegenerationsGrassmannians2021}, and Corey-Luber \cite{coreyGrassmannianPlanesMathbb2023}.
\end{remark}

\begin{remark}
    Corey and Luber show that for $(r,n)=(3,8)$ (and hence for any larger case), $\trop(M(3,8))$ is not reduced, and in
    particular $M(3,8)$ is not quasilinear \cite{coreyGrassmannianPlanesMathbb2023}. The nonreducedness of
    $\trop(M(3,8))$ comes from zero-dimensional strata of $\oM(3,8)$ parameterizing (stable replacements of)
    configurations of 8 lines in $\bP^2$ with 8 triple intersection points. Since the dual of such a configuration
    admits the same description, one sees that these strata consist of 2 distinct points, and thus are not irreducible.
    See \cite{coreyGrassmannianPlanesMathbb2023} for more details.

    On the other hand, such examples do not occur for $(r,n)=(3,7)$, and one can show by similar arguments to the proof
    of \cref{thm:M36_quasilinear} that $M(3,7)$ is quasilinear. The arguments in this case are quite subtle and do not
    offer much insight into the structure of $M(3,7)$ so we omit them, referring to \cite[Chapter
    7]{schockGeometryTropicalCompactifications2022} for more details.
\end{remark}

As another immediate consequence of \cref{thm:M36_quasilinear}, we recover the main results of
\cite{schockIntersectionTheoryStable2022}, describing the Chow rings of tropical compactifications of $M(3,6)$.

\begin{corollary} \label{cor:M36_chow}
    Let $\oM^{\Sigma}(3,6) \subset X(\Sigma)$ be any tropical compactification of $M(3,6)$. Then $A^*(\oM^{\Sigma}(3,6))
    \cong A^*(X(\Sigma))$. If $\Sigma$ is unimodular, then $\oM^{\Sigma}(3,6)$ is a resolution of singularities of the
    stable pair compactification $\oM(3,6)$, and $A^*(\oM^{\Sigma}(3,6)) \cong H^*(\oM^{\Sigma}(3,6))$.
\end{corollary}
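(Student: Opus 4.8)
The plan is to obtain the Chow- and cohomology-ring isomorphisms as a direct application of \cref{thm:qlin_chow}, and to deduce the resolution-of-singularities statement by combining this with the identification of $\oM(3,6)$ as the log canonical compactification from \cref{cor:M36_schon}.

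First I would note that the fan $\Sigma$ of any tropical compactification $\oM^{\Sigma}(3,6) \subset X(\Sigma)$ is, by the very definition of tropicalization, supported on $\trop(M(3,6))$, which is a quasilinear tropical fan cycle by \cref{thm:M36_quasilinear}. Thus $M(3,6)$ is a quasilinear variety and $\oM^{\Sigma}(3,6) \subset X(\Sigma)$ is a quasilinear tropical compactification in the sense of \cref{sec:qlin_comps}. Applying \cref{thm:qlin_chow} then immediately gives that $i^* : A^*(X(\Sigma)) \to A^*(\oM^{\Sigma}(3,6))$ is an isomorphism of graded rings; and if $\Sigma$ is unimodular, so that $X(\Sigma)$ is nonsingular, the second half of \cref{thm:qlin_chow} gives that $\oM^{\Sigma}(3,6)$ is nonsingular and that the cycle class map furnishes isomorphisms $H^*(\oM^{\Sigma}(3,6)) \cong A^*(\oM^{\Sigma}(3,6)) \cong A^*(X(\Sigma))$.

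For the resolution statement I would argue as follows. By \cref{cor:M36_schon}, $\oM(3,6)$ is the log canonical compactification, realized as the closure of $M(3,6)$ in the toric variety $X(Dr(3,6))$ of the Dressian $Dr(3,6)$, a fan structure on $\trop(M(3,6))$. Taking $\Sigma$ to be a unimodular fan structure refining $Dr(3,6)$ --- any unimodular fan structure on $\trop(M(3,6))$ can be replaced by such a common unimodular refinement, which still gives a sch\"on, hence (by unimodularity) smooth, tropical compactification by \cite[Theorem 1.5]{luxtonResultsTropicalCompactifications2011} --- the proper toric morphism $X(\Sigma) \to X(Dr(3,6))$ restricts to a morphism $\oM^{\Sigma}(3,6) \to \oM(3,6)$: it is proper since $\oM^{\Sigma}(3,6)$ is proper, its image is an irreducible closed subset of $X(Dr(3,6))$ containing $M(3,6)$, hence containing $\oM(3,6)$, and of dimension $\dim M(3,6) = \dim \oM(3,6)$, hence equal to $\oM(3,6)$, and it is an isomorphism over $M(3,6)$, hence birational. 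Since $\oM^{\Sigma}(3,6)$ is nonsingular, this exhibits it as a resolution of singularities of $\oM(3,6)$.

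The Chow- and cohomology-ring parts are thus essentially formal consequences of the machinery already in place. The one point requiring genuine care --- and which I expect to be the main, if minor, obstacle --- is the precise reading of the resolution statement: one must interpret ``$\Sigma$ unimodular'' appropriately (i.e.\ as a unimodular fan structure refining $Dr(3,6)$, or else insert the common-refinement step above) and verify that the proper toric morphism really does restrict to a morphism of the two compactifications with target exactly $\oM(3,6)$. This reduces to the elementary density and dimension-count argument indicated, together with the compatibility of closures under proper morphisms.
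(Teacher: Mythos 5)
Your derivation of the Chow- and cohomology-ring isomorphisms is exactly the paper's argument: quasilinearity of $M(3,6)$ (\cref{thm:M36_quasilinear}) makes $\oM^{\Sigma}(3,6) \subset X(\Sigma)$ a quasilinear tropical compactification, and \cref{thm:qlin_chow} does the rest. That part is fine.

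For the resolution statement, your construction of the morphism $\oM^{\Sigma}(3,6) \to \oM(3,6)$ (restrict the proper toric morphism $X(\Sigma) \to X(Dr(3,6))$, then use density of $M(3,6)$ and a dimension count to see the image is $\oM(3,6)$ and the map is birational) is correct \emph{when $\Sigma$ refines $Dr(3,6)$}. But your fallback for a general unimodular $\Sigma$ --- ``replace $\Sigma$ by a common unimodular refinement of $\Sigma$ and $Dr(3,6)$'' --- does not prove the stated claim: passing to a refinement $\Sigma'$ changes the compactification, so you would only have exhibited $\oM^{\Sigma'}(3,6)$, not the original $\oM^{\Sigma}(3,6)$, as a resolution of $\oM(3,6)$. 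The point you are missing, and the one the paper's proof leans on, is that no replacement is ever needed: $Dr(3,6)$ is the \emph{coarsest} fan structure on $\trop(M(3,6))$ (this is implicit in the proof of \cref{cor:M36_schon}, where the Dressian is noted to be convexly disjoint, and in Luxton's work), so \emph{every} fan structure on $\trop(M(3,6))$ --- in particular every unimodular one --- automatically refines $Dr(3,6)$, and your toric-morphism argument then applies verbatim to the given $\Sigma$. With that one-sentence repair your proof coincides with the paper's.
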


\begin{proof}
    Everything is immediate from \cref{thm:M36_quasilinear} and \cref{thm:qlin_chow} except for the statement that if
    $\Sigma$ is unimodular then $\oM^{\Sigma}(3,6)$ is a resolution of singularities of $\oM(3,6)$; this in turn follows
    from the observation that $\oM(3,6)$ is the tropical compactification associated to the coarsest fan structure on
    $\trop(M(3,6))$ (cf. the proof of \cref{cor:M36_schon}, and \cite{luxtonLogCanonicalCompactification2008}).
\end{proof}

\begin{remark}
    The moduli space $\oM(3,6)$ has 15 singular points, each locally isomorphic to the cone over $\bP^1 \times \bP^2$
    (see \cite{luxtonLogCanonicalCompactification2008, schockIntersectionTheoryStable2022}).  There are $2^{15}$ small
    resolutions of $\oM(3,6)$, whose fibers over each singular point are either $\bP^1$ or $\bP^2$. In
    \cite{schockIntersectionTheoryStable2022} we give an explicit presentation of the Chow ring of each small
    resolution, and as a consequence also obtain a presentation of the Chow ring of $\oM(3,6)$ itself. Since each small
    resolution of $\oM(3,6)$ is also a tropical compactification of $M(3,6)$, the above result gives an alternative
    proof of these presentations.
\end{remark}

\subsection{Moduli of marked cubic surfaces}

Let $Y(E_6)$ denote the moduli space of smooth marked cubic surfaces. A marking of a cubic surface $S$ is equivalent to a
realization of $S$ as the blowup of 6 points in general position in $\bP^2$, where here general position means no 2
points coincide, no 3 lie on a line, and no 6 lie on a conic. Projective duality therefore identifies $Y(E_6)$ as the
complement in $M(3,6)$ of the hypersurface $Q$ parameterizing the locus of 6 points on a conic. In the coordinates of
\cref{sec:applications}, equations for $Q$ are given by
\[
    Q = \left\{\det \begin{pmatrix}
            1 & 1 & 1 \\
            x_1 & y_1 & x_1y_1 \\
            x_2 & y_2 & x_2y_2
    \end{pmatrix} = 0\right\}
\]
(see, for instance \cite{yoshidaE_6EquivariantProjective2000}). Note that $Q$ is also naturally isomorphic to the
moduli space $M_{0,6}$ of 6 points on $\bP^1$.

\begin{theorem} \label{thm:Y_quasilinear}
    The moduli space $Y(E_6)$ is quasilinear.
\end{theorem}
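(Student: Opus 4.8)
The plan is to mirror the proof of \cref{thm:M36_quasilinear}. Since that theorem gives that $M(3,6)$ is quasilinear and $Y(E_6) = M(3,6)\setminus Q$ with $Q$ the conic locus, I would work in the coordinates $x_1,x_2,y_1,y_2$ on $(\bC^*)^4$ from the proof of \cref{thm:M36_quasilinear} and write $Y(E_6) = M_1 \setminus (Q_1\cup Q_2 \cup Q_3)$, where $M_1$ is the (linear, hence quasilinear) complement of the eight linear hypersurfaces, $Q_1 = \{x_1y_2=x_2y_1\}$ and $Q_2 = \{(x_1-1)(y_2-1)=(x_2-1)(y_1-1)\}$ are the two quadrics treated in the proof of \cref{thm:M36_quasilinear} (shown there to be linear, with $Q_1\cap Q_2=\emptyset$), and $Q_3 = Q\cap M_1$. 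By the corollary to \cref{thm:qlin_criterion_main} applied to $M_1$ and the three hypersurfaces $Q_1,Q_2,Q_3$, it then suffices to prove that $Q_3$, $Q_1\cap Q_3$, and $Q_2\cap Q_3$ are quasilinear.

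The key structural fact I would exploit is that the conic equation
\[
  F = x_2y_1y_2 - x_1y_1y_2 - x_1x_2y_2 + x_1x_2y_1 + x_1y_2 - x_2y_1
\]
has degree at most one in each of the four variables separately. For the intersections with the quadrics, substituting the equation $y_1 = x_1y_2/x_2$ of $Q_1$ into $F$ gives $F = x_1y_2(x_2-x_1)(y_2-x_2)/x_2$, so on $Q_1$ --- which lies in $M_1$, where $x_1\neq x_2$ and $x_2\neq y_2$ --- the conic never vanishes; hence $Q_1\cap Q_3 = \emptyset$, and by the symmetry of the defining equations (the one already used for $Q_2$ in the proof of \cref{thm:M36_quasilinear}) also $Q_2\cap Q_3 = \emptyset$. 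This reduces the problem to showing that $Q_3$ itself is quasilinear.

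For that, I would solve $F=0$ for $y_1$, exhibiting $Q_3$ as the very affine graph of $y_1 = x_1y_2(x_2-1)/G$ with $G = y_2(x_2-x_1)+x_2(x_1-1)$. A direct computation then shows that, once the eight linear conditions defining $M_1$ are imposed, the loci where the numerator vanishes and where $y_1$ equals $1$, $y_2$, or $x_1$ all factor into products of already-deleted linear hypersurfaces --- for instance $x_1y_2(x_2-1)-G = x_2(x_1-1)(y_2-1)$, $x_1y_2(x_2-1)-y_2G = y_2(x_2-x_1)(1-y_2)$, and $x_1y_2(x_2-1)-x_1G = x_1(x_1-1)(y_2-x_2)$ --- so the only genuinely new hypersurface that has to be removed is $V(G)$, and over the resulting base the function $y_1$ is \emph{nonvanishing}. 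By the nonvanishing case of \cref{thm:qlin_criterion_main}, $Q_3$ is then quasilinear provided $L\setminus V(G)$ is, where $L\subset(\bC^*)^3_{x_1,x_2,y_2}$ is a linear variety. Since $G$ is in turn linear in $y_2$, the same device realizes $V(G)\cap L$ as the very affine graph of the nonvanishing function $y_2 = x_2(1-x_1)/(x_2-x_1)$ over a two-dimensional linear variety, which is quasilinear; so $L\setminus V(G)$, and hence $Q_3$, is quasilinear, which finishes the argument.

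The main obstacle will be keeping the bookkeeping straight: the argument hinges on the handful of fortunate factorizations that collapse the auxiliary hypersurfaces into ones already deleted, and on the vanishing $Q_i\cap Q_3=\emptyset$, which is exactly what makes the inductive unwinding terminate rather than producing ever more complicated hypersurfaces. (A more conceptual alternative would be to use that $Q\cong M_{0,6}$, which is linear and hence quasilinear by \cref{thm:lin_qlin_var}; but that route requires separately checking that the embedding of $Q$ in the torus of $M(3,6)$ is tropically isomorphic to the standard linear embedding of $M_{0,6}$, e.g.\ via geometric tropicalization, and the explicit computation above avoids that.)
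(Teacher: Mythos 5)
Your argument is correct, but it is not the paper's route. The paper's proof is essentially two sentences: since $M(3,6)$ is quasilinear, it suffices to show the conic locus $Q$ is quasilinear in the embedding $Q \subset M(3,6) \subset (\bC^*)^{14}$, and this is deduced conceptually from $Q \cong M_{0,6}$ (linear in its intrinsic torus) together with Luxton's result that there is a tropical compactification $\wt M(3,6)$ in which the closure of $Q$ is $\oM_{0,6}$, so that $\trop(Q)$ is isomorphic to the (quasi)linear fan $\trop(M_{0,6})$. You instead run the explicit machinery of \cref{thm:M36_quasilinear} one step further, exploiting the multilinearity of the determinantal equation $F$ of $Q$. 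I have checked your computations: $F\vert_{Q_1} = x_1y_2(x_2-x_1)(y_2-x_2)/x_2$ is a unit on $M_1$, so $Q_1 \cap Q_3 = \emptyset$, and likewise $Q_2 \cap Q_3 = \emptyset$ (geometrically, the triples underlying $Q_1,Q_2$ are $\{1,2,4\}$ and $\{1,2,3\}$, whose complementary triples lie in the fixed frame and are never collinear, so a conic through all six points with either triple collinear is impossible; this is what your symmetry is really doing, and it does apply because $Q$ is $S_6$-invariant). The three factorizations $x_1y_2(x_2-1)-G = x_2(x_1-1)(y_2-1)$, etc., are also correct, so $Q_3$ is an iterated very affine graph of nonvanishing functions over $L \setminus V(G)$, and $V(G)\cap L$ is handled by the same device. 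What each approach buys: the paper's is short and structural but outsources the real content to Luxton (or to the Ren--Sam--Sturmfels identification mentioned in the subsequent remark), while yours is self-contained, elementary, and in the same style as the $M(3,6)$ proof, at the cost of bookkeeping that depends on several fortunate factorizations. Two points you should make explicit to close the argument: first, your graph presentation places $Q_3$ in a torus with one extra coordinate $\bC^*_G$; since $G = x_1y_2z_2y_1^{-1}$ restricts to a Laurent monomial on $Q_3$, this re-embedding is a degenerate modification along a globally linear function and does not change the tropicalization up to isomorphism (\cref{rmk:degen_mods}), so quasilinearity in the two embeddings is equivalent. Second, one should note that $V(F)\cap(\bC^*)^4$ is reduced and irreducible (immediate from your graph description together with $\dim(V(F)\cap V(G))\le 2$), so that $F$ really is a defining function of the hypersurface $Q_3$ on $M_1$ in the sense required by \cref{thm:qlin_criterion_main}.
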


\begin{proof}
    Since $Y(E_6)$ is the complement in $M(3,6)$ of the hypersurface $Q$, and $M(3,6)$ is quasilinear by
    \cref{thm:M36_quasilinear}, all we need to do is show that $Q$ is quasilinear (in the embedding $Q \subset M(3,6)
    \subset (\bC^*)^{14}$). Since $Q \cong M_{0,6}$ is quasilinear (indeed, linear) in its intrinsic torus, it suffices
    to show that the tropicalization of $Q$ in $(\bC^*)^{14}$ is isomorphic to the tropicalization of $M_{0,6}$. In
    fact, this follows from the results of Luxton \cite{luxtonLogCanonicalCompactification2008}, which show that there
    is a tropical compactification $\wM(3,6)$ of $M(3,6)$ such that the closure of $Q$ in $\wM(3,6)$ is isomorphic to
    $\oM_{0,6}$.
\end{proof}

\begin{remark}
    Recall that the Segre cubic $\cS$ is the unique cubic threefold in $\bP^4$ with 10 nodes. It is a birational model
    of $\oM_{0,6}$ obtained by contracting the 10 boundary divisors of the form $D_{ijk}$ to the nodes. The \emph{open}
    Segre cubic $\cS^{\circ}$ is the complement in $\cS$ of the images of the remaining 15 boundary divisors $D_{ij}$ of
    $\oM_{0,6}$; note that $\cS^{\circ} \cong M_{0,6}$. This induces an embedding of $M_{0,6}$ into a 14-dimensional
    torus, which, up to an appropriate change of coordinates, agrees with our embedding $Q \subset M(3,6) \subset
    (\bC^*)^{14}$ above (cf. \cite[Section 2]{renTropicalizationClassicalModuli2014}). By \cite[Theorem
    2.4]{renTropicalizationClassicalModuli2014}, the tropicalization of this embedding is isomorphic to the
    tropicalization of $M_{0,6}$ in its intrinsic torus, giving another proof that $Q \subset (\bC^*)^{14}$ is
    quasilinear.

    It is also interesting to note that the closure $\oQ$ of $Q$ in $\oM(3,6)$ is in fact isomorphic to the Segre cubic.
    The 10 nodes of the Segre cubic are cut out by the intersections of $\oQ$ with the codimension two boundary strata
    of $\oM(3,6)$ described by intersections $D_{ijk} \cap D_{lmn}$, where $ijk \subset [6]$ and $lmn \subset [6]$ form
    a partition of $[6] = \{1,\ldots,6\}$, and the divisors $D_{ijk} \cong \oM_{0,6}$ are described in \cite[Section
    2]{schockIntersectionTheoryStable2022} (see also \cite{luxtonLogCanonicalCompactification2008}). Note that $\oQ$ is
    not a tropical compactification of $Q$ because of its intersections with these strata. Blowing up $\oM(3,6)$ at the
    10 intersections $D_{ijk} \cap D_{lmn}$ amounts to blowing up $\oQ$ at the 10 nodes, giving the resolution of
    singularities $\oM_{0,6} \to \oQ$. (This description of $\oQ$ is implicit in
    \cite{luxtonLogCanonicalCompactification2008}.)
\end{remark}

\begin{remark}
    For another proof that $Q \subset M(3,6) \subset (\bC^*)^{14}$ (hence $Y(E_6)$) is quasilinear, see \cite[Chapter
    8]{schockGeometryTropicalCompactifications2022}. The proof there works explicitly with the equations of $Q$.
\end{remark}

A particularly nice, smooth projective compactification $\oY(E_6)$ of $Y(E_6)$, with simple normal crossings boundary,
was constructed by Naruki in \cite{narukiCrossRatioVariety1982}. Using this compactification, Hacking-Keel-Tevelev show
that $Y(E_6)$ is sch\"on and $\oY(E_6)$ is the log canonical compactification \cite{hackingStablePairTropical2009}. They
also show that a natural blowup $\wY(E_6)$ of $\oY(E_6)$ is the moduli space of stable marked cubic surfaces. (In fact,
$\oY(E_6)$ also has an intepretation as a moduli space of \emph{weighted} stable marked cubic surfaces, see
\cite{gallardoGeometricInterpretationToroidal2021, schockModuliWeightedStable2024}.) \cref{thm:Y_quasilinear} gives an
alternative proof that $Y(E_6)$ is sch\"on, and allows one to describe the Chow rings of $\oY(E_6)$ and $\wY(E_6)$.

\begin{corollary} \label{cor:Y_schon}
    The moduli space $Y(E_6)$ is sch\"on.
\end{corollary}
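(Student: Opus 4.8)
The plan is to obtain sch\"onness as an immediate consequence of the quasilinearity established just above. By \cref{thm:Y_quasilinear}, the moduli space $Y(E_6)$ is a quasilinear variety, and by \cref{cor:qlin_schon}, every quasilinear variety is sch\"on. So the proof is a one-line citation of these two facts.

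For completeness I would spell out why \cref{cor:qlin_schon} applies in this situation. Pick any fan structure $\Sigma$ on $\trop(Y(E_6))$ yielding a tropical compactification $\oY(E_6) \subset X(\Sigma)$; such a fan exists since the coarsest fan on any tropicalization gives a tropical compactification. By \cref{thm:qlin_strata}, every stratum $Y(E_6)_\sigma = \oY(E_6) \cap O(\sigma)$ is a quasilinear variety, hence in particular smooth by \cref{thm:qlin_vars}. Since all strata of a tropical compactification are smooth precisely when the multiplication map $m : \oY(E_6) \times T \to X(\Sigma)$ is smooth \cite[Lemma 2.7]{hackingHomologyTropicalVarieties2008}, it follows that this tropical compactification is sch\"on, and therefore $Y(E_6)$ is sch\"on (with the sch\"onness independent of the chosen fan structure, by \cite[Theorem 1.5]{luxtonResultsTropicalCompactifications2011}).

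There is no real obstacle remaining at this point: all of the substantive work lies upstream, in \cref{thm:Y_quasilinear} (which rests on the quasilinearity of $M(3,6)$ from \cref{thm:M36_quasilinear} together with an identification of $\trop(Q)$ inside $(\bC^*)^{14}$ with the tropicalization of $\oM_{0,6}$) and in the general machinery of \cref{cor:qlin_schon,thm:qlin_strata,thm:qlin_vars}. The corollary itself is purely a matter of assembling these ingredients.
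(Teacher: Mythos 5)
Your proposal is correct and matches the paper's proof exactly: the paper also derives this as an immediate consequence of \cref{thm:Y_quasilinear} together with \cref{cor:qlin_schon}. The extra unwinding of \cref{cor:qlin_schon} via \cref{thm:qlin_strata} and the smoothness-of-strata criterion is accurate and consistent with how the paper itself proves that corollary.
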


\begin{proof}
    Immediate from \cref{thm:Y_quasilinear} and \cref{cor:qlin_schon}.
\end{proof}

From \cref{thm:Y_quasilinear}, we also obtain a description of the Chow rings of tropical compactifications of $Y(E_6)$,
including Naruki's compactification $\oY(E_6)$ and the moduli space of stable marked cubic surfaces $\wY(E_6)$.

\begin{corollary} \label{cor:Y_chow}
    Let $\oY^{\Sigma}(E_6) \subset X(\Sigma)$ be any tropical compactification of $Y(E_6)$. Then there is an isomorphism
    $A^*(\oY^{\Sigma}(E_6)) \cong A^*(X(\Sigma))$. If $\Sigma$ is unimodular, then $\oY^{\Sigma}(E_6)$ is nonsingular
    and $A^*(\oY^{\Sigma}(E_6)) \cong H^*(\oY^{\Sigma}(E_6))$.  In particular, this applies to Naruki's compactification
    $\oY(E_6)$, and to the moduli space of stable marked del Pezzo surfaces $\wY(E_6)$.
\end{corollary}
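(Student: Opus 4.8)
The plan is to deduce \cref{cor:Y_chow} formally from \cref{thm:Y_quasilinear} and the general result \cref{thm:qlin_chow}. Since $Y(E_6)$ is quasilinear by \cref{thm:Y_quasilinear}, every tropical compactification $\oY^{\Sigma}(E_6) \subset X(\Sigma)$ is by definition a quasilinear tropical compactification, so \cref{thm:qlin_chow} applies verbatim: the pullback $i^* : A^*(X(\Sigma)) \to A^*(\oY^{\Sigma}(E_6))$ is a ring isomorphism, and when $\Sigma$ is unimodular (so that $X(\Sigma)$ is nonsingular) the compactification $\oY^{\Sigma}(E_6)$ is nonsingular and the cycle class map gives $A^*(\oY^{\Sigma}(E_6)) \cong H^*(\oY^{\Sigma}(E_6))$. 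Thus the bulk of the corollary is immediate from what precedes it.

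What remains is to justify the final sentence, i.e.\ to confirm that Naruki's compactification $\oY(E_6)$ and the moduli space $\wY(E_6)$ of stable marked cubic surfaces fall under this statement. For $\oY(E_6)$ I would cite \cite{hackingStablePairTropical2009} (building on \cite{narukiCrossRatioVariety1982}): there it is shown that $\oY(E_6)$ is the log canonical compactification of $Y(E_6)$, realized as the closure of $Y(E_6)$ in the smooth toric variety attached to a particular unimodular fan structure on $\trop(Y(E_6))$ --- in other words $\oY(E_6) = \oY^{\Sigma}(E_6)$ for a unimodular $\Sigma$, so the nonsingular case of the corollary applies. For $\wY(E_6)$ I would recall, again from \cite{hackingStablePairTropical2009}, that it is obtained from $\oY(E_6)$ by blowing up along a union of boundary strata; on the tropical side this corresponds to a further subdivision of the Naruki fan, which can be taken unimodular, so $\wY(E_6)$ is likewise a tropical compactification of $Y(E_6)$ associated to a unimodular fan. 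The corollary then again yields $A^*(\wY(E_6)) \cong H^*(\wY(E_6)) \cong A^*(X(\Sigma))$, with an explicit presentation available via \cref{thm:chow_pres}.

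The main --- indeed essentially the only --- obstacle lies not in the corollary itself but in this last verification: one must invoke the explicit combinatorial descriptions of the Naruki fan and of the stable-pair fan from \cite{hackingStablePairTropical2009} and check that the relevant fan structures on $\trop(Y(E_6))$ are unimodular (and supported on the tropicalization). Granting that, which is already in the literature, the corollary follows with no further work.
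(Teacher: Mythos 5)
Your proposal is correct and follows essentially the same route as the paper: the body of the corollary is immediate from \cref{thm:Y_quasilinear} and \cref{thm:qlin_chow}, and the final sentence reduces to the fact (from \cite{hackingStablePairTropical2009}) that $\oY(E_6)$ and $\wY(E_6)$ are tropical compactifications of $Y(E_6)$. Your extra care in noting that the Naruki fan and its stable-pair subdivision can be taken unimodular is a reasonable refinement needed for the cohomological statement, but it does not change the argument.
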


\begin{proof}
    Immediate from \cref{thm:Y_quasilinear,cor:qlin_schon,thm:qlin_chow}. (For the last sentence, note that both
    $\oY(E_6)$ and $\wY(E_6)$ are tropical compactifications by \cite{hackingStablePairTropical2009}.)
\end{proof}

\begin{remark}
    \begin{enumerate}
        \item The intersection theory of $\oY(E_6)$ has previously been studied by Colombo and van Geemen
            \cite{colomboChowGroupModuli2004}. The above result goes a step further, by allowing one to write down an
            explicit presentation for the Chow ring of $\oY(E_6)$, and indeed of any tropical compactification of $Y(E_6)$.
        \item We expect similar results to hold for moduli of marked del Pezzo surfaces of degrees 1 and 2. (Note these
            are the only remaining interesting cases, since the moduli space of marked del Pezzo surfaces of degree 4 is
            isomorphic to $M_{0,5}$, and the moduli spaces of marked del Pezzo surfaces of degrees $> 4$ are trivial.)
            Indeed, note for instance that  Hacking, Keel, and Tevelev also show that the moduli space $Y(E_7)$ of
            marked del Pezzo surfaces of degree 2 is sch\"on and the log canonical compactification $\oY(E_7)$ is a
            tropical compactification \cite{hackingStablePairTropical2009}. The results of Sekiguchi
            \cite{sekiguchiCrossRatioVarieties2000} explicitly describing the strata of $\oY(E_7)$ imply that each open
            stratum is irreducible, rational, and Chow-free.
    \end{enumerate}
\end{remark}

\bibliographystyle{amsalpha} \bibliography{ChowRingTropical}

\providecommand{\bysame}{\leavevmode\hbox to3em{\hrulefill}\thinspace}
\providecommand{\MR}{\relax\ifhmode\unskip\space\fi MR }
\providecommand{\MRhref}[2]{%
  \href{http://www.ams.org/mathscinet-getitem?mr=#1}{#2}
}
\providecommand{\href}[2]{#2}
\begin{thebibliography}{FMSS95}

\bibitem[ADH22]{ardilaLagrangianGeometryMatroids2022}
Federico Ardila, Graham Denham, and June Huh, \emph{Lagrangian geometry of
  matroids}, Journal of the American Mathematical Society (2022).

\bibitem[AHK18]{adiprasitoHodgeTheoryCombinatorial2018}
Kareem Adiprasito, June Huh, and Eric Katz, \emph{Hodge theory for
  combinatorial geometries}, Annals of Mathematics \textbf{188} (2018), no.~2.

\bibitem[AHR16]{allermannRationalEquivalenceTropical2016a}
Lars Allermann, Simon Hampe, and Johannes Rau, \emph{On {{Rational
  Equivalence}} in {{Tropical Geometry}}}, Canadian Journal of Mathematics
  \textbf{68} (2016), no.~2, 241--257.

\bibitem[AK00]{abramovichWeakSemistableReduction2000}
D.~Abramovich and K.~Karu, \emph{Weak semistable reduction in characteristic
  0}, Inventiones mathematicae \textbf{139} (2000), no.~2, 241--273.

\bibitem[Ale15]{alexeevModuliWeightedHyperplane2015}
Valery Alexeev, \emph{Moduli of {{Weighted Hyperplane Arrangements}}}, Advanced
  {{Courses}} in {{Mathematics}} - {{CRM Barcelona}}, Springer Basel, Basel,
  2015.

\bibitem[AP20]{aminiHodgeTheoryTropical2020}
Omid Amini and Matthieu Piquerez, \emph{Hodge theory for tropical varieties},
  arXiv:2007.07826 [math] (2020), 2007.07826.

\bibitem[AP21]{aminiHomologyTropicalFans2021}
\bysame, \emph{Homology of tropical fans}, arXiv:2105.01504 [math] (2021),
  2105.01504.

\bibitem[AR10]{allermannFirstStepsTropical2010}
Lars Allermann and Johannes Rau, \emph{First {{Steps}} in {{Tropical
  Intersection Theory}}}, Mathematische Zeitschrift \textbf{264} (2010), no.~3,
  0709.3705.

\bibitem[BL12]{brugalleInflectionPointsReal2012}
Erwan~A. Brugall{\'e} and Lucia~M. {L{\'o}pez de Medrano}, \emph{Inflection
  points of real and tropical plane curves}, Journal of Singularities
  \textbf{4} (2012), 74--103. \MR{3044488}

\bibitem[Bri96]{brionPiecewisePolynomialFunctions1996}
Michel Brion, \emph{Piecewise polynomial functions, convex polytopes and
  enumerative geometry}, Banach Center Publications \textbf{36} (1996), no.~1,
  25--44.

\bibitem[Car12]{cartwrightGrobnerStratificationTropical2012}
Dustin Cartwright, \emph{The {{Gr{\"o}bner}} stratification of a tropical
  variety}, arXiv:1205.4221 [math] (2012), 1205.4221.

\bibitem[CL23]{coreyGrassmannianPlanesMathbb2023}
Daniel Corey and Dante Luber, \emph{{The Grassmannian of {$3$}-planes in
  {$\mathbb{C}^{8}$} is sch{\"o}n}}, Algebraic Combinatorics \textbf{6} (2023),
  no.~5, 1273--1299.

\bibitem[CLS11]{coxToricVarieties2011}
David~A. Cox, John~B. Little, and Henry~K. Schenck, \emph{Toric varieties},
  Graduate Studies in Mathematics, no. v. 124, American Mathematical Society,
  Providence, R.I, 2011.

\bibitem[Cor21]{coreyInitialDegenerationsGrassmannians2021}
Daniel Corey, \emph{Initial degenerations of {{Grassmannians}}}, Selecta
  Mathematica \textbf{27} (2021), no.~4, 57.

\bibitem[Cue10]{cuetoTropicalImplicitization2010}
Maria~Angelica Cueto, \emph{Tropical {{Implicitization}}}, Ph.D. thesis, 2010.

\bibitem[Cue12]{cuetoImplicitizationSurfacesGeometric2012a}
\bysame, \emph{Implicitization of surfaces via geometric tropicalization},
  no.~arXiv:1105.0509, 1105.0509.

\bibitem[Cv04]{colomboChowGroupModuli2004}
Elisabetta Colombo and Bert {van Geemen}, \emph{The {{Chow}} group of the
  moduli space of marked cubic surfaces}, Annali di Matematica Pura ed
  Applicata. Series IV \textbf{183} (2004), no.~3, 291--316. \MR{2082660}

\bibitem[FMSS95]{fultonIntersectionTheorySpherical1995}
W.~Fulton, R.~MacPherson, F.~Sottile, and B.~Sturmfels, \emph{Intersection
  theory on spherical varieties}, Journal of Algebraic Geometry \textbf{4}
  (1995), no.~1, 181--193. \MR{1299008}

\bibitem[Fra12]{francoisCocyclesTropicalVarieties2012}
Georges Francois, \emph{Cocycles on tropical varieties via piecewise
  polynomials}, Proceedings of the American Mathematical Society \textbf{141}
  (2012), no.~2, 481--497.

\bibitem[FS97]{fultonIntersectionTheoryToric1997}
William Fulton and Bernd Sturmfels, \emph{Intersection theory on toric
  varieties}, Topology \textbf{36} (1997), no.~2, 335--353.

\bibitem[Ful93]{fultonIntroductionToricVarieties1993}
William Fulton, \emph{Introduction to toric varieties}, Annals of {{Mathematics
  Studies}}, vol. 131, Princeton University Press, Princeton, NJ, 1993.
  \MR{1234037}

\bibitem[Ful98]{fultonIntersectionTheory1998}
\bysame, \emph{Intersection {{Theory}}}, Springer New York, New York, NY, 1998.

\bibitem[GKM09]{gathmannTropicalFansModuli2009}
Andreas Gathmann, Michael Kerber, and Hannah Markwig, \emph{Tropical fans and
  the moduli spaces of tropical curves}, Compositio Mathematica \textbf{145}
  (2009), no.~1, 173--195.

\bibitem[GKS21]{gallardoGeometricInterpretationToroidal2021}
Patricio Gallardo, Matt Kerr, and Luca Schaffler, \emph{Geometric
  interpretation of toroidal compactifications of moduli of points in the line
  and cubic surfaces}, Advances in Mathematics \textbf{381} (2021), 2006.01314.

\bibitem[Gon15]{gonzalesEquivariantOperationalChow2015}
Richard~P. Gonzales, \emph{Equivariant operational {{Chow}} rings of
  {$T$}-linear schemes}, Documenta Mathematica \textbf{20} (2015), 401--432.
  \MR{3398717}

\bibitem[Gro15]{grossIntersectionTheoryLinear2015}
Andreas Gross, \emph{Intersection theory on linear subvarieties of toric
  varieties}, Collectanea Mathematica \textbf{66} (2015), no.~2, 175--190.

\bibitem[GS12]{gathmannIrreducibleCyclesPoints2012}
Andreas Gathmann and Franziska Schroeter, \emph{Irreducible {{Cycles}} and
  {{Points}} in {{Special Position}} in {{Moduli Spaces}} for {{Tropical
  Curves}}}, The Electronic Journal of Combinatorics \textbf{19} (2012), no.~4,
  P26.

\bibitem[GS21]{grossCyclesCocyclesDuality2021}
Andreas Gross and Farbod Shokrieh, \emph{Cycles, cocycles, and duality on
  tropical manifolds}, Proceedings of the American Mathematical Society (2021),
  1.

\bibitem[Gub13]{gublerGuideTropicalizations2013}
Walter Gubler, \emph{A guide to tropicalizations}, Algebraic and Combinatorial
  Aspects of Tropical Geometry, Contemp. {{Math}}., vol. 589, Amer. Math. Soc.,
  Providence, RI, 2013, pp.~125--189. \MR{3088913}

\bibitem[Hac08]{hackingHomologyTropicalVarieties2008}
Paul Hacking, \emph{The homology of tropical varieties}, Universitat de
  Barcelona. Collectanea Mathematica \textbf{59} (2008), no.~3, 263--273.
  \MR{2452307}

\bibitem[Har77]{hartshorneAlgebraicGeometry1977}
Robin Hartshorne, \emph{Algebraic geometry}, Springer-Verlag, New
  York-Heidelberg, 1977. \MR{0463157}

\bibitem[HK12]{helmMonodromyFiltrationsTopology2012}
David Helm and Eric Katz, \emph{Monodromy filtrations and the topology of
  tropical varieties}, Canadian Journal of Mathematics. Journal Canadien de
  Math{\'e}matiques \textbf{64} (2012), no.~4, 845--868. \MR{2957233}

\bibitem[HKT06]{hackingCompactificationModuliSpace2006}
Paul Hacking, Sean Keel, and Jenia Tevelev, \emph{Compactification of the
  moduli space of hyperplane arrangements}, Journal of Algebraic Geometry
  \textbf{15} (2006), no.~4, 657--680.

\bibitem[HKT09]{hackingStablePairTropical2009}
\bysame, \emph{Stable pair, tropical, and log canonical compact moduli of del
  {{Pezzo}} surfaces}, Inventiones mathematicae \textbf{178} (2009), no.~1,
  math/0702505.

\bibitem[Huh19]{huhCombinatorialApplicationsHodgeRiemann2019}
June Huh, \emph{Combinatorial {{Applications}} of the {{Hodge-Riemann
  Relations}}}, Proceedings of the {{International Congress}} of
  {{Mathematicians}} ({{ICM}} 2018) (Rio de Janeiro, Brazil), WORLD SCIENTIFIC,
  May 2019, pp.~3093--3111.

\bibitem[Jan06]{jannsenMixedMotivesAlgebraic2006}
Uwe Jannsen, \emph{Mixed {{Motives}} and {{Algebraic K-Theory}}.}, Springer
  Berlin / Heidelberg, Berlin, Heidelberg, 2006.

\bibitem[Jel20]{jellConstructingSmoothFully2020}
Philipp Jell, \emph{Constructing smooth and fully faithful tropicalizations for
  {{Mumford}} curves}, Selecta Mathematica. New Series \textbf{26} (2020),
  no.~4, Paper No. 60, 23. \MR{4135715}

\bibitem[Kee92]{keelIntersectionTheoryModuli1992}
Sean Keel, \emph{Intersection {{Theory}} of {{Moduli Space}} of {{Stable
  N-Pointed Curves}} of {{Genus Zero}}}, Transactions of the American
  Mathematical Society \textbf{330} (1992), no.~2, 545.

\bibitem[KP08]{katzPiecewisePolynomialsMinkowski2008}
Eric Katz and Sam Payne, \emph{Piecewise polynomials, {{Minkowski}} weights,
  and localization on toric varieties}, Algebra \& Number Theory \textbf{2}
  (2008), no.~2, 135--155.

\bibitem[KP11]{katzRealizationSpacesTropical2011}
\bysame, \emph{Realization spaces for tropical fans}, Combinatorial aspects of
  commutative algebra and algebraic geometry \textbf{6} (2011), 0909.4582.

\bibitem[KT06]{keelGeometryChowQuotients2006}
Sean Keel and Jenia Tevelev, \emph{Geometry of {{Chow}} quotients of
  {{Grassmannians}}}, Duke Mathematical Journal \textbf{134} (2006), no.~2,
  259--311.

\bibitem[Liu02]{liuAlgebraicGeometryArithmetic2002}
Qing Liu, \emph{Algebraic geometry and arithmetic curves}, Oxford Science
  Publications, no.~6, Oxford University Press, Oxford ; New York, 2002.

\bibitem[LQ11]{luxtonResultsTropicalCompactifications2011}
Mark Luxton and Zhenhua Qu, \emph{Some results on tropical compactifications},
  Transactions of the American Mathematical Society \textbf{363} (2011), no.~9,
  4853--4876.

\bibitem[Lux08]{luxtonLogCanonicalCompactification2008}
Mark~Andrew Luxton, \emph{The log canonical compactification of the moduli
  space of six lines in {{P}}{\textasciicircum}2}, Ph.D. thesis, The University
  of Texas at Austin, United States -- Texas, 2008.

\bibitem[Mik07]{mikhalkinTropicalGeometryIts2007}
Grigory Mikhalkin, \emph{Tropical geometry and its applications}, Proceedings
  of the {{International Congress}} of {{Mathematicians Madrid}}, {{August}}
  22--30, 2006 (Marta {Sanz-Sol{\'e}}, Javier Soria, Juan~Luis Varona, and Joan
  Verdera, eds.), European Mathematical Society Publishing House, Zuerich,
  Switzerland, May 2007, pp.~827--852.

\bibitem[MR18]{mikhalkinTropicalGeometry2018}
Grigory Mikhalkin and Johannes Rau, \emph{Tropical {{Geometry}}},
  {{https://www.math.uni-tuebingen.de/user/jora/downloads/main.pdf}}, 2018.

\bibitem[MS15]{maclaganIntroductionTropicalGeometry2015}
Diane Maclagan and Bernd Sturmfels, \emph{Introduction to tropical geometry},
  Graduate {{Studies}} in {{Mathematics}}, vol. 161, American Mathematical
  Society, Providence, RI, 2015. \MR{3287221}

\bibitem[Nar82]{narukiCrossRatioVariety1982}
Isao Naruki, \emph{Cross {{Ratio Variety}} as a {{Moduli Space}} of {{Cubic
  Surfaces}}}, Proceedings of the London Mathematical Society \textbf{s3-45}
  (1982), no.~1, 1--30.

\bibitem[OP13]{ossermanLiftingTropicalIntersections2013}
Brian Osserman and Sam Payne, \emph{Lifting {{Tropical Intersections}}},
  Documenta Mathematica (2013), 56.

\bibitem[Pay06]{payneEquivariantChowCohomology2006}
Sam Payne, \emph{Equivariant {{Chow}} cohomology of toric varieties},
  Mathematical Research Letters \textbf{13} (2006), no.~1, 29--41.

\bibitem[RSS14]{renTropicalizationClassicalModuli2014}
Qingchun Ren, Steven~V. Sam, and Bernd Sturmfels, \emph{Tropicalization of
  {{Classical Moduli Spaces}}}, Mathematics in Computer Science \textbf{8}
  (2014), no.~2, 119--145.

\bibitem[RSS16]{renTropicalizationPezzoSurfaces2016}
Qingchun Ren, Kristin Shaw, and Bernd Sturmfels, \emph{Tropicalization of del
  {{Pezzo}} surfaces}, Advances in Mathematics \textbf{300} (2016), 156--189.

\bibitem[Sch22a]{schockGeometryTropicalCompactifications2022}
Nolan Schock, \emph{Geometry of tropical compactifications of moduli spaces},
  Ph.D. thesis, University of Georgia, May 2022.

\bibitem[Sch22b]{schockIntersectionTheoryStable2022}
\bysame, \emph{Intersection theory of the stable pair compactification of the
  moduli space of six lines in the plane}, European Journal of Mathematics
  \textbf{8} (2022), 139--192.

\bibitem[Sch24]{schockModuliWeightedStable2024}
\bysame, \emph{Moduli of weighted stable marked cubic surfaces},
  no.~arXiv:2305.06922, 2305.06922.

\bibitem[Sek00]{sekiguchiCrossRatioVarieties2000}
Jiro Sekiguchi, \emph{Cross ratio varieties for root systems {{II}}: {{The}}
  case of the root system of type {$E_7$}}, Kyushu Journal of Mathematics
  \textbf{54} (2000), 7--37.

\bibitem[Sha13]{shawTropicalIntersectionProduct2013}
Kristin~M. Shaw, \emph{A {{Tropical Intersection Product}} in {{Matroidal
  Fans}}}, SIAM Journal on Discrete Mathematics \textbf{27} (2013), no.~1,
  459--491.

\bibitem[{Shu}92]{shun-ichiFractionalIntersectionBivariant1992}
Kimura {Shun-ichi}, \emph{Fractional intersection and bivariant theory},
  Communications in Algebra \textbf{20} (1992), no.~1, 285--302.

\bibitem[SPA22]{stacks-project}
The Stacks Project~Authors, \emph{Stacks {{Project}}},
  https://stacks.math.columbia.edu/, 2022.

\bibitem[ST08]{sturmfelsEliminationTheoryTropical2008}
Bernd Sturmfels and Jenia Tevelev, \emph{Elimination {{Theory}} for {{Tropical
  Varieties}}}, Mathematical Research Letters \textbf{15} (2008), no.~3,
  543--562.

\bibitem[Tev07]{tevelevCompactificationsSubvarietiesTori2007}
{\relax Jenia}.~Tevelev, \emph{Compactifications of subvarieties of tori},
  American Journal of Mathematics \textbf{129} (2007), no.~4, 1087--1104.

\bibitem[Tot14]{totaroChowGroupsChow2014}
Burt Totaro, \emph{Chow groups, chow cohomology, and linear varieties}, Forum
  of Mathematics, Sigma \textbf{2} (2014), e17.

\bibitem[Wlo97]{wlodarczykDecompositionBirationalToric1997}
Jaroslaw Wlodarczyk, \emph{Decomposition of {{Birational Toric Maps}} in
  {{Blow-Ups}} and {{Blow-Downs}}}, Transactions of the American Mathematical
  Society \textbf{349} (1997), no.~1, 373--411.

\bibitem[Yos00]{yoshidaE_6EquivariantProjective2000}
Masaaki Yoshida, \emph{A {$W(E_6)$}-equivariant projective embedding of the
  moduli space of cubic surfaces}, no.~arxiv:math/0002102, math/0002102.

\end{thebibliography}
\end{document}